\newtheorem{thm}{Theorem}[section]
\newtheorem{lem}[thm]{Lemma}
\newtheorem{prop}[thm]{Proposition}
\newtheorem{cor}[thm]{Corollary}
\newtheorem{ques}[thm]{Question}
\theoremstyle{definition}
\newtheorem{defn}[thm]{Definition}
\newtheorem{rem}[thm]{Remark}
\newtheorem{exam}[thm]{Example}
\newcommand{\bC}{{\mathbb{C}}}
\newcommand{\bN}{{\mathbb{N}}}
\newcommand{\bR}{{\mathbb{R}}}
\newcommand{\A}{{\mathcal{A}}}
\newcommand{\B}{{\mathcal{B}}}
\newcommand{\J}{{\mathcal{J}}}
\newcommand{\M}{{\mathcal{M}}}
\newcommand{\X}{{\mathcal{X}}}
\newcommand{\Y}{{\mathcal{Y}}}
\newcommand{\vX}{\mathbf{X}}
\newcommand{\vY}{\mathbf{Y}}
\newcommand{\vS}{\mathbf{S}}
\newcommand{\vT}{\mathbf{T}}
\renewcommand{\phi}{\varphi}
\newcommand{\fA}{{\mathfrak{A}}}
\newcommand{\fM}{{\mathfrak{M}}}
\newcommand{\qand}{\quad\text{and}\quad}
\newcommand{\qqand}{\qquad\text{and}\qquad}
\newcommand{\alg}{\mathrm{alg}}
\newcommand{\Tr}{\mathrm{Tr}}
\newcommand{\diag}{\mathrm{diag}}
\newcommand{\sa}{\mathrm{sa}}
\newcommand{\set}[1]{\left\{#1\right\}}
\newcommand{\ang}[1]{\left\langle#1\right\rangle}
\newcommand{\paren}[1]{\left(#1\right)}
\newcommand{\sq}[1]{\left[#1\right]}
\newcommand{\slr}{\set{\ell, r}}
\tikzset{Box/.style={very thick, rounded corners}}
\tikzset{marked/.style={star, star point height = .75mm, star points =5, fill=black,minimum size=2mm, inner sep=0mm} }
\tikzset{verythickline/.style = {line width=7pt}}
\tikzset{thickline/.style = {line width=5pt}}
\tikzset{medthick/.style = {line width=3pt}}
\tikzset{med/.style = {line width=2pt}}
\tikzset{count/.style = {fill=white,circle,draw,thin, inner sep=2pt}}
\tikzset{rcount/.style = {fill=white,rectangle,draw,thin,inner sep=2pt, rounded corners}}
\tikzset{cpr/.style = {draw,fill=white,rectangle,thin, rounded corners}}
\definecolor{ggreen}{HTML}{7FDD99}
\begin{document}

\nocite{*}

\title[Bi-Free Entropy: Non-Microstate]{Analogues of Entropy in Bi-Free Probability Theory: Non-Microstate}

\author{Ian Charlesworth}
\address{Department of Mathematics, University of California, Berkeley, California, 94720, USA}
\email{ilc@math.berkeley.edu}

\author{Paul Skoufranis}
\address{Department of Mathematics and Statistics, York University, 4700 Keele Street, Toronto, Ontario, M3J 1P3, Canada}
\email{pskoufra@yorku.ca}

\subjclass[2010]{46L54, 46L53}
\date{\today}
\keywords{bi-free probability, entropy}
\thanks{The research of the second author was supported in part by NSERC (Canada) grant RGPIN-2017-05711.}

\begin{abstract}
	In this paper, we extend the notion of non-microstate free entropy to the bi-free setting.  Using a diagrammatic approach involving bi-non-crossing diagrams, bi-free difference quotients are constructed as analogues of the free partial derivations.  Adjoints of bi-free difference quotients are discussed and used to define bi-free conjugate variables.  Notions of bi-free Fisher information, non-microstate bi-free entropy, and non-microstate bi-free entropy dimension are defined and known properties in the free setting are extended to the bi-free setting.
\end{abstract}

\maketitle

\section{Introduction}
\label{sec:Intro}

In a series of revolutionary papers \cites{V1993, V1994, V1996, V1997, V1998-2, V1999}, Voiculescu generalized the notions of entropy and Fisher's information to the free probability setting.
In particular, \cite{V1998-2} introduced a non-microstate notion of free entropy, in contrast to the microstates-based approach pioneered in \cite{V1994}.
The non-microstates approach to entropy takes its inspiration from Fisher information in probability and studies the behaviour of non-commutative distributions under infinitesimal perturbations by free Brownian motion,through tracial formulae related to the free difference quotients.
Non-microstate free entropy and the techniques developed to study it led to many advances in free probability theory with ramifications to the study of von Neumann algebras.  For example, these techniques were used to demonstrate specific type II$_1$ factors are non-$\Gamma$ \cite{D2010}, to establish free monotone transport \cite{GS2014}, and to show the absence of atoms in free product distributions \cites{CS2014, MSW2017}.

Recently in \cite{V2014} Voiculescu extended the notion of free probability to simultaneously study the left and right actions of algebras on reduced free product spaces.
This so-called bi-free probability has attracted the attention of many researchers and has had numerous developments (see \cites{BBGS2017, C2016, CNS2015-1, CNS2015-2, S2016-1, S2016-2, S2016-3, S2016-4, HW2016} for example).
The interest surrounding bi-free probability is the possibility to extend the techniques of free probability to solve problems pertaining to pairs of von Neumann algebras, such as a von Neumann algebra and its commutant, or the tensor product of von Neumann algebras.

One important development in bi-free probability theory was the diagrammatical and combinatorial approach using bi-non-crossing partitions developed in \cites{CNS2015-1, CNS2015-2}.
As a diagrammatical view of the free conjugate variables is possible using non-crossing partitions, in this paper we extend this diagrammatical view using \cites{CNS2015-1, CNS2015-2} to develop a notion of non-microstate bi-free entropy.
In our sister paper \cite{CS2017} a notion of microstate bi-free entropy is developed.

In addition to this introduction, this paper contains seven sections which are organized as follows.  In Section \ref{sec:DiffQuot} the notion of bi-free difference quotients is introduced.  The left and right bi-free difference quotients are motivated via a diagrammatical view of the free difference quotients and are obtained by connecting nodes to the bottom of bi-non-crossing diagrams.  In particular, in the bi-partite case where all left and right operators commute, the bi-free difference quotients may be viewed as partial derivatives.  Using the bi-free difference quotients, the notions of left and right conjugate variables are introduced.

In Section \ref{sec:Adjoints} adjoints of the bi-free difference quotients are analyzed.  One important fact from \cite{V1998-2} is that a free conjugate variable exists if and only if $1 \otimes 1$ is in the domain of the adjoint of the corresponding free difference quotient.  In the bi-free setting things are more complicated due to the lack of traciality.  It is demonstrated that a bi-free conjugate variable exists if and only if $1 \otimes 1$ is in the domain of the adjoint of a `flipped' bi-free difference quotient; that is, an analogue of the bi-free difference quotient where nodes are connected to the top of diagrams.  In addition, it is demonstrated that large portions of the generating algebras are contained in the domain of the adjoint of these `flipped' bi-free difference quotients, but it remains unknown whether these adjoints are densely defined.

In Section \ref{sec:Properties} additional properties of bi-free conjugate variables are examined.  In particular, most of the properties of the free conjugate variables exhibited in \cite{V1998-2} hold for the bi-free conjugate variables.

In Section \ref{sec:Fisher} the relative bi-free Fisher information is defined (see Definition \ref{defn:bi-fisher}).  In addition, all properties of the relative Free information exhibited in \cite{V1998-2} are extended to the bi-free setting.

In Section \ref{sec:Entropy} we define the non-microstate bi-free entropy (see Definition \ref{defn:entropy}) as an integral of the Fisher information of perturbations by the independent bi-free Brownian motion.  The non-microstate bi-free entropy of every self-adjoint bi-free central limit distribution is computed and agrees with the microstate bi-free entropy as seen by \cite{CS2017}.  Furthermore, natural properties desired for an entropy theory are demonstrated for the non-microstate bi-free entropy and a lower bound based on the non-microstate free entropy of the system obtained by modifying all right variables to be left variables is obtained.

In Section \ref{sec:Entropy-Dimension} we define the non-microstate bi-free entropy dimension.  In particular, known properties and bounds of the non-microstate free entropy dimension are extended to the bi-free setting and it is demonstrated that the bi-free entropy dimension of a bi-free central limit distribution pair equals the dimension of the support of its joint distribution. 

Finally we analyze the question of when bi-free Fisher information being additive implies bi-freeness in Section \ref{sec:Additive-Bi-Free-Fisher-Info} and discuss several open questions  in Section \ref{sec:Ques}.

\subsection{Notation}
Throughout the paper, $\vX$ and $\vY$ will denote tuples of left operators $(X_1, \ldots, X_n)$ and right operators $(Y_1, \ldots, Y_m)$ respectively of possible different length. When it is necessary to specify their lengths we will tend to denote the length of $\vX$ by $n$ and that of $\vY$ by $m$.
By $\hat\vX_i$ we denote the tuple $(X_1, \ldots, X_{i-1}, X_{i+1}, \ldots, X_n)$.
The notation $B\ang{\vX}$ will denote the non-commutative free algebra generated by $B$ and the elements of $\vX$.

\section{Bi-Free Difference Quotients and Conjugate Variables}
\label{sec:DiffQuot}

In this section we will introduce the notions of bi-free difference quotients and bi-free conjugate variables.  We begin by motivating the bi-free difference quotient by analyzing various interpretations of the free difference quotient and free conjugate variables.

\begin{defn}[\cite{V1998-2}] 
	\label{defn:free-diff-quotient}
	Let $B$ be a unital algebra and let $\A = B\langle X \rangle$ be the non-commutative free algebra generated by $B$ and a variable $X$.  The \emph{free derivation corresponding to $X$} (also known as the \emph{free difference quotient in $X$}) is the linear map $\partial_{X} : \A \to \A \otimes \A$ such that
	\begin{align*}
		\partial_{X}(X) &= 1 \otimes 1, \\ 
		\partial_X(b) &= 0 \text{ for all }b \in B, \text{ and}\\
		\partial_{X}(Z_1Z_2) &= \partial_{X}(Z_1) Z_2 + Z_1 \partial_{X}(Z_2)\text{ for all }Z_1, Z_2 \in \A
	\end{align*}
	where $\A \otimes \A$ is viewed as an $\A$-bimodule via
	\[
		Z_1(P \otimes Q)Z_2 = Z_1P \otimes QZ_2.
	\]
\end{defn}

\begin{defn}[\cite{V1998-2}] 
	\label{defn:free-conjugate-variables}
	Let $\fM$ be a von Neumann algebra, $\tau : \fM \to \bC$ be a tracial state on $\fM$, $X \in \fM$ a self-adjoint operator, $B$ a subalgebra of $\fM$ with no algebraic relations with $X$, and $\A = B\langle X \rangle$.
	Let $L_2(\A, \tau)$ denote the GNS Hilbert space of $\A$ with respect to $\tau$ defined by the sesquilinear form $\langle Z_1, Z_2\rangle_{L_2(\A, \tau)} = \tau(Z_2^*Z_1)$ so that there is a left-action of $\A$ on $L_2(\A, \tau)$.
	Consequently $Z\zeta$ is a well-defined element of $L_2(\A, \tau)$ for all $\zeta \in L_2(\A, \tau)$ and $Z \in \A$.
	Define $\tau(Z\zeta) = \langle Z\zeta, 1\rangle_{L_2(\A, \tau)}$ (where $1 \in \A$ is viewed as an element of $L_2(\A, \tau)$).

	The \emph{conjugate variable of $X$ relative to $B$ with respect to $\tau$} is the unique element $\xi \in L_2(\A, \tau)$ (if it exists) such that
	\[
		\tau(Z \xi) = (\tau \otimes \tau)(\partial_{X}(Z))
	\]
	for all $Z \in \A$ (where $\partial_{X}(Z)$ represents computing the free difference quotient algebraically as defined in Definition \ref{defn:free-diff-quotient} and evaluating at elements of $\fM$).  We use $\J(X : B)$ to denote $\xi$ provided $\xi$ exists.
\end{defn}

\begin{rem}
	\label{rem:free-diff-quot-diagram-view}
	Alternatively, the relation between the free difference quotient and conjugate variables may be seen diagrammatically. To begin, under the notation of Definition \ref{defn:free-conjugate-variables}, notice that if $X_1, \ldots, X_k \in B \cup \{X\}$ then
	\begin{align*}
		\tau( X_{1} \cdots X_{k} \xi) &= (\tau \otimes \tau)(\partial_{X}(X_{1} \cdots X_{k})) \\
		&= \sum_{X_q = X} \tau(X_{1} \cdots X_{{q-1}}) \tau(X_{{q+1}} \cdots X_{k}).
	\end{align*}
	This may be viewed diagrammatically as listing $X_{1},\ldots, X_{k}, \xi$ along a horizontal line, drawing all pictures connecting $\xi$ to any $X_{q}$ where $X_q = X$, taking the trace of each component of the diagram, multiplying the results, and then summing over all such diagrams.
	\begin{align*}
		\begin{tikzpicture}[baseline]
			\draw[thick, dashed] (.25,0) -- (-7.25, 0);
			\draw[thick] (-5, 0) -- (-5,1) -- (0,1) -- (0, 0);
			\draw[thick] (-2.5,0) ellipse (2cm and .66cm);
			\draw[thick] (-6.5,0) ellipse (1cm and .66cm);
			\node[above] at (-2.5, 0) {$\tau$};
			\node[above] at (-6.5, 0) {$\tau$};
			\node[below] at (0, 0) {$\xi$};
			\draw[black, fill=black] (0,0) circle (0.05);	
			\node[below] at (-1, 0) {$X_{7}$};
			\draw[black, fill=black] (-1,0) circle (0.05);	
			\node[below] at (-2, 0) {$X_{6}$};
			\draw[black, fill=black] (-2,0) circle (0.05);	
			\node[below] at (-3, 0) {$X_{5}$};
			\draw[black, fill=black] (-5,0) circle (0.05);
			\node[below] at (-4, 0) {$X_{4}$};
			\draw[black, fill=black] (-4,0) circle (0.05);	
			\node[below] at (-5, 0) {$X$};
			\draw[black, fill=black] (-3,0) circle (0.05);
			\node[below] at (-6, 0) {$X_{2}$};
			\draw[black, fill=black] (-6,0) circle (0.05);	
			\node[below] at (-7, 0) {$X_{1}$};
			\draw[black, fill=black] (-7,0) circle (0.05);
		\end{tikzpicture}
	\end{align*}
\end{rem}

To generalize this to the bi-free setting, we will examine an analogue of the above using bi-non-crossing diagrams.
To begin, suppose $B_\ell$ and $B_r$ are unital $*$-algebras, and let $\A = (B_\ell \vee B_r) \langle X, Y\rangle$ for two variables $X$ and $Y$, where $B_\ell \vee B_r$ denotes the unital algebra generated by $B_\ell$ and $B_r$.  One should think of $X$ and $Y$ as being self-adjoint operators, elements of $B_\ell \langle X \rangle$ as being left operators, and elements of $B_r\langle Y\rangle$ as being right operators.  Note that we do not assume we are in the bi-partite setting; that is, we do not assume that elements of $B_\ell \langle X \rangle$ commute with elements of $B_r\langle Y\rangle$.

\begin{defn}
	\label{defn:left-bi-free-diff-quot}
	The \emph{left bi-free difference quotient corresponding to $X$ with respect to $(B_\ell, B_r \langle Y \rangle)$} is the map $\partial_{\ell, X} : \A \to \A \otimes \A$ defined as follows.  Equipping $\A\otimes \A$ with the multiplication $(Z_1 \otimes Z_2) \cdot (W_1 \otimes W_2) = Z_1W_1 \otimes Z_2W_2$, let $T_\ell : \A \to \A \otimes \A$ be the algebra homomorphism defined by
	\[
		T_\ell(x) = 1 \otimes x \qqand T_\ell(y) = y \otimes 1
	\]
	for all $x \in B_\ell \langle X\rangle$ and $y \in B_r \langle Y \rangle$.  Note $T_\ell$ is $*$-preserving when $\A$ is equipped with an involution and $\A \otimes \A$ is equipped with the canonical involution on a tensor product. 	Define $C : \A \otimes \A \to \A$ by
	\[
		C(Z_1 \otimes Z_2) = Z_1Z_2
	\]
	for all $Z_1,Z_2 \in \A$.  Note that $C$ is a homomorphism when $\A \otimes \A$ is equipped with the multiplication $(Z_1 \otimes Z_2) \cdot (W_1 \otimes W_2) = Z_1W_1 \otimes W_2Z_2$ (that is, one uses the opposite multiplication on the second tensor component).
	Then $\partial_{\ell, X} := (C \otimes 1)  \circ  (1\otimes T_\ell)   \circ  \partial_{X}$ where $\partial_X$ is the free derivation for $X$ with respect to $(B_\ell \vee B_r)\langle Y \rangle$.  In particular, $\partial_{\ell, X}$ is not a derivation but a composition of homomorphisms (with differing multiplications) with a derivation.  Also note $C \otimes 1$ is $*$-preserving on the range of $(1\otimes T_\ell)   \circ  \partial_{X}$ provided $B_\ell\langle X \rangle$ and $B_r\langle Y \rangle$ commute with each other.
\end{defn}

\begin{exam}
	To see the diagrammatic view of $\partial_{\ell, X}$, consider the following example. For $x_1, x_2 \in B_\ell$ and $y_1, y_2, y_3 \in B_r \langle Y \rangle$, Definition \ref{defn:left-bi-free-diff-quot} yields
	\begin{align*}
		\partial_{\ell, X}(y_1 X y_1 x_1 y_2 X y_3  y_1x_2) &= ((C \otimes 1)  \circ  (1\otimes T_\ell))\left(y_1 \otimes y_1 x_1 y_2 X y_3  y_1x_2 + y_1 X y_1 x_1 y_2 \otimes y_3  y_1x_2\right)  \\
		&= (C \otimes 1) \left(y_1 \otimes y_1y_2y_3  y_1 \otimes  x_1 X x_2 + y_1 X y_1 x_1 y_2 \otimes y_3  y_1\otimes x_2\right)\\
		& = y_1 y_1 y_2 y_3 y_1 \otimes x_1Xx_2 + y_1Xy_1x_1y_2y_3y_1 \otimes x_2.
	\end{align*}
	This can be observed by drawing $y_1, X, y_1, x_1, y_2, X, y_3, y_1, x _2$ as one would in a bi-non-crossing diagram (i.e. drawing two vertical lines and placing the variables on these lines starting at the top and going down with left variables on the left line and right variables on the right line), drawing all pictures connecting the centre of the bottom of the diagram to any $X$, taking the product of the elements starting from the top and going down in each of the two isolated components of the diagram, and taking the tensor of the two components with the one isolated on the right in the tensor.

	\begin{align*}
		\begin{tikzpicture}[baseline]
			\draw[thick, dashed] (-1,4.5) -- (-1,-.5) -- (1,-.5) -- (1,4.5);
			\draw[thick] (0,-.5) -- (0,3.5) -- (-1,3.5);
			\node[left] at (-1, 3.5) {$X$};
			\draw[black, fill=black] (-1,3.5) circle (0.05);
			\node[left] at (-1, 2.5) {$x_1$};
			\draw[black, fill=black] (-1,2.5) circle (0.05);
			\node[left] at (-1, 1.5) {$X$};
			\draw[black, fill=black] (-1,1.5) circle (0.05);	
			\node[left] at (-1, 0) {$x_2$};
			\draw[black, fill=black] (-1,0) circle (0.05);
			\node[right] at (1, 4) {$y_1$};
			\draw[black, fill=black] (1,4) circle (0.05);	
			\node[right] at (1, 3) {$y_1$};
			\draw[black, fill=black] (1,3) circle (0.05);
			\node[right] at (1, 2) {$y_2$};
			\draw[black, fill=black] (1,2) circle (0.05);
			\node[right] at (1, 1) {$y_3$};
			\draw[black, fill=black] (1,1) circle (0.05);
			\node[right] at (1, .5) {$y_1$};
			\draw[black, fill=black] (1,.5) circle (0.05);
		\end{tikzpicture}
		\qquad  \qquad
		\begin{tikzpicture}[baseline]
			\draw[thick, dashed] (-1,4.5) -- (-1,-.5) -- (1,-.5) -- (1,4.5);
			\draw[thick] (0,-.5) -- (0,1.5) -- (-1,1.5);
			\node[left] at (-1, 3.5) {$X$};
			\draw[black, fill=black] (-1,3.5) circle (0.05);
			\node[left] at (-1, 2.5) {$x_1$};
			\draw[black, fill=black] (-1,2.5) circle (0.05);
			\node[left] at (-1, 1.5) {$X$};
			\draw[black, fill=black] (-1,1.5) circle (0.05);	
			\node[left] at (-1, 0) {$x_2$};
			\draw[black, fill=black] (-1,0) circle (0.05);
			\node[right] at (1, 4) {$y_1$};
			\draw[black, fill=black] (1,4) circle (0.05);	
			\node[right] at (1, 3) {$y_1$};
			\draw[black, fill=black] (1,3) circle (0.05);
			\node[right] at (1, 2) {$y_2$};
			\draw[black, fill=black] (1,2) circle (0.05);
			\node[right] at (1, 1) {$y_3$};
			\draw[black, fill=black] (1,1) circle (0.05);
			\node[right] at (1, .5) {$y_1$};
			\draw[black, fill=black] (1,.5) circle (0.05);
		\end{tikzpicture}
	\end{align*}
\end{exam}

\begin{rem}
	First note $\partial_{\ell, X} : \A \to \A \otimes B_\ell\langle X \rangle$.
	Furthermore, it is elementary to see that the $\partial_{\ell, X}|_{B_\ell \langle X \rangle} = \partial_{X}$.  Thus $\partial_{\ell, X}$ is an extension of the free difference quotient to accommodate right variables. 
\end{rem}

\begin{rem}
	Note that although $\partial_{X}$ does not behave well with respect to commutation of variables, $\partial_{\ell, X}$ does provided the commutation is between left and right variables.  Indeed first notice that if $y \in B_r \langle Y\rangle$ then
	\[
		\partial_{\ell, X}(Z_1 X  y  Z_2) = \partial_{\ell, X}(Z_1 y  X  Z_2)
	\]
	for all $Z_1, Z_2 \in \A$.  Furthermore, if $x \in B_\ell$ is such that $[x, y] = 0$ then 
	\[
		 \partial_{\ell, X}(Z_1 x  y  Z_2)=  \partial_{\ell, X}(Z_1 y  x  Z_2)
	\]
	for all $Z_1, Z_2 \in \A$.
	Thus although we have defined $\partial_{\ell, X}$ assuming that $X, Y, B_\ell$, and $B_r$ share no algebraic relations, $\partial_{\ell, X}$ is well-defined under the above commutation relations.
	In particular $\partial_{\ell, X}$ is well-defined with respect to the relations contained in bi-partite systems.
\end{rem}

\begin{rem}
	\label{rem:left-bi-free-diff-quot-bi-partite}
	The reason that $\partial_{\ell, X}$ is called a difference quotient can be most easily seen in the bi-partite setting.  Indeed suppose that $[x,y] = 0$ for all $x \in B_\ell\langle X \rangle$ and $y \in B_r\langle Y \rangle$.  Then $(B_\ell \vee B_r)\langle X, Y\rangle$ is naturally isomorphic to the algebra $B_r\langle Y \rangle \otimes B_\ell\langle X \rangle$.  In this case
	\[
		\partial_{\ell, X} : B_r\langle Y \rangle \otimes B_\ell\langle X \rangle \to (B_r\langle Y \rangle \otimes B_\ell\langle X \rangle) \otimes B_\ell\langle X \rangle = B_r \langle Y\rangle \otimes (B_\ell\langle X \rangle \otimes B_\ell\langle X \rangle)
	\]
	and, with respect to this decomposition, $\partial_{\ell, X} = id \otimes \partial_X$.
	Thus, if $B_\ell = B_r = \bC$, if we identify  $B_r\langle Y \rangle \otimes B_\ell\langle X \rangle$ with polynomials in the commuting variables $X$ and $Y$, and if we associate $B_r\langle Y \rangle \otimes B_\ell\langle X \rangle \otimes B_\ell\langle X \rangle$ with polynomials in commuting variables $Y, X_1$, and $X_2$, we see that
	\[
		\partial_{\ell, X}(X^nY^m) = \frac{X_1^n - X_2^n}{X_1-X_2} Y^m.
	\] 
	Thus $\partial_{\ell, X}$ really is a partial derivative in the left variable.
\end{rem}

Now we repeat on the right.

\begin{defn}
	\label{defn:right-bi-free-diff-quot}
	The \emph{right bi-free difference quotient with respect to $Y$ corresponding to $(B_\ell \langle X \rangle, B_r)$} is the map $\partial_{r, Y} : \A \to \A \otimes \A$ defined as follows: equipping $\A\otimes \A$ with the multiplication $(Z_1 \otimes Z_2) \cdot (W_1 \otimes W_2) = Z_1W_1 \otimes Z_2W_2$, let $T_r : \A \to \A \otimes \A$ to be the homomorphism such that
	\[
		T_r(x) =   x \otimes 1\qqand T_r(y) = 1 \otimes   y
	\]
	for all $x \in B_\ell\langle X \rangle$ and $y \in B_r\langle Y \rangle$, and let $C$ be as in Definition \ref{defn:left-bi-free-diff-quot}.  Note $T_r$ is a $*$-preserving when $\A$ is equipped with an involution.  Then $\partial_{r, Y} = (C \otimes 1)  \circ  (1 \otimes T_r)   \circ  \partial_{Y}$ where $\partial_Y$ is the free derivation of $Y$ with respect to $(B_\ell \vee B_r)\langle X\rangle$.  In particular, $\partial_{r, Y}$  is not a derivation but a composition of homomorphisms (with differing multiplications) with a derivation.  Also note $C \otimes 1$ is $*$-preserving on the range of $(1 \otimes T_r)   \circ  \partial_{Y}$ provided $B_\ell\langle X \rangle$ and $B_r\langle Y \rangle$ commute with each other.
\end{defn}

\begin{exam}
	To see the diagrammatic view of $\partial_{r, Y}$, consider the following example.  For $x_1, x_2, x_3 \in B_\ell\langle X \rangle$ and $y_1, y_2 \in B_r$, Definition \ref{defn:right-bi-free-diff-quot} yields
	\begin{align*}
		\partial_{r, Y}&(Y x_1 Y x_2 y_1 x_1 y_2  Y x_3)\\
		&= \left((C \otimes 1)  \circ  (1 \otimes T_r)\right)\left(1 \otimes x_1 Y x_2 y_1 x_1 y_2  Y x_3 + Y x_1 \otimes x_2 y_1 x_1 y_2  Y x_3 + Y x_1 Y x_2 y_1 x_1 y_2  \otimes x_3\right) \\
		&= (C \otimes 1) \left(1 \otimes x_1x_2  x_1x_3  \otimes y_1 Y y_2 Y  + Y x_1 \otimes x_2x_1x_3   \otimes  y_1  y_2  Y  + Y x_1 Y x_2 y_1 x_1 y_2  \otimes x_3 \otimes 1  \right) \\
		&= x_1x_2 x_1x_3 \otimes Y y_1y_2Y  + Yx_1x_2x_1x_3 \otimes y_1y_2Y  + Yx_1Yx_2y_1x_1y_2x_3 \otimes 1
	\end{align*}
	This can be observed by drawing  $Y, x_1, Y, x_2, y_1, x_1, y_2,  Y, x_3$ as one would in a bi-non-crossing diagram  (i.e. drawing two vertical lines and placing the variables on these lines starting at the top and going down with left variables on the left line and right variables on the right line), drawing all pictures connecting the centre of the bottom of the diagram to any $Y$, taking the product of the elements starting from the top and going down in each of the two isolated components of the diagram, and taking the tensor of the two components with the one isolated on the right of the tensor.

	\begin{align*}
		\begin{tikzpicture}[baseline]
			\draw[thick, dashed] (-1,4.5) -- (-1,-.5) -- (1,-.5) -- (1,4.5);
			\draw[thick] (0,-.5) -- (0,4) -- (1,4);
			\node[left] at (-1, 3.5) {$x_1$};
			\draw[black, fill=black] (-1,3.5) circle (0.05);
			\node[left] at (-1, 2.5) {$x_2$};
			\draw[black, fill=black] (-1,2.5) circle (0.05);
			\node[left] at (-1, 1.5) {$x_1$};
			\draw[black, fill=black] (-1,1.5) circle (0.05);	
			\node[left] at (-1, 0) {$x_3$};
			\draw[black, fill=black] (-1,0) circle (0.05);
			\node[right] at (1, 4) {$Y$};
			\draw[black, fill=black] (1,4) circle (0.05);	
			\node[right] at (1, 3) {$Y$};
			\draw[black, fill=black] (1,3) circle (0.05);
			\node[right] at (1, 2) {$y_1$};
			\draw[black, fill=black] (1,2) circle (0.05);
			\node[right] at (1, 1) {$y_2$};
			\draw[black, fill=black] (1,1) circle (0.05);
			\node[right] at (1, .5) {$Y$};
			\draw[black, fill=black] (1,.5) circle (0.05);
		\end{tikzpicture}
		\qquad  \qquad
		\begin{tikzpicture}[baseline]
			\draw[thick, dashed] (-1,4.5) -- (-1,-.5) -- (1,-.5) -- (1,4.5);
			\draw[thick] (0,-.5) -- (0,3) -- (1,3);
			\node[left] at (-1, 3.5) {$x_1$};
			\draw[black, fill=black] (-1,3.5) circle (0.05);
			\node[left] at (-1, 2.5) {$x_2$};
			\draw[black, fill=black] (-1,2.5) circle (0.05);
			\node[left] at (-1, 1.5) {$x_1$};
			\draw[black, fill=black] (-1,1.5) circle (0.05);	
			\node[left] at (-1, 0) {$x_3$};
			\draw[black, fill=black] (-1,0) circle (0.05);
			\node[right] at (1, 4) {$Y$};
			\draw[black, fill=black] (1,4) circle (0.05);	
			\node[right] at (1, 3) {$Y$};
			\draw[black, fill=black] (1,3) circle (0.05);
			\node[right] at (1, 2) {$y_1$};
			\draw[black, fill=black] (1,2) circle (0.05);
			\node[right] at (1, 1) {$y_2$};
			\draw[black, fill=black] (1,1) circle (0.05);
			\node[right] at (1, .5) {$Y$};
			\draw[black, fill=black] (1,.5) circle (0.05);
		\end{tikzpicture}
		\qquad  \qquad
		\begin{tikzpicture}[baseline]
			\draw[thick, dashed] (-1,4.5) -- (-1,-.5) -- (1,-.5) -- (1,4.5);
			\draw[thick] (0,-.5) -- (0,.5) -- (1,.5);
			\node[left] at (-1, 3.5) {$x_1$};
			\draw[black, fill=black] (-1,3.5) circle (0.05);
			\node[left] at (-1, 2.5) {$x_2$};
			\draw[black, fill=black] (-1,2.5) circle (0.05);
			\node[left] at (-1, 1.5) {$x_1$};
			\draw[black, fill=black] (-1,1.5) circle (0.05);	
			\node[left] at (-1, 0) {$x_3$};
			\draw[black, fill=black] (-1,0) circle (0.05);
			\node[right] at (1, 4) {$Y$};
			\draw[black, fill=black] (1,4) circle (0.05);	
			\node[right] at (1, 3) {$Y$};
			\draw[black, fill=black] (1,3) circle (0.05);
			\node[right] at (1, 2) {$y_1$};
			\draw[black, fill=black] (1,2) circle (0.05);
			\node[right] at (1, 1) {$y_2$};
			\draw[black, fill=black] (1,1) circle (0.05);
			\node[right] at (1, .5) {$Y$};
			\draw[black, fill=black] (1,.5) circle (0.05);
		\end{tikzpicture}
	\end{align*}
\end{exam}

\begin{rem}
\label{rem:right-bi-free-diff-quot-bi-partite}
Clearly $\partial_{r, Y}$ shares many properties with $\partial_{\ell, X}$.  Indeed first note $\partial_{r, Y} : \A \to \A \otimes B_r\langle Y \rangle$ and $\partial_{r, Y}|_{B_r \langle Y \rangle} = \partial_{Y}$.  Thus $\partial_{r,Y}$ is an extension of the free partial derivations to accommodate left variables.  Furthermore, similar arguments show that  $\partial_{r, Y}$ is well-behaved with respect to the commutation of left and right operators.  Finally, in the case that $[x,y] = 0$ for all $x \in B_\ell\langle X \rangle$ and $y \in B_r\langle Y \rangle$ so $(B_\ell \vee B_r)\langle X, Y\rangle$ is naturally isomorphic to the algebra $B_\ell\langle X \rangle \otimes B_r\langle Y \rangle$, we see that
	\[
		\partial_{r, Y} : B_\ell\langle X \rangle \otimes B_r\langle Y \rangle \to  (B_\ell\langle X \rangle \otimes B_r\langle Y \rangle) \otimes B_r\langle Y \rangle = B_\ell\langle X \rangle \otimes (B_r\langle Y \rangle \otimes B_r\langle Y \rangle)
	\]
	and, with respect to this decomposition, $\partial_{r, Y} = id \otimes \partial_Y$.
	Thus, if $B_\ell = B_r = \bC$, if we identify  $B_\ell\langle X \rangle \otimes B_r\langle Y \rangle$ with polynomials in the commuting variables $X$ and $Y$, and if we associate $B_\ell\langle X \rangle \otimes B_r\langle Y \rangle \otimes B_r\langle Y \rangle$ with polynomials in commuting variables $X, Y_1$, and $Y_2$, we see that
	\[
		\partial_{r, Y}(X^nY^m) = X^n\frac{Y_1^m - Y_2^m}{Y_1-Y_2}.
	\] 
	Thus $\partial_{r, Y}$ is really a partial derivative in the right variable.
\end{rem}

\begin{rem}
	It is not difficult to verify that the bi-free difference quotients behave well with respect to composition.  In particular
	\begin{align*}
		(\partial_{\ell, X} \otimes id) \circ \partial_{\ell, X} &= (id \otimes \partial_{\ell, X}) \circ \partial_{\ell, X} \\
		(\partial_{r, Y} \otimes id) \circ \partial_{r, Y} &= (id \otimes \partial_{r, Y}) \circ \partial_{r, Y} \\
		(\partial_{\ell, X} \otimes id) \circ \partial_{r, Y} &= \Theta_{(1), (2,3)} \circ (id \otimes \partial_{r, Y}) \circ \partial_{\ell, X} \\
	\end{align*}
	where $\Theta_{(1), (2,3)} : (B_\ell \vee B_r)\langle X, Y\rangle^{\otimes 3} \to (B_\ell \vee B_r)\langle X, Y\rangle^{\otimes 3}$ is defined by
	\[
		\Theta_{(1), (2,3)}(Z_1 \otimes Z_2 \otimes Z_3) = Z_1 \otimes Z_3 \otimes Z_2.
	\]
\end{rem}

The following shows that the bi-free difference quotients truly behaves like partial derivatives on polynomials.

\begin{prop}
	\label{prop:zero-diff-quot-equals-scalar}
	Let $\A = \bC\ang{\vX, \vY}/Z$ where
	\[
	Z = \mathrm{span}\left(\left\{ [X_i, Y_j] \, \mid \, \forall \, i,j\right\}  \right)
	\]
		and define $\Theta_{(1,2)} : \A \otimes \A \to \A \otimes \A$ by $\Theta_{(1,2)}(Z_1\otimes Z_2) = Z_2\otimes Z_1$.
	Let $\partial_{\ell, X_i}$ denote the left bi-free difference quotient of $X_i$ with respect to $\left(\bC\ang{\hat\vX_i}, \bC\ang{\vY}\right)$ and take $\partial_{r, Y_i}$ similarly on the right.  Then, when $\A$ is equipped with the multiplication $(Z_1 \otimes Z_2) \cdot (W_1 \otimes W_2) = Z_1W_1 \otimes Z_2W_2$, for any $P \in \A$, 
	\[
		\sum^n_{i=1} \partial_{\ell, X_i}(P) (X_i \otimes 1) - (1 \otimes X_i) \partial_{\ell, X_i}(P) - \Theta_{1,2}\left( \sum^m_{j=1}   (\partial_{r, Y_j}(P) (Y_j \otimes 1) - (1 \otimes Y_j)  \partial_{r, Y_j}(P)    \right)= P \otimes 1 - 1 \otimes P.
	\]
	In particular, if $P \in \A$ is such that $\partial_{\ell, X_i}(P) = 0 = \partial_{r, Y_j}(P)$ for all $i$ and $j$, then $P$ is a scalar.
\end{prop}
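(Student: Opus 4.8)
The plan is to reduce everything to a single-variable telescoping identity for the ordinary free difference quotient and to exploit the bi-partite structure of $\A$. Since $\A = \bC\ang{\vX, \vY}/Z$ is the bi-partite algebra, the natural map $\bC\ang{\vX} \otimes \bC\ang{\vY} \to \A$ is an isomorphism, so by linearity it suffices to prove the identity for $P = FG$ with $F \in \bC\ang{\vX}$ and $G \in \bC\ang{\vY}$. For such $P$ I would first record the two factorization formulas
\[
	\partial_{\ell, X_i}(FG) = \partial_{X_i}(F)\,(G \otimes 1) \qand \partial_{r, Y_j}(FG) = (F \otimes 1)\,\partial_{Y_j}(G),
\]
where $\partial_{X_i}$ and $\partial_{Y_j}$ are the ordinary free difference quotients on $\bC\ang{\vX}$ and $\bC\ang{\vY}$ and the products are taken in $\A \otimes \A$ with the multiplication $(Z_1 \otimes Z_2)(W_1 \otimes W_2) = Z_1 W_1 \otimes Z_2 W_2$. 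These follow directly from Definition \ref{defn:left-bi-free-diff-quot}: in the bi-partite setting $G$ lies in the base algebra, commutes with $X_i$, and is carried unchanged into the first tensor leg, which is exactly the identification $\partial_{\ell, X_i} = id \otimes \partial_{X_i}$ of Remark \ref{rem:left-bi-free-diff-quot-bi-partite} (and symmetrically for $\partial_{r, Y_j}$ via Remark \ref{rem:right-bi-free-diff-quot-bi-partite}).

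The computational heart is the purely free telescoping identity: for any $F \in \bC\ang{\vX}$,
\[
	\sum_{i=1}^n \left( \partial_{X_i}(F)(X_i \otimes 1) - (1 \otimes X_i)\partial_{X_i}(F) \right) = F \otimes 1 - 1 \otimes F,
\]
and likewise with $\vY$ and $G$. I would prove this directly on a monomial $F = X_{i_1} \cdots X_{i_k}$: writing $a_q = X_{i_1} \cdots X_{i_q} \otimes X_{i_{q+1}} \cdots X_{i_k}$ for $0 \le q \le k$, multiplying $\partial_{X_i}(F)$ on the right by $X_i \otimes 1$ and summing collapses the first sum to $\sum_{q=1}^k a_q$, while the second sum becomes $\sum_{q=0}^{k-1} a_q$, so the difference telescopes to $a_k - a_0 = F \otimes 1 - 1 \otimes F$.

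Combining these, and using that $(G \otimes 1)$ commutes with both $(X_i \otimes 1)$ and $(1 \otimes X_i)$ in the componentwise product, the factorization formula lets me pull $(G \otimes 1)$ to the right and apply the free identity, giving
\[
	\sum_{i=1}^n \left( \partial_{\ell, X_i}(P)(X_i \otimes 1) - (1 \otimes X_i)\partial_{\ell, X_i}(P) \right) = (F \otimes 1 - 1 \otimes F)(G \otimes 1) = P \otimes 1 - G \otimes F,
\]
and symmetrically the right-hand sum equals $P \otimes 1 - F \otimes G$. Applying $\Theta_{(1,2)}$ to the latter produces $1 \otimes P - G \otimes F$, and subtracting, the cross terms $G \otimes F$ cancel, leaving exactly $P \otimes 1 - 1 \otimes P$. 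This cancellation, engineered precisely by the flip $\Theta_{(1,2)}$, is the step I expect to be the most delicate to present cleanly, since it hinges on careful bookkeeping of which operator sits in which tensor leg and on the commutation of $X$'s with $Y$'s in the bi-partite quotient.

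Finally, for the ``in particular'' clause, if $\partial_{\ell, X_i}(P) = 0 = \partial_{r, Y_j}(P)$ for all $i,j$ then the identity forces $P \otimes 1 = 1 \otimes P$ in $\A \otimes \A$; applying $id \otimes \ep$ for the augmentation $\ep : \A \to \bC$ sending $1 \mapsto 1$ and every generator to $0$ (which descends to $\A$ because it annihilates each commutator $[X_i, Y_j]$) yields $P = \ep(P)\,1$, so $P$ is a scalar.
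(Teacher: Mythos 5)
Your proposal is correct and takes essentially the same route as the paper's proof: both reduce by linearity and the commutation relations to a monomial $P = FG$ with $F \in \bC\ang{\vX}$, $G \in \bC\ang{\vY}$, both rest on the same telescoping computation, and both finish with the identical $\Theta_{(1,2)}$ cancellation of the cross term $G \otimes F$. The only difference is organizational—you isolate the factorizations $\partial_{\ell, X_i}(FG) = \partial_{X_i}(F)(G \otimes 1)$ and $\partial_{r, Y_j}(FG) = (F \otimes 1)\partial_{Y_j}(G)$ together with a one-variable free telescoping lemma, while the paper carries $G$ along and computes the four sums on the monomial directly, leaving the final scalar conclusion (which you handle cleanly via the augmentation) implicit.
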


\begin{proof}
By linearity and commutativity of $X_i$ and $Y_j$ for all $i,j$, it suffices to consider the case that $P = X_{i_1} \cdots X_{i_p} Y_{j_1}\cdots Y_{j_q}$.  Then it is easy via commutativity of $X_i$ and $Y_j$ for all $i,j$ to see that
\begin{align*}
\sum^n_{i=1} \partial_{\ell, X_i}(P) (X_i \otimes 1) & = \sum^p_{k=1} X_{i_1} \cdots X_{i_{k-1}} X_{i_k} Y_{j_1}\cdots Y_{j_q} \otimes X_{i_{k+1}} \cdots X_{i_p}, \\
\sum^n_{i=1} (1 \otimes X_i) \partial_{\ell, X_i}(P) & = \sum^p_{k=1} X_{i_1} \cdots X_{i_{k-1}} Y_{j_1}\cdots Y_{j_q} \otimes  X_{i_k} X_{i_{k+1}} \cdots X_{i_p}, \\
 \sum^m_{j=1}   \partial_{r, Y_j}(P) (Y_j \otimes 1)&= \sum^q_{k=1} X_{i_1} \cdots X_{i_p} Y_{j_1} \cdots Y_{j_{k-1}} Y_{j_k} \otimes Y_{j_{k+1}} \cdots Y_{j_q},\text{ and} \\
  \sum^m_{j=1}  (Y_j \otimes 1) \partial_{r, Y_j}(P) &= \sum^q_{k=1} X_{i_1} \cdots X_{i_p} Y_{j_1} \cdots Y_{j_{k-1}}  \otimes Y_{j_k}Y_{j_{k+1}} \cdots Y_{j_q}.
\end{align*}
Thus
\begin{align*}
\sum^n_{i=1} \partial_{\ell, X_i}(P) (X_i \otimes 1) - (1 \otimes X_i) \partial_{\ell, X_i}(P) &= P \otimes 1 - Y_{j_1}\cdots Y_{j_q} \otimes X_{i_1} \cdots X_{i_p} \\
 \sum^m_{j=1}   \partial_{r, Y_j}(P) (Y_j \otimes 1) - (1 \otimes Y_j)  \partial_{r, Y_j}(P)  &= P \otimes 1 - X_{i_1} \cdots X_{i_p} \otimes Y_{j_1}\cdots Y_{j_q}
\end{align*}
Hence the result follows.
\end{proof}

\begin{rem}
	Unfortunately the conclusion of Proposition \ref{prop:zero-diff-quot-equals-scalar} fails in the non-bi-partite setting.  Indeed consider $\A = \bC \langle X, Y\rangle$ with no relations between $X$ and $Y$.  If $P = XY - YX$ then
	\[
		\partial_{\ell, X}(P) = 0 = \partial_{r, Y}(P).
	\]
	Thus, for non-bi-partite systems, there can be non-scalar operators with zero bi-free difference quotients.
\end{rem}

In order to develop bi-free analogues of conjugate variables, we note a cumulant approach to the free conjugate variables from \cite{NSS2002}.  Under the notation of Definition \ref{defn:free-conjugate-variables}, recall that $Z \zeta$ is a well-defined element of $L_2(\A, \tau)$ for all $\zeta \in L_2(\A, \tau)$ and $Z \in \A$.  Consequently we can define the free cumulants of $\zeta \in L_2(\A, \tau)$ with elements $Z_1, \ldots, Z_k \in \A$ via
\[
	\kappa(Z_1, Z_2, \ldots, Z_k, \zeta) = \sum_{\pi \in NC(k+1)} \tau_\pi(Z_1, \ldots, Z_k, \zeta) \mu_{NC}(\pi, 1_{k+1}),
\]
where $NC(k+1)$ denotes the non-crossing partitions on $k+1$ elements, $1_n$ denotes the full partition, $\mu_{NC}$ denotes the M\"{o}bius function on the set of non-crossing partitions, and
\[
	\tau_\pi(Z_1, \ldots, Z_k, \zeta) = \prod_{V \in \pi} \tau\left( \prod_{q \in V} Z_q\right)
\]
where $Z_{k+1} = \zeta$ and the product is performed in increasing order.  Note via M\"{o}bius inversion
\[
	\tau(Z_1 \cdots Z_k \zeta) = \sum_{\pi \in NC(k+1)}\kappa_\pi(Z_1, Z_2, \ldots, Z_k, \zeta)
\]
where
\[
	\kappa_\pi(Z_1, \ldots, Z_k, \zeta) = \prod_{V \in \pi} \kappa\left((Z_1, \ldots, Z_{k+1})|_{V}\right).
\]

Using this notion, we have the following characterization of the free conjugate variables which trivially follows by the M\"{o}bius inversion formula.

\begin{cor}
	Under the notation and assumptions of Definition \ref{defn:free-conjugate-variables}, an element $\xi \in L_2(\A,\tau)$ is the conjugate variable of $X$ with respect to $B$ if and only if
	\begin{align*}
		\kappa_1(\xi) &= 0 \\
		\kappa_2(X, \xi) &= 1 \\
		\kappa_2(b, \xi) &= 0 \text{ for all }b \in B\\
		\kappa_k(Z_1,\ldots, Z_k, \xi) &= 0 \text{ for all }k > 2 \text{ and } Z_1, \ldots, Z_k \in B \cup \set{X}.
	\end{align*}
\end{cor}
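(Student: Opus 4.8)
The plan is to reduce everything to a combinatorial comparison of two cumulant expansions of $\tau(X_1 \cdots X_k \xi)$. First I would observe that, since monomials span $\A$ and both sides of the defining equation $\tau(Z\xi) = (\tau \otimes \tau)(\partial_X(Z))$ are linear in $Z$, the element $\xi$ is the conjugate variable of $X$ relative to $B$ if and only if, for every $k \geq 0$ and all $X_1, \ldots, X_k \in B \cup \set{X}$,
\[
	\tau(X_1 \cdots X_k \xi) = \sum_{q : X_q = X} \tau(X_1 \cdots X_{q-1}) \tau(X_{q+1} \cdots X_k),
\]
which is precisely the expansion of $(\tau \otimes \tau)(\partial_X(X_1 \cdots X_k))$ recorded in Remark \ref{rem:free-diff-quot-diagram-view}. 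Call this relation $(\ast)$; the corollary then amounts to showing that $(\ast)$ for all monomials is equivalent to the four stated cumulant conditions, and the whole content is Möbius inversion applied on both sides.

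Next I would expand each side in free cumulants. On the right of $(\ast)$, apply the moment-cumulant relation to the two ordinary moments $\tau(X_1 \cdots X_{q-1})$ and $\tau(X_{q+1} \cdots X_k)$, and use the standard factorisation of non-crossing partitions across the arc joining $q$ and $k+1$: the result is
\[
	\sum_{q : X_q = X} \ \sum_{\substack{\pi \in NC(k+1) \\ V_\xi = \set{q, k+1}}} \ \prod_{\substack{V \in \pi \\ V \neq \set{q, k+1}}} \kappa\big((X_1, \ldots, X_k, \xi)|_V\big),
\]
where $V_\xi$ denotes the block of $\pi$ containing $k+1 = \xi$. On the left, the full moment-cumulant relation gives $\tau(X_1 \cdots X_k \xi) = \sum_{\pi \in NC(k+1)} \kappa_\pi(X_1, \ldots, X_k, \xi)$, which I would organise according to the size of $V_\xi$: the cases $\abs{V_\xi} = 1$, $\abs{V_\xi} = 2$, and $\abs{V_\xi} \geq 3$ produce an extra factor $\kappa_1(\xi)$, $\kappa_2(X_q, \xi)$, and $\kappa_j(\ldots, \xi)$ with $j \geq 3$ respectively.

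With this bookkeeping the forward implication is immediate. If $\kappa_1(\xi) = 0$ and every higher cumulant $\kappa_j(\ldots, \xi)$ with $j > 2$ vanishes, then on the left only the pair-block partitions survive; and since $\kappa_2(X, \xi) = 1$ while $\kappa_2(b, \xi) = 0$, the pair-block factor $\kappa_2(X_q, \xi)$ is exactly the indicator of $X_q = X$, so the surviving terms reproduce the right-hand side computed above, giving $(\ast)$. For the converse I would argue by induction on $k$: isolating the full-partition term $\kappa_{k+1}(X_1, \ldots, X_k, \xi)$ coming from $\pi = 1_{k+1}$, the inductive hypothesis (the asserted cumulant identities for all shorter tuples) annihilates every $\pi \neq 1_{k+1}$ except the pair-block partitions, whose total again equals the right side of $(\ast)$; comparing with $(\ast)$ forces $\kappa_{k+1}(X_1, \ldots, X_k, \xi) = 0$. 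The base cases read off directly from $(\ast)$: taking $k = 0$ gives $\kappa_1(\xi) = \tau(\xi) = 0$, and $k = 1$ gives $\kappa_2(X, \xi) = \tau(X\xi) = 1$ and $\kappa_2(b, \xi) = \tau(b\xi) = 0$.

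The only genuinely non-routine point is the non-crossing geometry used in the second step, namely that prescribing $V_\xi = \set{q, k+1}$ forces any completion to a non-crossing partition of $\set{1, \ldots, k+1}$ to split as a non-crossing partition of $\set{1, \ldots, q-1}$ together with one of $\set{q+1, \ldots, k}$, with no block bridging the two. Once this factorisation is granted, the right-hand side of $(\ast)$ coincides with the pair-block part of the left-hand expansion, and the equivalence is exactly the Möbius inversion the statement advertises.
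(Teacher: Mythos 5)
Your proposal is correct and follows essentially the same route as the paper: the paper simply asserts that the corollary ``trivially follows by the M\"{o}bius inversion formula,'' and your argument is exactly that moment--cumulant expansion carried out in detail (the non-crossing factorisation across the pair block $\set{q,k+1}$ for the forward direction, and induction on the order of the cumulant, isolating the $1_{k+1}$ term, for the converse). Nothing in your write-up diverges from the intended proof; it just makes the ``trivial'' step explicit.
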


Using the above cumulant view of conjugate variables, it is not difficult to develop a bi-free analogue.
To begin, let $\fA$ be a unital C$^*$-algebra and $\varphi : \fA \to \bC$ a state on $\fA$.
We will call $(\fA, \varphi)$ a \emph{C$^*$-non-commutative probability space}.
Note we will assume neither that $\varphi$ is tracial nor faithful on $\fA$ as these properties need not occur in most bi-free systems (see \cite{BBGS2017}*{Theorem 6.1} and \cite{R2017} respectively).
Let $L_2(\fA, \varphi)$ denote the GNS Hilbert space induced from the sesquilinear form $\langle Z_1, Z_2 \rangle_{L_2(\fA, \varphi)} = \varphi(Z_2^*Z_1)$.  Thus there is a left action of $\fA$ on $L_2(\fA, \varphi)$ so that $Z \zeta$ is a well-defined element of $L_2(\fA, \varphi)$ for all $\zeta \in L_2(\fA, \varphi)$ and $Z \in \fA$.  We define $\varphi(Z \zeta) = \langle Z \zeta, 1\rangle_{L_2(\fA, \varphi)}$ (where $1 \in \fA$ is viewed as an element of $L_2(\fA, \tau)$).  

Let $(B_\ell, B_r)$ be a pair of unital subalgebras of $\fA$ that specify left and right operators of $\fA$.  If $\zeta \in L_2(\A,\tau)$, $k \in \bN$, $\chi : \{1,\ldots, k, k+1\} \to \slr$ is such that $\chi(k+1) = \ell$, and $Z_1, \ldots, Z_{k} \in \fA$ are such that $Z_p \in B_{\chi(q)}$, we define the \emph{$\chi$-bi-free cumulant of $Z_1, \ldots, Z_k, \xi$} to be
\[
	\kappa_{\chi}(Z_1, \ldots, Z_{k}, \zeta) = \sum_{\pi \in BNC(\chi)} \varphi_{\pi}(Z_1, \ldots, Z_{k}, \zeta) \mu_{BNC}(\pi, 1_\chi)
\]
where $BNC(\chi)$ denotes the bi-non-crossing partitions with respect to $\chi$, $1_\chi$ denotes the full partition, $\mu_{BNC}$ denotes the M\"{o}bius function on the set of bi-non-crossing partitions (see Remark \ref{rem:partial-mobius-inversion}), and
\[
	\varphi_\pi(Z_1, \ldots, Z_k, \zeta) = \prod_{V \in \pi} \varphi\left( \prod_{q \in V} Z_q\right)
\]
where $Z_{k+1} = \zeta$ and the product is performed in increasing order.   Note via M\"{o}bius inversion
\[
	\varphi(Z_1 \cdots Z_k \zeta) = \sum_{\pi \in BNC(\chi)}\kappa_\pi(Z_1, Z_2, \ldots, Z_k, \zeta)
\]
where
\[
	\kappa_\pi(Z_1, \ldots, Z_k, \zeta) = \prod_{V \in \pi} \kappa_{\chi|_V}\left((Z_1, \ldots, Z_{k+1})|_{V}\right).
\]

Note that we have specified that the entry $\zeta$ is inserted into is treated as a left variable.  Alternatively if $\chi' : \{1, \ldots, k+1\} \to \slr$ is such that $\chi'(k+1) = r$ and $\chi'(p) = \chi(p)$ for all $p \neq k+1$ and we define
\[
	\kappa_{\chi'}(Z_1, \ldots, Z_{k}, \xi) = \sum_{\pi \in BNC(\chi')} \tau_{\pi}(Z_1, \ldots, Z_{k}, \xi) \mu_{BNC}(\pi, 1_{\chi'}),
\]
then it is elementary to see that
\[
	\kappa_{\chi}(Z_1, \ldots, Z_{k}, \xi) = \kappa_{\chi'}(Z_1, \ldots, Z_{k}, \xi)
\]
as there is a bijection between $BNC(\chi)$ to $BNC(\chi')$ obtained by changing the side of the last node which preserves lattice structure.  To summarize, as we have seen throughout the theory of bi-free probability, the first operator to act (which is the last one in any list) can be treated as either a left or as a right and the moment/cumulant formulae do not change.

Using the above, we may now define notions of bi-free conjugate variables. 
\begin{defn}
	\label{defn:bi-free-conjugate variables}
	Let $(\fA, \varphi)$ be a C$^*$-non-commutative probability space, and let $X, Y \in \fA$ be self-adjoint operators.
	Let $B_\ell$ and $B_r$ be unital, self-adjoint subalgebras of $\fA$ such that $X$ and $Y$ satisfy no polynomial relations in $B_\ell\vee B_r$ other than possibly commuting with $B_r$ and $B_\ell$ respectively.
	Denote $\A_X =(B_\ell \vee B_r)\langle X \rangle$ and $\A_Y =(B_\ell \vee B_r)\langle Y \rangle$.
	An element $\xi \in L_2(\A_X, \varphi)$ is said to be a \emph{left bi-free conjugate variable of $X$ with respect to $(B_\ell, B_r)$} and an element $\eta \in L_2(\A_Y, \varphi)$ is said to be a \emph{right bi-free conjugate variable of $Y$ with respect to $(B_\ell, B_r)$} if
	\begin{align*}
		&\kappa_{\ell}(\xi) = 0 & & \kappa_\ell(\eta) = 0\\
		&\kappa_{{\ell, \ell}}(X, \xi) = 1 & & \kappa_{{r, \ell}}(Y, \eta) = 1 \\
		&\kappa_{{\ell, \ell}}(x, \xi)  =  0 \text{ for all }x \in B_\ell & &\kappa_{{\ell, \ell}}(x, \eta)  =  0 \text{ for all }x \in B_\ell  \\
		&\kappa_{{r, \ell}}(y, \xi)  =  0 \text{ for all }y \in B_r& &\kappa_{{r, \ell}}(y, \eta)  =  0 \text{ for all }y \in B_r  \\
		&\kappa_\chi(Z_1, \ldots, Z_{k}, \xi)  = 0  & &\kappa_\chi(Z'_1, \ldots, Z'_{k}, \eta)  = 0 
	\end{align*}
	for all $k \geq 2$, $\chi : \set{1, \ldots, k+1} \to \slr$, and $Z_1, \ldots, Z_k \in \A_X$ and $Z'_1, \ldots, Z'_k \in \A_Y$ where $Z_p \in B_{\ell} \langle X \rangle$ and $Z'_p \in B_\ell$ when $\chi(p) = \ell$, and $Z_p \in B_r$ and $Z'_p \in B_r\langle Y \rangle$ when $\chi(p) = r$.
\end{defn}

\begin{rem}
	By the comments preceding Definition \ref{defn:bi-free-conjugate variables}, it does not matter whether we take $\chi(k+1)$ to be $\ell$ or $r$ as both cumulants are the same, although we may prefer to treat $\xi$ as a left variable and $\eta$ as a right variable.
	There is some subtlety here in that $\xi$ may be a mixture of left and right variables and so should not really be thought of as being either left or right (see, for example, the semicircular case in Example~\ref{exam:conju-of-semis}).
\end{rem}

\begin{rem}
	Due to the moment-cumulant formulae, the values of the cumulants specified in Definition \ref{defn:bi-free-conjugate variables} automatically specify the values of
	\[
		\varphi(Z \xi) = \langle \xi, Z^* \rangle_{L_2(\A_X, \varphi)} \qqand \varphi(Z' \eta) = \langle \eta, Z'^*\rangle_{L_2(\A_Y, \varphi)}
	\]
	for all $Z \in \A_X$ and $Z' \in \A_Y$.  Therefore, by density of an algebra in its $L_2$-space, there is at most one left bi-free conjugate variable for $X$ and at most one right bi-free conjugate variable for $Y$.  As such we will use
	\[
		\J_\ell(X : (B_\ell, B_r)) \qqand \J_r(Y : (B_\ell, B_r))
	\]
	to denote the left bi-free conjugate variable for $X$ with respect to $(B_\ell, B_r)$ and the right bi-free conjugate variable for $Y$ with respect to $(B_\ell, B_r)$, respectively, should they exist.
\end{rem}

\begin{rem}
	\label{rem:bi-conjugate-variable-via-moment-formula}
	It is not difficult using the moment-cumulant formulae to see that $\J_\ell(X : (\B_\ell, B_r))$ exists if and only if there exists an element $\xi \in L_2(\A, \varphi)$ such that
	\[
		\varphi(Z \xi) = (\varphi \otimes \varphi)(\partial_{\ell, X}(Z))
	\]
	for all $Z \in (B_\ell \vee B_r)\ang{X}$, in which case $\J_\ell(X : (\B_\ell, B_r)) = \xi$.  A similar result holds for right bi-free conjugate variables.
	In particular, both views of the free conjugate variables have a consistent interpretation for our bi-free conjugate variables.
\end{rem}

\begin{exam}
	\label{exam:conju-of-semis}
	Let $(S, T)$ be a self-adjoint bi-free central limit distribution with respect to a state $\varphi$ such that $\varphi(S^2) = \varphi(T^2) = 1$ and $\varphi(ST) = \varphi(TS) = c \in (-1,1)$ (see \cite{V2014}*{Section 7}). Then
	\[
		\J_\ell(S : (\bC, \bC\langle T\rangle )) = \frac{1}{1-c^2}(S - cT).
	\]
	To see this via cumulants, let $\xi = \frac{1}{1-c^2}(S - cT)$.  Clearly $\varphi(\xi) = 0$.  Furthermore,
	\[
		\kappa_{\ell, \ell}(S, \xi) = \varphi(S\xi) = \frac{1}{1-c^2}\left( \varphi(S^2) - c \varphi(TS)\right) = \frac{1}{1-c^2}(1 - c^2) = 1
	\]
	and
	\[
		\kappa_{r, \ell}(T, \xi) = \varphi(T \xi) = \frac{1}{1-c^2} \left( \varphi(ST) - c \varphi(T^2) \right) = \frac{1}{1-c^2}(c - c) = 0.
	\]
	Finally, all higher order cumulants involving $\xi$ vanish as bi-free cumulants of order at least three with entries in $S$ and $T$ vanish (and thus so to do those involving $S^n$, $T^m$, and $\xi$ by the $(\ell, r)$-cumulant expansion formula from \cite{CNS2015-2}*{Theorem 9.1.5}) and due to the fact that it does not matter whether the last entry in a cumulant expression is treated as a left or as a right operator.
	
	Alternatively, we can derive our expression for $\J_\ell(S : (\bC, \bC\langle T\rangle ))$ using moments.  To see this, it suffices by linearity and commutativity to show for all $n,m \in \bN \cup \{0\}$ that
	\[
	\varphi(S^n T^m \J_\ell(S : (\bC, \bC\langle T\rangle ))) = (\varphi \otimes \varphi)(\partial_{\ell, S}(S^n T^m)).
	\]
	Using the moment-cumulant formula together with the knowledge of the bi-free cumulants for bi-free central limit distributions, we see that
	\begin{align*}
	\varphi(S^n T^m S) &= \sum^{n-1}_{i=0} \varphi(S^i T^m) \varphi(S^{n-i-1}) + \sum^{m-1}_{j=0} c \varphi(S^n T^{j}) \varphi(T^{m-j-1}) \text{ and}\\
	\varphi(S^n T^m T) &= \sum^{n-1}_{i=0} c\varphi(S^i T^m) \varphi(S^{n-i-1}) + \sum^{m-1}_{j=0} \varphi(S^n T^{j}) \varphi(T^{m-j-1}).
	\end{align*}
Hence it follows that
\[
	\varphi\left( S^n T^m \left( \frac{1}{1-c^2}(S - cT)  \right)\right) =  \sum^{n-1}_{i=0} \varphi(S^i T^m) \varphi(S^{n-i-1}) = (\varphi \otimes \varphi)(\partial_{\ell, S}(S^n T^m)),
\]
as desired.

	A similar argument shows that $$\J_r(T:(\bC\ang{S}, \bC)) = \frac{1}{1-c^2}(T-cS).$$
\end{exam}

\begin{exam}
	\label{exam:bi-free-conjugate-independence}
	Under the notation and assumptions of Definition \ref{defn:bi-free-conjugate variables}, suppose that $B_\ell\langle X \rangle$ and $B_r \langle Y \rangle$ are classically independent with respect to $\varphi$; that is, $B_\ell\langle X \rangle$ and $B_r \langle Y \rangle$ commute and $\varphi(xy) = \varphi(x) \varphi(y)$ for all $x \in B_\ell \langle X\rangle$ and $y \in B_r\langle Y \rangle$.
	Then $L_2(\A, \varphi) = L_2(B_\ell \langle X \rangle, \varphi) \otimes  L_2(B_r\langle Y \rangle, \varphi)$, and it is not difficult to see based on Remark \ref{rem:bi-conjugate-variable-via-moment-formula} that $\J_\ell(X : (B_\ell, B_r\langle Y \rangle))$ exists if and only if $\J(X : B_\ell)$ exists in which case
	\[
		\J_\ell(X : (B_\ell, B_r\langle Y \rangle)) = \J(X : B_\ell) \otimes 1.
	\]
	Similarly $\J_r(Y : (B_\ell\langle X \rangle, B_r))$ exists if and only if $\J(Y : B_r)$ exists in which case
	\[
		\J_r(Y : (B_\ell\langle X \rangle, B_r)) = 1 \otimes \J(Y : B_r).
	\]
\end{exam}

As a generalization of the above, the bi-free conjugate variables for a bi-partite system can be described via their joint distribution.

\begin{prop}
	\label{prop:conjugate-variable-integral-description-bi-partite}
	Let $(X, Y)$ be a pair of commuting self-adjoint operators in a C$^*$-non-commutative probability space.  Let $\mu_{X,Y}$ denote the joint distribution of $(X, Y)$ and suppose $\mu_{X,Y}$ is absolutely continuous with respect to the two-dimensional Lebesgue measure with density $f(x,y) \in L_3(\bR^2, d\lambda_2)$.  Thus $L_2(\alg(X, Y), \varphi) = L_2(\bR^2, f(x,y) \, d\lambda_2)$ and the distributions of $X$ and $Y$ are absolutely continuous with respect to the one-dimensional Lebesgue measure with distributions
	\[
		f_X(x) = \int_\bR f(x,y) \, dy \qqand f_Y(y) = \int_\bR f(x,y) \, dx
	\]
	respectively.  Let
	\begin{align*}
		D &= \mathrm{supp}(\mu_{X, Y}), \\
		D_X &= \mathrm{supp}(\mu_{ Y}), \text{ and} \\
		D_Y &= \mathrm{supp}(\mu_{Y}).
	\end{align*}

	For $\epsilon > 0$ let
	\[
		g_\epsilon(x) = \int_\bR \frac{x-s}{(x-s)^2 + \epsilon^2} f_X(s) \, ds \qqand G_\epsilon(x,y) = \int_\bR \frac{x-s}{(x-s)^2 + \epsilon^2} f(s,y)  \, ds.
	\]
	Suppose $h_X, \xi \in L_2(\bR^2, f(x,y) \, d\lambda_2)$ are such that
	\[
		h_X(x,y) = \lim_{\epsilon \to 0+} g_\epsilon(x) \qqand \xi(x,y) = \lim_{\epsilon \to 0+} \frac{f_X(x)G_\epsilon (x,y)}{f(x,y)} 1_{\{(x,y) \, \mid f(x,y) \neq 0\}}
	\]
	with the limits being in $L_2(\bR^2, f(x,y) \, d\lambda_2)$ (in particular, $h_X$ is, up to a factor of $\pi$, the Hilbert transform of $f_X$).  If $D = D_X \times D_Y$ (up to sets of $\lambda_2$-measure zero) then
	\[
		\J_\ell(X : (\bC, \bC\langle Y\rangle)) = h_X(x,y) + \xi(x,y).
	\]
	The analogous result holds for $\J_r(Y : (\bC\langle X \rangle, \bC))$.
\end{prop}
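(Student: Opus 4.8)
The plan is to verify the moment characterisation of Remark~\ref{rem:bi-conjugate-variable-via-moment-formula} directly. Since $h_X + \xi \in L_2(\bR^2, f\,d\lambda_2) = L_2(\alg(X,Y),\varphi)$ by hypothesis, and since the commuting monomials $X^nY^m$ span $\alg(X,Y)$, it suffices to establish
\[
	\varphi\paren{X^nY^m(h_X + \xi)} = (\varphi\otimes\varphi)\paren{\partial_{\ell, X}(X^nY^m)}
\]
for all $n,m \geq 0$; uniqueness of the conjugate variable then identifies $h_X+\xi$ with $\J_\ell(X:(\bC,\bC\ang{Y}))$. For the right-hand side I would invoke the bi-partite description $\partial_{\ell, X} = \mathrm{id}\otimes\partial_X$ of Remark~\ref{rem:left-bi-free-diff-quot-bi-partite}, which gives $\partial_{\ell, X}(X^nY^m) = \sum_{k=0}^{n-1} X^kY^m \otimes X^{n-1-k}$ and hence
\[
	(\varphi\otimes\varphi)\paren{\partial_{\ell, X}(X^nY^m)} = \sum_{k=0}^{n-1}\varphi(X^kY^m)\varphi(X^{n-1-k}) = \int\!\!\int\!\!\int \frac{x^n - s^n}{x - s}\, y^m f(x,y) f_X(s)\,ds\,dx\,dy,
\]
using $\frac{x^n-s^n}{x-s} = \sum_{k=0}^{n-1}x^ks^{n-1-k}$ and the fact that $X,Y$ are bounded, so $\mu_{X,Y}$ is compactly supported and all moments are finite.

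For the left-hand side I would use the $L_2(\bR^2, f\,d\lambda_2)$-convergences defining $h_X$ and $\xi$. As the support is compact, $x^ny^m$ is bounded there and so lies in $L_2(\bR^2, f\,d\lambda_2)$; pairing against it is therefore $L_2$-continuous, and
\[
	\varphi\paren{X^nY^m(h_X+\xi)} = \lim_{\epsilon\to0+}\paren{\int x^ny^m g_\epsilon(x) f(x,y)\,d\lambda_2 + \int x^ny^m f_X(x) G_\epsilon(x,y)\, 1_{\{f\neq0\}}\,d\lambda_2},
\]
where in the second term the factor $f$ in $\tfrac{f_XG_\epsilon}{f}\cdot f$ has cancelled on $\{f\neq0\}$. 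This is exactly where the hypothesis $D = D_X\times D_Y$ enters: since $f_X$ vanishes off $D_X$ and $G_\epsilon(x,y)$ vanishes whenever $y\notin D_Y$ (as then $f(\cdot,y)\equiv0$ a.e.\ and $f\geq0$), the function $f_X(x)G_\epsilon(x,y)$ is supported on $D_X\times D_Y$, on which $f>0$ almost everywhere by the product-support assumption. Thus $1_{\{f\neq0\}}$ equals $1$ almost everywhere against this integrand and may be dropped, recovering the unregularised Poisson integral $I_2(\epsilon) = \int x^ny^m f_X(x)G_\epsilon(x,y)\,d\lambda_2$.

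It then remains to recombine the two Poisson-type integrals. Writing each out as a triple integral against $\frac{x-s}{(x-s)^2+\epsilon^2}$ and relabelling $x\leftrightarrow s$ in $I_2(\epsilon)$ (which introduces a sign, the kernel being odd) produces the single antisymmetrised integral
\[
	\int\!\!\int\!\!\int (x^n - s^n)\,\frac{x-s}{(x-s)^2+\epsilon^2}\, y^m f(x,y) f_X(s)\,ds\,dx\,dy.
\]
Since $(x^n-s^n)\frac{x-s}{(x-s)^2+\epsilon^2} = \frac{x^n-s^n}{x-s}\cdot\frac{(x-s)^2}{(x-s)^2+\epsilon^2}$ with $\frac{(x-s)^2}{(x-s)^2+\epsilon^2}\to 1$ and bounded by $1$, and since $\frac{x^n-s^n}{x-s}y^m f(x,y)f_X(s)$ is integrable (compact support, $f,f_X\in L_1$), dominated convergence lets $\epsilon\to0+$ and yields precisely the right-hand side computed in the first paragraph. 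This proves the formula for $\J_\ell$, and the statement for $\J_r(Y:(\bC\ang{X},\bC))$ follows from the symmetric argument using Remark~\ref{rem:right-bi-free-diff-quot-bi-partite}.

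The main obstacle is the bookkeeping around the set $\{f=0\}$: the candidate $\xi$ is only defined $f\,d\lambda_2$-a.e., while the regularisation $f_XG_\epsilon$ need not respect $\{f=0\}$, so without the product-support hypothesis the indicator $1_{\{f\neq0\}}$ cannot be removed and the two halves of the kernel fail to recombine into the antisymmetric form. The remaining analytic points—existence of the stated $L_2$-limits and membership of $h_X,\xi$ in $L_2$—are where the $L_3$-hypothesis on $f$ is used, via $L_p$-boundedness of the Hilbert transform as in the one-variable theory of \cite{V1998-2}; these I would either take as the granted hypotheses of the statement or import from that setting, reducing the present argument to the identity manipulations above.
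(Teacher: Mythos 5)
Your proposal is, in substance, the paper's own proof run in reverse: the paper starts from $(\varphi\otimes\varphi)(\partial_{\ell,X}(X^nY^m))$, telescopes $\frac{x^n-s^n}{x-s}$, regularizes by the kernel $\frac{(x-s)(x^n-s^n)}{(x-s)^2+\epsilon^2}$ with dominated convergence on the compact support, and then splits the integral into the two pieces it identifies with $\varphi(X^nY^m h_X)$ and $\varphi(X^nY^m\xi)$, invoking $D=D_X\times D_Y$ at exactly the step where you invoke it; your recombination of the two Poisson-type integrals via the odd kernel is the same computation read backwards, and your treatment of the $L_3$/Hilbert-transform analysis as granted matches the paper, which only spends a short paragraph on it.

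One caveat, on the single point where you are more explicit than the paper. Your justification for deleting $1_{\{f\neq 0\}}$ asserts that $f>0$ almost everywhere on $D_X\times D_Y$ ``by the product-support assumption,'' and this implication is false: equality of supports does not prevent $f$ from vanishing on a positive-measure subset of $D$. For instance, if $C\subset[0,1]$ is a fat Cantor set and $f$ is proportional to $1_{[0,1]^2\setminus(C\times C)}$, then $D=D_X\times D_Y=[0,1]^2$ while $f=0$ on $C\times C$, a set of positive measure on which $f_X>0$ and $G_\epsilon$ need not tend to $0$. What your argument (and the paper's) actually requires is that
\[
\lim_{\epsilon\to 0+}\iint_{D\cap\{f=0\}} x^n y^m f_X(x)G_\epsilon(x,y)\,d\lambda_2 = 0 ,
\]
since the $L_2(f\,d\lambda_2)$-convergence hypothesis on $\xi$ only controls the integral over $\{f\neq 0\}$. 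The paper makes this identical move silently, when it multiplies and divides by $f$ inside $\iint_D$ in the treatment of the $-s^n$ piece, so your write-up is at the same level of rigor as the published proof; but the reason you give for the step is not valid as stated, and a fully airtight argument would need the stronger hypothesis that $D\cap\{f=0\}$ is $\lambda_2$-null (equivalently, $\{f>0\}=D_X\times D_Y$ up to null sets).
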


\begin{proof}
	Note $D$, $D_X$, and $D_Y$ are compact sets.  By the theory of the Hilbert transform (see \cite{SW1971}) $g_\epsilon$ converges in $L_3(\bR, d\lambda)$ to $\pi$ times the Hilbert transform of $f_X$.  Since $h_X$ and $f_X$ are in $L_3(\bR, d\lambda)$, we infer that $h_X \in L_2(\bR, f_X(x) \, d\lambda(x))$ and $g_\epsilon$ converges to $h_X$ in  $L_2(\bR, f_X(x) \, d\lambda(x))$. 

	Let $\A = \alg(X, Y) = \text{span}\{X^nY^m \, \mid \, n,m \in \bN \cup \{0\}\}$.  Thus a vector $\eta \in L_2(\A, \varphi)$ is $\J_\ell(X : (\bC, \bC\langle Y\rangle))$ if and only if
	\[
		\varphi(X^nY^m \eta) = (\varphi \otimes \varphi)(\partial_{\ell, X}(X^n Y^m))
	\]
	for all $n,m \in \bN \cup \{0\}$.  

	Notice for all $n,m \in \bN \cup \{0\}$ that
	\begin{align*}
		(\varphi \otimes \varphi)(\partial_{\ell, X}(X^n Y^m)) &= \sum^{n-1}_{k=0} \varphi(X^k) \varphi(X^{n-k-1}Y^m) \\
		&= \sum^{n-1}_{k=0}  \iint_D s^k f(s,t) \, ds \, dt \iint_D x^{n-k-1} y^m f(x,y) \, dx \, dy \\ 
		&= \sum^{n-1}_{k=0}  \iint_D  \iint_D s^k  x^{n-k-1} y^m f(s,t)f(x,y) \, ds \, dt  \, dx \, dy \\ 
		&= \iint_D  \iint_D \frac{x^n-s^n}{x-s} y^mf(s,t) f(x,y) \, ds \, dt  \, dx \, dy \\ 
		&= \lim_{\epsilon \to 0+}\iint_D  \iint_D \frac{(x-s)(x^n-s^n)}{(x-s)^2 + \epsilon^2} y^mf(s,t) f(x,y) \, ds \, dt  \, dx \, dy
	\end{align*}
	as $\mu_{X,Y}$ is a compactly supported probability measure.  Furthermore, notice
	\begin{align*}
		&\lim_{\epsilon \to 0+}\iint_D  \iint_D \frac{(x-s)x^n}{(x-s)^2 + \epsilon^2} y^mf(s,t) f(x,y) \, ds \, dt  \, dx \, dy \\
		&=\lim_{\epsilon \to 0+}\iint_D  \left(\iint_D \frac{x-s}{(x-s)^2 + \epsilon^2} f(s,t) \, dt \, ds \right) x^n y^m f(x,y)   \, dx \, dy \\
		&=\lim_{\epsilon \to 0+}\iint_D  \left(\int_{D_X} \frac{x-s}{(x-s)^2 + \epsilon^2} f_X(s) \, ds \right) x^n y^m f(x,y)  \, dx \, dy \\
		&=\iint_D  h_X(x,y) x^n y^m f(x,y)  \, dx \, dy 
	\end{align*}
	and
	\begin{align*}
		&\lim_{\epsilon \to 0+}\iint_D  \iint_D \frac{(x-s)(-s^n)}{(x-s)^2 + \epsilon^2} y^mf(s,t) f(x,y) \, ds \, dt  \, dx \, dy \\
		&=\lim_{\epsilon \to 0+}\iint_D  \iint_D \frac{(s-x)s^n}{(s-x)^2 + \epsilon^2} y^mf(s,t) f(x,y) \, ds \, dt  \, dx \, dy\\
		&=\lim_{\epsilon \to 0+}\iint_D s^n f(s,t) \left( \iint_D \frac{(s-x)}{(s-x)^2 + \epsilon^2} y^m f(x,y)  \, dx \, dy\right) \, dt \, ds \\
		&= \lim_{\epsilon \to 0+} \int_{D_X} \int_{D_Y} s^n y^m f_X(s) G_\epsilon(s,y)  \, dy \, ds \\
		&= \lim_{\epsilon \to 0+} \iint_{D} s^n y^m f_X(s) G_\epsilon(s,y)  \, dy \, ds \\
		&= \lim_{\epsilon \to 0+} \iint_D x^n y^m \frac{f_X(x) G_\epsilon(x,y)}{f(x,y)} f(x,y)  \, dy \, dx \\
		&= \iint_D x^ny^m \xi(x,y) f(x,y) \, dy \, dx.
	\end{align*}
	Therefore
	\[
		(\varphi \otimes \varphi)(\partial_{\ell, X}(X^n Y^m)) = \varphi(X^nY^m h_X) + \varphi(X^nY^m \xi) = \varphi(X^nY^m(h_X + \xi))
	\]
	as desired.
\end{proof}

\begin{rem}
	\label{rem:formula-for-conjugate-variables-in-the-bi-partite-situation}
	Note that $h_X$ from Proposition \ref{prop:conjugate-variable-integral-description-bi-partite} is equal to $\frac{1}{2} \J(X : \bC)$ by \cite{V1998-2}*{Proposition 3.5}.  Furthermore, heuristically, if
	\[
		H_X(x,y) = \int_\bR \frac{f(s,y)}{x-s} \, dx
	\]
	(that is, $H_X$ is, up to a factor of $\pi$, the pointwise Hilbert transform of $x \mapsto f(x,y)$), then
	\[
		\J_\ell(X : (\bC, \bC\langle Y\rangle)) = h_X(x) + \frac{f_X(x) H_X(x,y)}{f(x,y)} 1_{\{(x,y) \, \mid f(x,y) \neq 0\}}.
	\]
	Thus $\J_\ell(X : (\bC, \bC\langle Y\rangle))$ looks like half of $\J(X : \bC)$ plus a mixing term.  

	In the case that $(X, Y)$ are classically independent, we see that $f(x,y) = f_X(x)f_Y(y)$ so $H_X(x,y) = h_X(x) f_Y(y)$ and
	\[
		\frac{f_X(x) H_X(x,y)}{f(x,y)} 1_{\{(x,y) \, \mid f(x,y) \neq 0\}} = h_X(x).
	\]
	Hence $\J_\ell(X : (\bC, \bC\langle Y\rangle)) =  \J(X : \bC)$ which is consistent with Example \ref{exam:bi-free-conjugate-independence}.
\end{rem}

\begin{rem}
	\label{rem:conjugate-variables-to-free-conjugate}
	Based on Proposition \ref{prop:conjugate-variable-integral-description-bi-partite}, it is not surprising that the existence of the bi-free conjugate variables implies the existence of the free conjugate variables.  Indeed, under the assumptions and notation of Definition \ref{defn:bi-free-conjugate variables} suppose $\xi = \J_\ell(X : (B_\ell, B_r))$ exists.  If $P : L_2(\A, \varphi) \to L_2(B_\ell\langle X\rangle, \varphi)$ is the orthogonal projection onto $L_2(B_\ell\langle X\rangle, \varphi)$, then it is elementary to see that $P(\xi) = \J(X : B_\ell)$.  A similar result holds for the right bi-free conjugate variables.
\end{rem}

\begin{rem}
	In relation to Proposition \ref{prop:conjugate-variable-integral-description-bi-partite}, it is natural to ask whether the converse holds; that is, if the conjugate variables exist for a bi-partite pair, does the formula for the conjugate variables from Proposition \ref{prop:conjugate-variable-integral-description-bi-partite} hold, and must it be the case that $D = D_X \times D_Y$?
	Note this latter condition can be interpreted as that there is not too much degeneracy between the variables (i.e. if the support of the distribution is not a product, the two variables are more closely related).

To analyze this question, first note that if the conjugate variables exist then by Remarks \ref{rem:formula-for-conjugate-variables-in-the-bi-partite-situation} and \ref{rem:conjugate-variables-to-free-conjugate} we must have that $h_X$ exists.   By performing the same computations  in the proof of Proposition \ref{prop:conjugate-variable-integral-description-bi-partite}, we find that
	$$\varphi\paren{X^nY^mJ_\ell(X:(\bC,\bC\ang{Y}))} = \varphi\paren{X^nY^mh_X(x,y)} + \lim_{\epsilon \to 0+} \int_{D_X}\int_{D_Y} x^ny^mf_X(x)G_\epsilon(x,y)\,dy\,dx.$$
	Thus $f_X(x)G_\epsilon(x, y)$ converges weakly to $f(x,y)\paren{J_\ell(X:(\bC, \bC\ang{Y}))(x, y) - h_X(x)}$ in  $L_2(\bR^2, \lambda_2)$.  Hence for almost every $(x, y) \notin D$, either $f_X(x) = 0$ or $\lim_{ \epsilon \to 0+} G_\epsilon(x, y) = 0$.  

	In an attempt to show that $D = D_X \times D_Y$, note clearly $D \subseteq D_X \times D_Y$.  Suppose we can find a $y_0 \in \bR$ so that $f_Y(y_0) > 0$, and $f_X(x) > 0$ but $f(x, y_0) = 0$ for all $x$ in a set $S$ of positive Lebesgue measure.
	Note the function $$z \mapsto \int_\bR \frac{f(x, y_0)}{z-x}\,dx$$ is holomorphic on the upper half plane and satisfies
	$$\lim_{\epsilon\to 0+} \int_\bR\frac{f(x, y_0)}{x_0+i\epsilon - x}\,dx = -i\pi f(x_0, y_0) + \lim_{\epsilon\to 0+}G_\epsilon(x_0, y_0)$$
	for all $x_0 \in S$.  Hence this holomorphic function tends to zero as $z$ tends non-tangentially to any $x_0 \in S$.  Therefore, if it was the case that $S$ was second category and dense in an open interval then the Lusin-Privalov Theorem \cite{LP1925} would imply the holomorphic function is zero in the upper half plane and thus $f(x, y_0)$ would be identically zero.  Repeating on the right would then yield $D = D_X \times D_Y$.  From this we can conclude that any bi-partite pairs that have conjugate variables outside of Proposition \ref{prop:conjugate-variable-integral-description-bi-partite} are pathological.
\end{rem}

Remark \ref{rem:conjugate-variables-to-free-conjugate} demonstrates a connection between the free and bi-free conjugate variables.  In the tracially bi-partite setting (where all left operators commute with all right operators and the restriction of the state to both the left algebra and the right algebra is tracial), this connection runs deeper.

\begin{lem}
	\label{lem:converting-rights-to-lefts}
	Under the assumptions and notation of Definition \ref{defn:bi-free-conjugate variables} suppose that $(\bC\ang{\vX}, \bC\ang{\vY})$ is tracially bi-partite, with $\vX$ an $n$-tuple, and $\vY$ $m$-tuple of self-adjoint operators.
	Suppose further that $\vX$ and $\vY$ satisfy no relations other than $[X_i, Y_j] = 0$ for each $i$ and $j$.

	Assume that there exists another C$^*$-non-commutative probability space $(\A_0, \tau_0)$ and tuples of self-adjoint operators $\vX', \vY'$ such that $\tau_0$ is tracial on $\A_0$ and
	\[
		\varphi(X_{i_1} \cdots X_{i_p} Y_{j_1} \cdots Y_{j_q}) = \tau(X'_{i_1} \cdots X'_{i_p} Y'_{j_q} \cdots Y'_{j_1})
	\]
	for all $p,q \in \bN \cup \{0\}$ and $i_1, \ldots, i_p \in \{1,\ldots, n\}$ and $j_1, \ldots, j_q \in \{1, \ldots, m\}$.
	Then there is an isometric map $\Psi : L_2(\A, \varphi) \to L_2(\A_0, \tau_0)$ such that
	\[
		\Psi(X_{i_1} \cdots X_{i_p} Y_{j_1} \cdots Y_{j_q}) = X'_{i_1} \cdots X'_{i_p} Y'_{j_q} \cdots Y'_{j_1}
	\]
	for all $p,q \in \bN \cup \{0\}$, and for all $i_1, \ldots, i_p \in \{1,\ldots, n\}$ and $j_1, \ldots, j_q \in \{1, \ldots, m\}$.  Furthermore, if $P : L_2(\A_0, \tau_0) \to  \Psi(L_2(\A, \varphi))$ is the orthogonal projection onto $\Psi(L_2(\A, \varphi))$, then
		\[\J_\ell\paren{X_i : \paren{\bC\ang{\hat\vX_i}, \bC\ang{\vY}}} = \Psi^{-1}\paren{P\paren{\J\paren{X'_i : \bC\ang{\hat\vX'_i, \vY'}}}}\]
provided $\J\paren{X'_i : \bC\ang{\hat\vX'_i, \vY'}}$ exists.  A similar result holds on the right.
\end{lem}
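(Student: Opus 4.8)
The plan is to construct the isometry $\Psi$ first and then transport the free conjugate variable through it. Since $(\bC\ang{\vX}, \bC\ang{\vY})$ is bi-partite and $\vX, \vY$ satisfy no relations beyond $[X_i, Y_j] = 0$, every element of $\A$ is uniquely a combination of monomials $X_{i_1}\cdots X_{i_p} Y_{j_1}\cdots Y_{j_q}$ with all left letters preceding all right letters, and these form a basis; so $\Psi$ is well defined on $\A$ by the stated formula and extends linearly. To see it is isometric I would take two such monomials $Z_1, Z_2$ and compare $\ang{Z_1, Z_2}_{L_2(\A,\varphi)} = \varphi(Z_2^* Z_1)$ with $\ang{\Psi Z_1, \Psi Z_2}_{L_2(\A_0,\tau_0)} = \tau_0\paren{(\Psi Z_2)^*(\Psi Z_1)}$. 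Using self-adjointness of the letters and $[X_i, Y_j] = 0$ to collect $Z_2^*Z_1$ into left-then-right form, the moment hypothesis rewrites $\varphi(Z_2^*Z_1)$ as a $\tau_0$-moment of $X'$'s followed by a reversed block of $Y'$'s; on the other side, traciality of $\tau_0$ is exactly what cycles the block of $Y'$'s produced by $(\Psi Z_2)^*$ to the far right so that the two expressions coincide. Hence $\Psi$ preserves the sesquilinear form and extends to an isometry onto the closed subspace $\Psi(L_2(\A,\varphi)) \subseteq L_2(\A_0,\tau_0)$.

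With $\Psi$ in hand, I would set $\xi' = \J\paren{X'_i : \bC\ang{\hat\vX'_i, \vY'}}$ (assumed to exist) and $\tilde\xi = \Psi^{-1}(P(\xi'))$, which makes sense since $P(\xi')$ lies in the range of $\Psi$. Rather than presuppose that $\J_\ell\paren{X_i : \paren{\bC\ang{\hat\vX_i}, \bC\ang{\vY}}}$ exists, I would verify directly that $\tilde\xi$ satisfies the moment characterization of Remark \ref{rem:bi-conjugate-variable-via-moment-formula}, namely $\varphi(W\tilde\xi) = (\varphi\otimes\varphi)(\partial_{\ell, X_i}(W))$ for all $W \in \A$; uniqueness of the conjugate variable then yields both existence and the asserted formula. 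By linearity it suffices to check this for a monomial $W = X_{i_1}\cdots X_{i_p}Y_{j_1}\cdots Y_{j_q}$.

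For the left-hand side I would write $\varphi(W\tilde\xi) = \ang{\tilde\xi, W^*}_{L_2(\A,\varphi)}$, then use the isometry together with $\Psi(\tilde\xi) = P(\xi')$ to pass to $\ang{P(\xi'), \Psi(W^*)}_{L_2(\A_0,\tau_0)} = \ang{\xi', \Psi(W^*)}$, the last equality because $\Psi(W^*)$ lies in the range of the self-adjoint projection $P$. Unwinding this as $\tau_0(\Psi(W^*)^*\xi')$ and invoking the defining relation for $\xi'$ from Definition \ref{defn:free-conjugate-variables} turns it into $(\tau_0\otimes\tau_0)(\partial_{X'_i}(\Psi(W^*)^*))$. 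Since $\partial_{X'_i}$ differentiates only the $X'_i$ letters and treats the $Y'$ and $\hat\vX'_i$ letters as base elements, this is an explicit sum over the occurrences of $X'_i$; applying traciality of $\tau_0$ to each first tensor leg and then the moment hypothesis converts every factor back into a $\varphi$-moment. On the right-hand side, the bi-partite description $\partial_{\ell, X_i} = \mathrm{id}\otimes\partial_{X_i}$ from Remark \ref{rem:left-bi-free-diff-quot-bi-partite} gives $(\varphi\otimes\varphi)(\partial_{\ell, X_i}(W)) = \sum_{k : i_k = i}\varphi(X_{i_1}\cdots X_{i_{k-1}}Y_{j_1}\cdots Y_{j_q})\,\varphi(X_{i_{k+1}}\cdots X_{i_p})$, and the two sums are seen to agree term by term.

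The main obstacle is that $\Psi$ is \emph{not} $*$-preserving: because the $X'$ and $Y'$ blocks cannot be interchanged in the free algebra $\A_0$, one has $\Psi(W^*)^* \neq \Psi(W)$ in general, and likewise $\Psi$ does not intertwine the left actions. Consequently every adjoint, and every reversal of a $Y'$ block, that appears in the computation must be absorbed using traciality of $\tau_0$; it is precisely the order reversal built into the hypothesized moment identity $\varphi(X_{i_1}\cdots X_{i_p} Y_{j_1}\cdots Y_{j_q}) = \tau(X'_{i_1}\cdots X'_{i_p}Y'_{j_q}\cdots Y'_{j_1})$ that makes these cancellations line up. Once this bookkeeping is handled carefully, the term-by-term matching is routine, and the statement for $\J_r\paren{Y_j : \paren{\bC\ang{\vX}, \bC\ang{\hat\vY_j}}}$ follows by the symmetric argument with the roles of the left and right variables exchanged.
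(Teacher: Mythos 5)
Your proposal is correct and follows essentially the same route as the paper's proof: defining $\Psi$ on monomials, establishing isometry via the commutation relations, traciality of $\tau_0$, and the order-reversing moment hypothesis, and then verifying that $\Psi^{-1}(P(\xi'))$ satisfies the moment characterization of Remark \ref{rem:bi-conjugate-variable-via-moment-formula} by passing through $\langle \xi', \Psi(W^*)\rangle_{\tau_0}$, the defining property of the free conjugate variable, traciality, and the moment hypothesis. Your explicit flagging of the fact that $\Psi$ is not $*$-preserving, so every adjoint must be absorbed by traciality, is a point the paper handles implicitly but is exactly the right bookkeeping concern.
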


\begin{proof}
	For notational simplicity, let
	\[
		\xi_i = \J_\ell\paren{X_i : \paren{\bC\ang{\hat\vX_i}, \bC\ang{\vY}}} \qqand
		\xi'_i = \J\paren{X'_i : \bC\ang{\hat\vX'_i, \vY'}},
	\]
provided they exist.
Now $\A = \bC\ang{\vX, \vY}$ is generated by monomials of the form $p(\vX)q(\vY)$ and admits no relations other than commutation between $X$'s and $Y$'s, so we may define $\Psi$ as desired on $\A$ without issue.

We claim that $\Psi$ extends to an isometry.
To see this, it suffices to verify that it preserves inner products between monomials.  Suppose $p_1(\vX)q_1(\vY)$ and $p_2(\vX)q_2(\vY)$ are two monomials.
Let $q_1'$ and $q_2'$ be obtained from $q_1$ and $q_2$ by reversing the order of the variables (so that, e.g., $\Psi(q_1(\vY)) = q_1'(\vY')$)
Notice that
\begin{align*}
	\tau_0\paren{\Psi\paren{p_1(\vX)q_1(\vY)}^*\Psi\paren{p_2(\vX)q_2(\vY)}}
	&=\tau_0\paren{q_1'(\vY')^*p_1(\vX')^*p_2(\vX')q_2'(\vY')}\\
	&= \tau_0\paren{p_1(\vX')^*p_2(\vX')q_2'(\vY')q_1'(\vY')^*}\\
	&= \varphi\paren{p_1(\vX)^*p_2(\vX)q_1(\vY)^*q_2(\vY)}\\
	&= \varphi\paren{q_1(\vY)^*p_1(\vX)^*p_2(\vX)q_2(\vY)} \\
	&= \varphi\left( (p_1 (\vX)q_1(\vY))^* (p_2(\vX) q_2(\vY)) \right).
\end{align*}
Here we have used the definition of $\Psi$, the fact that $\tau_0$ is tracial, the relation between $\tau_0$ and $\varphi$, and the fact that the elements of $\vX$ commute with those in $\vY$.
Hence $\Psi$ is an isometry and thus extends to a well-defined isometry from $L_2(\A, \varphi)$ to $L_2(\A_0, \tau_0)$.

Suppose that $\xi'_i$ exists.
To see that $\xi_i$ exists and $\xi_i = \Psi^{-1}(P(\xi'_i))$, we will demonstrate that $\Psi^{-1}(P(\xi'_i))$ satisfies the appropriate moment formula described in Remark \ref{rem:bi-conjugate-variable-via-moment-formula} to be the bi-free conjugate variable.
Once again let $p(\vX)q(\vY)$ be a monomial with $p(\vX) = X_{i_1}\cdots X_{i_k}$, and let $q'$ be obtained from $q$ by reversing its letters.
Then
\begin{align*}
	\varphi\paren{p(\vX)q(\vY)\Psi^{-1}(P(\xi'_i))}
	&= \ang{ \Psi^{-1}(P(\xi'_i)), q(\vY)^*p(\vX)^* }_{\varphi}\\
	&= \ang{ P(\xi'_i),  p(\vX')^*q'(\vY')^* }_{\tau_0}\\
	&= \ang{  \xi'_i, p(\vX')^*q'(\vY')^* }_{\tau_0}\\
	&= \tau_0\paren{ q'(\vY')p(\vX) \xi'_i } \\
	&= \sum^k_{j=1} \delta_{i, i_j}     \tau_0(q'(\vY')X_{i_1}'\cdots X_{i_{j-1}}')\tau_0(X_{i_{j+1}}'\cdots X_{i_k}')\\
	&= \sum^k_{j=1} \delta_{i, i_j}     \tau_0(X'_{i_1} \cdots X'_{i_{j-1}} q'(\vY')) \tau_0(X'_{i_{j+1}} \cdots X'_{i_k})\\
	&= \sum^k_{j=1} \delta_{i, i_j}     \varphi(X_{i_1} \cdots X_{i_{j-1}} q(\vY)) \varphi(X_{i_{j+1}} \cdots X_{i_k}).
\end{align*}
Hence $P(\xi'_i) = \xi_i$. 
\end{proof}

We note there are several instances where the hypotheses of Lemma \ref{lem:converting-rights-to-lefts} are satisfied.  Indeed if $(\fM, \tau)$ is a tracial von Neumann algebra, $X_0, Y_0 \in \fM$ are self-adjoint, and $L_2(\fM, \tau)$ denotes the GNS representation of $\fM$ with respect to $\tau$, then $\B(L_2(\fM, \tau))$, the bounded linear operators on $L_2(\fM, \tau)$, may be equipped with the state $\varphi : \B(L_2(\fM, \tau)) \to \bC$ defined by
\[
\varphi(T) = \tau(T(1))
\]
for all $T \in \B(L_2(\fM, \tau))$.  If $X$ and $Y$ denoted left and right multiplication by $X_0$ and $Y_0$ respectively, and $X'$ and $Y'$ denote left multiplication by $X_0$ and $Y_0$ respectively, then the hypotheses of Lemma \ref{lem:converting-rights-to-lefts} are satisfied.

\begin{rem}
	Before we conclude this section by demonstrating an important property of bi-free cumulants, we note a portion of the diagrammatic view of conjugate variables in the free probability setting that is not observed in the bi-free setting due to the lack of traciality.  Under the notation of Definition \ref{defn:free-conjugate-variables}, we note since $\tau$ is tracial that there are left and right actions of $\A$ on $L_2(\A, \tau)$.  Consequently, for $\zeta \in L_2(\A, \tau)$ and $Z_1, Z_2 \in \A$, the element $Z_1 \zeta Z_2$ makes sense as an element of $L_2(\A, \tau)$ and we may define $\tau(Z_1 \zeta Z_2) = \langle Z_1 \zeta Z_2, 1 \rangle_{L_2(\A, \tau)}$.  Hence, if  $X_1, \ldots, X_k \in B \cup \{X\}$ then, due to traciality, for all $p$ 
	\begin{align*}
		\tau(X_{p} \cdots X_{k} \xi X_{1} \cdots X_{p-1}) &= \tau( X_{1} \cdots X_{k} \xi) \\
		&= (\tau \otimes \tau)(\partial_{X}(X_{1} \cdots X_{k})) \\
		&= \sum_{X_q = X} \tau(X_{1} \cdots X_{{q-1}}) \tau(X_{{q+1}} \cdots X_{k}).
	\end{align*}
	This can be viewed diagrammatically via an extension of the view of Remark \ref{rem:free-diff-quot-diagram-view} where we sum over the encapsulated region and the non-encapsulated region.
	\begin{align*}
		\begin{tikzpicture}[baseline]
			\draw[thick, dashed] (-.25,0) -- (7.25, 0);
			\draw[thick] (3, 0) -- (3,1) -- (6,1) -- (6, 0);
			\draw[thick] (4.5,0) ellipse (1cm and .66cm);
			\node[above] at (4.5, 0) {$\tau$};
			\node[below] at (0, 0) {$X_{5}$};
			\draw[black, fill=black] (0,0) circle (0.05);	
			\node[below] at (1, 0) {$X_{6}$};
			\draw[black, fill=black] (1,0) circle (0.05);	
			\node[below] at (2, 0) {$X_{7}$};
			\draw[black, fill=black] (2,0) circle (0.05);	
			\node[below] at (3, 0) {$\xi$};
			\draw[black, fill=black] (5,0) circle (0.05);
			\node[below] at (4, 0) {$X_{1}$};
			\draw[black, fill=black] (4,0) circle (0.05);	
			\node[below] at (5, 0) {$X_{2}$};
			\draw[black, fill=black] (3,0) circle (0.05);
			\node[below] at (6, 0) {$X$};
			\draw[black, fill=black] (6,0) circle (0.05);	
			\node[below] at (7, 0) {$X_{4}$};
			\draw[black, fill=black] (7,0) circle (0.05);
		\end{tikzpicture}
		\qquad
		\begin{tikzpicture}[baseline]
			\draw[thick, dashed] (-.25,0) -- (7.25, 0);
			\draw[thick] (1, 0) -- (1,1) -- (3,1) -- (3, 0);
			\draw[thick] (2,0) ellipse (.5cm and .66cm);
			\node[above] at (2, 0) {$\tau$};
			\node[below] at (0, 0) {$X_{5}$};
			\draw[black, fill=black] (0,0) circle (0.05);	
			\node[below] at (1, 0) {$X$};
			\draw[black, fill=black] (1,0) circle (0.05);	
			\node[below] at (2, 0) {$X_{7}$};
			\draw[black, fill=black] (2,0) circle (0.05);	
			\node[below] at (3, 0) {$\xi$};
			\draw[black, fill=black] (5,0) circle (0.05);
			\node[below] at (4, 0) {$X_{1}$};
			\draw[black, fill=black] (4,0) circle (0.05);	
			\node[below] at (5, 0) {$X_{2}$};
			\draw[black, fill=black] (3,0) circle (0.05);
			\node[below] at (6, 0) {$X_{3}$};
			\draw[black, fill=black] (6,0) circle (0.05);	
			\node[below] at (7, 0) {$X_{4}$};
			\draw[black, fill=black] (7,0) circle (0.05);
		\end{tikzpicture}
	\end{align*}

	The bi-free analogues developed will not have such a diagrammatic interpretation.
	The main reason for this is that if $\varphi$ is not tracial then it is unclear how to make sense of $Z_1 \zeta Z_2$ as an element of $L_2(\A, \varphi)$ for all $\zeta \in L_2(\A, \varphi)$ and $Z_1, Z_2 \in \A$.
	More specifically if $L_2(\A, \varphi)$ is the GNS Hilbert space given by the left action of $\A$ on itself with respect to the sesquilinear form $\langle Z_1, Z_2 \rangle =\varphi(Z_2^*Z_1)$, then, in general, there need not be a bounded right action of $\A$ on $L_2(\A, \varphi)$.
	Of course there are certain circumstances where such an action occurs, but we do not desire to restrict ourselves to that setting.

	Another thought would be perhaps it is only necessary to have left and right actions of certain elements of $L_2(\A, \varphi)$.
	For example, we are always in the situation that $\A$ is generated by two unital algebras, say $B_\ell$ and $B_r$.
	Thus, as every instance currently studied in bi-free probability requires `left objects' to come from the left algebras, one might think of trying to define a left bi-free conjugate variable as an element of $L_2(B_\ell, \varphi)$.
	In specific cases, such as the tracially bi-partite setting, it is possible to make sense of $Z_1 \zeta Z_2$ as an element of $L_2(\A, \varphi)$ for all $\zeta \in L_2(B_\ell, \varphi)$ and $Z_1, Z_2 \in \A$.
	However, several complications arise when using this definition.
	For example generalizing results such as \cite{V1998-2}*{Proposition 3.6} (which says conjugate variables are preserved under adding a free algebra) fail due to the lack of knowledge of the behaviour of the expectation of elements of $B_r$ onto the $L_2(B_\ell, \varphi)$.
\end{rem}

To conclude this section, we will demonstrate an interesting fact that both further supports the idea of the bi-free conjugate variables being defined using the last entries of bi-free cumulants and will be used in subsequent sections.  In particular, we demonstrate that a cumulant involving a product of left and right entries in the final entry may be expanded as a sum of specific cumulants where the left and right entries in the cumulant are separated.    

To begin, given two partitions $\pi, \sigma \in BNC(\chi)$, let $\pi \vee \sigma$ denote the smallest element of $BNC(\chi)$ greater than $\pi$ and $\sigma$.  Given $p,q \in \bN$ with $p < q$, a $\chi : \{1,\ldots, p\} \to \slr$, and a $\chi' : \set{p, \ldots, q} \to \slr$, define $\widehat{\chi} : \set{1, \ldots, q} \to \slr$ via
\[
	\widehat{\chi}(k) = \begin{cases}
		\chi(k) & \text{if } k < p \\
		\chi'(k) & \text{if }k \geq p
	\end{cases}.
\]
We may embed $BNC(\chi)$ into $BNC(\widehat{\chi})$ via $\pi \mapsto \widehat{\pi}$ where $p+1, p+2, \ldots, q$ are added to the block of $\pi$ containing $p$.  It is not difficult to see that $\hat{\pi}$ will be non-crossing as the new nodes $p, \ldots, q$ occur at the bottom of the diagram and so form an interval in the ordering induced by $\widehat{\chi}$.
Alternatively, this map can be viewed as an analogue of the map on non-crossing partitions from \cite{NSBook}*{Notation 11.9} after applying $s^{-1}_\chi$ (where $s_\chi$ is the permutation that sends $\{1,\ldots, n\}$ to elements of $\chi^{-1}(\{\ell\})$ in increasing order followed by elements of $\chi^{-1}(\{r\})$ in decreasing order).

It is easy to see that $\widehat{1_\chi} = 1_{\widehat{\chi}}$, 
\[
	\widehat{0_\chi} = \set{\{1\}, \{2\}, \ldots, \{p-1\}, \{p, p+1, \ldots, q\}},
\]
and $\pi \mapsto \widehat{\pi}$ is injective and preserves the partial ordering on $BNC$.  Furthermore the image of $BNC(\chi)$ under this map is
\[
	\widehat{BNC}(\chi) = \left[\widehat{0_\chi}, \widehat{1_\chi}\right] = \left[\widehat{0_\chi}, 1_{\widehat{\chi}}\right] \subseteq BNC(\widehat{\chi}).
\]
\begin{rem}
	\label{rem:partial-mobius-inversion}
	Recall that since $\mu_{BNC}$ is the M\"{o}bius function on the lattice of bi-non-crossing partitions, we have for each $\sigma,\pi \in BNC(\chi)$ with $\sigma \leq \pi$ that
	\[
		\sum_{\substack{ \rho \in BNC(\chi) \\ \sigma \leq \rho \leq \pi  }} \mu_{BNC}(\rho, \pi) =  \left\{
			\begin{array}{ll}
				1 & \mbox{if } \sigma = \pi  \\
				0 & \mbox{otherwise }
		\end{array} \right. .
	\]
Since the lattice structure is preserved under the map defined above, we see that $\mu_{BNC}(\sigma, \pi) = \mu_{BNC}(\widehat{\sigma}, \widehat{\pi})$.

	It is also easy to see that the partial M\"{o}bius inversion from \cite{NSBook}*{Proposition 10.11} holds in the bi-free setting; that is, if $f, g : BNC(\chi) \to \bC$ are such that 
	\[
		f(\pi) = \sum_{\substack{\sigma \in BNC(\chi) \\ \sigma \leq \pi}} g(\sigma)
	\]
	for all $\pi \in BNC(\chi)$, then for all $\pi, \sigma \in BNC(\chi)$ with $\sigma \leq \pi$, we have the relation
	\[
		\sum_{\substack{\rho \in BNC(\chi) \\ \sigma \leq \rho \leq \pi }} f(\rho) \mu_{BNC}(\rho, \pi) = \sum_{\substack{\omega \in BNC(\chi) \\ \omega \vee \sigma = \pi }} g(\omega).
	\]
\end{rem}

Following the spirit of \cite{NSBook}*{Theorem 11.12}, we now describe how the bi-free cumulants involving products of operators in terms in the last entry behave.
\begin{lem}
	\label{lem:bottom-can-always-be-expanded}
	Let $(\A, \varphi)$ be a C$^*$-non-commutative probability space, $p,q \in \bN$ with $p < q$, $\chi : \{1,\ldots, p\} \to \slr$, and  $\chi' : \set{p, \ldots, q} \to \slr$.
	If $\pi \in BNC(\chi)$ and $Z_k \in \A$ for all $k \in \{1,\ldots, q\}$, then
	\[
		\kappa_\pi\left(Z_1, \ldots, Z_{p-1}, Z_pZ_{p-1} \cdots Z_{q}\right) =  \sum_{\substack{\sigma \in BNC(\widehat{\chi})\\ \sigma \vee \widehat{0_\chi} = \widehat{\pi}}} \kappa_\sigma(Z_1, \ldots, Z_q).
	\]
	In particular, taking $\pi = 1_\chi$, we have
	\[
		\kappa_{\chi}\left(Z_1, \ldots, Z_{p-1}, Z_pZ_{p-1} \cdots Z_{q}\right)  = \sum_{\substack{\sigma \in BNC(\widehat{\chi})\\ \sigma \vee \widehat{0_\chi} = 1_{\widehat{\chi}}}} \kappa_\sigma(Z_1, \ldots, Z_q).
	\]
\end{lem}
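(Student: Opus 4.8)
The plan is to mimic the proof of the free product formula for cumulants (\cite{NSBook}*{Theorem 11.12}), using the combinatorial machinery assembled immediately before the statement: the embedding $\rho \mapsto \widehat{\rho}$ identifying $BNC(\chi)$ with the interval $\sq{\widehat{0_\chi}, 1_{\widehat{\chi}}} \subseteq BNC(\widehat{\chi})$, the preservation of the M\"obius function under this embedding, and the partial M\"obius inversion recorded in Remark \ref{rem:partial-mobius-inversion}.

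First I would establish the key moment identity
\[
	\varphi_\pi\paren{Z_1, \ldots, Z_{p-1}, Z_p Z_{p+1} \cdots Z_q} = \varphi_{\widehat{\pi}}(Z_1, \ldots, Z_q) \qforal \pi \in BNC(\chi).
\]
This holds block by block. The substitution affects only the single block $V$ of $\pi$ containing the index $p$; every other block involves only $Z_1, \ldots, Z_{p-1}$ and is untouched. Since $p$ is the largest index in $\set{1, \ldots, p}$, it is the final factor in the increasing-order product attached to $V$, so replacing $Z_p$ by $Z_p Z_{p+1} \cdots Z_q$ merely appends $Z_{p+1}, \ldots, Z_q$ at the end of that product. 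This is precisely $\varphi\paren{\prod_{k \in \widehat{V}} Z_k}$, the factor contributed by the enlarged block $\widehat{V} = V \cup \set{p+1, \ldots, q}$ of $\widehat{\pi}$. This is exactly where the hypothesis that the product occupies the \emph{final} entry is used: because $p+1, \ldots, q$ are adjoined at the bottom of the diagram, they form an interval in the $\widehat{\chi}$-ordering, so the block product is simply extended with no internal reordering and no new crossings are introduced.

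With this in hand I would run M\"obius inversion over $BNC(\chi)$,
\[
	\kappa_\pi\paren{Z_1, \ldots, Z_{p-1}, Z_p Z_{p+1} \cdots Z_q} = \sum_{\substack{\rho \in BNC(\chi) \\ \rho \leq \pi}} \varphi_\rho\paren{Z_1, \ldots, Z_{p-1}, Z_p Z_{p+1} \cdots Z_q}\, \mu_{BNC}(\rho, \pi),
\]
substitute the moment identity termwise, and transport the whole sum to $BNC(\widehat{\chi})$ via $\rho \mapsto \widehat{\rho}$. Because this map is an order isomorphism onto $\sq{\widehat{0_\chi}, 1_{\widehat{\chi}}}$ carrying $\set{\rho \leq \pi}$ bijectively onto $\set{\rho' : \widehat{0_\chi} \leq \rho' \leq \widehat{\pi}}$ and satisfies $\mu_{BNC}(\rho, \pi) = \mu_{BNC}(\widehat{\rho}, \widehat{\pi})$, the right-hand side becomes
\[
	\sum_{\substack{\rho' \in BNC(\widehat{\chi}) \\ \widehat{0_\chi} \leq \rho' \leq \widehat{\pi}}} \varphi_{\rho'}(Z_1, \ldots, Z_q)\, \mu_{BNC}(\rho', \widehat{\pi}).
\]
Applying the partial M\"obius inversion of Remark \ref{rem:partial-mobius-inversion} in the lattice $BNC(\widehat{\chi})$, with $f = \varphi_{(\cdot)}(Z_1, \ldots, Z_q)$, $g = \kappa_{(\cdot)}(Z_1, \ldots, Z_q)$, lower bound $\sigma = \widehat{0_\chi}$, and upper bound $\widehat{\pi}$, collapses this to $\sum_{\sigma \vee \widehat{0_\chi} = \widehat{\pi}} \kappa_\sigma(Z_1, \ldots, Z_q)$, which is the claim; setting $\pi = 1_\chi$ (so that $\widehat{\pi} = 1_{\widehat{\chi}}$) gives the displayed specialization.

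I expect the one substantive step to be the verification of the moment identity, specifically making precise that reading the diagram of $\widehat{\pi}$ reproduces the word $Z_p Z_{p+1} \cdots Z_q$ in the correct position of its block. The remaining algebra — M\"obius inversion, the order-preserving transport to $\widehat{\chi}$, and the partial inversion — is routine given the lattice facts already recorded above the statement.
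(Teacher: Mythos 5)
Your proof is correct and takes essentially the same route as the paper's: M\"obius inversion over $BNC(\chi)$, transport of the sum to the interval $[\widehat{0_\chi}, \widehat{\pi}] \subseteq BNC(\widehat{\chi})$ via the order isomorphism and the invariance of $\mu_{BNC}$, and then the partial M\"obius inversion of Remark \ref{rem:partial-mobius-inversion}. The only difference is that you make explicit the block-by-block moment identity $\varphi_\rho(Z_1, \ldots, Z_{p-1}, Z_pZ_{p+1}\cdots Z_q) = \varphi_{\widehat{\rho}}(Z_1, \ldots, Z_q)$, which the paper uses implicitly in passing to its second displayed line; your justification of it (the new indices are adjoined at the end of the increasing-order product of the block containing $p$) is exactly right.
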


\begin{proof}
	Notice
	\begin{align*}
		\kappa_\pi\left(Z_1, \ldots, Z_{p-1}, Z_pZ_{p-1} \cdots Z_{q}\right) &= \sum_{\substack{\rho \in BNC(\chi) \\ \rho \leq \pi}} \varphi_\rho\left(Z_1, \ldots, Z_{p-1}, Z_pZ_{p-1} \cdots Z_{q}\right) \mu_{BNC}(\rho, \pi) \\
		& = \sum_{\substack{\rho \in BNC(\chi)\\  \rho \leq \pi}} \varphi_{\widehat{\rho}}(Z_1, \ldots, Z_q) \mu_{BNC}(\widehat{\rho}, \widehat{\pi}) \\
		& = \sum_{\substack{\sigma \in BNC(\widehat{\chi}) \\ \widehat{0_\chi} \leq \sigma \leq \widehat{\pi}}} \varphi_{\sigma}(Z_1, \ldots, Z_q) \mu_{BNC}(\sigma, \widehat{\pi}) \\
		& = \sum_{\substack{\sigma \in BNC(\widehat{\chi})\\ \sigma \vee \widehat{0_\chi} = \widehat{\pi}}} \kappa_\sigma(Z_1, \ldots, Z_q)
	\end{align*}
	with the last line following from Remark \ref{rem:partial-mobius-inversion}.
\end{proof}

With Lemma \ref{lem:bottom-can-always-be-expanded} we can now extend the vanishing of mixed cumulants to allow products of left and right operators in the last entry of a cumulant expression.

\begin{prop}
	\label{prop:vanishing-of-mixed-cumulants-with-mixed-bottom}
	Let $(\A, \varphi)$ be a C$^*$-non-commutative probability space and let $\{(A_{k,\ell}, A_{k,r})\}_{k \in K}$ be bi-free pairs of algebras in $\A$.  If $q \geq 2$, if $\chi : \{1,\ldots,q\} \to \slr$, if $\omega : \{1,\ldots, q\} \to K$, if $Z_p \in A_{\omega(p), \chi(p)}$ for all $p < q$, and if $Z_q \in \alg(A_{\omega(q), \ell}, A_{\omega(q), r})$, then
	\[
		\kappa_{\chi}(Z_1, \ldots, Z_q) = 0
	\]
	unless $\omega$ is constant.
\end{prop}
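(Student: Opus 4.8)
The plan is to reduce to the case where the final argument is a single product of pure operators, expand the resulting cumulant by the formula of Lemma~\ref{lem:bottom-can-always-be-expanded}, and then combine the (standard) vanishing of mixed bi-free cumulants for pure operators with a combinatorial fact about the join with $\widehat{0_\chi}$.

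First, by multilinearity of $\kappa_\chi$ in its arguments I may assume that $Z_q = W_1 W_2 \cdots W_s$ is a single monomial with each factor $W_j$ lying in $A_{\omega(q), \ell}$ or $A_{\omega(q), r}$. Recording the factors $W_1, \ldots, W_s$ in positions $q, q+1, \ldots, N$ (with $N = q-1+s$) and letting $\widehat{\chi} : \{1,\ldots,N\} \to \slr$ carry the corresponding colours, Lemma~\ref{lem:bottom-can-always-be-expanded} applied with $\pi = 1_\chi$ gives
\[
	\kappa_{\chi}(Z_1, \ldots, Z_{q-1}, Z_q) = \sum_{\substack{\sigma \in BNC(\widehat{\chi}) \\ \sigma \vee \widehat{0_\chi} = 1_{\widehat{\chi}}}} \kappa_\sigma(Z_1, \ldots, Z_{q-1}, W_1, \ldots, W_s),
\]
where $\widehat{0_\chi} = \{\{1\}, \ldots, \{q-1\}, \{q, q+1, \ldots, N\}\}$. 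I then extend the index map to $\widehat{\omega} : \{1,\ldots,N\}\to K$ by $\widehat{\omega}(p) = \omega(p)$ for $p < q$ and $\widehat{\omega}(p) = \omega(q)$ for $p \geq q$, so that each argument above is a pure operator drawn from the pair indexed by its $\widehat{\omega}$-value. Since $\kappa_\sigma = \prod_{V \in \sigma} \kappa_{\widehat{\chi}|_V}$ and every factor is a bi-free cumulant of pure operators, the vanishing of mixed bi-free cumulants \cite{CNS2015-1} forces $\kappa_\sigma = 0$ unless every block of $\sigma$ is $\widehat{\omega}$-monochromatic; hence only the $\sigma$ with $\sigma \leq \ker\widehat{\omega}$ contribute to the sum.

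It therefore suffices to prove the combinatorial claim that if $\sigma$ is $\widehat{\omega}$-monochromatic then so is $\sigma \vee \widehat{0_\chi}$. Granting this, any surviving $\sigma$ would make $1_{\widehat{\chi}} = \sigma \vee \widehat{0_\chi}$ monochromatic, which is possible only when $\widehat{\omega}$, and hence $\omega$, is constant; so for non-constant $\omega$ the index set of the sum is empty and $\kappa_\chi(Z_1,\ldots,Z_q)=0$, as claimed. To prove the claim I would describe the join explicitly: because the only nontrivial block of $\widehat{0_\chi}$ is the bottom interval $B = \{q, \ldots, N\}$, the partition $\sigma \vee \widehat{0_\chi}$ is obtained from $\sigma$ by amalgamating into one block all blocks of $\sigma$ that meet $B$ (together with $B$). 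Every such block is monochromatic of colour $\omega(q)$, since each node of $B$ has that colour, so the amalgamated block remains monochromatic while all other blocks of $\sigma$ are left untouched; the result is therefore monochromatic.

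The step requiring care, and the main obstacle, is verifying that this amalgamation yields a genuinely bi-non-crossing partition requiring no further closure, so that blocks of differing colour are never merged merely to restore the non-crossing condition. This is precisely the phenomenon already used in the construction $\pi \mapsto \widehat{\pi}$ preceding Lemma~\ref{lem:bottom-can-always-be-expanded}: because $B$ sits at the very bottom of the diagram and forms an interval, the strand amalgamating the $B$-meeting blocks can be routed along the bottom, below all other nodes, without crossing anything. I would make this rigorous either by drawing the amalgamated strand explicitly in the bi-non-crossing diagram or, equivalently, by transporting through the order-isomorphism $s_{\widehat{\chi}}$ to $NC$ and checking that $B$ maps to an interval straddling the left/right boundary, so that joining the blocks meeting it preserves non-crossingness.
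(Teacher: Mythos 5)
Your proof is correct and follows essentially the same route as the paper's: reduce by multilinearity to a monomial in the last entry, expand via Lemma \ref{lem:bottom-can-always-be-expanded}, and kill each surviving term using the vanishing of mixed $(\ell,r)$-cumulants. The only difference is one of detail: where the paper asserts tersely that $\sigma \vee \widehat{0_\chi} = 1_{\widehat{\chi}}$ forces at least one mixed cumulant factor, you prove this contrapositively by showing that the join with the bottom block $B$ is the set-theoretic amalgamation (since $B$ is an interval at the bottom of the diagram) and hence preserves monochromaticity --- a careful elaboration of exactly the point the paper leaves implicit.
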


\begin{proof}
	By linearity, it suffices to consider $Z_q$ a product of elements from $A_{\omega(q),\ell}$ and $A_{\omega(q), r}$.
	Lemma \ref{lem:bottom-can-always-be-expanded} then implies $\kappa_{\chi}(Z_1, \ldots, Z_q) $ is a sum of products of $(\ell, r)$-cumulants involving $\{(A_{k,\ell}, A_{k,r})\}_{k \in K}$ where only left elements occur in left entries and right elements occur in right entries.
	As at least one cumulant in each product is mixed by the $\sigma \vee \widehat{0_\chi} = 1_{\widehat{\chi}}$ assumption, the result follows from \cite{CNS2015-2}*{Theorem 4.3.1}.
\end{proof}

\section{Adjoints of Bi-Free Difference Quotients}
\label{sec:Adjoints}

One essential tool in the theory of free conjugate variables is the ability to express the conjugate variables using adjoints of the derivations.
Specifically, given $X$ and a unital self-adjoint algebra $B$ with no algebraic relations, it is possible to view $\partial_X$ as a densely defined, unbounded operator from $L_2(B\langle X \rangle, \tau)$ to $L_2(B\langle X \rangle, \tau) \otimes L_2(B\langle X \rangle, \tau)$ and thus $\partial_X^*$, the adjoint of $\partial_X$, makes sense.
This led to the original definition of conjugate variable in \cite{V1998-2}: $\J(X : B)$ is defined when $1\otimes1 \in \mathrm{dom}(\partial_X^*)$, in which case $\J(X : B) := \partial_X^*(1\otimes1)$.
This characterization is essential for many analytical arguments.

In the bi-free setting, things (unsurprisingly) become more complicated.  Under the notation and assumptions of Definition \ref{defn:bi-free-conjugate variables}, it is not apparent that $1 \otimes 1 \in \mathrm{dom}(\partial_{\ell, X}^*)$ is equivalent to the existence of $\J_\ell(X : (B_\ell, B_r \langle Y \rangle))$ due to complications with adjoints.  However, as taking the adjoint of a product of operators corresponds to vertically flipping a bi-non-crossing diagram, there is a corresponding flipped version of $\partial_{ \ell, X}$ that will play the role of $\partial_{X}$ when it comes to adjoints.  Again these definitions are purely algebraic and we substitute elements of $(\fA, \varphi)$ later.

\begin{defn}
	\label{defn:left-bi-free-diff-quot-flipped}
	Let $B_\ell$ and $B_r$ be unital self-adjoint algebras and let $\A = (B_\ell \vee B_r) \langle X, Y\rangle$ for two variables $X$ and $Y$. The \emph{flipped left bi-free difference quotient of $X$ relative to $(B_\ell, B_r\langle Y \rangle)$} is the map $\hat{\partial}_{\ell, X} : \A \to \A \otimes \A$ defined as follows: equipping $\A \otimes \A$ with the multiplication given by $(Z_1 \otimes Z_2) \cdot (W_1 \otimes W_2) = Z_1 W_1 \otimes Z_2 W_2$, define $\hat{T}_\ell : \A \to \A \otimes \A$ to be the ($*$-)homomorphism such that
	\[
		\hat{T}_\ell(x) =  x \otimes 1 \qqand \hat{T}_\ell(y) = 1 \otimes y
	\]
	for all $x \in B_\ell\langle X \rangle$ and $y \in B_r\langle Y \rangle$ and let $C : \A \otimes \A \to \A$ be as in Definition \ref{defn:left-bi-free-diff-quot}.
	(Notice that $\hat{T}_\ell = T_r$ from Definition \ref{defn:left-bi-free-diff-quot}.) Then $\hat{\partial}_{\ell, X} = (1 \otimes C)  \circ  (\hat{T}_\ell \otimes 1)   \circ  \partial_{X}$ where $\partial_X$ is the free derivation of $X$ relative to $(B_\ell \vee B_r)\langle Y \rangle$.  Thus $\hat{\partial}_{\ell, X}$ is not a derivation but a composition of homomorphisms (using different multiplicative structures) with a derivation.
\end{defn}

\begin{exam}
	To see the diagrammatic view of $\hat{\partial}_{\ell, X}$, consider the following example.  For $x_1, x_2 \in B_\ell$ and $y_1, y_2, y_3 \in B_r \langle Y \rangle$, Definition \ref{defn:left-bi-free-diff-quot-flipped} yields
	\begin{align*}
		\hat{\partial}_{\ell, X}(y_1 Xy_1 x_1 y_2 Xy_3  y_1x_2) &= ((1 \otimes C)  \circ  (\hat{T}_\ell \otimes 1))(y_1 \otimes y_1 x_1 y_2 Xy_3  y_1x_2 + y_1 Xy_1 x_1 y_2 \otimes y_3  y_1x_2)  \\
		&= (1 \otimes C) (1 \otimes y_1 \otimes y_1 x_1 y_2 Xy_3  y_1x_2 +    Xx_1   \otimes y_1 y_1 y_2 \otimes y_3  y_1x_2)   \\
		&= 1 \otimes y_1 y_1 x_1 y_2 Xy_3  y_1x_2 +  Xx_1 \otimes y_1 y_1  y_2 y_3  y_1x_2
	\end{align*}
	This can be observed by drawing  $y_1, X, y_1, x_1, y_2, X, y_3,  y_1, x_2$ as one would in a bi-non-crossing diagram (i.e. drawing two vertical lines and placing the variables on these lines starting at the top and going down with left variables on the left line and right variables on the right line), drawing all pictures connecting the centre of the top of the diagram to any $X$, taking the product of the elements starting from the top and going down in each of the two isolated components of the diagram.
	\begin{align*}
		\begin{tikzpicture}[baseline]
			\draw[thick, dashed] (-1,4.5) -- (-1,-.5) -- (1,-.5) -- (1,4.5);
			\draw[thick] (0,4.5) -- (0,3.5) -- (-1,3.5);
			\node[left] at (-1, 3.5) {$X$};
			\draw[black, fill=black] (-1,3.5) circle (0.05);
			\node[left] at (-1, 2.5) {$x_1$};
			\draw[black, fill=black] (-1,2.5) circle (0.05);
			\node[left] at (-1, 1.5) {$X$};
			\draw[black, fill=black] (-1,1.5) circle (0.05);	
			\node[left] at (-1, 0) {$x_2$};
			\draw[black, fill=black] (-1,0) circle (0.05);
			\node[right] at (1, 4) {$y_1$};
			\draw[black, fill=black] (1,4) circle (0.05);	
			\node[right] at (1, 3) {$y_1$};
			\draw[black, fill=black] (1,3) circle (0.05);
			\node[right] at (1, 2) {$y_2$};
			\draw[black, fill=black] (1,2) circle (0.05);
			\node[right] at (1, 1) {$y_3$};
			\draw[black, fill=black] (1,1) circle (0.05);
			\node[right] at (1, .5) {$y_1$};
			\draw[black, fill=black] (1,.5) circle (0.05);
		\end{tikzpicture}
		\qquad  \qquad
		\begin{tikzpicture}[baseline]
			\draw[thick, dashed] (-1,4.5) -- (-1,-.5) -- (1,-.5) -- (1,4.5);
			\draw[thick] (0,4.5) -- (0,1.5) -- (-1,1.5);
			\node[left] at (-1, 3.5) {$X$};
			\draw[black, fill=black] (-1,3.5) circle (0.05);
			\node[left] at (-1, 2.5) {$x_1$};
			\draw[black, fill=black] (-1,2.5) circle (0.05);
			\node[left] at (-1, 1.5) {$X$};
			\draw[black, fill=black] (-1,1.5) circle (0.05);	
			\node[left] at (-1, 0) {$x_2$};
			\draw[black, fill=black] (-1,0) circle (0.05);
			\node[right] at (1, 4) {$y_1$};
			\draw[black, fill=black] (1,4) circle (0.05);	
			\node[right] at (1, 3) {$y_1$};
			\draw[black, fill=black] (1,3) circle (0.05);
			\node[right] at (1, 2) {$y_2$};
			\draw[black, fill=black] (1,2) circle (0.05);
			\node[right] at (1, 1) {$y_3$};
			\draw[black, fill=black] (1,1) circle (0.05);
			\node[right] at (1, .5) {$y_1$};
			\draw[black, fill=black] (1,.5) circle (0.05);
		\end{tikzpicture}
	\end{align*}
\end{exam}

\begin{rem}
	\label{rem:left-connection-between-two-diff-quot}
	Note that $\hat{\partial}_{\ell, X}$ shares the same properties and remarks that were demonstrated for $\partial_{\ell, X}$ in Section \ref{sec:DiffQuot}.  Indeed it is straightforward to check that $\hat{\partial}_{\ell, X}(Z) = \paren{\partial_{\ell, X}(Z^*)}^\star$ where we interpret $(A\otimes B)^\star$ as $B^*\otimes A^*$.
	From this it follows that
	\[
		(\varphi \otimes \varphi)(\hat{\partial}_{\ell, k}(Z^*)^*) = (\varphi \otimes \varphi)(\hat{\partial}_{\ell, k}(Z^*)^\star) = (\varphi \otimes \varphi)(\partial_{\ell, k}(Z)) .
	\]
	Moreover, $\hat{\partial}_{\ell, X}|_{B_\ell\ang{X}} = \partial_X = \partial_{\ell, X}|_{B_\ell\ang{X}}$.
	The reason $\partial_{\ell, X}$ was used over $\hat{\partial}_{\ell, X}$ in the definition of the left bi-free conjugate variables was the connection between $\partial_{\ell, X}$ and the bottom of bi-non-crossing diagrams which enabled the establishment of bi-free conjugate variables via cumulants.
\end{rem}

Similarly, we have the following on the right.

\begin{defn}
	\label{defn:right-bi-free-diff-quot-flipped}
	Let $B_\ell$ and $B_r$ are unital self-adjoint algebras and let $\A = (B_\ell \vee B_r) \langle X, Y\rangle$ for two variables $X$ and $Y$.  The \emph{flipped right bi-free difference quotient of $Y$ relative to $(B_\ell \langle X \rangle, B_r)$} is the map $\hat{\partial}_{r, Y} : \A \to \A \otimes \A$ is defined as follows:   equipping $\A \otimes \A$ with the multiplication given by $(Z_1 \otimes Z_2) \cdot (W_1 \otimes W_2) = Z_1 W_1 \otimes Z_2 W_2$, define $\hat{T}_r : \A \to \A \otimes \A$ to be the ($*$-)homomorphism such that
	\[
		\hat{T}_r(x) =  1 \otimes  x \qqand \hat{T}_r(y) = y \otimes 1
	\]
	for all $x \in B_\ell \langle X \rangle$ and $y \in B_r \langle Y \rangle$,
	and let $C : \A \otimes \A \to \A$ be as in Definition \ref{defn:left-bi-free-diff-quot}.
	(Notice that $\hat{T}_\ell = T_r$ from Definition \ref{defn:right-bi-free-diff-quot}.) Then $\hat{\partial}_{r, Y} = (1 \otimes C)  \circ  ( \hat{T}_r \otimes 1)   \circ  \partial_{Y}$ where $\partial_Y$ is the free derivation of $Y$ with respect to $(B_\ell \vee B_r)\langle X \rangle$.  Thus $\hat{\partial}_{r,  Y}$ is not a derivation but a composition of homomorphisms  (using different multiplicative structures) with a derivation.
\end{defn}

\begin{exam}
	To see the diagrammatic view of $\hat{\partial}_{\ell, X}$, consider the following example.  For $x_1, x_2, x_3 \in B_\ell \langle X \rangle$ and $y_1, y_2 \in B_r$, Definition \ref{defn:right-bi-free-diff-quot-flipped} implies that
	\begin{align*}
		\hat{\partial}_{r, Y} & (Yx_1 Yx_2 y_1x_1 y_2  Yx_3) \\
		&= ((1 \otimes C)  \circ  ( \hat{T}_r \otimes 1) )  (1 \otimes x_1 Yx_2 y_1x_1 y_2  Yx_3 + Yx_1 \otimes x_2 y_1x_1 y_2  Yx_3 + Yx_1 Yx_2 y_1x_1 y_2  \otimes x_3    )\\
		&=(1 \otimes C) (1 \otimes 1 \otimes x_1 Yx_2 y_1x_1 y_2  Yx_3 + Y \otimes x_1 \otimes x_2 y_1x_1 y_2  Yx_3 + Y Y y_1y_2   \otimes x_1x_2 x_1 \otimes x_3    )   \\
		&=1 \otimes x_1 Yx_2 y_1x_1 y_2  Yx_3 + Y   \otimes x_1x_2 y_1 x_1 y_2  Yx_3 + Y Y  y_1  y_2   \otimes   x_1x_2x_1x_3
	\end{align*}
	This can be observed by drawing  $Y, x_1, Y, x_2, y_1, x_1, y_2,  Y, x_3$ as one would in a bi-non-crossing diagram, drawing all pictures connecting the top of the diagram to any $Y$, taking the product of each component of the diagram, and taking the tensor of the two components with the one isolated on the left.
	\begin{align*}
		\begin{tikzpicture}[baseline]
			\draw[thick, dashed] (-1,4.5) -- (-1,-.5) -- (1,-.5) -- (1,4.5);
			\draw[thick] (0,4.5) -- (0,4) -- (1,4);
			\node[left] at (-1, 3.5) {$x_1$};
			\draw[black, fill=black] (-1,3.5) circle (0.05);
			\node[left] at (-1, 2.5) {$x_2$};
			\draw[black, fill=black] (-1,2.5) circle (0.05);
			\node[left] at (-1, 1.5) {$x_1$};
			\draw[black, fill=black] (-1,1.5) circle (0.05);	
			\node[left] at (-1, 0) {$x_3$};
			\draw[black, fill=black] (-1,0) circle (0.05);
			\node[right] at (1, 4) {$Y$};
			\draw[black, fill=black] (1,4) circle (0.05);	
			\node[right] at (1, 3) {$Y$};
			\draw[black, fill=black] (1,3) circle (0.05);
			\node[right] at (1, 2) {$y_1$};
			\draw[black, fill=black] (1,2) circle (0.05);
			\node[right] at (1, 1) {$y_2$};
			\draw[black, fill=black] (1,1) circle (0.05);
			\node[right] at (1, .5) {$Y$};
			\draw[black, fill=black] (1,.5) circle (0.05);
		\end{tikzpicture}
		\qquad  \qquad
		\begin{tikzpicture}[baseline]
			\draw[thick, dashed] (-1,4.5) -- (-1,-.5) -- (1,-.5) -- (1,4.5);
			\draw[thick] (0,4.5) -- (0,3) -- (1,3);
			\node[left] at (-1, 3.5) {$x_1$};
			\draw[black, fill=black] (-1,3.5) circle (0.05);
			\node[left] at (-1, 2.5) {$x_2$};
			\draw[black, fill=black] (-1,2.5) circle (0.05);
			\node[left] at (-1, 1.5) {$x_1$};
			\draw[black, fill=black] (-1,1.5) circle (0.05);	
			\node[left] at (-1, 0) {$x_3$};
			\draw[black, fill=black] (-1,0) circle (0.05);
			\node[right] at (1, 4) {$Y$};
			\draw[black, fill=black] (1,4) circle (0.05);	
			\node[right] at (1, 3) {$Y$};
			\draw[black, fill=black] (1,3) circle (0.05);
			\node[right] at (1, 2) {$y_1$};
			\draw[black, fill=black] (1,2) circle (0.05);
			\node[right] at (1, 1) {$y_2$};
			\draw[black, fill=black] (1,1) circle (0.05);
			\node[right] at (1, .5) {$Y$};
			\draw[black, fill=black] (1,.5) circle (0.05);
		\end{tikzpicture}
		\qquad  \qquad
		\begin{tikzpicture}[baseline]
			\draw[thick, dashed] (-1,4.5) -- (-1,-.5) -- (1,-.5) -- (1,4.5);
			\draw[thick] (0,4.5) -- (0,.5) -- (1,.5);
			\node[left] at (-1, 3.5) {$x_1$};
			\draw[black, fill=black] (-1,3.5) circle (0.05);
			\node[left] at (-1, 2.5) {$x_2$};
			\draw[black, fill=black] (-1,2.5) circle (0.05);
			\node[left] at (-1, 1.5) {$x_1$};
			\draw[black, fill=black] (-1,1.5) circle (0.05);	
			\node[left] at (-1, 0) {$x_3$};
			\draw[black, fill=black] (-1,0) circle (0.05);
			\node[right] at (1, 4) {$Y$};
			\draw[black, fill=black] (1,4) circle (0.05);	
			\node[right] at (1, 3) {$Y$};
			\draw[black, fill=black] (1,3) circle (0.05);
			\node[right] at (1, 2) {$y_1$};
			\draw[black, fill=black] (1,2) circle (0.05);
			\node[right] at (1, 1) {$y_2$};
			\draw[black, fill=black] (1,1) circle (0.05);
			\node[right] at (1, .5) {$Y$};
			\draw[black, fill=black] (1,.5) circle (0.05);
		\end{tikzpicture}
	\end{align*}
\end{exam}

\begin{rem}
	Note that $\hat{\partial}_{r,Y}$ and $\partial_{r, Y}$ share the same relation as $\hat{\partial}_{\ell,X}$ and $\partial_{\ell, X}$: we have $\hat{\partial}_{r, Y}(Z) = \paren{\partial_{r, Y}(Z^*)}^\star$, and $\hat{\partial}_{r,Y}|_{B_r \langle Y \rangle} = \partial_{Y} = \partial_{r, Y}|_{B_r \langle Y \rangle}$ and, under the assumptions of Definition \ref{defn:bi-free-conjugate variables}, 
	\[
		(\varphi \otimes \varphi)(\hat{\partial}_{r, Y}(Z^*)^*) = (\varphi \otimes \varphi)(\partial_{r, Y}(Z))
	\]
	for all $Z \in (B_\ell \vee B_r)\langle X, Y\rangle$.
\end{rem}

Using the flipped bi-free difference quotients, we obtain a characterization of bi-free conjugate variables using adjoints of maps.

\begin{thm}
	Under the notation and assumptions used in Definition \ref{defn:bi-free-conjugate variables}, for $\xi \in L_2(\A, \varphi)$ the following are equivalent:
	\begin{enumerate}
		\item $\xi = \J_\ell(X : (B_\ell, B_r \langle Y\rangle))$.
		\item Viewing $\hat{\partial}_{\ell, X} : (B_\ell \vee B_r)\langle X, Y \rangle \to (B_\ell \vee B_r)\langle X, Y \rangle \otimes (B_\ell \vee B_r)\langle X, Y \rangle$ as a densely defined unbounded operator from $L_2(\A, \varphi)$ to $L_2(A, \varphi) \otimes L_2(\A, \varphi)$, we have $1 \otimes 1 \in \mathrm{dom}(\hat{\partial}_{\ell, X}^*)$ and $\hat{\partial}_{\ell, X}^*(1 \otimes 1) = \xi$.
	\end{enumerate}
	A similar results holds for the right bi-free conjugate variables.
\end{thm}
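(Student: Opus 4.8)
The plan is to unwind the definition of the adjoint and reduce statement (2) to the moment characterization of $\J_\ell$ recorded in Remark \ref{rem:bi-conjugate-variable-via-moment-formula}. First I would observe that, because the right algebra here is $B_r\langle Y\rangle$, the algebra $\A_X = (B_\ell \vee B_r\langle Y\rangle)\langle X\rangle$ coincides with $\A = (B_\ell \vee B_r)\langle X, Y\rangle$; hence $\hat{\partial}_{\ell, X}$ is defined on all of $\A$, which is dense in $L_2(\A, \varphi)$, so the unbounded operator is densely defined and its adjoint makes sense. Consequently condition (2) is equivalent to the existence of $\xi \in L_2(\A, \varphi)$ with
\[
	\langle \hat{\partial}_{\ell, X}(W), 1 \otimes 1\rangle_{L_2(\A,\varphi)\otimes L_2(\A,\varphi)} = \langle W, \xi\rangle_{L_2(\A,\varphi)}
\]
for every $W$ in the domain $\A$.

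Next I would compute the left-hand side. Writing $\hat{\partial}_{\ell, X}(W) = \sum_i A_i \otimes B_i$, pairing against $1 \otimes 1$ collapses to $\sum_i \varphi(A_i)\varphi(B_i) = (\varphi \otimes \varphi)(\hat{\partial}_{\ell, X}(W))$. Invoking the identity $\hat{\partial}_{\ell, X}(W) = \paren{\partial_{\ell, X}(W^*)}^\star$ from Remark \ref{rem:left-connection-between-two-diff-quot} and tracking the conjugations introduced by the flip $(A\otimes B)^\star = B^*\otimes A^*$, this equals $\overline{(\varphi \otimes \varphi)(\partial_{\ell, X}(W^*))}$. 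Equating with the right-hand side $\langle W, \xi\rangle = \varphi(\xi^* W)$ and taking complex conjugates, the adjoint relation becomes $(\varphi \otimes \varphi)(\partial_{\ell, X}(W^*)) = \varphi(W^* \xi)$. Substituting $Z = W^*$—legitimate since $\A$ is $*$-closed and $W$ ranges over all of $\A$—yields exactly $\varphi(Z \xi) = (\varphi \otimes \varphi)(\partial_{\ell, X}(Z))$ for all $Z \in \A$, which by Remark \ref{rem:bi-conjugate-variable-via-moment-formula} holds if and only if $\xi = \J_\ell(X : (B_\ell, B_r\langle Y\rangle))$. Since every manipulation is an equivalence, (1) and (2) are equivalent and produce the same $\xi$.

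I expect the main obstacle to be bookkeeping rather than conceptual: one must be scrupulous about the two involutions in play—the genuine adjoint $*$ on $L_2(\A,\varphi)\otimes L_2(\A,\varphi)$ and the flip $\star$ used in Remark \ref{rem:left-connection-between-two-diff-quot}—and about the conjugate-linearity of the inner product in its second slot, since a single misplaced conjugate would break the correspondence. A secondary point worth spelling out is that $1 \otimes 1 \in \mathrm{dom}(\hat{\partial}_{\ell, X}^*)$ is genuinely \emph{equivalent} to, and not merely implied by, the existence of $\xi$: boundedness of the functional $W \mapsto \langle \hat{\partial}_{\ell, X}(W), 1 \otimes 1\rangle$ is precisely the Riesz-representability that yields $\xi \in L_2(\A, \varphi)$, so each condition furnishes the other's $\xi$. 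The right-handed statement then follows verbatim upon replacing $\hat{\partial}_{\ell, X}$, $\partial_{\ell, X}$, and $\J_\ell$ by $\hat{\partial}_{r, Y}$, $\partial_{r, Y}$, and $\J_r$, using the corresponding right-sided remark in place of Remark \ref{rem:left-connection-between-two-diff-quot}.
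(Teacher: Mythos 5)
Your proposal is correct and follows essentially the same route as the paper's proof: both unwind the adjoint pairing of $\hat{\partial}_{\ell,X}$ against $1 \otimes 1$, invoke the identity $\hat{\partial}_{\ell,X}(Z) = \paren{\partial_{\ell,X}(Z^*)}^\star$ from Remark \ref{rem:left-connection-between-two-diff-quot}, and match the result with the moment characterization of $\J_\ell$ in Remark \ref{rem:bi-conjugate-variable-via-moment-formula}. The only cosmetic difference is that the paper places $1 \otimes 1$ in the first slot of the inner product, so it never needs to take the complex conjugates you track explicitly.
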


\begin{proof}
	Notice
	\begin{align*}
		\langle 1 \otimes 1, \hat{\partial}_{\ell, X}(Z)\rangle_{\varphi \otimes \varphi} &= (\varphi \otimes \varphi)(\hat{\partial}_{\ell, X}(Z)^*) = (\varphi \otimes \varphi)(\partial_{\ell, X}(Z^*)).
	\end{align*}
	for all $Z \in (B_\ell \vee B_r)\langle X, Y \rangle$.  Furthermore, the defining formula for $\xi = \J_\ell(X : (B_\ell, B_r \langle Y\rangle))$ is that
	\[
		(\varphi \otimes \varphi)(\partial_{\ell, X}(Z^*)) = \varphi(Z^*\xi) = \langle \xi, Z\rangle_{\varphi \otimes \varphi}
	\]
	for all $Z \in (B_\ell \vee B_r)\langle X, Y \rangle$.  Hence the result follows.
\end{proof}

One of the essential reasons why knowing $1 \otimes 1 \in \mathrm{dom}(\partial^*_{X})$ is so important is \cite{V1998-2}*{Corollary 4.2} which states that if $1 \otimes 1 \in \mathrm{dom}(\partial^*_X)$ then $B\langle X \rangle \otimes B\langle X \rangle \in \mathrm{dom}(\partial^*_X)$ and thus $\partial_X$ is pre-closed.  Thus it is natural to ask whether we have a similar result for $\hat\partial^*_{\ell, X}$ and $\hat\partial^*_{r, Y}$.

To begin, notice that
\begin{align*}
	\hat\partial_{\ell, X} &: (B_\ell \vee B_r)\langle X, Y \rangle \to B_\ell\langle X\rangle \otimes (B_\ell \vee B_r)\langle X, Y \rangle \quad \text{and} \\
	\hat\partial_{r, Y} &: (B_\ell \vee B_r)\langle X, Y \rangle \to B_r\langle Y\rangle \otimes (B_\ell \vee B_r)\langle X, Y \rangle
\end{align*}
so the potential domains for $\partial^*_{\ell, X}$ and $\partial^*_{r, Y}$ are $B_\ell\langle X\rangle \otimes (B_\ell \vee B_r)\langle X, Y \rangle$ and $B_r\langle Y\rangle \otimes (B_\ell \vee B_r)\langle X, Y \rangle$ respectively.  To show a good portion of these algebras are in the domains, we note the following.

\begin{lem}
	Let $B_\ell$ and $B_r$ are unital self-adjoint algebras and let $\A = (B_\ell \vee B_r) \langle X, Y\rangle$ for two variables $X$ and $Y$.  
	For all $C, C_1, C_2 \in B_\ell\langle X\rangle$, $D, D_1, D_2 \in B_r\langle Y\rangle$, and $M \in  (B_\ell \vee B_r)\langle X, Y \rangle$, 
	\begin{align*}
		\hat{\partial}_{\ell, X}(C M) &= \hat{\partial}_{\ell, X}(C)(1 \otimes M) + (C \otimes 1) \hat{\partial}_{\ell, X}(M) \\
		\hat{\partial}_{\ell, X}(D_1 MD_2) &= (1 \otimes D_1)\hat{\partial}_{\ell, X}(M)(1 \otimes D_2) \\
		\hat{\partial}_{r, Y}(D M) &= \hat{\partial}_{r, Y}(D)(1\otimes M) + (D \otimes 1) \hat{\partial}_{r, Y}(M)\\
		\hat{\partial}_{r, Y}(C_1 MC_2) &= (1 \otimes C_1)\hat{\partial}_{r, Y}(M)(1 \otimes C_2)
	\end{align*}
	where, in $\A \otimes \A$, $(Z_1 \otimes Z_2)(W_1\otimes W_2) = Z_1W_1 \otimes Z_2W_2$.
\end{lem}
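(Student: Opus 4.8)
The plan is to verify each of the four identities by unwinding the definition $\hat{\partial}_{\ell,X} = (1\otimes C)\circ(\hat{T}_\ell\otimes 1)\circ\partial_X$ from Definition~\ref{defn:left-bi-free-diff-quot-flipped} and applying it termwise to the Leibniz expansion produced by $\partial_X$. The three structural facts I would lean on are: (i) $\partial_X$ is a derivation for the $\A$-bimodule structure $Z_1(P\otimes Q)Z_2 = Z_1P\otimes QZ_2$ of Definition~\ref{defn:free-diff-quotient}; (ii) $\partial_X$ is the free derivation of $X$ relative to $(B_\ell\vee B_r)\langle Y\rangle$, so it annihilates everything in $(B_\ell\vee B_r)\langle Y\rangle$, in particular every $D\in B_r\langle Y\rangle$; and (iii) $\hat{T}_\ell$ is a homomorphism for the multiplication $(Z_1\otimes Z_2)(W_1\otimes W_2) = Z_1W_1\otimes Z_2W_2$, with $\hat{T}_\ell(C)=C\otimes 1$ for $C\in B_\ell\langle X\rangle$ and $\hat{T}_\ell(D)=1\otimes D$ for $D\in B_r\langle Y\rangle$.

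For the first identity I would begin from the Leibniz expansion $\partial_X(CM) = \partial_X(C)\cdot M + C\cdot\partial_X(M)$. Handling the first summand: since $C\in B_\ell\langle X\rangle$, the left leg of $\partial_X(C)$ lies in $B_\ell\langle X\rangle$ and is sent by $\hat{T}_\ell$ to $(\cdot)\otimes 1$, whose trailing $1$ is absorbed by the final multiplication map, while the right-multiplication by $M$ rides untouched through the outermost tensor slot; tracing this through reproduces exactly $\hat{\partial}_{\ell,X}(C)(1\otimes M)$. For the second summand I would use the multiplicativity of $\hat{T}_\ell$ together with $\hat{T}_\ell(C)=C\otimes 1$ to pull the left factor $C$ out as $(C\otimes 1)$ acting on the first two tensor legs, yielding $(C\otimes 1)\hat{\partial}_{\ell,X}(M)$. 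Summing the two contributions gives the claim. For the second identity I would first invoke fact (ii) to collapse the expansion to $\partial_X(D_1MD_2)=D_1\cdot\partial_X(M)\cdot D_2$, since $\partial_X(D_1)=\partial_X(D_2)=0$. Because $\hat{T}_\ell(D_1)=1\otimes D_1$, after $(\hat{T}_\ell\otimes 1)$ the left factor $D_1$ lands on the \emph{middle} tensor leg while the right factor $D_2$ remains on the outermost leg; the final multiplication map then deposits $D_1$ on the left and $D_2$ on the right of the product leg, which is precisely $(1\otimes D_1)\hat{\partial}_{\ell,X}(M)(1\otimes D_2)$.

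The third and fourth identities follow by the same argument with the roles of $(X,\ B_\ell\langle X\rangle,\ \hat{T}_\ell)$ and $(Y,\ B_r\langle Y\rangle,\ \hat{T}_r)$ interchanged, using that $\partial_Y$ annihilates $(B_\ell\vee B_r)\langle X\rangle\supseteq B_\ell\langle X\rangle$ and that $\hat{T}_r(D)=D\otimes 1$ for $D\in B_r\langle Y\rangle$ while $\hat{T}_r(C_i)=1\otimes C_i$ for $C_i\in B_\ell\langle X\rangle$. These are exactly the role assignments that make the computations of the first and second identities apply verbatim, so no separate work is needed beyond noting the correspondence.

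I expect the only real obstacle to be bookkeeping: keeping straight the three tensor legs produced by $(\hat{T}_\ell\otimes 1)\circ\partial_X$ and the two distinct multiplicative structures in play at each stage (the componentwise product used by $\hat{T}_\ell$ versus the opposite-on-the-second-factor product defining the multiplication map $C$ of Definition~\ref{defn:left-bi-free-diff-quot}), and confirming the asymmetry that a left multiplication by an element of $B_\ell\langle X\rangle$ becomes an action on the first two legs whereas a right multiplication by $M$ or by $D_2$ stays confined to the outermost leg. Once these conventions are pinned down the verification is a direct, if slightly tedious, computation, and I would present it by tracking a single representative tensor $P\otimes Q$ (respectively $R\otimes S$) appearing in $\partial_X(C)$ (respectively $\partial_X(M)$) through the composition.
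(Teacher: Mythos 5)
Your proposal is correct and is exactly the argument the paper has in mind: the paper's proof simply states that the identities follow directly from the definitions, and your computation (Leibniz expansion of $\partial_X$, annihilation of $B_r\langle Y\rangle$ by $\partial_X$, multiplicativity of $\hat{T}_\ell$, and tracking the three tensor legs through $(1\otimes C)\circ(\hat{T}_\ell\otimes 1)$) is precisely that verification written out. No gaps; the bookkeeping you flag is the only content, and you handle it correctly.
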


\begin{proof}
	The result trivially follows from the definitions of $\partial_{\ell, X}$ and $\partial_{r, Y}$.
\end{proof}

\begin{prop}
	\label{prop:computing-domains-of-adjoints}
	Under the notation and assumptions of Definition \ref{defn:bi-free-conjugate variables}, consider $\hat{\partial}_{\ell, X} : (B_\ell \vee B_r)\langle X, Y \rangle \to (B_\ell \vee B_r)\langle X, Y \rangle \otimes (B_\ell \vee B_r)\langle X, Y \rangle$ as a densely defined unbounded operator from $L_2(\A, \varphi)$ to $L_2(A, \varphi) \otimes L_2(\A, \varphi)$.  Suppose $\eta \in \mathrm{dom}(\hat{\partial}^*_{\ell, X})$.  Then
	\[
		(C \otimes 1) \eta, (1 \otimes D)\eta \in \mathrm{dom}(\hat{\partial}^*_{\ell, X})
	\]
	for all $C  \in B_\ell\langle X\rangle$ and  $D \in B_r\langle Y\rangle$.  In particular, we have
	\begin{align*}
		\hat{\partial}^*_{\ell, X}((C \otimes 1) \eta) &= C\hat{\partial}_{\ell, X}^*(\eta)   -  (\varphi \otimes id)(\hat{\partial}_{\ell, X}(C^*)^*\eta)\\
		\hat{\partial}^*_{\ell, X}((1 \otimes D)\eta) &= D\partial^*_{X, \ell}(\eta).
	\end{align*}
	Analogous results hold on the right for $\hat{\partial}^*_{r, Y}$.
\end{prop}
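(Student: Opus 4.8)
The plan is to use the standard domain criterion for an adjoint: a vector $\zeta \in L_2(\A,\varphi)\otimes L_2(\A,\varphi)$ lies in $\mathrm{dom}(\hat{\partial}^*_{\ell,X})$ precisely when $Z \mapsto \langle \zeta, \hat{\partial}_{\ell,X}(Z)\rangle_{\varphi\otimes\varphi}$ is bounded on the dense domain $\A$, in which case $\hat{\partial}^*_{\ell,X}(\zeta)$ is the unique $v \in L_2(\A,\varphi)$ with $\langle \zeta, \hat{\partial}_{\ell,X}(Z)\rangle_{\varphi\otimes\varphi} = \langle v, Z\rangle_\varphi$ for all $Z \in \A$. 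Thus for each of $\zeta = (C\otimes 1)\eta$ and $\zeta = (1\otimes D)\eta$ I would compute this functional explicitly, using the Leibniz-type identities of the preceding lemma to transfer the multiplication onto the argument of $\hat{\partial}_{\ell,X}$, and then exhibit the representing vector $v$ directly; boundedness is automatic from the Cauchy--Schwarz inequality once $v$ is identified, and the formula for $\hat{\partial}^*_{\ell,X}(\zeta)$ drops out.

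I would dispose of the $(1\otimes D)\eta$ case first, as it is the cleaner one. Since $B_r$ and $Y$ are self-adjoint, $D^* \in B_r\langle Y\rangle$, so moving the adjoint across gives $\langle (1\otimes D)\eta, \hat{\partial}_{\ell,X}(Z)\rangle_{\varphi\otimes\varphi} = \langle \eta, (1\otimes D^*)\hat{\partial}_{\ell,X}(Z)\rangle_{\varphi\otimes\varphi}$. The identity $\hat{\partial}_{\ell,X}(D_1 M D_2) = (1\otimes D_1)\hat{\partial}_{\ell,X}(M)(1\otimes D_2)$, taken with $D_1 = D^*$, $D_2 = 1$, $M = Z$, shows $(1\otimes D^*)\hat{\partial}_{\ell,X}(Z) = \hat{\partial}_{\ell,X}(D^* Z)$ with no correction term. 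Since $\eta \in \mathrm{dom}(\hat{\partial}^*_{\ell,X})$ and $D^* Z \in \A$, this equals $\langle \hat{\partial}^*_{\ell,X}(\eta), D^* Z\rangle_\varphi = \langle D\hat{\partial}^*_{\ell,X}(\eta), Z\rangle_\varphi$, yielding both membership and the stated formula $\hat{\partial}^*_{\ell,X}((1\otimes D)\eta) = D\hat{\partial}^*_{\ell,X}(\eta)$.

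For the $(C\otimes 1)\eta$ case I would again write $\langle (C\otimes 1)\eta, \hat{\partial}_{\ell,X}(Z)\rangle_{\varphi\otimes\varphi} = \langle \eta, (C^*\otimes 1)\hat{\partial}_{\ell,X}(Z)\rangle_{\varphi\otimes\varphi}$ and then use the first Leibniz identity to rewrite $(C^*\otimes 1)\hat{\partial}_{\ell,X}(Z) = \hat{\partial}_{\ell,X}(C^* Z) - \hat{\partial}_{\ell,X}(C^*)(1\otimes Z)$. The first summand is handled exactly as above and produces $\langle C\hat{\partial}^*_{\ell,X}(\eta), Z\rangle_\varphi$. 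The main obstacle is the second summand, which carries the genuinely non-derivation part of $\hat{\partial}_{\ell,X}$: I must establish, for the fixed element $W := \hat{\partial}_{\ell,X}(C^*) \in \A\otimes\A$, the slice identity
\[
	\langle \eta, W(1\otimes Z)\rangle_{\varphi\otimes\varphi} = \langle (\varphi\otimes id)(W^*\eta), Z\rangle_\varphi .
\]
I would prove this by writing $W = \sum_j A_j\otimes B_j$ as a finite sum, verifying the identity on simple tensors $\eta = u\otimes v$ — where both sides collapse to $\sum_j \varphi(A_j^* u)\,\varphi(Z^* B_j^* v)$ — and extending by bilinearity and continuity, using that $W$ acts boundedly on $L_2(\A,\varphi)\otimes L_2(\A,\varphi)$ and that the slice map $\varphi\otimes id$ is bounded into $L_2(\A,\varphi)$. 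Combining the two summands gives $\hat{\partial}^*_{\ell,X}((C\otimes 1)\eta) = C\hat{\partial}^*_{\ell,X}(\eta) - (\varphi\otimes id)(\hat{\partial}_{\ell,X}(C^*)^*\eta)$. Finally, the analogous statements for $\hat{\partial}_{r,Y}$ follow verbatim after replacing the two left Leibniz identities by their right-hand counterparts from the same lemma, with the roles of $B_\ell\langle X\rangle$ and $B_r\langle Y\rangle$ interchanged.
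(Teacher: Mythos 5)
Your proposal is correct and follows essentially the same route as the paper's proof: both handle the two cases by moving $C^*$ (resp.\ $D^*$) across the inner product, invoking the Leibniz-type identities of the preceding lemma to rewrite $(C^*\otimes 1)\hat{\partial}_{\ell,X}(Z)$ and $(1\otimes D^*)\hat{\partial}_{\ell,X}(Z)$, and then reading off the representing vector, so that membership in $\mathrm{dom}(\hat{\partial}^*_{\ell,X})$ and the stated formulas follow at once. Your explicit simple-tensor verification of the slice identity $\langle \eta, W(1\otimes Z)\rangle_{\varphi\otimes\varphi} = \langle (\varphi\otimes id)(W^*\eta), Z\rangle_\varphi$ merely spells out a step the paper compresses into a single line, and is accurate.
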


\begin{proof}
	Let $p \in (B_\ell \vee B_r)\langle X, Y \rangle$.  Then
	\begin{align*}
		\langle (C \otimes 1) \eta, \hat{\partial}_{\ell, X}(p)\rangle_{L_2(\A, \varphi) \otimes L_2(\A, \varphi)} &= \langle  \eta, (C^* \otimes 1)\hat{\partial}_{\ell, X}(p)\rangle_{L_2(\A, \varphi) \otimes L_2(\A, \varphi)} \\
		&= \langle  \eta, \hat{\partial}_{\ell, X}(C^* p)  - \hat{\partial}_{\ell, X}(C^*)(1 \otimes p)\rangle_{L_2(\A, \varphi) \otimes L_2(\A, \varphi)} \\
		&= \langle  \eta, \hat{\partial}_{\ell, X}(C^* p)\rangle   - \langle \eta, \hat{\partial}_{\ell, X}(C^*)(1 \otimes p)\rangle_{L_2(\A, \varphi) \otimes L_2(\A, \varphi)} \\
		&= \langle \hat{\partial}_{\ell, X}^*(\eta), C^* p\rangle   - \langle \hat{\partial}_{\ell, X}(C^*)^*\eta, (1 \otimes p)\rangle_{L_2(\A, \varphi)} \\
		&= \langle C\hat{\partial}_{\ell, X}^*(\eta), p\rangle   - \langle (\varphi \otimes id)(\hat{\partial}_{\ell, X}(C^*)^*\eta), p\rangle_{L_2(\A, \varphi)} \\
		&= \langle C\hat{\partial}_{\ell, X}^*(\eta)   -  (\varphi \otimes id)(\hat{\partial}_{\ell, X}(C^*)^*\eta), p\rangle_{L_2(\A, \varphi)}
	\end{align*}
	Hence the first claim follows. Furthermore
	\begin{align*}
		\langle (1 \otimes D) \eta, \hat{\partial}_{\ell, X}(p)\rangle_{L_2(\A, \varphi) \otimes L_2(\A, \varphi)} &= \langle \eta,(1 \otimes D^*) \hat{\partial}_{\ell, X}(p)\rangle_{L_2(\A, \varphi) \otimes L_2(\A, \varphi)} \\
		&= \langle \eta, \hat{\partial}_{\ell, X}(D^*p)\rangle_{L_2(\A, \varphi) \otimes L_2(\A, \varphi)} \\
		&= \langle \hat{\partial}^*_{X, \ell}(\eta), D^* p \rangle_{L_2(\A, \varphi)} \\
		&= \langle D\hat{\partial}^*_{X, \ell}(\eta),  p \rangle_{L_2(\A, \varphi)}.
	\end{align*}
	Hence the second claim follows.  The results for the flipped right bi-free difference quotient are similar.
\end{proof}

\begin{cor}
	\label{cor:domains}
	Under the notation and assumptions of Definition \ref{defn:bi-free-conjugate variables}, if $1 \otimes 1 \in \mathrm{dom}(\hat{\partial}^*_{\ell, X})$ then
	\[
		B_\ell\langle X\rangle \otimes B_r \langle Y \rangle \in \mathrm{dom}(\hat{\partial}^*_{\ell, X}).
	\]
	Similarly, if $1 \otimes 1 \in \mathrm{dom}(\hat{\partial}^*_{r, Y})$ then $B_r \langle Y\rangle \otimes B_\ell \langle X \rangle \in \mathrm{dom}(\hat{\partial}^*_{r, Y})$.
\end{cor}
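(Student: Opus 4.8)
The plan is to obtain the conclusion as an immediate consequence of Proposition \ref{prop:computing-domains-of-adjoints} by applying it twice, starting from the hypothesis $1 \otimes 1 \in \mathrm{dom}(\hat{\partial}^*_{\ell, X})$. The key observation is that every elementary tensor $C \otimes D$ with $C \in B_\ell\langle X\rangle$ and $D \in B_r\langle Y\rangle$ is reachable from $1 \otimes 1$ by two of the domain-preserving left-multiplication operations furnished by that proposition, and that the domain of an adjoint is automatically a linear subspace.

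First I would take $\eta = 1 \otimes 1$, which lies in $\mathrm{dom}(\hat{\partial}^*_{\ell, X})$ by hypothesis, and feed it into Proposition \ref{prop:computing-domains-of-adjoints}. Since $(1 \otimes D)(1 \otimes 1) = 1 \otimes D$, the proposition yields $1 \otimes D \in \mathrm{dom}(\hat{\partial}^*_{\ell, X})$ for every $D \in B_r\langle Y\rangle$. Next I would set $\eta = 1 \otimes D$, which is now known to lie in the domain, and apply the proposition a second time: as $(C \otimes 1)(1 \otimes D) = C \otimes D$, we conclude $C \otimes D \in \mathrm{dom}(\hat{\partial}^*_{\ell, X})$ for all $C \in B_\ell\langle X\rangle$ and $D \in B_r\langle Y\rangle$.

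To finish, I would invoke the linearity of $\mathrm{dom}(\hat{\partial}^*_{\ell, X})$. Since the algebraic tensor product $B_\ell\langle X\rangle \otimes B_r\langle Y\rangle$ is spanned by the elementary tensors $C \otimes D$ just handled, the whole space lies in the domain, giving $B_\ell\langle X\rangle \otimes B_r\langle Y\rangle \subseteq \mathrm{dom}(\hat{\partial}^*_{\ell, X})$. The right-hand assertion follows identically, using the analogous statement of Proposition \ref{prop:computing-domains-of-adjoints} for $\hat{\partial}^*_{r, Y}$ together with the decomposition $(D \otimes 1)(1 \otimes C) = D \otimes C$.

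Honestly, there is no genuine obstacle here: the analytic content—computing the action of the adjoint on $(C \otimes 1)\eta$ and $(1 \otimes D)\eta$ and verifying these vectors pair boundedly against the range of $\hat{\partial}_{\ell, X}$—was entirely absorbed into Proposition \ref{prop:computing-domains-of-adjoints}. The only point requiring a little care is the order of operations: one must produce $1 \otimes D$ first (via the $(1 \otimes D)$-step, which does not require controlling $\hat{\partial}_{\ell, X}(C^*)^*\eta$) and only then multiply on the left by $C \otimes 1$, so that the induction bottoms out cleanly at $\eta = 1 \otimes 1$.
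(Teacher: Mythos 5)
Your proof is correct and matches the paper's intended argument exactly: the corollary is stated without a separate proof precisely because it follows immediately from Proposition \ref{prop:computing-domains-of-adjoints} applied twice starting at $1 \otimes 1$, together with linearity of the domain of the adjoint. The only quibble is that your cautionary remark about the order of operations is unnecessary — both domain-preserving operations in that proposition apply to an arbitrary $\eta$ in the domain (the correction term $(\varphi \otimes id)(\hat{\partial}_{\ell, X}(C^*)^*\eta)$ is well defined for any such $\eta$), so multiplying by $C \otimes 1$ first and then by $1 \otimes D$ would work just as well.
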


Of course, Corollary \ref{cor:domains} leaves a large question open.

\begin{ques}
	\label{ques:domains}
	If $1 \otimes 1 \in \mathrm{dom}(\hat{\partial}^*_{\ell, X})$ must it be true that
	\[
		B_\ell\langle X\rangle \otimes (B_\ell \vee B_r) \langle X, Y \rangle \subseteq \mathrm{dom}(\hat{\partial}^*_{\ell, X})?
	\]
	If so, we would have a similar result on the right.
\end{ques}

In regards to Question \ref{ques:domains}, the proof in \cite{V1998-2}*{Corollary 4.2} breaks down due to the lack of traciality.
The answer to Question \ref{ques:domains} is also not clear even in the simplest non-trivial setting where traciality does occur.
Indeed suppose $B_\ell = B_r = \bC$, $[X, Y] = 0$, and $1 \otimes 1 \in \mathrm{dom}(\hat{\partial}^*_{\ell, X})$.
If we desired to show that $1 \otimes X^p$ is in the domain of $\hat{\partial}^*_{\ell, X}$ for all $p \in \bN$ (which will then imply the domain of $\hat{\partial}^*_{\ell, X}$ contains all of $\bC \langle X \rangle \otimes \bC\langle Y \rangle$ by Proposition \ref{prop:computing-domains-of-adjoints} since $X$ and $Y$ commute), it suffices to show that for all $n,m \in \bN \cup \{0\}$ that there exists a $\zeta_p \in L_2(\A, \varphi)$ so that
\[
	\langle 1 \otimes X^p, \hat{\partial}_{\ell, X}(X^n Y^m) \rangle_{L_2(\A, \varphi) \otimes L_2(\A, \varphi)} = \langle \zeta_p, X^nY^m \rangle_{L_2(\A, \varphi)}.
\]
Naturally we would proceed by induction on $p$.  So suppose $1 \otimes X^{p-1}$ is in the domain of $\hat{\partial}_{\ell, X}^*$.  Then
\begin{align*}
	\langle 1 \otimes X^p, & \hat{\partial}_{\ell, X}(X^n Y^m) \rangle_{L_2(\A, \varphi) \otimes L_2(\A, \varphi)}\\
	&= \left\langle 1 \otimes X^{p}, \sum^{n-1}_{k=0} X^{k} \otimes X^{n-1-k} Y^m \right\rangle_{L_2(\A, \varphi) \otimes L_2(\A, \varphi)} \\
	&= \left\langle 1 \otimes X^{p-1}, \sum^{n-1}_{k=0} X^{k} \otimes X^{n-k} Y^m \right\rangle_{L_2(\A, \varphi) \otimes L_2(\A, \varphi)} \\
	&= \left\langle 1 \otimes X^{p-1}, \hat{\partial}_{\ell, X}(X^{n+1} Y^m) - X^{n+1} \otimes Y^m       \right\rangle_{L_2(\A, \varphi) \otimes L_2(\A, \varphi)}\\ 
	&= \langle X \hat{\partial}_{\ell, X}^*(1 \otimes X^{p-1}), X^n Y^m  \rangle_{L_2(\A, \varphi) \otimes L_2(\A, \varphi)} - \varphi(X^{n}) \varphi(X^{p-1}Y^m).
\end{align*}
Thus the existence of $\zeta_p$ for all $p$ is equivalent to the existence of $\zeta'_p \in L_2(\A, \varphi)$ for all $p$ so that
\[
	\langle \zeta'_p, X^nY^m \rangle_{L_2(\A, \varphi)} = \varphi(X^{n}) \varphi(X^{p-1}Y^m)
\]
for all $n,m \in \bN \cup \{0\}$.  

Clearly if $X$ and $Y$ are classically independent, then $\zeta'_p = \varphi(X^{p-1})$ would work.   More generally, if the joint distribution of $(X, Y)$ is given by the Lebesgue absolutely continuous measure $f(x,y) \, dx \, dy$ with support $D$ and the distributions of $X$ and $Y$ are given by the Lebesgue absolutely continuous measures $f_X(x) \, dx$ and $f_Y(y) \, dy$ respectively with supports $D_X$ and $D_Y$ respectively such that $D = D_X \times D_Y$, then notice for all $n,m \in \bN \cup \{0\}$ that
\begin{align*}
	\varphi(X^{n}) \varphi(X^{p-1}Y^m) &= \iint_D \iint_D x^n s^{p-1} t^m f(s,t) f(x,y) \, ds \, dt \, dx \, dy \\
	&= \int_{D_X} \int_{D_Y} x^n t^m \mathbb{E}\left[X^{p-1} \, | \, Y= t\right] f_Y(t)  f_X(x) \, dt \, dx \\
	&= \iint_D x^n t^m \mathbb{E}\left[X^{p-1} \, | \, Y = t\right] f_Y(t) f_X(x)  \, dx \, dt \\
	&= \iint_D x^n t^m \frac{\mathbb{E}\left[X^{p-1} \, | \, y = t\right] f_Y(t)  f_X(x)}{f(x,t)} f(x,t)  \, dx \, dt \\
\end{align*}
where
\[
	\mathbb{E}\left[X^{p-1} \, | \, Y = t\right] = \int_{D_X} \frac{s^{p-1} f(s,t)}{f_Y(t)} \, ds.
\]
Hence $\zeta_p$ exists if and only if
\[
	\frac{\mathbb{E}\left[X^{p-1} \, | \, Y = y\right] f_Y(y) f_X(x)}{f(x,y)}
\]
is an element of $L_2(\bR^2, f(x,y) \, dx \, dy)$.  In particular, notice that
\[
	\zeta'_1 = \frac{f_X(x)f_Y(y)}{f(x,y)} \qqand \zeta'_p = \mathbb{E}\left[X^{p-1} \, | \, Y = y\right]  \zeta'_1 \text{ for all }p.
\]

Of course $1 \otimes 1 \in \mathrm{dom}(\hat{\partial}^*_{\ell, X})$ implies that
\[
	\frac{f_X(x) H_X(x,y)}{f(x,y)} 1_{\{(x,y) \, \mid \, f(x,y) \neq 0\}}
\]
is an element of $L_2(\bR^2, f(x,y) \, dx \, dy)$ by Remark \ref{rem:formula-for-conjugate-variables-in-the-bi-partite-situation}.  We believe there is more difficulty in the later being in $L_2(\bR^2, f(x,y) \, dx \, dy)$ than the former so we expect the domain to be dense in this setting.  However, it is not clear that $1 \otimes 1 \in \mathrm{dom}(\hat{\partial}^*_{\ell, X})$ implies $\zeta'_1 \in L_2(\bR^2, f(x,y) \, dx \, dy)$.

For a specific example of the above situation, by \cite{HW2016}*{Example 3.4}, if $(X, Y)$ is a self-adjoint bi-free central limit distribution with variance 1 and covariance $c \in (-1, 1)$, then the joint distribution of $(X, Y)$ is given by the measure $\mu_c$ on $[-2,2]^2$ defined by
\[
	d\mu_c = \frac{1-c^2}{4\pi^2} \frac{\sqrt{4-x^2}\sqrt{4-y^2}}{(1-c^2)^2 - c(1+c^2)xy + c^2(x^2 + y^2)} \, dx \, dy.
\]
Therefore
\[
	\zeta'_1 = \frac{(1-c^2)^2 - c(1+c^2)XY + c^2(X^2 + Y^2)}{1-c^2}
\]
which is clearly an element of $L_2(\bR^2,\mu_c)$ as it is a polynomial.  
To show the existence of $\zeta'_p$ for other $p$, note elementary calculus can be used to show that for a fixed $c \in (-1, 1)$ and $p \in \bN$ that there exists $0 < k_{c,p}  < \infty$ such that
\[
\left| \frac{1-c^2}{2\pi} \frac{x^{p-1}\sqrt{4-x^2}}{(1-c^2)^2 - c(1+c^2)xy + c^2(x^2 + y^2)} \right| \leq k_{c,p}
\]
for all $(x,y) \in [-2,2]^2$ as the minimal value of $(1-c^2)^2 - c(1+c^2)xy + c^2(x^2 + y^2)$ is obtained at $(x,y) = \pm (2,2)$ and is strictly positive.  Hence we obtain that $\mathbb{E}\left[X^{p-1} \, | \, Y = y\right]$ is a bounded function and hence $\zeta'_p = \mathbb{E}\left[X^{p-1} \, | \, Y = y\right] \zeta'_1 \in L_2(\bR^2,\mu_c)$.

\begin{rem}
	Due to the lack of an answer to Question \ref{ques:domains} and the anti-symmetry of Corollary \ref{cor:domains}, it may be useful in the future to flip the tensors in the definition of $\hat{\partial}_{r, Y}$ so that $\hat{\partial}^*_{\ell, X}$ and $\hat{\partial}^*_{r, Y}$ have a common domain (which is a nice algebra).
\end{rem}

\section{Properties of Bi-Free Conjugate Variables}
\label{sec:Properties}

In this section, we will examine the behaviour of the bi-free conjugate variables under several operations.  As bi-free conjugate variables are generalizations of the free conjugate variables, we can expect only to extend known properties to the bi-free setting.  We will use a cumulant approach to the proofs as opposed to the moment approach used in \cite{V1998-2}.  This is done out of ease of working with cumulants.  In most cases, the moment proofs from \cite{V1998-2} generalize, using \cite{C2016} whenever an `alternating centred moment vanish' is required.

We begin with the following which immediately follows from the linearity of cumulants.

\begin{lem}
	\label{lem:deriv-conjugate-variable-scaling}
	Under the assumptions and notation of Definition \ref{defn:bi-free-conjugate variables}, if
	\[
		\xi = \J_\ell(X : (B_\ell, B_r))   
	\]
	exists then for all $\lambda \in \bR \setminus \{0\}$
	\[
		\xi' = \J_\ell(\lambda X : (B_\ell, B_r))   
	\]
	exists and is equal to $\frac{1}{\lambda} \xi$.

	A similar results holds for the right bi-free conjugate variables.
\end{lem}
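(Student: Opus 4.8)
The plan is to verify directly that $\frac{1}{\lambda}\xi$ satisfies the list of defining cumulant identities for $\J_\ell(\lambda X : (B_\ell, B_r))$ recorded in Definition \ref{defn:bi-free-conjugate variables}. First I would observe that since $\lambda \neq 0$ we have $B_\ell\langle \lambda X\rangle = B_\ell\langle X\rangle$, so the relevant generating algebra, and hence the Hilbert space $L_2(\A_{\lambda X}, \varphi) = L_2(\A_X, \varphi)$, is left unchanged. In particular $\frac{1}{\lambda}\xi$ is a bona fide element of the correct $L_2$-space, and the entries $Z_p$ allowed in the higher cumulant conditions (those lying in $B_\ell\langle \lambda X\rangle$ when $\chi(p) = \ell$) are exactly the same as for $X$. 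Thus the only thing to check is that $\frac{1}{\lambda}\xi$ meets the cumulant conditions with $X$ replaced by $\lambda X$ throughout.

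Next I would run through the five families of conditions, invoking multilinearity of the bi-free cumulants (which holds because each $\kappa_\chi$ is built from the multilinear functionals $\varphi_\pi$ via M\"{o}bius inversion). One gets $\kappa_\ell(\frac{1}{\lambda}\xi) = \frac{1}{\lambda}\kappa_\ell(\xi) = 0$; for $x \in B_\ell$ and $y \in B_r$, $\kappa_{\ell,\ell}(x, \frac{1}{\lambda}\xi) = \frac{1}{\lambda}\kappa_{\ell,\ell}(x,\xi) = 0$ and $\kappa_{r,\ell}(y, \frac{1}{\lambda}\xi) = \frac{1}{\lambda}\kappa_{r,\ell}(y,\xi) = 0$; and for $k \geq 2$ with admissible $Z_1, \ldots, Z_k$, $\kappa_\chi(Z_1, \ldots, Z_k, \frac{1}{\lambda}\xi) = \frac{1}{\lambda}\kappa_\chi(Z_1, \ldots, Z_k, \xi) = 0$. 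The only identity in which the two scalings interact is the normalization, where bilinearity gives $\kappa_{\ell,\ell}(\lambda X, \frac{1}{\lambda}\xi) = \lambda \cdot \frac{1}{\lambda}\,\kappa_{\ell,\ell}(X, \xi) = 1$, as required. By the uniqueness of bi-free conjugate variables noted after Definition \ref{defn:bi-free-conjugate variables}, this shows $\J_\ell(\lambda X : (B_\ell, B_r))$ exists and equals $\frac{1}{\lambda}\xi$; the right-handed statement is entirely symmetric.

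There is essentially no obstacle here, which is exactly why the lemma is asserted to follow immediately from the linearity of cumulants. If one preferred the moment formulation of Remark \ref{rem:bi-conjugate-variable-via-moment-formula}, the same conclusion drops out of the identity $\partial_{\ell, \lambda X} = \frac{1}{\lambda}\partial_{\ell, X}$ (a consequence of $\partial_{\lambda X} = \frac{1}{\lambda}\partial_X$ for the underlying free derivation), which yields $(\varphi \otimes \varphi)(\partial_{\ell, \lambda X}(Z)) = \frac{1}{\lambda}(\varphi \otimes \varphi)(\partial_{\ell, X}(Z)) = \varphi(Z \cdot \frac{1}{\lambda}\xi)$ for every $Z \in (B_\ell \vee B_r)\ang{X}$, identifying $\frac{1}{\lambda}\xi$ as the required conjugate variable.
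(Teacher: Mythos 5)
Your proposal is correct and is precisely the argument the paper intends: the lemma is stated as following immediately from the linearity (multilinearity) of the bi-free cumulants, and your verification of the defining cumulant conditions for $\frac{1}{\lambda}\xi$ (including the observation that $B_\ell\langle \lambda X\rangle = B_\ell\langle X\rangle$ so the ambient $L_2$-space and admissible entries are unchanged) simply spells out that one-line justification. The moment-formula alternative you sketch is also consistent with Remark \ref{rem:bi-conjugate-variable-via-moment-formula} and equally valid.
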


\begin{lem}
	\label{lem:deriv-conjugate-variable-projecting}
	Under the assumptions and notation of Definition \ref{defn:bi-free-conjugate variables}, if $C_\ell \subseteq B_\ell$ and $C_r \subseteq B_r$ are self-adjoint subalgebras and if
	\[
		\xi = \J_\ell(X : (B_\ell, B_r))   
	\]
	exists   then 
	\[
		\xi' = \J_\ell(X : (C_\ell, C_r))   
	\]
	exists.  In particular, if $P : L_2(\A, \varphi) \to L_2( (C_\ell \vee C_r)\langle X \rangle, \varphi)$ is the orthogonal projection onto the codomain, then $\xi' = P(\xi)$.

	A similar results holds for the right bi-free conjugate variables.
\end{lem}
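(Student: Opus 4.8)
The plan is to verify that the projection $P(\xi)$ satisfies the moment characterization of the left bi-free conjugate variable relative to $(C_\ell, C_r)$ recorded in Remark~\ref{rem:bi-conjugate-variable-via-moment-formula}, and then to invoke uniqueness of that variable. Since $P(\xi)$ lies in $L_2((C_\ell \vee C_r)\langle X\rangle, \varphi)$ by construction, it is already an admissible candidate for $\xi'$; so it will suffice to establish
\[
	\varphi(Z' P(\xi)) = (\varphi \otimes \varphi)(\partial_{\ell, X}(Z'))
\]
for every $Z' \in (C_\ell \vee C_r)\langle X\rangle$, where $\partial_{\ell, X}$ on the right is computed relative to $(C_\ell, C_r)$.

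First I would rewrite the left-hand side in terms of $\xi$ itself. Using that $P$ is the orthogonal projection onto $L_2((C_\ell \vee C_r)\langle X\rangle, \varphi)$, hence self-adjoint, and that $(Z')^*$ again lies in $(C_\ell \vee C_r)\langle X\rangle$ (because $C_\ell$, $C_r$, and $X$ are self-adjoint), I would compute
\begin{align*}
	\varphi(Z' P(\xi)) &= \langle P(\xi), (Z')^* \rangle_{L_2(\A, \varphi)} = \langle \xi, P((Z')^*) \rangle_{L_2(\A, \varphi)} \\
	&= \langle \xi, (Z')^* \rangle_{L_2(\A, \varphi)} = \varphi(Z' \xi).
\end{align*}
Because $\xi = \J_\ell(X : (B_\ell, B_r))$ and $Z' \in (C_\ell \vee C_r)\langle X\rangle \subseteq (B_\ell \vee B_r)\langle X\rangle$, Remark~\ref{rem:bi-conjugate-variable-via-moment-formula} then identifies $\varphi(Z'\xi)$ with $(\varphi \otimes \varphi)(\partial_{\ell, X}(Z'))$, the difference quotient now being taken relative to $(B_\ell, B_r)$.

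The last step is to match the two difference quotients on the smaller algebra. The left bi-free difference quotient is the combinatorial map $(C \otimes 1) \circ (1 \otimes T_\ell) \circ \partial_X$ of Definition~\ref{defn:left-bi-free-diff-quot}, whose value on a word depends only on the positions of the $X$'s and on the left/right designation of the remaining letters; since $C_\ell \subseteq B_\ell$ and $C_r \subseteq B_r$ preserve these designations, the restriction to $(C_\ell \vee C_r)\langle X\rangle$ of $\partial_{\ell, X}$ relative to $(B_\ell, B_r)$ coincides with $\partial_{\ell, X}$ relative to $(C_\ell, C_r)$. Chaining the three identities gives the desired moment formula, and uniqueness yields $P(\xi) = \J_\ell(X : (C_\ell, C_r)) = \xi'$; the right-hand statement follows in the same way with $\partial_{r, Y}$ in place of $\partial_{\ell, X}$. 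I do not expect a genuine obstacle here, as the argument is essentially bookkeeping: the only points needing care are the self-adjointness of $P$ together with the invariance of its range under the involution, and the concluding identification of the two difference quotients, both of which are immediate from the definitions.
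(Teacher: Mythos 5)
Your proposal is correct and takes essentially the same approach as the paper: both arguments hinge on the projection identity $\varphi(Z' P(\xi)) = \varphi(Z' \xi)$ for $Z' \in (C_\ell \vee C_r)\langle X \rangle$, which is precisely the paper's opening (and only substantive) step. The sole difference is cosmetic --- the paper then checks the cumulant conditions of Definition \ref{defn:bi-free-conjugate variables} (which for $(C_\ell, C_r)$ are among those already satisfied by $\xi$), while you check the equivalent moment characterization of Remark \ref{rem:bi-conjugate-variable-via-moment-formula}, at the cost of the extra (easy) observation that the difference quotients relative to $(B_\ell, B_r)$ and $(C_\ell, C_r)$ agree on the smaller algebra.
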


\begin{proof}
	Since $\varphi(ZP(\xi)) = \varphi(Z\xi)$ for all $Z \in (C_\ell \vee C_r)\langle X \rangle$, it follows that for all $\chi : \{1,\ldots, p\} \to \slr$ with $\chi(p) = \ell$ and for all $Z_k \in (C_\ell \vee C_r)\langle X \rangle$ with $Z_k \in C_\ell \langle X\rangle$ if $\chi(k) = \ell$ and $Z_k \in C_r $ if $\chi(k) = r$ that
	\[
		\kappa_\chi(Z_1, \ldots, Z_{p-1}, P(\xi)) = \kappa_\chi(Z_1, \ldots, Z_{p-1}, \xi).
	\]
	Hence the result follows.
\end{proof}

The following generalizes \cite{V1998-2}*{Proposition 3.6}.
\begin{prop}
	\label{prop:deriv-bi-free-affecting-conjugate-variables}
	Under the assumptions and notation of Definition \ref{defn:bi-free-conjugate variables}, if $(C_\ell, C_r)$ is a pair of unital, self-adjoint subalgebras of $\fA$ such that
	\[
		(B_\ell  \langle X \rangle, B_r ) \qqand (C_\ell, C_r)
	\]
	are bi-free with respect to $\varphi$, then
	\[
		\xi =  \J_\ell(X : (B_\ell, B_r))
	\]
	exists if and only if
	\[
		\xi' =  \J_\ell(X : (B_\ell \vee C_\ell, B_r \vee C_r))
	\]
	exists, in which case they are equal.

	A similar results holds for the right bi-free conjugate variables.
\end{prop}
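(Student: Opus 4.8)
The plan is to verify directly that $\xi$ satisfies the cumulant relations of Definition \ref{defn:bi-free-conjugate variables} for the enlarged pair $(B_\ell \vee C_\ell, B_r \vee C_r)$, and then to obtain the converse together with the asserted equality from the projection result of Lemma \ref{lem:deriv-conjugate-variable-projecting}. Thus I would first prove the forward implication: assuming $\xi = \J_\ell(X : (B_\ell, B_r))$ exists, show that $\kappa_\chi(W_1, \ldots, W_k, \xi)$ takes the values prescribed in Definition \ref{defn:bi-free-conjugate variables} whenever the left entries lie in $(B_\ell \vee C_\ell)\langle X \rangle$ and the right entries in $B_r \vee C_r$. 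The relations $\kappa_\ell(\xi) = 0$ and $\kappa_{\ell,\ell}(X, \xi) = 1$ involve only $X$ and $\xi$ and hold by hypothesis, so only relations containing an entry drawn from $C_\ell$ or $C_r$ require work. Once the forward direction is in hand, if both conjugate variables exist then uniqueness forces them to agree; and if only $\xi'$ exists, Lemma \ref{lem:deriv-conjugate-variable-projecting} (applied with the ambient pair $(B_\ell \vee C_\ell, B_r \vee C_r)$ and subalgebras $B_\ell, B_r$) shows $\xi = \J_\ell(X:(B_\ell,B_r))$ exists and equals $P(\xi')$, whereupon the forward direction and uniqueness give $\xi' = P(\xi') = \xi$.

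For the forward direction, multilinearity of the bi-free cumulants lets me reduce to the case in which each $W_p$ is a word in the generating letters from $B_\ell$, $C_\ell$, $B_r$, $C_r$, and $X$. Expanding each product entry into these letters and applying the bi-free partial M\"obius inversion of Remark \ref{rem:partial-mobius-inversion}, exactly as in the proof of Lemma \ref{lem:bottom-can-always-be-expanded}, rewrites $\kappa_\chi(W_1, \ldots, W_k, \xi)$ as a sum of $\kappa_\sigma$ over bi-non-crossing partitions $\sigma$ of the letters together with $\xi$, subject to $\sigma \vee \widehat{0} = 1$, where $\widehat{0}$ is the partition grouping the letters of each original entry. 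Since $(B_\ell\langle X \rangle, B_r)$ and $(C_\ell, C_r)$ are bi-free, the vanishing of mixed cumulants \cite{CNS2015-2}*{Theorem 4.3.1} kills every $\sigma$ possessing a block that mixes the two pairs; hence only $\sigma$ with monochromatic blocks survive, and in particular the block $V$ containing $\xi$ consists of $\xi$ together with single letters from $B_\ell \cup \{X\}$ (on the left) and $B_r$ (on the right), contributing a pair-$1$ cumulant ending in $\xi$.

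I would then feed this block into the defining relations for $\xi = \J_\ell(X:(B_\ell,B_r))$: reading off the pair-$1$ conditions, the factor $\kappa(V)$ vanishes (via $\kappa_\ell(\xi)=0$, $\kappa_{\ell,\ell}(b,\xi)=0$ for $b \in B_\ell$, $\kappa_{r,\ell}(b,\xi)=0$ for $b \in B_r$, or the higher-order pair-$1$ vanishing) unless $V = \{X, \xi\}$, in which case it equals $1$. Combining this with the bi-non-crossing constraint and the connectivity requirement $\sigma \vee \widehat{0} = 1$ is the crux. Because $\xi$ sits at the bottom of the diagram, a surviving block $V = \{X_0, \xi\}$ forces every letter positioned between $X_0$ and $\xi$ to be nested beneath that arc; tracking how the remaining monochromatic blocks must then reconnect through $\widehat{0}$ shows that for $k \ge 2$, and for $k = 1$ with an entry from $C_\ell$ or $C_r$, no admissible $\sigma$ can simultaneously be non-crossing, be connected, and keep $\kappa(V)$ nonzero, so the cumulant vanishes; the only surviving configuration is $k=1$ with $W_1 = X$, yielding $\kappa_{\ell,\ell}(X,\xi)=1$. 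This planar bookkeeping is the main obstacle: unlike in the free case, an argument ignoring non-crossingness would wrongly admit crossing partitions assembled from nonzero pair-$2$ cumulants (for instance from an entry of the form $c X c'$ with $c,c' \in C_\ell$), and it is precisely the bi-non-crossing condition that excludes these.

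Having matched every prescribed cumulant value, $\xi$ satisfies Definition \ref{defn:bi-free-conjugate variables} for $(B_\ell \vee C_\ell, B_r \vee C_r)$, so $\xi' = \J_\ell(X:(B_\ell\vee C_\ell, B_r\vee C_r))$ exists and equals $\xi$; the converse and the equality follow as indicated above. The identical argument with the two spines exchanged handles $\J_r(Y : (B_\ell\vee C_\ell, B_r \vee C_r))$.
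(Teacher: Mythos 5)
Your skeleton (forward direction by checking cumulant relations, backward direction and the equality via Lemma \ref{lem:deriv-conjugate-variable-projecting} and uniqueness) is the same as the paper's, but the central step of your forward direction is not justified. You attribute the vanishing of every block mixing the two pairs to \cite{CNS2015-2}*{Theorem 4.3.1}. That theorem concerns elements of the algebras; it says nothing about the block containing $\xi$, because $\xi$ is not an element of either pair --- it is merely a vector in $L_2((B_\ell \vee B_r)\langle X \rangle, \varphi)$ --- and the defining relations of $\J_\ell(X : (B_\ell, B_r))$ give no information whatsoever about cumulants of $\xi$ against entries from $C_\ell$ or $C_r$. Establishing exactly this vanishing is the entire content of the forward implication, and it is what the paper's proof supplies: $\xi$ is an $L_2$-limit of elements of $(B_\ell \vee B_r)\langle X \rangle$, bi-free cumulants are $L_2$-continuous in each entry, and Proposition \ref{prop:vanishing-of-mixed-cumulants-with-mixed-bottom} --- proved earlier precisely so that the \emph{last} entry of a mixed cumulant may be a product of left and right elements from one pair --- then forces every cumulant ending in $\xi$ that contains an entry from $C_\ell \cup C_r$ to vanish. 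Without this $L_2$-approximation argument your proof has no source for the key vanishing.

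Second, the planar bookkeeping you propose aims at a statement that is false, so it cannot close. Take $k = 2$, $Z_1 = b \in B_\ell$ and $Z_2 = b'X$ with $b' \in B_\ell$ (legitimate left entries for either pair). The partition $\sigma = \{\{b, b'\}, \{X, \xi\}\}$ is bi-non-crossing, satisfies $\sigma \vee \widehat{0} = 1$ (the block $\{b,b'\}$ bridges the two entries, and $\{X,\xi\}$ bridges the second entry with $\xi$), and every block is admissible in your sense; its contribution is $\kappa_{\ell,\ell}(b,b') \, \kappa_{\ell,\ell}(X,\xi) = \kappa_{\ell,\ell}(b,b')$, so that $\kappa_{\ell,\ell,\ell}(b, b'X, \xi) = \kappa_{\ell,\ell}(b,b') \neq 0$ in general --- this already happens in the purely free setting. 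Even your own illustration fails: for the single entry $cXc'$ with $c, c' \in C_\ell$, the non-crossing connected partition $\{\{c\},\{c'\},\{X,\xi\}\}$ survives (the crossing one you worry about is indeed excluded, but the singleton version is not), giving $\kappa_{\ell,\ell}(cXc',\xi) = \varphi(c)\varphi(c')$. There is no contradiction with the proposition: the cumulant relations characterizing a conjugate variable hold for \emph{generator} entries (compare the paper's free-case corollary, where entries lie in $B \cup \{X\}$, and note that these already determine $\xi'$ uniquely through the moment--cumulant formula), and since cumulants are multilinear but not multiplicative, ``reducing to words'' and demanding that all word-entry cumulants with $k \geq 2$ vanish asks for more than is true. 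Restrict the verification to generator entries and insert the $L_2$-approximation argument above; at that point no planar analysis is needed and your proof collapses to the paper's.
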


\begin{proof}
	If $\xi'$ exists, then Lemma \ref{lem:deriv-conjugate-variable-projecting} implies that $\xi$ exists.

	Conversely, suppose the left bi-free conjugate variable $\xi$ exists.  Hence $\xi$ is an $L_2$-limit of elements from $(B_\ell \vee B_r) \langle X\rangle$.  Since the bi-free cumulants are $L_2$-continuous in each entry, it follows that any bi-free cumulant involving at the end $\xi$ and at least one element of $C_\ell$ or $C_r$ must be zero by Proposition \ref{prop:vanishing-of-mixed-cumulants-with-mixed-bottom} as
	\[
		(B_\ell  \langle X \rangle, B_r\rangle) \qqand (C_\ell, C_r)
	\]
	are bi-free.  Therefore, as $L_2((B_\ell \vee B_r)\langle X \rangle, \varphi) \subseteq L_2((B_\ell \vee B_r \vee C_\ell \vee C_r)\langle X \rangle, \varphi)$,  it easily follows that $\xi = \xi'$.
\end{proof}

The following generalizes  \cite{V1998-2}*{Proposition 3.7}.
\begin{prop}
	\label{prop:deriv-sums-affecting-conjugate-variables}
	Let $\vX,\vX'$ be $n$-tuples of self-adjoint operators, let $\vY,  \vY'$ be $m$-tuples of self-adjoint operators, and let $B_\ell$, $B_r$, $C_\ell$, $C_r$ be unital, self-adjoint subalgebras of a C$^*$-non-commutative probability space $(\A, \varphi)$ such that
	\[
		(B_\ell  \langle \vX \rangle, B_r \langle \vY\rangle) \qqand (C_\ell \langle \vX' \rangle, C_r\langle \vY'\rangle)
	\]
	are bi-free and each pair contains no algebraic relations other than possibly elements of the left algebra commuting with elements of the right algebra.  If
	\[
		\xi = \J_\ell\left(X_1 : (B_\ell\langle \hat{\vX}_1 \rangle, B_r \langle \vY \rangle)   \right)
	\]
	exists then
	\[
		\eta = \J_\ell(X_1+X'_1 : ((B_\ell \vee C_\ell) \langle \widehat{(\vX + \vX')}_1\rangle, (B_r \vee C_r) \langle \vY + \vY' \rangle ))
	\]
	exists.  Moreover, if $P$ is the orthogonal projection of $L_2(\fA, \varphi)$ onto $L_2(( (B_\ell \vee C_\ell) \vee (B_r \vee C_r)) \langle \vX + \vX', \vY + \vY'\rangle, \varphi)$, then
	\[
		\eta = P(\xi).
	\]

	A similar results holds for the right bi-free conjugate variables.
\end{prop}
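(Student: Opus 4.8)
The plan is to verify that $P(\xi)$ satisfies the cumulant conditions of Definition~\ref{defn:bi-free-conjugate variables} characterizing the left bi-free conjugate variable $\J_\ell(X_1 + X_1' : ((B_\ell \vee C_\ell)\langle \widehat{(\vX + \vX')}_1 \rangle, (B_r \vee C_r)\langle \vY + \vY'\rangle))$; since a bi-free conjugate variable is unique when it exists, this simultaneously establishes existence and the identity $\eta = P(\xi)$. Write $S_i = X_i + X_i'$ and $T_j = Y_j + Y_j'$, and let $\D$ denote the unital $*$-algebra generated by these together with $B_\ell \vee C_\ell$ and $B_r \vee C_r$, so that $P$ is the orthogonal projection of $L_2(\fA, \varphi)$ onto $L_2(\D, \varphi)$.

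First I would transfer every computation from $P(\xi)$ to $\xi$. As in the proof of Lemma~\ref{lem:deriv-conjugate-variable-projecting}, since $\D$ is self-adjoint and $P$ projects onto its $L_2$-closure, one has $\varphi(W P(\xi)) = \varphi(W \xi)$ for all $W \in \D$. In any moment $\varphi_\pi(W_1, \ldots, W_k, P(\xi))$ entering a cumulant, the vector $P(\xi)$ sits at the final node and, within its block, is multiplied on the left only by a product of some of the remaining entries, all of which lie in $\D$; hence $\varphi_\pi(W_1, \ldots, W_k, P(\xi)) = \varphi_\pi(W_1, \ldots, W_k, \xi)$ for every $\pi \in BNC$ and all $W_1, \ldots, W_k \in \D$, and therefore $\kappa_\chi(W_1, \ldots, W_k, P(\xi)) = \kappa_\chi(W_1, \ldots, W_k, \xi)$. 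It thus suffices to evaluate cumulants ending in $\xi$ whose other entries are drawn from $(B_\ell \vee C_\ell)\langle \vX + \vX'\rangle$ in left slots and from $(B_r \vee C_r)\langle \vY + \vY'\rangle$ in right slots.

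The crux, and the step I expect to be the main obstacle, is to discard all contributions of the primed (second) system. Since $\xi$ is an $L_2$-limit of elements of $(B_\ell \vee B_r)\langle \vX, \vY\rangle$ and bi-free cumulants are $L_2$-continuous in their last entry, I may treat $\xi$ as a first-system element. Expanding each entry multilinearly in $S_i = X_i + X_i'$ and $T_j = Y_j + Y_j'$ and factoring $B_\ell \vee C_\ell$ and $B_r \vee C_r$ into letters, I would use the product expansion of bi-free cumulants (Lemma~\ref{lem:bottom-can-always-be-expanded} together with its analogue for interior entries) to pass to single-letter cumulants, and then invoke the vanishing of mixed cumulants for the bi-free pairs $(B_\ell\langle\vX\rangle, B_r\langle\vY\rangle)$ and $(C_\ell\langle\vX'\rangle, C_r\langle\vY'\rangle)$ (Proposition~\ref{prop:vanishing-of-mixed-cumulants-with-mixed-bottom} and \cite{CNS2015-2}*{Theorem 4.3.1}). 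Any term retaining a letter $X_i'$ or $Y_j'$, or a factor from $C_\ell$ or $C_r$, then produces a mixed single-letter cumulant and vanishes. The delicate point is that individual entries may themselves mix the two systems — for instance words arising from $S_i^2 = (X_i + X_i')^2$ — so the product expansion must be combined with mixed-cumulant vanishing at the level of letters, all while carrying the $L_2$-continuity that legitimizes treating $\xi$ as a genuine first-system vector. What remains is $\kappa_\chi(V_1, \ldots, V_k, \xi)$, where $V_p$ is the first-system component of $W_p$, obtained by the substitutions $S_i \mapsto X_i$, $T_j \mapsto Y_j$ and deletion of the $C_\ell$- and $C_r$-parts, so that $V_p \in B_\ell\langle\hat\vX_1\rangle$ or $V_p \in B_r\langle\vY\rangle$ as dictated by $\chi(p)$.

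Finally I would read off the required values from the defining relations for $\xi = \J_\ell(X_1 : (B_\ell\langle\hat\vX_1\rangle, B_r\langle\vY\rangle))$. The first-order condition is $\kappa_\ell(P(\xi)) = \varphi(\xi) = 0$; the second-order condition follows from $\kappa_{\ell,\ell}(X_1 + X_1', \xi) = \kappa_{\ell,\ell}(X_1, \xi) + \kappa_{\ell,\ell}(X_1', \xi) = 1 + 0 = 1$, the second summand vanishing by mixed-cumulant vanishing; and each remaining condition reduces to a cumulant of first-system entries ending in $\xi$ that is not the single distinguished cumulant $\kappa_{\ell,\ell}(X_1, \xi)$, and so vanishes by the defining relations of $\xi$. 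This confirms every condition of Definition~\ref{defn:bi-free-conjugate variables}, giving that $\eta$ exists and equals $P(\xi)$; the right-handed statement follows by the symmetric argument.
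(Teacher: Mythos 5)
Your proposal is correct and follows essentially the same route as the paper's proof: both verify the cumulant characterization of Definition~\ref{defn:bi-free-conjugate variables} for $P(\xi)$, use $\varphi(ZP(\xi)) = \varphi(Z\xi)$ to replace $P(\xi)$ by $\xi$ in cumulants, expand multilinearly in $X_i + X_i'$ and $Y_j + Y_j'$ via the product-expansion of cumulants (\cite{CNS2015-1}*{Theorem 9.1.5}, i.e.\ the interior analogue of Lemma~\ref{lem:bottom-can-always-be-expanded}), and kill all terms containing primed or $C_\ell$, $C_r$ letters by Proposition~\ref{prop:vanishing-of-mixed-cumulants-with-mixed-bottom}, bi-freeness, and $L_2$-continuity, reducing to the defining relations of $\xi$.
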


\begin{proof}
	Suppose $\xi$ exists and let $\A = ( (B_\ell \vee C_\ell) \vee (B_r \vee C_r)) \langle \vX + \vX', \vY + \vY'\rangle$.  Since $\varphi(ZP(\xi)) = \varphi(Z\xi)$ for all $Z \in \A$, it follows for all $\chi : \{1,\ldots, p\} \to \slr$ with $\chi(p) = \ell$ and for all $Z_k \in\A$ with $Z_k \in (B_\ell \vee C_\ell) \langle \vX + \vX' \rangle$ if $\chi(k) = \ell$ and $Z_k \in (B_r \vee C_r) \langle  \vY + \vY'\rangle$ if $\chi(k) = r$ that
	\[
		\kappa_\chi(Z_1, \ldots, Z_{p-1}, P(\xi)) = \kappa_\chi(Z_1, \ldots, Z_{p-1}, \xi).
	\]
	Thus any $(\ell, r)$-cumulants involving terms of the form $B_\ell$, $C_\ell$, $B_r$, $C_r$, $X_i + X'_i$, and $Y_j + Y'_j$ and a $\xi$ at the end may be expanded using linearity to involve only terms of the form $B_\ell$, $C_\ell$, $B_r$, $C_r$, $X_i$, $X'_i$, $Y_j$, and $Y'_j$ with a $\xi$ at the end.  These cumulants then obtain the desired values due to Proposition \ref{prop:vanishing-of-mixed-cumulants-with-mixed-bottom}, the fact that
	\[
		(B_\ell  \langle \vX \rangle, B_r \langle \vY\rangle) \qqand (C_\ell \langle \vX' \rangle, C_r\langle \vY'\rangle)
	\]
	are bi-free, and the properties of $\xi$.   Then, using linearity, continuity, and \cite{CNS2015-1}*{Theorem 9.1.5} to expand out cumulants of products, we see that any $(\ell, r)$-cumulants involving terms of the form $B_\ell$, $C_\ell$, $B_r$, $C_r$, $X_i + X'_i$, and $Y_j + Y'_j$ with a $P(\xi)$ at the end is the correct value for $P(\xi)$ to be  $\J_\ell(X_1+X'_1 : ((B_\ell \vee C_\ell) \langle\widehat{(\vX + \vX')}_1\rangle, (B_r \vee C_r) \langle \vY + \vY' \rangle ))$.
\end{proof}

Finally, we arrive at the following  generalization of \cite{V1998-2}*{Corollary 3.9} which enables us to guarantee the existence of bi-free conjugate variables (even if we are not in the tracially bi-partite setting) provided we perturb our variables by small multiplies of bi-free central limit distributions.
Although we state the result for a bi-free central limit system without covariance, one could just as easily perturb by a system of semicircular variables with any invertible covariance matrix and prove a similar result.
\begin{thm}
	\label{thm:conj-perturb-by-semis}
	Let $\vX, \vY$ be tuples of self-adjoint operators of length $n$ and $m$ respectively, let $(\{S_i\}^n_{i=1}, \{T_j\}^m_{j=1})$ be semicircular operators with variance one, and let $B_\ell$, $B_r$ be unital, self-adjoint subalgebras of a C$^*$-non-commutative probability space $(\A, \varphi)$ such that 
	\[
		(B_\ell  \langle \vX \rangle, B_r \langle \vY\rangle), \qquad \{(S_i, 1)\}^n_{i=1}, \qqand  \{(1, T_j)\}^m_{j=1}
	\]
	are bi-free and each pair contain no algebraic relations other than possibly elements of the left algebra commuting with elements of the right algebra.  If $P : L_2(\fA, \varphi) \to L_2((B_\ell \vee B_r)  \langle \vX + \sqrt{\epsilon} \vS, \vY + \sqrt{\epsilon} \vT \rangle, \varphi)$ is the orthogonal projection onto the codomain, then
	\[
		\xi = \J_\ell(X_1 + \sqrt{\epsilon} S_1 : (B_\ell  \langle \widehat{(\vX + \sqrt{\epsilon} \vS)}_1\rangle, B_r \langle  \vY + \sqrt{\epsilon} \vT  \rangle)) = \frac{1}{\sqrt{\epsilon}} P(S_1).
	\]
	Furthermore
	\[
		 \left\|\xi\right\|_2 \leq \frac{1}{\sqrt{\epsilon}}
	\]
	where the norms is computed in $L_2(\fA, \varphi)$.
\end{thm}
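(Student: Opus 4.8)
The plan is to exhibit $\frac{1}{\sqrt{\epsilon}}P(S_1)$ as the conjugate variable by checking the moment characterization of Remark \ref{rem:bi-conjugate-variable-via-moment-formula}, using the two classical ingredients behind \cite{V1998-2}*{Corollary 3.9}: a semicircular element is its own conjugate variable, and differentiating in $S_1$ differs from differentiating in $X_1 + \sqrt{\epsilon}S_1$ only by the scalar $\sqrt{\epsilon}$. Write $\tilde X_i = X_i + \sqrt{\epsilon}S_i$ and $\tilde Y_j = Y_j + \sqrt{\epsilon}T_j$, let $\A_\epsilon = (B_\ell \vee B_r)\ang{\tilde X_1, \ldots, \tilde X_n, \tilde Y_1, \ldots, \tilde Y_m}$ be the algebra on which the claimed conjugate variable lives, and set $\zeta = \frac{1}{\sqrt{\epsilon}}P(S_1)$. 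Since $\zeta \in L_2(\A_\epsilon, \varphi)$, by Remark \ref{rem:bi-conjugate-variable-via-moment-formula} it suffices to verify $\varphi(Z\zeta) = (\varphi\otimes\varphi)(\partial_{\ell, \tilde X_1}(Z))$ for all $Z \in \A_\epsilon$; and because $Z$ lies in the range of $P$, we have $\varphi(Z\zeta) = \frac{1}{\sqrt{\epsilon}}\varphi(Z P(S_1)) = \frac{1}{\sqrt{\epsilon}}\varphi(Z S_1)$, so the goal reduces to
\[
	\varphi(Z S_1) = \sqrt{\epsilon}\,(\varphi\otimes\varphi)(\partial_{\ell, \tilde X_1}(Z)) \qfor Z \in \A_\epsilon.
\]

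First I would pass to the $*$-algebra $\fB$ generated by $B_\ell$, $B_r$, and all of $X_i, Y_j, S_i, T_j$, inside which $S_1$ and $\tilde X_1 = X_1 + \sqrt{\epsilon}S_1$ are separately available. Let $E_\ell$ be generated by $B_\ell$ together with every left operator other than $S_1$ (that is, the $X_i$ and the $S_i$ with $i \neq 1$) and let $E_r$ be generated by $B_r$ and all $Y_j, T_j$. The bi-freeness hypothesis makes $(S_1, 1)$ bi-free from $(E_\ell, E_r)$, and $S_1$ is semicircular of variance one; I claim consequently $\J_\ell(S_1 : (E_\ell, E_r)) = S_1$, equivalently $\varphi(W S_1) = (\varphi\otimes\varphi)(\partial_{\ell, S_1}(W))$ for all $W \in \fB = (E_\ell \vee E_r)\ang{S_1}$. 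To see this one expands $\varphi(W S_1)$ by the moment-cumulant formula: the block containing the final entry $S_1$ must consist only of letters from the pair $(S_1,1)$ by vanishing of mixed cumulants (Proposition \ref{prop:vanishing-of-mixed-cumulants-with-mixed-bottom}), and must have exactly two elements since all cumulants of $S_1$ of order other than two vanish. Thus the final $S_1$ pairs with a single occurrence of $S_1$ inside $W$, and summing over that occurrence — with the non-crossing constraint splitting the remaining word exactly as the difference quotient does — reproduces $(\varphi\otimes\varphi)(\partial_{\ell, S_1}(W))$.

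The remaining ingredient is a chain rule: for every $Z \in \A_\epsilon$ one has $\partial_{\ell, S_1}(Z) = \sqrt{\epsilon}\,\partial_{\ell, \tilde X_1}(Z)$ as elements of $\fB \otimes \fB$. Both maps are obtained by post-composing a free derivation with the same $(C \otimes 1)\circ(1 \otimes T_\ell)$, since the assignment of each operator to the left or the right side is identical in the two situations; so it is enough to check $\partial_{S_1}(Z) = \sqrt{\epsilon}\,\partial_{\tilde X_1}(Z)$ at the level of free derivations. Here $\partial_{S_1}$ is taken with $X_1 \in E_\ell$ in its base, so both sides are derivations on $\A_\epsilon$ and they agree on the generators of $\A_\epsilon$: on $\tilde X_1$ we get $\partial_{S_1}(\tilde X_1) = \sqrt{\epsilon}(1 \otimes 1) = \sqrt{\epsilon}\,\partial_{\tilde X_1}(\tilde X_1)$, while on $B_\ell$, $B_r$, and every $\tilde X_i$ $(i \neq 1)$ and $\tilde Y_j$ — none of which involve $S_1$ — both sides vanish. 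Applying $\varphi \otimes \varphi$ and combining with the previous paragraph gives $\varphi(Z S_1) = (\varphi\otimes\varphi)(\partial_{\ell, S_1}(Z)) = \sqrt{\epsilon}\,(\varphi\otimes\varphi)(\partial_{\ell, \tilde X_1}(Z))$, which is exactly the reduced identity, so $\xi = \zeta = \frac{1}{\sqrt{\epsilon}}P(S_1)$. Finally the norm estimate is immediate: $\|\xi\|_2 = \frac{1}{\sqrt{\epsilon}}\|P(S_1)\|_2 \leq \frac{1}{\sqrt{\epsilon}}\|S_1\|_2 = \frac{1}{\sqrt{\epsilon}}$ since $P$ is a contraction and $\varphi(S_1^2) = 1$. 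I expect the genuine work to be concentrated in the claim that $S_1$ is its own conjugate variable — keeping track of the bi-non-crossing bookkeeping so that the single semicircular pairing really does match the difference quotient — whereas the chain rule and the projection step are formal.
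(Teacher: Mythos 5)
Your proposal is correct, but it is organized quite differently from the paper's proof, which is a three-line assembly of previously established structural results: the paper notes $\J_\ell(S_1 : (\bC\langle \hat{\vS}_1\rangle, \bC\langle \vT\rangle)) = S_1$ (via Example \ref{exam:bi-free-conjugate-independence} and Voiculescu's free result), rescales via Lemma \ref{lem:deriv-conjugate-variable-scaling}, and then invokes Proposition \ref{prop:deriv-bi-free-affecting-conjugate-variables} (adjoining a bi-free pair does not change conjugate variables) together with Proposition \ref{prop:deriv-sums-affecting-conjugate-variables} (conjugate variables of sums are obtained by projecting) to get $\xi = \frac{1}{\sqrt{\epsilon}}P(S_1)$. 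You instead verify the moment formula of Remark \ref{rem:bi-conjugate-variable-via-moment-formula} directly, and your two key steps effectively inline those propositions in this special case: your cumulant argument that $\J_\ell(S_1 : (E_\ell, E_r)) = S_1$ combines the free self-conjugacy of a semicircular with the bi-free invariance of Proposition \ref{prop:deriv-bi-free-affecting-conjugate-variables} (indeed you could shorten this step by simply citing that proposition together with $\J_\ell(S_1 : (\bC,\bC)) = S_1$), while your chain rule $\partial_{\ell, S_1}|_{\A_\epsilon} = \sqrt{\epsilon}\,\partial_{\ell, \tilde X_1}$ — two derivations agreeing on generators, composed with the same $(C\otimes 1)\circ(1\otimes T_\ell)$ since left/right assignments coincide — replaces both the scaling lemma and the sum-plus-projection proposition. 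This chain-rule device is closer in spirit to Voiculescu's original free argument and has the virtue of making explicit exactly where semicircularity (only second-order cumulants survive) and bi-freeness (mixed cumulants vanish, so the final $S_1$ must pair with an occurrence of $S_1$) enter; the paper's route is shorter given its machinery and generalizes more readily since the structural propositions are stated for arbitrary conjugate variables, not just semicircular ones. Two small points you should make explicit: the grouping fact that bi-freeness of the full family implies $(S_1,1)$ is bi-free from the pair $(E_\ell, E_r)$ generated by everything else requires the combinatorial characterization of bi-freeness from \cite{CNS2015-2} (it is standard but not automatic from the definition), and your bookkeeping that the bi-non-crossing constraint splits the word as $\partial_{\ell,S_1}$ does — outside block consisting of all letters before the paired $S_1$ together with the right letters after it, inside block the left letters after it — is exactly right but is the one computation a referee would ask you to write out.
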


\begin{proof}
	Note
	\[
		\J_\ell(S_1 : (\bC\langle \hat{\vS}_1\rangle, \bC\langle \vT\rangle)) = S_1
	\]
	by Example \ref{exam:bi-free-conjugate-independence} and the free result \cite{V1998-2}*{Proposition 3.6}.
	From Lemma \ref{lem:deriv-conjugate-variable-scaling}, we have
	\begin{align*} 
		\eta &=J(\sqrt{\epsilon} S_1 : (\bC\langle \sqrt{\epsilon} \hat{\vS}_1\rangle, \bC\langle \sqrt{\epsilon}\vT\rangle)) = \frac{1}{\sqrt{\epsilon}} S_1.
	\end{align*}
	It then follows by Propositions \ref{prop:deriv-bi-free-affecting-conjugate-variables} and \ref{prop:deriv-sums-affecting-conjugate-variables} that
	\begin{align*}
		\xi &= P(\eta) = \frac{1}{\sqrt{\epsilon}} P(S_1),
	\end{align*}
	as desired.  The norm estimate then easily follow by inner product computations.
\end{proof}

\section{Relative Bi-Free Fisher Information}
\label{sec:Fisher}

We now extend the notion of Fisher information from \cite{V1998-2}*{Section 6} to the bi-free setting.  Due to the results of Section \ref{sec:Properties}, the results follow with nearly identical proofs.

\begin{defn}
	\label{defn:bi-fisher}
	Let $\vX, \vY$ be tuples of self-adjoint operators of length $n$ and $m$ respectively, and let $B_\ell$, $B_r$ be unital, self-adjoint subalgebras of a C$^*$-non-commutative probability space $(\fA, \varphi)$ such that $\vX, \vY, B_\ell$, and $B_r$ contain no algebraic relations other than the possibility that elements of $B_\ell \langle \vX\rangle$ commute with elements of $B_r \langle \vY\rangle$.

	For $i \in \{1, \ldots, n\}$ and $j \in \{1,\ldots, m\}$ let
	\[
		\xi_i = \J_\ell(X_i : (B_\ell\langle \hat{\vX}_i\rangle, B_r\langle \vY \rangle))
\qqand
		\eta_j = \J_r(Y_{j} : (B_\ell \langle \vX \rangle, B_r\langle \hat{\vY}_j\rangle))
	\]  
	provided these bi-free conjugate variables exist.
	The \emph{relative bi-free Fisher information of $\vX, \vY$ with respect to $(B_\ell, B_r)$} is
	\[
		\Phi^*(\vX \sqcup \vY : (B_\ell, B_r)) = \sum^n_{i=1} \left\|\xi_i\right\|_2^2 + \sum^m_{j=1} \left\|\eta_j\right\|_2^2
	\]
	if $\xi_1, \ldots, \xi_n, \eta_1, \ldots, \eta_m$ exist, and otherwise defined as $\infty$.

	If $B_\ell = B_r = \bC$, we call $\Phi^*(\vX \sqcup \vY : (B_\ell, B_r))$ \emph{the bi-free Fisher information of $\vX, \vY$} and denote it by
	\[
		\Phi^*(\vX \sqcup \vY)
	\]
	instead.
\end{defn}

\begin{exam}
	\label{exam:Fisher-two-semis?}
	Let $(S, T)$ be a self-adjoint bi-free central limit distribution with respect to a state $\varphi$ such that $\varphi(S^2) = \varphi(T^2) = 1$ and $\tau(ST) = \tau(TS) = c \in (-1,1)$.  By Example \ref{exam:conju-of-semis} 
	\[
		\J_\ell(S : (\bC, \bC\langle T\rangle)) = \frac{1}{1-c^2} (S - c T)  \qqand \J_r(T : (\bC \langle S \rangle, \bC)) = \frac{1}{1-c^2} (T - c S).
	\]
	Hence 
	\begin{align*}
		\Phi^*(S \sqcup T) &= \frac{1}{(1-c^2)^2} \left\|S - c T\right\|_2^2 + \frac{1}{(1-c^2)^2} \left\|T - c S\right\|_2^2 = \frac{2}{1-c^2}
	\end{align*}
	as 
	\[
	\varphi((S-cT)^2) = \varphi(S^2) - c \varphi(ST) - c \varphi(TS) + c^2 \varphi(T^2) = 1-c^2.
	\]
\end{exam}

\begin{exam}
	\label{exam:Fisher-bi-free-central}
	More generally, let $(\{S_k\}^{n}_{k=1}, \{S_k\}^{n+m}_{k=n+1})$ be a self-adjoint bi-free central limit distribution with respect to $\varphi$.  By \cite{V2014}*{Section 7} the joint distribution of $(\{S_k\}^{n}_{k=1}, \{S_k\}^{n+m}_{k=n+1})$ is completely determined by the matrix
	\[
		A = [a_{i,j}] = [\varphi(S_iS_j)] \in \M_{n+m}(\bR)_{\sa}.
	\]
	Furthermore, by \cite{V2014}*{Section 7}, $A$ is positive as we can represent this pair as left and right semicircular operators acting on a Fock space and thus $A = [\langle f_i, f_j\rangle]$ where $\{f_k\}^{n+m}_{k=1}$ are vectors in a Hilbert space.

	Suppose $A$ is invertible.  For $k \in \{1,\ldots, n\}$ let
	\[
		\xi_k = \J_\ell(S_k : (\bC \langle S_1, \ldots, S_{k-1}, S_{k+1}, \ldots, S_n\rangle, \bC \langle S_{n+1}, \ldots, S_{n+m}\rangle))
	\]
	and for $k \in \{n+1, \ldots, n+m\}$ let
	\[
		\xi_k = \J_r(S_k : (\bC \langle S_1, \ldots, S_n\rangle, \bC \langle S_{n+1}, \ldots, S_{k-1}, S_{k+1}, \ldots, S_{n+m}\rangle)).
	\]
	It is routine to verify using similar arguments to Example \ref{exam:conju-of-semis} that if $\{e_k\}^{n+m}_{k=1}$ denotes the standard basis of $\bR^{n+m}$, then
	\[
		\xi_k = b_{1,k} S_1 + \ldots + b_{n+m, k} S_{n+m}
	\]
	where
	\[
		A \begin{bmatrix}
			b_{1,k} \\ \vdots \\ b_{n+m, k}
		\end{bmatrix} = e_k.
	\]
	Therefore, if $B = [b_{i,j}] \in \M_{n+m}(\bR)$ then $AB = I_{n+m}$ so $B = A^{-1} \in \M_{n+m}(\bR)_{\sa}$.  Note as $A$ is self-adjoint that $B$ is self-adjoint.

	By Definition \ref{defn:bi-fisher}, we see that if $\Tr$ denotes the unnormalized trace on $\M_{n+m}(\bR)$ then
	\begin{align*}
		\Phi^*(S_1, \ldots, S_n \sqcup S_{n+1}, \ldots, S_{n+m}) &= \sum^{n+m}_{k=1} \left\|\xi_k\right\|^2_2 \\
		&= \sum^{n+m}_{k=1} \varphi\left( \left(\sum^{n+m}_{i=1} b_{i,k} S_i\right)\left(\sum^{n+m}_{j=1} b_{j,k} S_j\right) \right) \\
		&= \sum^{n+m}_{i,j,k =1}  b_{i,k} a_{i,j} b_{j,k} \\
		&= \Tr(B^*AB) = \Tr(B^*) = \Tr(A^{-1}).
	\end{align*}
	We will see later via  Example \ref{exam:entropy-bi-free-central} that if $A$ is not invertible, then the bi-free entropy is infinite and thus the bi-free Fisher information is infinite by Proposition \ref{prop:finite-fisher-implies-finite-entropy}.
\end{exam}

\begin{rem}
	\label{rem:remarks-about-fisher-info}
	We make the following observations.
	\begin{enumerate}
		\item First notice that if $m = 0$ and $B_r = \bC$ or $n = 0$, and $B_\ell = \bC$, then $\Phi^*(\vX \sqcup \vY : (B_\ell, B_r))$ is simply the relative free Fisher information of $\vX$ with respect to $B_\ell$ or of $\vY$ with respect to $B_r$ respectively. \label{part:bi-free-fisher-is-free-fisher-if-one-side-absent}
		\item \label{part:only-one-variable}
			As
			\begin{align*}
				\Phi^*( & \vX \sqcup \vY : (B_\ell, B_r))\ = \sum^n_{i=1} \Phi^*(X_i : (B_\ell\langle  \hat{\vX}_i\rangle, B_r \langle \vY \rangle))+ \sum^m_{j=1}  \Phi^*(Y_j : (B_\ell\langle \vX\rangle, B_r \langle \hat{\vY}_j \rangle)),
			\end{align*}
			many questions about the relative bi-free Fisher information reduce to the cases $(n,m) \in \{(1,0), (0, 1)\}$.
		\item 
			\label{part:scaling-fisher-info}
			Recall from Lemma \ref{lem:deriv-conjugate-variable-scaling} that for all $\lambda \in \bR \setminus \{0\}$
			\begin{align*}
				\J_\ell(\lambda X_i & : (B_\ell\langle\lambda \hat{\vX}_i\rangle, B_r \langle \lambda \vY \rangle)) = \frac{1}{\lambda} \J_\ell(  X_i : (B_\ell\langle  \hat{\vX}_i\rangle, B_r \langle  \vY \rangle)).
			\end{align*}
			As a similar result holds for the right bi-free conjugate variables, we see that
			\[
				\Phi^*(\lambda \vX \sqcup  \lambda\vY : (B_\ell, B_r)) = \frac{1}{\lambda^2} \Phi^*(\vX \sqcup \vY : (B_\ell, B_r)).
			\]
		\item 
			\label{part:increasing-algebra-in-fisher-info}
			Notice if $C_\ell \subseteq B_\ell$ and $C_r \subseteq B_r$ are unital, self-adjoint subalgebras, then by Lemma \ref{lem:deriv-conjugate-variable-projecting} the bi-free conjugate variables of
			\[
				(\vX \sqcup \vY : (C_\ell, C_r))
			\]
			are the projections of the bi-free conjugate variables of
			\[
				(\vX \sqcup \vY : (B_\ell, B_r))
			\]
			onto $L_2((C_\ell \vee C_r) \langle \vX, \vY\rangle, \varphi)$.  Therefore 
			\[
				\Phi^*(\vX \sqcup \vY : (C_\ell, C_r)) \leq \Phi^*(\vX \sqcup \vY : (B_\ell, B_r)).
			\]
		\item 
			\label{part:fisher-info-for-bi-free-things}
			Finally, if $(C_\ell, C_r)$ is a pair of unital, self-adjoint subalgebras of $\fA$ that is bi-free from
			\[
				\left( B_\ell \langle \vX\rangle, B_r \langle \vY\rangle\right)
			\]
			then it follows from Proposition \ref{prop:deriv-bi-free-affecting-conjugate-variables} that
			\[
				(\vX \sqcup \vY : (B_\ell, B_r)) \qqand (\vX \sqcup \vY : (B_\ell \vee C_\ell, B_r \vee C_r))
			\]
			have the same bi-free conjugate variables and thus
			\[
				\Phi^*(\vX \sqcup \vY : (B_\ell, B_r)) = \Phi^*(\vX \sqcup \vY : (B_\ell \vee C_\ell, B_r \vee C_r)).
			\]
	\end{enumerate}
\end{rem}

Furthermore, the bi-free Fisher information behaves well with respect to combining bi-free collections.

\begin{prop}
	\label{prop:fisher-info-with-bifree-things}
	Let $\vX, \vY, \vX', \vY'$ be tuples of self-adjoint operators of lengths $n$, $m$, $n'$, and $m'$ respectively, and let $B_\ell$, $B_r$, $C_\ell$, $C_r$ be unital, self-adjoint subalgebras of a C$^*$-non-commutative probability space $(\fA, \varphi)$ such that 
	\[
		(B_\ell  \langle \vX \rangle, B_r \langle \vY\rangle) \qqand (C_\ell \langle \vX' \rangle, C_r\langle  \vY'\rangle)
	\]
	are bi-free and the pairs have no algebraic relations other than possibly left operators commuting with right operators.  Then
	\begin{align*}
		\Phi^*&(\vX, \vX' \sqcup \vY,  \vY': (B_\ell \vee C_\ell, B_r \vee C_r))= \Phi^*(\vX   \sqcup \vY   : (B_\ell, B_r)) + \Phi^*( \vX' \sqcup  \vY' : (C_\ell, C_r)).
	\end{align*}
\end{prop}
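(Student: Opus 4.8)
The plan is to prove the identity term by term: I will show that every bi-free conjugate variable entering the definition of $\Phi^*(\vX, \vX' \sqcup \vY, \vY' : (B_\ell \vee C_\ell, B_r \vee C_r))$ is literally equal to the corresponding conjugate variable of one of the two subsystems, so that the two defining sums of squared $L_2$-norms agree term by term. By the additivity of the bi-free Fisher information over individual variables recorded in Remark \ref{rem:remarks-about-fisher-info}, it suffices to treat each variable in turn. I fix one left variable $X_i$ of $\vX$ (the cases of $Y_j$, and of the primed variables, being symmetric) and compare
\[
	\J_\ell\paren{X_i : \paren{(B_\ell \vee C_\ell)\ang{\hat\vX_i, \vX'}, (B_r \vee C_r)\ang{\vY, \vY'}}}
\]
with $\xi_i = \J_\ell(X_i : (B_\ell\ang{\hat\vX_i}, B_r\ang{\vY}))$.

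The key step is to recognize the first expression as the result of enlarging the algebras defining $\xi_i$ by a bi-free pair, so that Proposition \ref{prop:deriv-bi-free-affecting-conjugate-variables} applies directly. Indeed, since $(B_\ell\ang{\hat\vX_i})\vee(C_\ell\ang{\vX'}) = (B_\ell\vee C_\ell)\ang{\hat\vX_i, \vX'}$ and $(B_r\ang{\vY})\vee(C_r\ang{\vY'}) = (B_r\vee C_r)\ang{\vY, \vY'}$, I will invoke that proposition with base pair $(B_\ell\ang{\hat\vX_i}, B_r\ang{\vY})$, variable $X_i$, and adjoined pair $(C_\ell\ang{\vX'}, C_r\ang{\vY'})$. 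The hypothesis needed there is that
\[
	\paren{B_\ell\ang{\vX}, B_r\ang{\vY}} \qqand \paren{C_\ell\ang{\vX'}, C_r\ang{\vY'}}
\]
be bi-free, which is precisely the standing assumption. The proposition then yields both that the combined-system conjugate variable exists if and only if $\xi_i$ does and that, when they exist, they coincide; in particular their $L_2$-norms are equal.

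Running the same argument for each $X_i$, and (using the right-handed form of Proposition \ref{prop:deriv-bi-free-affecting-conjugate-variables}) for each $Y_j$, and then swapping the roles of the two bi-free pairs to handle each primed variable, I obtain that the conjugate variables of the combined system split exactly into those of the $(\vX, \vY)$-subsystem and those of the $(\vX', \vY')$-subsystem, with matching norms. Summing squared norms then gives $\Phi^*(\vX \sqcup \vY : (B_\ell, B_r)) + \Phi^*(\vX' \sqcup \vY' : (C_\ell, C_r))$, as claimed. The finiteness bookkeeping is automatic: because Proposition \ref{prop:deriv-bi-free-affecting-conjugate-variables} is an equivalence, a conjugate variable of the combined system fails to exist precisely when the corresponding subsystem one does, so the equality persists under the convention $\infty = \infty$. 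The only point requiring genuine (but routine) care is the identification of the joined algebras above and the verification that the combined tuples carry no algebraic relations beyond the permitted left--right commutation; this follows from the no-relations hypothesis on each pair together with bi-freeness, and is where I expect the only real attention to be needed.
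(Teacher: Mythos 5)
Your proposal is correct and follows essentially the same route as the paper: the paper's proof likewise applies Proposition \ref{prop:deriv-bi-free-affecting-conjugate-variables} variable by variable (with base pair $(B_\ell\ang{\hat\vX_i}, B_r\ang{\vY})$ and adjoined pair $(C_\ell\ang{\vX'}, C_r\ang{\vY'})$, and symmetrically for the right and primed variables) to identify each conjugate variable of the combined system with the corresponding subsystem one, then sums squared norms. Your added remarks on the algebra identifications and the infinite-value bookkeeping are points the paper leaves implicit, but they are the same argument.
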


\begin{proof}
	By Proposition \ref{prop:deriv-bi-free-affecting-conjugate-variables} 
	\begin{align*}
		\J_\ell(X_i : (B_\ell \langle \hat{\vX}_i \rangle, B_r\langle \vY \rangle)) = \J_\ell(X_i : ((B_\ell \vee C_\ell) \langle \hat{\vX}_i, \vX' \rangle, (B_r \vee C_r) \langle \vY, \vY' \rangle)).
	\end{align*}
	As a similar result holds for the right bi-free conjugate variables and for the $\vX'$s and $\vY'$s, the result easily follows.
\end{proof}

When pairs of operators are not bi-free, at least Proposition \ref{prop:fisher-info-with-bifree-things} holds upto an inequality.

\begin{prop}
	\label{prop:Fisher-supadditive}
	Let $\vX, \vY, \vX', \vY'$ be tuples of self-adjoint operators of lengths $n$, $m$, $n'$, and $m'$ respectively, and let $B_\ell$, $B_r$, $C_\ell$, $C_r$ be self-adjoint subalgebras of a C$^*$-non-commutative probability space $(\fA, \varphi)$ such that this collection has no algebraic relations other than possibly left operators commuting with right operators. 
	Then
	\begin{align*}
		\Phi^*&(\vX, \vX' \sqcup \vY, \vY' : (B_\ell \vee C_\ell, B_r \vee C_r)) \geq \Phi^*(\vX   \sqcup \vY  : (B_\ell, B_r)) + \Phi^*( \vX' \sqcup  \vY' : (C_\ell, C_r)).
	\end{align*}
\end{prop}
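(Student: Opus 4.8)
The plan is to reduce everything to the single-variable projection statement of Lemma \ref{lem:deriv-conjugate-variable-projecting} together with the elementary fact that an orthogonal projection does not increase the $L_2$-norm. First I would dispose of the trivial case: if
\[
\Phi^*(\vX, \vX' \sqcup \vY, \vY' : (B_\ell \vee C_\ell, B_r \vee C_r)) = \infty
\]
there is nothing to prove, so I may assume it is finite, which by Definition \ref{defn:bi-fisher} means that every bi-free conjugate variable appearing in the large system exists.

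Next, fix an index $i \in \{1, \ldots, n\}$ and compare the two left conjugate variables for $X_i$. In the large system this is
\[
\Xi_i = \J_\ell\left(X_i : \left((B_\ell \vee C_\ell)\langle \hat{\vX}_i, \vX'\rangle, (B_r \vee C_r)\langle \vY, \vY'\rangle\right)\right),
\]
while in the small system it is $\xi_i = \J_\ell(X_i : (B_\ell\langle \hat{\vX}_i\rangle, B_r\langle \vY\rangle))$. Since $B_\ell\langle \hat{\vX}_i\rangle \subseteq (B_\ell \vee C_\ell)\langle \hat{\vX}_i, \vX'\rangle$ and $B_r\langle \vY\rangle \subseteq (B_r \vee C_r)\langle \vY, \vY'\rangle$, Lemma \ref{lem:deriv-conjugate-variable-projecting} guarantees that $\xi_i$ exists and equals $P(\Xi_i)$, where $P$ is the orthogonal projection of $L_2(\fA, \varphi)$ onto $L_2((B_\ell \vee B_r)\langle \vX, \vY\rangle, \varphi)$. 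As $P$ is a contraction, $\|\xi_i\|_2 \leq \|\Xi_i\|_2$. The identical argument applied to each $X'_i$, and the right-handed version of Lemma \ref{lem:deriv-conjugate-variable-projecting} applied to each $Y_j$ and each $Y'_j$, shows that every conjugate variable of the two small systems exists and has $L_2$-norm bounded by the norm of the corresponding conjugate variable of the large system.

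Finally I would assemble the inequality. By Remark \ref{rem:remarks-about-fisher-info}(\ref{part:only-one-variable}) the large Fisher information decomposes as a single sum of squared norms ranging over all of $\vX, \vX', \vY, \vY'$, and the sum $\Phi^*(\vX \sqcup \vY : (B_\ell, B_r)) + \Phi^*(\vX' \sqcup \vY' : (C_\ell, C_r))$ is the corresponding sum for the two subsystems over exactly the same index set. Comparing term by term using the norm inequalities just established yields
\[
\Phi^*(\vX \sqcup \vY : (B_\ell, B_r)) + \Phi^*(\vX' \sqcup \vY' : (C_\ell, C_r)) \leq \Phi^*(\vX, \vX' \sqcup \vY, \vY' : (B_\ell \vee C_\ell, B_r \vee C_r)),
\]
as desired. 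The only point requiring care — and the closest thing to an obstacle — is the bookkeeping in the application of Lemma \ref{lem:deriv-conjugate-variable-projecting}: one must check that, for each variable, the relevant small left and right algebras genuinely sit inside the corresponding large left and right algebras, so that the lemma's hypotheses (namely $C_\ell \subseteq B_\ell$ and $C_r \subseteq B_r$ in its notation) are met and the projection lands on the $L_2$-space of the subsystem. Once this nesting is verified, the contraction property of orthogonal projections does all the remaining work.
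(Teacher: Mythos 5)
Your proof is correct and follows essentially the same route as the paper: the paper simply cites part (\ref{part:increasing-algebra-in-fisher-info}) of Remark \ref{rem:remarks-about-fisher-info} (which is exactly Lemma \ref{lem:deriv-conjugate-variable-projecting} plus the contraction property of orthogonal projections that you reprove inline) for each single variable, and then sums via part (\ref{part:only-one-variable}) of that remark. Your explicit handling of the infinite case and the nesting bookkeeping are implicit in the paper's argument, so there is no substantive difference.
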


\begin{proof}
	By Remark \ref{rem:remarks-about-fisher-info} part (\ref{part:increasing-algebra-in-fisher-info})
	\begin{align*}
		 \Phi^*(X_i : ((B_\ell \vee C_\ell) \langle \hat{\vX}_i, \vX'\rangle, (B_r \vee C_r) \langle \vY, \vY'\rangle))  \geq \Phi^*(X_i : (B_\ell \langle \hat{\vX}_i \rangle, B_r \langle \vY \rangle)).
	\end{align*}
	As a similar result holds for the right bi-free conjugate variables and for the $\vX'$'s and $\vY'$'s, the result follows from Remark \ref{rem:remarks-about-fisher-info} part (\ref{part:only-one-variable}).
\end{proof}

Next we endeavour to obtain a bi-free analogue of the Stam Inequality.  To do so, we must first note the following.

\begin{lem}
	\label{lem:ortho-projections-orthogonal-bi-free}
	Let $\vX, \vY, \vX', \vY'$ be tuples of self-adjoint operators of length $n$, $m$, $n'$, and $m'$ respectively, and let $B_\ell$, $B_r$, $C_\ell$, $C_r$ be unital, self-adjoint subalgebras of a C$^*$-non-commutative probability space $(\fA, \varphi)$ such that 
	\[
		(B_\ell  \langle \vX \rangle, B_r \langle \vY\rangle) \qqand (C_\ell \langle \vX' \rangle, C_r\langle \vY'\rangle)
	\]
	are bi-free.  If
	\begin{align*}
		P_0 & : L_2(\A, \varphi) \to \bC1_{\A} \\
		P_1 & : L_2(\A, \varphi) \to L_2((B_\ell \vee B_r)\langle \vX, \vY \rangle, \varphi) \\
		P_2 & : L_2(\A, \varphi) \to L_2((C_\ell \vee C_r)\langle \vX', \vY' \rangle, \varphi)
	\end{align*}
	are the orthogonal projections onto their co-domains, then $P_1P_2 = P_2P_1 = P_0$.
\end{lem}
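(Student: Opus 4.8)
The plan is to reduce everything to a single moment factorisation and then invoke the vanishing of mixed bi-free cumulants. First I would record the trivial containments: since $B_\ell$, $B_r$, $C_\ell$, $C_r$ are unital, $\bC 1_\A \subseteq \mathrm{range}(P_1) \cap \mathrm{range}(P_2)$, so $P_i 1_\A = 1_\A$ and $P_0 P_i = P_i P_0 = P_0$ for $i=1,2$, and moreover $P_0\zeta = \langle \zeta, 1_\A\rangle 1_\A = \varphi(\zeta)1_\A$. Because $P_0$ is self-adjoint and $(P_1P_2)^* = P_2P_1$, it suffices to prove $P_1P_2 = P_0$; then $P_2P_1 = P_0^* = P_0$ follows automatically.

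To prove $P_1 P_2 = P_0$, I would show that the centred part of $\mathrm{range}(P_2)$ is orthogonal to $\mathrm{range}(P_1)$. Concretely, fix $\eta \in L_2(\A,\varphi)$, put $\zeta = P_2\eta$ and $\zeta_0 = \zeta - \varphi(\zeta)1_\A \in \mathrm{range}(P_2)$. If one knows $P_1\zeta_0 = 0$, then $P_1P_2\eta = \varphi(\zeta)1_\A$, and since $\varphi(\zeta) = \langle P_2\eta,1_\A\rangle = \langle\eta, P_21_\A\rangle = \varphi(\eta)$ this gives $P_1P_2\eta = \varphi(\eta)1_\A = P_0\eta$, as desired. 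By density of the generating algebras in their $L_2$-closures and continuity of the inner product, the orthogonality $P_1\zeta_0 = 0$ reduces to the assertion that for every monomial $b \in (C_\ell\vee C_r)\langle\vX',\vY'\rangle$ and every monomial $w \in (B_\ell\vee B_r)\langle\vX,\vY\rangle$ one has $\langle b - \varphi(b)1_\A, w\rangle = 0$; writing $a = w^*$ (which again lies in $(B_\ell\vee B_r)\langle\vX,\vY\rangle$ since the algebras are self-adjoint), this is exactly the moment factorisation
\[
\varphi(ab) = \varphi(a)\varphi(b).
\]

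The heart of the argument is this factorisation, which I would establish through bi-free cumulants. Writing $ab = Z_1\cdots Z_{k+l}$ with $Z_1,\dots,Z_k$ the letters of $a$ (all from the pair $(B_\ell\langle\vX\rangle, B_r\langle\vY\rangle)$) and $Z_{k+1},\dots,Z_{k+l}$ the letters of $b$ (all from $(C_\ell\langle\vX'\rangle, C_r\langle\vY'\rangle)$), and letting $\chi:\set{1,\dots,k+l}\to\slr$ record the left/right type of each letter, the moment-cumulant formula gives $\varphi(ab) = \sum_{\pi\in BNC(\chi)}\kappa_\pi(Z_1,\dots,Z_{k+l})$. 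Since the two pairs are bi-free, \cite{CNS2015-2}*{Theorem 4.3.1} forces $\kappa_\pi = 0$ unless every block of $\pi$ is contained entirely in $\set{1,\dots,k}$ or entirely in $\set{k+1,\dots,k+l}$. The main obstacle is to show that the surviving partitions factor: I expect to verify that in the spine ordering $s_\chi$ the letters of $a$ occupy the outermost positions (the $a$-left letters first and the $a$-right letters last) while the letters of $b$ form a single contiguous block nested strictly between them. This nesting guarantees that a partition with single-pair blocks is bi-non-crossing if and only if its restrictions $\pi_1$ to $\set{1,\dots,k}$ and $\pi_2$ to $\set{k+1,\dots,k+l}$ are each bi-non-crossing, that these restrictions may be chosen independently, and that $\kappa_\pi = \kappa_{\pi_1}\kappa_{\pi_2}$ with the sub-cumulants computed exactly as for the sub-words $a$ and $b$. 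The sum then splits as $\big(\sum_{\pi_1}\kappa_{\pi_1}\big)\big(\sum_{\pi_2}\kappa_{\pi_2}\big) = \varphi(a)\varphi(b)$, completing the proof. This decoupling of the bi-non-crossing condition across the two pairs---a purely combinatorial fact about $s_\chi$---is the one step requiring genuine care; everything else is formal.
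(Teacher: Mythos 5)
Your proposal is correct and follows essentially the same route as the paper: reduce the claim to the moment factorisation $\varphi(ab) = \varphi(a)\varphi(b)$ across the two bi-free pairs, establish it by vanishing of mixed bi-free cumulants together with the observation that bi-non-crossing partitions with unmixed blocks decompose (since the letters of $b$ form a nested interval in the spine ordering), and conclude by orthogonality of the centred subspaces. The only cosmetic difference is that you handle one product order and obtain the other by taking adjoints, whereas the paper simply notes the factorisation in both orders.
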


\begin{proof}
	First note that if
	\[
		Z \in (B_\ell \vee B_r)\langle \vX, \vY \rangle \qand Z' \in (C_\ell \vee C_r)\langle \vX', \vY' \rangle
	\]
	then bi-freeness implies
	\[
		\varphi(ZZ') = \varphi(Z'Z) = \varphi(Z) \varphi(Z').
	\]
	This can easily be seen via bi-non-crossing partitions as bi-freeness implies a cumulant of $ZZ'$ corresponding to a bi-non-crossing partition is non-zero if and only if it decomposes into a bi-non-crossing partition on $Z$ union a bi-non-crossing partition on $Z'$.

	As the above implies that
	\[
		L_2((B_\ell \vee B_r)\langle \vX, \vY \rangle, \varphi) \ominus L_2(\bC, \varphi) \qand L_2((C_\ell \vee C_r)\langle \vX', \vY' \rangle, \varphi)\ominus L_2(\bC, \varphi)
	\]
	are orthogonal subspaces by taking $L_2$-limits, the result follows.
\end{proof}

\begin{prop}[Bi-Free Stam Inequality]
	\label{prop:stam-inequality}
	Let $\vX, \vX'$ be $n$-tuples of self-adjoint operators, let $\vY, \vY'$ be $m$-tuples of self-adjoint operators, and let $B_\ell$, $B_r$, $C_\ell$, $C_r$ be unital, self-adjoint subalgebras of a C$^*$-non-commutative probability space $(\fA, \varphi)$ such that 
	\[
		(B_\ell  \langle \vX \rangle, B_r \langle \vY\rangle) \qqand (C_\ell \langle \vX' \rangle, C_r\langle \vY'\rangle)
	\]
	are bi-free  and the pairs have no algebraic relations other than possibly left operators commuting with right operators. Then
	\begin{align*}
		\left(\Phi^*( \vX+ \vX' \sqcup  \vY + \vY' : (B_\ell \vee C_\ell, B_r \vee C_r))   \right)^{-1}  \geq \left(\Phi^*(\vX   \sqcup \vY   : (B_\ell, B_r))\right)^{-1} + \left( \Phi^*( \vX' \sqcup  \vY' : (C_\ell, C_r)) \right)^{-1}.
	\end{align*}
\end{prop}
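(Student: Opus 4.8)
The plan is to mirror the classical Stam inequality: realize each conjugate variable of the summed system as a projection of the conjugate variables of the two constituent bi-free systems, and then combine orthogonality with a single one-parameter convexity optimization. First I would fix notation, writing $\Phi_1 = \Phi^*(\vX \sqcup \vY : (B_\ell, B_r))$ and $\Phi_2 = \Phi^*(\vX' \sqcup \vY' : (C_\ell, C_r))$, and letting $\xi_i, \eta_j$ (resp. $\xi'_i, \eta'_j$) denote the left/right bi-free conjugate variables of the first (resp. second) system and $\zeta_i, \theta_j$ those of the summed system. If either $\Phi_1$ or $\Phi_2$ is infinite the stated inequality is immediate, since the corresponding reciprocal is $0$ and Proposition \ref{prop:deriv-sums-affecting-conjugate-variables} still produces the summed conjugate variables as a projection of the finite side; so I may assume all conjugate variables exist.

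The structural heart of the argument is that, applying Proposition \ref{prop:deriv-sums-affecting-conjugate-variables} once from each side (the summed algebra, and hence the projection $P$ onto $L_2(((B_\ell\vee C_\ell)\vee(B_r\vee C_r))\langle\vX+\vX',\vY+\vY'\rangle,\varphi)$, is symmetric in the two systems), one obtains
\[
	\zeta_i = P(\xi_i) = P(\xi'_i) \qqand \theta_j = P(\eta_j) = P(\eta'_j).
\]
Since $P$ is linear, for every $t \in [0,1]$ this gives $\zeta_i = P(t\xi_i + (1-t)\xi'_i)$, whence $\|\zeta_i\|_2 \le \|t\xi_i + (1-t)\xi'_i\|_2$ because $P$ is an orthogonal projection, and similarly for $\theta_j$. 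The conjugate variables are centered ($\kappa_\ell(\xi_i) = \varphi(\xi_i) = 0$, and likewise for the others) and lie in the subspaces $L_2((B_\ell\vee B_r)\langle\vX,\vY\rangle,\varphi)$ and $L_2((C_\ell\vee C_r)\langle\vX',\vY'\rangle,\varphi)$ respectively, so Lemma \ref{lem:ortho-projections-orthogonal-bi-free} forces $\langle\xi_i,\xi'_i\rangle = 0 = \langle\eta_j,\eta'_j\rangle$; the cross terms therefore drop out, leaving
\[
	\|\zeta_i\|_2^2 \le t^2\|\xi_i\|_2^2 + (1-t)^2\|\xi'_i\|_2^2 \qqand \|\theta_j\|_2^2 \le t^2\|\eta_j\|_2^2 + (1-t)^2\|\eta'_j\|_2^2.
\]

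Summing over $i$ and $j$ then yields $\Phi^*(\vX+\vX'\sqcup\vY+\vY' : (B_\ell\vee C_\ell, B_r\vee C_r)) \le t^2\Phi_1 + (1-t)^2\Phi_2$ for every $t \in [0,1]$, and I would finish by elementary calculus: the map $t \mapsto t^2\Phi_1 + (1-t)^2\Phi_2$ is minimized at $t = \Phi_2/(\Phi_1+\Phi_2)$ with value $\Phi_1\Phi_2/(\Phi_1+\Phi_2)$, so taking reciprocals gives exactly the claimed bound. I do not anticipate a genuine obstacle, since all the probabilistic content is already packaged into Proposition \ref{prop:deriv-sums-affecting-conjugate-variables} and Lemma \ref{lem:ortho-projections-orthogonal-bi-free}. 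The only points demanding care are that a \emph{single} parameter $t$ must be used simultaneously for all the variables—so that the optimization is carried out once on the totals rather than variable-by-variable, which is precisely what makes the reciprocals add—and that the infinite-Fisher-information edge cases be handled so the reciprocals are interpreted correctly.
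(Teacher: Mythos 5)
Your proposal is correct, and it rests on exactly the same two structural pillars as the paper's proof: Proposition \ref{prop:deriv-sums-affecting-conjugate-variables} (the conjugate variables of the summed system are the projections, under $P_3$, of the conjugate variables of \emph{either} constituent system) and Lemma \ref{lem:ortho-projections-orthogonal-bi-free} (which, combined with the centredness $\varphi(\xi_i)=0$, forces $\langle \xi_i,\xi'_i\rangle = 0 = \langle \eta_j,\eta'_j\rangle$). Your handling of the infinite cases also matches the paper's. Where you genuinely diverge is the concluding Hilbert-space computation. The paper never introduces a parameter: it stacks the conjugate variables into vectors $h_1,h_2,h_3 \in (L_2(\fA,\varphi))^{\oplus(n+m)}$, writes the orthogonal decompositions $f_k = h_k - h_3 \perp h_3$, and exploits the identity $0 = \langle h_1,h_2\rangle = \|h_3\|_2^2 + \langle f_1,f_2\rangle$ together with Cauchy--Schwarz to get $\|h_3\|_2^4 \le (\|h_1\|_2^2-\|h_3\|_2^2)(\|h_2\|_2^2-\|h_3\|_2^2)$, from which the reciprocal inequality follows by expanding. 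You instead use $\zeta_i = P(t\xi_i+(1-t)\xi'_i)$, contractivity of the orthogonal projection, orthogonality to kill the cross terms, and then optimize $t\mapsto t^2\Phi_1+(1-t)^2\Phi_2$ at $t = \Phi_2/(\Phi_1+\Phi_2)$. This is the classical ``convexity'' proof of Stam-type inequalities; it buys a slightly more transparent finish (only projection contractivity plus elementary calculus, no quadratic rearrangement), at the cost of introducing and optimizing the auxiliary parameter, and your observation that a \emph{single} $t$ must be used across all $n+m$ variables is precisely the point that makes the totals, rather than the individual terms, satisfy the reciprocal additivity. One cosmetic remark for both arguments: passing to reciprocals tacitly uses $\Phi_1,\Phi_2 > 0$, which is automatic when $n+m\ge 1$ and the Fisher informations are finite, since $1 = \kappa_{\ell,\ell}(X_i,\xi_i) = \varphi(X_i\xi_i)$ forces $\xi_i \neq 0$.
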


\begin{proof}
	If both of
	\[
		\Phi^*(\vX   \sqcup \vY  : (B_\ell, B_r)) \qand  \Phi^*( \vX' \sqcup  \vY' : (C_\ell, C_r)) 
	\]
	are infinite then the result is immediate.  If exactly one is infinite then the desired inequality is equivalent to
	\begin{align*}
		\Phi^*(\vX + \vX' \sqcup \vY + \vY' : (B_\ell \vee C_\ell, B_r \vee C_r)) \leq  \Phi^*(\vX   \sqcup \vY   : (B_\ell, B_r))
	\end{align*}
	(when $\Phi^*( \vX' \sqcup  \vY' : (C_\ell, C_r))  = \infty$) and thus easily follows from Proposition \ref{prop:deriv-sums-affecting-conjugate-variables} as a projection onto a subspace decreases the $L_2$-norm.  Thus we will assume that both relative bi-free Fisher informations are finite.

	Let $P_0, P_1,$ and $P_2$ be as in Lemma~\ref{lem:ortho-projections-orthogonal-bi-free}, and take $P_3$ to be the orthogonal projection onto the algebra generated by the sums of the variables:
	\begin{align*}
		P_0 & : L_2(\A, \varphi) \to \bC1_{\A} \\
		P_1 & : L_2(\A, \varphi) \to L_2((B_\ell \vee B_r)\langle \vX, \vY \rangle, \varphi) \\
		P_2 & : L_2(\A, \varphi) \to L_2((C_\ell \vee C_r)\langle \vX', \vY' \rangle, \varphi) \\
		P_3 & : L_2(\A, \varphi) \to L_2(((B_\ell \vee C_\ell) \vee (B_r  \vee C_r))\langle \vX + \vX', \vY + \vY' \rangle, \varphi).
	\end{align*}
	By Lemma \ref{lem:ortho-projections-orthogonal-bi-free}, $P_1P_2 = P_2P_1 = P_0$.

	For notational simplicity, let $X''_i = X_i + X'_i$ for all $i$, $Y''_j = Y_j + Y'_j$ for all $j$, and let
	\begin{align*}
		\xi_{1,i} &= \J_\ell(X_i : (B_\ell \langle \hat{\vX}_i\rangle, B_r\langle \vY \rangle)), \\
		\xi_{2,i} &= \J_\ell(X'_i : (C_\ell \langle \hat{\vX}'_i\rangle, C_r\langle \vY' \rangle)), \\
		\xi_{3,i} &= \J_\ell(X''_i : ((B_\ell\vee C_\ell) \langle \hat{\vX}''_i\rangle, (B_r\vee C_r)\langle \vY'' \rangle)), \\
		\eta_{1,j} &= \J_r(Y_j : (B_\ell \langle\vX\rangle, B_r\langle \hat{\vY}_j \rangle)), \\
		\eta_{2,j} &= \J_r(Y'_j : (C_\ell \langle \vX'\rangle, C_r\langle \hat{\vY}'_j \rangle)), \text{ and} \\
		\eta_{3,j} &= \J_r( Y''_j : ((B_\ell\vee C_\ell) \langle \vX''\rangle, (B_r\vee C_r)\langle \hat{\vY}''_j \rangle)).
	\end{align*}
	By Proposition \ref{prop:deriv-sums-affecting-conjugate-variables} we have that
	\[
		\xi_{3, i} = P_3(\xi_{1, i}) = P_3(\xi_{2, i}) \qqand \eta_{3, j} = P_3(\eta_{1, j}) = P_3(\eta_{2, j}).
	\]
	Since $P_1P_2 = P_2P_1 = P_0$, $\langle 1, \xi_{k,i}\rangle = 0 = \langle 1, \eta_{k,j}\rangle$, and $P_k(\xi_{k,i}) = \xi_{k,i}$ and $P_k(\eta_{k,j}) = \eta_{k,j}$ for all $k=1,2$, $1 \leq i \leq n$, and $1 \leq j \leq m$, we obtain that
	\[
		\langle \xi_{1, i}, \xi_{2,i} \rangle = 0 = \langle \eta_{1,j}, \eta_{2,j}\rangle
	\]
	for all $1 \leq i \leq n$, $1 \leq j \leq m$.

	Let $\zeta_{k,i} = \xi_{k,i} - \xi_{3,i} = (I - P_3)(\xi_{k,i})$ and $\theta_{k,j} = \eta_{k,j} - \eta_{3,j} = (I - P_3)(\eta_{k,j})$ for all $1 \leq i \leq n$, $1 \leq j \leq m$, and $k \in \{1,2\}$.  Clearly
	\[
		\xi_{k,i} = \xi_{3,i} + \zeta_{k,i}, \quad \xi_{3,i} \bot \zeta_{k,i}, \quad \eta_{k,j} = \eta_{3,j} + \theta_{k,j}, \qand \eta_{3,j} \bot \theta_{k,j}.
	\]
	Hence if for $k \in \{1,2, 3\}$ we define
	\[
		h_k = (\xi_{k,1}, \ldots, \xi_{k,n}, \eta_{k,1}, \ldots, \eta_{k,m}) \in (L_2(\A, \varphi))^{\oplus (n+m)}
	\]
	and for $k \in \{1,2\}$ we define
	\[
		f_k = (\zeta_{k,1}, \ldots, \zeta_{k,n}, \theta_{k,1}, \ldots, \theta_{k,m}) \in (L_2(\A, \varphi))^{\oplus (n+m)},
	\]
	then
	\[
		h_3 + f_1 = h_1, \quad h_3 + f_2 = h_2, \quad h_3 \bot f_1, \quad h_3 \bot f_2, \qand h_1 \bot h_2.
	\]
	Thus
	\[
		0 = \langle h_1, h_2 \rangle = \langle h_3, h_3\rangle + \langle f_1, f_2\rangle
	\]
	so that
	\begin{align*}
		\left\|h_3\right\|^4_2 &\leq \left\|f_1\right\|^2_2 \left\|f_2\right\|_2^2 \\
		&= \left( \left\|h_1\right\|^2_2 - \left\|h_3\right\|^2_2\right)\left( \left\|h_2\right\|^2_2 - \left\|h_3\right\|^2_2\right) \\
		&= \left\|h_1\right\|^2_2 \left\|h_2\right\|^2_2 - \left\|h_3\right\|^2_2 \left( \left\|h_1\right\|^2_2 + \left\|h_2\right\|^2_2\right) + \left\|h_3\right\|^4_2.
	\end{align*}
	This implies
	\[
		\left\|h_1\right\|^2_2 \left\|h_2\right\|^2 \geq \left\|h_3\right\|_2^2\left(\left\|h_1\right\|^2_2 + \left\|h_2\right\|^2_2\right).
	\]
	Hence
	\[
		\left( \left\|h_3\right\|^2_2\right)^{-1} \geq \left( \left\|h_2\right\|^2_2\right)^{-1} + \left( \left\|h_1\right\|^2_2\right)^{-1},
	\]
	which is the desired inequality.
\end{proof}

Next we note that the bi-free Fisher information behaves well with respect to specific transformations.
\begin{prop}
	\label{prop:fisher-information-unaffected-by-orthogonal-transform}
	Let $\vX, \vY$ be self-adjoint operators of length $n$ and $m$ respectively, and let $B_\ell$, $B_r$ be self-adjoint subalgebras of a C$^*$-non-commutative probability space $(\fA, \varphi)$ with no algebraic relations except for possibly left operators commuting with right operators.  Let $A = [a_{i,j}] \in \M_n(\bR)$ be an invertible matrix and for each $i \in \{1,\ldots, n\}$ let
	\[
		X'_i = \sum_{k=1}^n a_{i,k} X_k.
	\]
	Then for all $1 \leq k \leq n$,
	\begin{align*}
		 \J_\ell(X_k: (B_\ell \langle \hat{\vX}_k\rangle, B_r\langle \vY\rangle ))  = \sum_{i=1}^n a_{i,k} \J_\ell(X'_i : (B_\ell \langle \hat{\vX}'_i\rangle, B_r\langle \vY \rangle)).
	\end{align*}
	In particular, if $A$ is an orthogonal matrix then
	\[
		\Phi^*(\vX \sqcup \vY : (B_\ell, B_r)) = \Phi^*(\vX' \sqcup \vY : (B_\ell, B_r)).
	\]
	For general $A$, we have that
	\begin{align*}
		\left(\max\{\left\|A^{-1}\right\|, 1\}\right)^{-2} \Phi^*(\vX' \sqcup \vY : (B_\ell, B_r)) &\leq \Phi^*(\vX \sqcup \vY : (B_\ell, B_r)) \\
		& \leq \left(\max\{\left\|A\right\|, 1\}\right)^2 \Phi^*(\vX' \sqcup \vY : (B_\ell, B_r)).
	\end{align*}

	A similar result holds on the right.
\end{prop}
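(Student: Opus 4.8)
The plan is to reduce the whole statement to a chain rule for the left bi-free difference quotient under the linear substitution $X'_i = \sum_k a_{i,k}X_k$, and then to transfer that identity to the conjugate variables via the moment characterization of Remark~\ref{rem:bi-conjugate-variable-via-moment-formula}. The key observation is that, because $A$ is invertible, the left algebra $B_\ell\ang{\vX} = B_\ell\ang{\vX'}$ and the right algebra $B_r\ang{\vY}$ are unchanged by the substitution, so the homomorphisms $T_\ell$ and $C$ appearing in the definitions of $\partial_{\ell, X_k}$ and $\partial_{\ell, X'_i}$ are literally the same maps; only the innermost free derivation differs.

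First I would prove the chain rule
\[
	\partial_{\ell, X'_i} = \sum_{k=1}^n (A^{-1})_{k,i}\,\partial_{\ell, X_k}.
\]
Since $(C\otimes 1)\circ(1\otimes T_\ell)$ is linear and common to both sides, it suffices to show $\partial_{X'_i} = \sum_k (A^{-1})_{k,i}\partial_{X_k}$ for the underlying free derivations. Both are derivations into the same $\A$-bimodule, so I need only check they agree on generators: each kills $B_\ell$, $B_r$, and $\vY$, and on $X_l = \sum_j (A^{-1})_{l,j}X'_j$ both produce $(A^{-1})_{l,i}\,1\otimes1$. The fact that $\partial_{X_k}$ and $\partial_{X'_i}$ are defined relative to different base algebras (missing $X_k$ versus missing $X'_i$) causes no difficulty precisely because the substitution is linear and invertible, so the claimed combination reproduces $\partial_{X'_i}$ on every generator.

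Next I would transfer this to conjugate variables. Assuming the $\xi_k = \J_\ell(X_k : (B_\ell\ang{\hat{\vX}_k}, B_r\ang{\vY}))$ exist, Remark~\ref{rem:bi-conjugate-variable-via-moment-formula} gives, for every $Z \in (B_\ell\vee B_r)\ang{\vX,\vY}$,
\begin{align*}
	\varphi\paren{Z\sum_k (A^{-1})_{k,i}\xi_k} &= \sum_k (A^{-1})_{k,i}\,(\varphi\otimes\varphi)(\partial_{\ell, X_k}(Z)) \\
	&= (\varphi\otimes\varphi)(\partial_{\ell, X'_i}(Z)),
\end{align*}
so $\sum_k (A^{-1})_{k,i}\xi_k$ satisfies the defining relation of $\J_\ell(X'_i : (B_\ell\ang{\hat{\vX}'_i}, B_r\ang{\vY}))$, which therefore exists. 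Running the argument with $A^{-1}$ in place of $A$ gives the converse, so existence of the two families is equivalent, and inverting the relation using $A^{-1}A = I$ yields the stated identity $\J_\ell(X_k) = \sum_i a_{i,k}\J_\ell(X'_i)$. The right conjugate variables $\eta_j$ are literally unchanged, as their defining base $B_\ell\ang{\vX} = B_\ell\ang{\vX'}$ is unaffected.

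Finally I would read off the Fisher-information bounds, writing $\xi'_i = \J_\ell(X'_i)$. For orthogonal $A$ one has $(A^{-1})_{k,i} = a_{i,k}$, so a direct expansion gives $\sum_i \left\|\xi'_i\right\|_2^2 = \sum_{k,l}(A^TA)_{k,l}\ang{\xi_k,\xi_l} = \sum_k \left\|\xi_k\right\|_2^2$ (the $a_{i,k}$ being real), and adding the common term $\sum_j\left\|\eta_j\right\|_2^2$ gives the claimed equality. For general $A$, I would regard $(\xi_1,\ldots,\xi_n)$ and $(\xi'_1,\ldots,\xi'_n)$ as vectors in $L_2(\fA,\varphi)^{\oplus n}$; the identities $\xi_k = \sum_i a_{i,k}\xi'_i$ and $\xi'_i = \sum_k (A^{-1})_{k,i}\xi_k$ express each as the image of the other under $A^T\otimes I$ and $(A^{-1})^T\otimes I$, operators of norm $\left\|A\right\|$ and $\left\|A^{-1}\right\|$. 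Hence $\sum_k\left\|\xi_k\right\|_2^2 \leq \left\|A\right\|^2\sum_i\left\|\xi'_i\right\|_2^2$ and $\sum_i\left\|\xi'_i\right\|_2^2 \leq \left\|A^{-1}\right\|^2\sum_k\left\|\xi_k\right\|_2^2$; adding the untouched term $\sum_j\left\|\eta_j\right\|_2^2$ and using $a\alpha+\beta \leq \max\{a,1\}(\alpha+\beta)$ produces both inequalities, with the infinite-Fisher-information cases covered by the existence equivalence. The one step needing genuine care is the chain rule: one must confirm the substitution identity for the free derivations despite their differing base algebras, and that $T_\ell$ and $C$ really do coincide for the two systems; everything after that is the moment-formula transfer and routine linear algebra.
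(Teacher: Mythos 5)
Your proof is correct, and its overall skeleton --- transfer the left conjugate variables linearly, note that the right ones are untouched because $B_\ell\ang{\vX} = B_\ell\ang{\vX'}$, then do $L_2$-norm estimates --- matches the paper's. The one place where you take a genuinely different route is the verification of the identity $\J_\ell(X'_i : (B_\ell\ang{\hat{\vX}'_i}, B_r\ang{\vY})) = \sum_{k}(A^{-1})_{k,i}\,\J_\ell(X_k : (B_\ell\ang{\hat{\vX}_k}, B_r\ang{\vY}))$: the paper deduces it in one line from the multilinearity of bi-free cumulants (each defining cumulant condition transfers because $\kappa_{\ell,\ell}(X_l,\cdot)$ is linear and $\sum_i a_{i,k}(A^{-1})_{l,i} = \delta_{l,k}$), whereas you first prove the chain rule $\partial_{\ell, X'_i} = \sum_k (A^{-1})_{k,i}\,\partial_{\ell, X_k}$ --- legitimate since $T_\ell$ and $C$ depend only on the unchanged left and right algebras, and the underlying free derivations agree on a generating set --- and then invoke the moment characterization of Remark \ref{rem:bi-conjugate-variable-via-moment-formula}. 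The two verifications are equivalent (the paper itself establishes the equivalence of the cumulant and moment descriptions); the cumulant route is shorter, while your chain rule isolates a reusable algebraic fact about the difference quotients and makes the existence equivalence, and hence the infinite-Fisher-information cases, completely explicit. Your norm computations --- the $(A^{T}A)_{k,l}$ expansion in the orthogonal case, and in the general case the bound via $A^{T}\otimes I$ acting on $L_2(\fA,\varphi)^{\oplus n}$ together with $a\alpha+\beta \leq \max\{a,1\}(\alpha+\beta)$ --- are precisely the ``easy $L_2$-norm computations'' the paper leaves to the reader.
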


\begin{proof}
	As $A$ is an invertible matrix, we see that
	\[
		L_2((B_\ell \vee B_r)\langle \vX', \vY\rangle, \varphi) = L_2((B_\ell \vee B_r)\langle \vX, \vY\rangle, \varphi).
	\]
	The equation for the conjugate variables then follows by the linearity of the cumulants.  The remainder of the proof follows from easy $L_2$-norm computations.
\end{proof}

We note Proposition \ref{prop:fisher-information-unaffected-by-orthogonal-transform} only applies only to matrices acting on either just the left operators or just the right operators.
Due to the rigidity of the bi-free cumulants only accepting left operators in left entries and right operators in right entries (except for the final entry) it is unclear how such a transformation would affect the bi-free Fisher information.

Next we obtain a lower bound for the bi-free Fisher information based on the the variance of each operator.

\begin{prop}[Bi-Free Cramer-Rao Inequality]
	\label{prop:cramer-rao}
	Let $\vX, \vY$ be tuples of self-adjoint operators of length $n$ and $m$ respectively, and let $B_\ell$, $B_r$ be unital, self-adjoint subalgebras of a C$^*$-non-commutative probability space $(\fA, \varphi)$ with no algebraic relations except for possibly left operators commuting with right operators.  Then
	\[
		\Phi^*(\vX \sqcup \vY : (B_\ell, B_r)) \varphi\left(\sum^n_{i=1} X_i^2 + \sum^m_{j=1} Y_j^2 \right) \geq (n+m)^2.
	\]
	Moreover, equality holds if $\vX, \vY$ are centred semicircular distributions of the same variance and $\{(B_\ell, B_r)\} \cup \{(X_i, 1)\}^n_{i=1}\cup \{(1,Y_j)\}^m_{j=1}$ is bi-free.  The converse holds when $B_\ell = B_r = \bC$.
\end{prop}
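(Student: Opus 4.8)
The plan is to package the conjugate variables and the variables themselves into two single families and reduce the whole estimate to one orthogonality relation together with a double Cauchy--Schwarz. Write $W_1,\dots,W_{n+m}$ for the self-adjoint operators $X_1,\dots,X_n,Y_1,\dots,Y_m$ and $\zeta_1,\dots,\zeta_{n+m}$ for the conjugate variables $\xi_1,\dots,\xi_n,\eta_1,\dots,\eta_m$, so that by definition $\Phi^*(\vX\sqcup\vY:(B_\ell,B_r))=\sum_a\|\zeta_a\|_2^2$ and $\varphi\left(\sum_i X_i^2+\sum_j Y_j^2\right)=\sum_a\|W_a\|_2^2$ (the inequality being trivial when the Fisher information is infinite, so we may assume all $\zeta_a$ exist).

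First I would record the relevant inner products. Using the moment description of the conjugate variables in Remark \ref{rem:bi-conjugate-variable-via-moment-formula} together with $\partial_{\ell,X_i}(X_i)=1\otimes1$ and its right analogue, and the GNS identity $\varphi(W_a\zeta_a)=\langle W_a\zeta_a,1\rangle=\langle\zeta_a,W_a^*1\rangle=\langle\zeta_a,W_a\rangle$ (valid since each $W_a$ is self-adjoint), one obtains $\langle\zeta_a,W_a\rangle=(\varphi\otimes\varphi)(\partial(W_a))=1$ for every $a$. (The off-diagonal relations $\langle\zeta_a,W_b\rangle=0$ for $a\neq b$ follow for the same reason but are not needed here.) A termwise Cauchy--Schwarz followed by a Cauchy--Schwarz applied to the sequences $(\|\zeta_a\|_2)_a$ and $(\|W_a\|_2)_a$ then gives
\[
n+m=\sum_{a}\langle\zeta_a,W_a\rangle\le\sum_{a}\|\zeta_a\|_2\|W_a\|_2\le\Big(\sum_{a}\|\zeta_a\|_2^2\Big)^{1/2}\Big(\sum_{a}\|W_a\|_2^2\Big)^{1/2},
\]
and squaring yields $\Phi^*(\vX\sqcup\vY:(B_\ell,B_r))\,\varphi\left(\sum_i X_i^2+\sum_j Y_j^2\right)\ge(n+m)^2$.

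For the equality statements I would proceed as follows. If $\vX,\vY$ are centred semicirculars of common variance $\sigma^2$ and $\{(B_\ell,B_r)\}\cup\{(X_i,1)\}_i\cup\{(1,Y_j)\}_j$ is bi-free, then Example \ref{exam:bi-free-conjugate-independence} together with Proposition \ref{prop:deriv-bi-free-affecting-conjugate-variables} (equivalently, Example \ref{exam:Fisher-bi-free-central} with covariance matrix $A=\sigma^2 I_{n+m}$) gives $\zeta_a=\sigma^{-2}W_a$, so $\Phi^*=(n+m)\sigma^{-2}$ and $\varphi(\sum_a W_a^2)=(n+m)\sigma^2$, and the product is exactly $(n+m)^2$. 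For the converse, assume $B_\ell=B_r=\bC$ and that equality holds. Tracing the two Cauchy--Schwarz equalities forces $\zeta_a=c_aW_a$ with $c_a=\|W_a\|_2^{-2}$ (each $W_a\neq0$, since finite Fisher information forces positive variances) and $c_a$ independent of $a$; hence there is a single $\lambda>0$ with $\xi_i=\lambda X_i$, $\eta_j=\lambda Y_j$, and all variances equal to $\sigma^2=\lambda^{-1}$. Substituting these into the defining cumulant conditions of Definition \ref{defn:bi-free-conjugate variables}: the relation $\kappa_\ell(\zeta_a)=0$ gives centredness; the vanishing of all cumulants of order $\ge2$ ending in $\zeta_a$ gives the vanishing of all bi-free cumulants of order $\ge3$ among $W_1,\dots,W_{n+m}$; and the second-order conditions (which for $B_\ell=B_r=\bC$ include $\kappa_{\ell,\ell}(X_k,\xi_i)=0$ for $k\neq i$ and $\kappa_{r,\ell}(Y_j,\xi_i)=0$, together with their right-hand analogues) force the covariance matrix to equal $\sigma^2 I_{n+m}$. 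A system whose only nonvanishing bi-free cumulants are second order with scalar diagonal covariance is precisely a bi-free semicircular family of variance $\sigma^2$ with $\{(X_i,1)\}_i\cup\{(1,Y_j)\}_j$ bi-free.

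I expect the converse to be the main obstacle. The delicate point is to confirm that, in the case $B_\ell=B_r=\bC$, the conjugate-variable conditions genuinely supply \emph{every} mixed second-order cumulant needed to conclude simultaneously that the variances all coincide and that the family is fully bi-free, in particular the $X$--$Y$ mixed cumulants, which enter only through the condition $\kappa_{r,\ell}(y,\xi_i)=0$ for $y\in B_r\langle\vY\rangle$. One must also verify carefully that the equality analysis of the double Cauchy--Schwarz pins down a single common proportionality constant $\lambda$ rather than $a$-dependent constants, which is exactly what feeds the equal-variance conclusion.
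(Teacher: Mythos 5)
Your proposal is correct and follows essentially the same route as the paper: the paper also pairs each variable with its conjugate variable via $\varphi(X_i\,\J_\ell(X_i:B_{\ell,i}))=1$, applies Cauchy--Schwarz (in the single direct-sum form, which is equivalent to your two-step version), characterizes equality by $\J_\ell(X_i:B_{\ell,i})=\lambda X_i$ and $\J_r(Y_j:B_{r,j})=\lambda Y_j$ for one common $\lambda$, proves the forward equality case from Proposition \ref{prop:deriv-bi-free-affecting-conjugate-variables} and Lemma \ref{lem:deriv-conjugate-variable-scaling}, and deduces the converse (for $B_\ell=B_r=\bC$) from the cumulant conditions of Definition \ref{defn:bi-free-conjugate variables}. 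The two worries you raise do resolve exactly as you suspect: the second Cauchy--Schwarz equality forces the proportionality constants $c_a$ to coincide, and the conjugate-variable conditions (using both the $\xi_i$'s and the $\eta_j$'s, with single variables as the entries $Z_p$) do supply every mixed second- and higher-order cumulant, which is precisely the paper's one-sentence justification spelled out.
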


\begin{proof}
	Let
	\begin{align*}
		B_{\ell, i} &= (B_\ell \langle \hat{\vX}_i \rangle, B_r\langle \vY\rangle)  \text{ and}\\
		B_{r, j} &= (B_\ell \langle \vX \rangle, B_r\langle \hat{\vY}_j\rangle).
	\end{align*}
	Then
	\begin{align*}
		&  \Phi^*(\vX \sqcup \vY : (B_\ell, B_r)) \varphi\left(\sum^n_{i=1} X_i^2 + \sum^m_{j=1} Y_j^2 \right)  \\
		& = \left(\sum^n_{i=1} \left\|\J_\ell(X_i : B_{\ell, i})\right\|^2_2 + \sum^m_{j=1} \left\|\J_r(Y_j : B_{r, j})\right\|^2_2 \right) \left(\sum^n_{i=1} \left\|X_i\right\|_2^2 + \sum^m_{j=1} \left\|Y_j\right\|_2^2 \right)  \\
		& \geq \left|\sum^n_{i=1} \varphi(X_i \J_\ell(X_i : B_{\ell, i})) + \sum^m_{j=1} \varphi(Y_j \J_r(Y_j : B_{r, j}))  \right|^2 = (n+m)^2
	\end{align*}
	by the Cauchy-Schwarz inequality.  Moreover, equality holds if and only if there exists a $\lambda \in \bR \setminus \{0\}$ such that
	\[
		\J_\ell(X_i : B_{\ell, i}) = \lambda X_i \qqand \J_r(Y_j : B_{r, j}) = \lambda Y_j
	\]
	for all $1 \leq i \leq n$ and $1 \leq j \leq m$.

	Suppose $\vX, \vY$ are centred semicircular distributions of the same variance, say $\lambda^{-1}$, and $\{(B_\ell, B_r)\}\cup \{(X_i, 1)\}^n_{i=1}\cup \{(1,Y_j)\}^m_{j=1}$ is bi-free. By Proposition \ref{prop:deriv-bi-free-affecting-conjugate-variables} and Lemma \ref{lem:deriv-conjugate-variable-scaling},
	\begin{align*}
		\J(X_i : B_{\ell, i}) &= \J_\ell(X_i : (\bC, \bC)) = \lambda^{\frac{1}{2}} \J_\ell(\lambda^{\frac{1}{2}} X_i : (\bC, \bC)) =  \lambda^{\frac{1}{2}}\left(\lambda^{\frac{1}{2}} X_i   \right) = \lambda X_i.
	\end{align*}
	Similarly $\J(Y_j : B_{r, j}) = \lambda Y_j$ so equality occurs in this case as desired.

	To see the converse if $B_\ell = B_r = \bC$, notice that if 
	\[
		\J(X_i : B_{\ell, i}) = \lambda X_i \qqand \J(Y_j : B_{r, j}) = \lambda Y_j
	\]
	for all $1 \leq i \leq n$ and $1 \leq j \leq m$, then the definition of the conjugate variables gives relations on the bi-free cumulants of $(\{X_i\}_{i=1}^n, \{Y_j\}^m_{j=1})$ which imply $\vX, \vY$ are centred semicircular distributions of the same variance $\lambda^{-1}$ and $ \{(X_i, 1)\}^n_{i=1}\cup \{(1,Y_j)\}^m_{j=1}$ is bi-free.
\end{proof}

\begin{rem}
	The reason that the converse of the last statement in Proposition~\ref{prop:cramer-rao} may fail when $B_\ell$ and $B_r$ are not both $\bC$ comes down to the fact that knowing the behaviour of conjugate variable does not tell us about bi-free cumulants with elements of $B_\ell$ or $B_r$ in the final entry.  In the free setting this difficulty is absent due to the traciality of the state.
\end{rem}

In order to perform many computations with the bi-free Fisher information, we require an understanding of some analytical aspects.  Thus we will prove the following.

\begin{prop} 
	\label{prop:fisher-strong-convergence-bounds}
	Let $\vX, \vY$ be tuples of self-adjoint operators of length $n$ and $m$ respectively, and let $B_\ell$, $B_r$ be unital, self-adjoint subalgebras of a C$^*$-non-commutative probability space $(\fA, \varphi)$ with no algebraic relations except for possibly left operators commuting with right operators.
	Suppose further that for each $k \in \bN$ that $\vX^{(k)}, \vY^{(k)}$ are tuples of self-adjoint elements in $\fA$ of length $n$ and $m$ respectively such that
	\begin{align*}
		& \limsup_{k \to \infty} \left\|X^{(k)}_i\right\| < \infty, \\
		& \limsup_{k \to \infty} \left\|Y^{(k)}_j\right\| < \infty, \\
		& s\text{-}\lim_{k \to \infty} X^{(k)}_i = X_i, \text{ and} \\
		& s\text{-}\lim_{k \to \infty} Y^{(k)}_j = Y_j
	\end{align*}
	for all $1 \leq i \leq n$ and $1 \leq j \leq m$ (where the strong limit is computed as bounded linear maps on $L_2(\fA, \varphi)$).  Then
	\begin{align*}
  \liminf_{k \to \infty}  \Phi^*\left(\vX^{(k)} \sqcup \vY^{(k)} : (B_\ell, B_r)\right)  \geq  \Phi^*(\vX \sqcup \vY : (B_\ell, B_r)) 
	\end{align*}
\end{prop}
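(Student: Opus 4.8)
The plan is to combine weak compactness in $L_2(\fA,\varphi)$ with the weak lower semicontinuity of the Hilbert space norm, identifying the weak limits of the conjugate variables of the approximating systems as the conjugate variables of the limiting system. First I would dispose of the trivial case: if $\liminf_{k\to\infty}\Phi^*(\vX^{(k)}\sqcup\vY^{(k)}:(B_\ell,B_r))=\infty$ there is nothing to prove, so I may pass to a subsequence along which this $\liminf$ is attained and equals some finite $M<\infty$. Along this subsequence all the conjugate variables
\[
	\xi^{(k)}_i=\J_\ell(X^{(k)}_i:(B_\ell\langle\hat{\vX}^{(k)}_i\rangle,B_r\langle\vY^{(k)}\rangle)),\qquad \eta^{(k)}_j=\J_r(Y^{(k)}_j:(B_\ell\langle\vX^{(k)}\rangle,B_r\langle\hat{\vY}^{(k)}_j\rangle))
\]
exist and satisfy $\sum_i\|\xi^{(k)}_i\|_2^2+\sum_j\|\eta^{(k)}_j\|_2^2\le M$; in particular each sequence is bounded in $L_2(\fA,\varphi)$, so after passing to a further subsequence I may assume $\xi^{(k)}_i\rightharpoonup\tilde\xi_i$ and $\eta^{(k)}_j\rightharpoonup\tilde\eta_j$ weakly in $L_2(\fA,\varphi)$.

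The crux is to show that these weak limits give the conjugate variables of $\vX\sqcup\vY$, for which I would use the moment characterization of Remark \ref{rem:bi-conjugate-variable-via-moment-formula}. Fix a word $Z$ in the abstract variables and elements of $B_\ell,B_r$, and let $Z^{(k)}$ denote its evaluation with $X_i,Y_j$ replaced by $X^{(k)}_i,Y^{(k)}_j$ (the elements of $B_\ell,B_r$ being held fixed). Since strong operator convergence is preserved under products of uniformly norm-bounded sequences, and the $X^{(k)}_i,Y^{(k)}_j$ are self-adjoint, both $Z^{(k)}$ and $(Z^{(k)})^*$ converge strongly to $Z$ and $Z^*$; evaluating at the vector $1$ shows $(Z^{(k)})^*\to Z^*$ in $L_2$-norm and that every moment $\varphi(W^{(k)})\to\varphi(W)$. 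Consequently the right-hand side $(\varphi\otimes\varphi)(\partial_{\ell,X^{(k)}_i}(Z^{(k)}))$, being a finite sum of products of such moments, converges to $(\varphi\otimes\varphi)(\partial_{\ell,X_i}(Z))$. For the left-hand side I would write $\varphi(Z^{(k)}\xi^{(k)}_i)=\langle\xi^{(k)}_i,(Z^{(k)})^*\rangle_{L_2}$ and observe that the pairing of the weakly convergent sequence $\xi^{(k)}_i$ with the norm-convergent sequence $(Z^{(k)})^*$ tends to $\langle\tilde\xi_i,Z^*\rangle_{L_2}=\varphi(Z\tilde\xi_i)$. Passing to the limit in the defining identity thus yields $\varphi(Z\tilde\xi_i)=(\varphi\otimes\varphi)(\partial_{\ell,X_i}(Z))$ for every such $Z$. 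Letting $P$ be the orthogonal projection of $L_2(\fA,\varphi)$ onto $L_2((B_\ell\vee B_r)\langle\vX,\vY\rangle,\varphi)$, the vector $P\tilde\xi_i$ lies in the correct space and satisfies the same identity, so by uniqueness $P\tilde\xi_i=\J_\ell(X_i:(B_\ell\langle\hat{\vX}_i\rangle,B_r\langle\vY\rangle))$, and similarly for the right conjugate variables.

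Finally I would conclude by lower semicontinuity. Since $\|P\tilde\xi_i\|_2\le\|\tilde\xi_i\|_2\le\liminf_k\|\xi^{(k)}_i\|_2$ (weak limits do not increase norm and projections are contractions), and likewise for the $\tilde\eta_j$, summing the finitely many terms gives
\[
	\Phi^*(\vX\sqcup\vY:(B_\ell,B_r))=\sum_i\|P\tilde\xi_i\|_2^2+\sum_j\|P\tilde\eta_j\|_2^2\le\liminf_{k}\Big(\sum_i\|\xi^{(k)}_i\|_2^2+\sum_j\|\eta^{(k)}_j\|_2^2\Big)=M,
\]
which, as $M$ was chosen to equal $\liminf_{k\to\infty}\Phi^*(\vX^{(k)}\sqcup\vY^{(k)}:(B_\ell,B_r))$ over the full sequence, is exactly the desired bound. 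The main obstacle I anticipate is the limiting step of the second paragraph: one must marry the weak convergence of the conjugate variables with the mere strong (not norm) convergence of the operators $X^{(k)}_i,Y^{(k)}_j$, which is precisely why the uniform norm bounds are indispensable for keeping products strongly convergent and moments convergent; the secondary point requiring care is the $k$-dependence of the GNS subspaces housing the $\xi^{(k)}_i$, which is resolved by projecting the ambient weak limit via $P$.
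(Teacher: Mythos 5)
Your proof is correct and takes essentially the same approach as the paper's: the paper likewise extracts weak limits of the conjugate variables along a subsequence with bounded Fisher information (its Lemma \ref{lem:fisher-strong-convergence-bounds}), identifies their projections under $P$ as the conjugate variables of the limiting system using convergence of moments of strongly convergent, uniformly bounded operators, and concludes by weak lower semicontinuity of the $L_2$-norm together with the decomposition of $\Phi^*$ into single-variable terms. The only cosmetic difference is that the paper verifies the identification via the cumulant characterization of Definition \ref{defn:bi-free-conjugate variables} while you use the equivalent moment formula of Remark \ref{rem:bi-conjugate-variable-via-moment-formula} (and your claim that each $\sum_i\|\xi_i^{(k)}\|_2^2+\sum_j\|\eta_j^{(k)}\|_2^2\le M$ should merely read ``is eventually bounded,'' which is all the argument needs).
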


The proof of Proposition \ref{prop:fisher-strong-convergence-bounds} first requires the following.
\begin{lem}
	\label{lem:fisher-strong-convergence-bounds}
	Under the assumptions of Proposition \ref{prop:fisher-strong-convergence-bounds} along with the additional assumptions that
	\[
		\xi_k = \J_\ell\left(X_1^{(k)} : (B_\ell \langle \hat{\vX}^{(k)}_1 \rangle, B_r\langle \vY^{(k)}\rangle)\right)
	\]
	exist and are bounded in $L_2$-norm by some constant $K > 0$, it follows that
	\[
		\xi = \J_\ell(X_1 : (B_\ell \langle \hat{\vX}_1\rangle, B_r\langle \vY\rangle))
	\]
	exists and is equal to
	\[
		w\text{-}\lim_{k \to \infty} P\left(\xi_k\right)
	\]
	where $P$ is the orthogonal projection of $L_2(\fA, \varphi)$ onto $L_2((B_\ell \vee B_r)\langle \vX, \vY\rangle, \varphi)$.

	If, in addition,
	\[
		\limsup_{k \to \infty} \left\|\xi_k\right\|_2 \leq \left\| \xi \right\|_2
	\]
	then
	\[
		\lim_{k \to \infty}  \left\|\xi_k- \xi\right\|_2 =0
	\]

	The same holds with $X_1$ replaced with $X_i$, and a similar result holds for the right.
\end{lem}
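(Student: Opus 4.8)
The plan is to reduce everything to the moment characterization of the conjugate variable recorded in Remark~\ref{rem:bi-conjugate-variable-via-moment-formula}: a vector $\zeta \in L_2((B_\ell \vee B_r)\langle \vX, \vY\rangle, \varphi)$ equals $\J_\ell(X_1 : (B_\ell\langle \hat\vX_1\rangle, B_r\langle \vY\rangle))$ precisely when $\varphi(Z\zeta) = (\varphi \otimes \varphi)(\partial_{\ell, X_1}(Z))$ for every $Z \in (B_\ell \vee B_r)\langle \vX, \vY\rangle$. Thus the entire task is to produce a candidate vector and verify this family of scalar identities in the limit. Since $\|P\xi_k\|_2 \leq \|\xi_k\|_2 \leq K$, the sequence $(P\xi_k)_k$ is bounded in the closed (hence weakly closed) subspace $L_2((B_\ell \vee B_r)\langle \vX, \vY\rangle, \varphi)$, so it has weak limit points; I would let $\zeta$ be any such limit point, extracted along a subsequence, and aim to show $\zeta$ satisfies the defining formula.

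For the verification, fix a word $Z$ in the generators $\hat\vX_1, \vY, B_\ell, B_r$ and $X_1$, and let $Z^{(k)}$ be the same word with each $X_i, Y_j$ replaced by $X_i^{(k)}, Y_j^{(k)}$. By the definition of $\xi_k$ we have $\varphi(Z^{(k)}\xi_k) = (\varphi \otimes \varphi)(\partial_{\ell, X_1^{(k)}}(Z^{(k)}))$. For the left-hand side I would write $\langle \xi_k, (Z^{(k)})^*\rangle = \langle \xi_k, Z^*\rangle + \langle \xi_k, (Z^{(k)})^* - Z^*\rangle$. The strong convergence $X_i^{(k)} \to X_i$, $Y_j^{(k)} \to Y_j$ together with the uniform operator-norm bounds gives $(Z^{(k)})^* \to Z^*$ in $L_2$, so the second term is dominated by $K\|(Z^{(k)})^* - Z^*\|_2 \to 0$; and since $Z^*$ lies in the range of $P$, the first term equals $\langle P\xi_k, Z^*\rangle \to \langle \zeta, Z^*\rangle = \varphi(Z\zeta)$. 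For the right-hand side, $(\varphi \otimes \varphi)(\partial_{\ell, X_1}(\cdot))$ is a finite sum of products $\varphi(A)\varphi(B)$ indexed by the occurrences of $X_1$ in the word, and the same strong convergence plus uniform bounds force each moment $\varphi(A^{(k)}) \to \varphi(A)$, so the right-hand side tends to $(\varphi \otimes \varphi)(\partial_{\ell, X_1}(Z))$. Passing to the limit yields $\varphi(Z\zeta) = (\varphi \otimes \varphi)(\partial_{\ell, X_1}(Z))$ for all $Z$, so $\zeta = \xi$ by Remark~\ref{rem:bi-conjugate-variable-via-moment-formula}. As every weak limit point of the bounded sequence $(P\xi_k)$ coincides with the unique bi-free conjugate variable, the full sequence converges weakly and $\xi = w\text{-}\lim_{k\to\infty} P\xi_k$.

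For the norm statement, weak lower semicontinuity of the norm gives $\|\xi\|_2 \leq \liminf_k \|P\xi_k\|_2 \leq \liminf_k \|\xi_k\|_2$, which combined with the hypothesis $\limsup_k \|\xi_k\|_2 \leq \|\xi\|_2$ forces $\|\xi_k\|_2 \to \|\xi\|_2$ and $\|P\xi_k\|_2 \to \|\xi\|_2$. I would then expand $\|\xi_k - \xi\|_2^2 = \|\xi_k\|_2^2 - 2\Re\langle \xi_k, \xi\rangle + \|\xi\|_2^2$ and use $\langle \xi_k, \xi\rangle = \langle P\xi_k, \xi\rangle \to \|\xi\|_2^2$ (as $\xi$ is in the range of $P$) to conclude the expression tends to $0$. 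The cases of $X_i$ for $i \neq 1$ and the right-hand analogues follow by relabelling and the symmetric argument for $\partial_{r, Y_j}$.

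The step I expect to be the main obstacle is the treatment of the pairing $\langle \xi_k, (Z^{(k)})^*\rangle$: here a merely weakly convergent sequence $\xi_k$ is paired against a test vector $(Z^{(k)})^*$ that is itself moving and, crucially, need not lie in the range of $P$. The uniform $L_2$-bound $K$ is exactly what licenses the two-term split above and makes the weak--strong pairing converge, so I would take care to invoke it explicitly rather than treating the convergence as automatic.
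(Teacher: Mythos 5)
Your proof is correct and follows essentially the same route as the paper's: extract a weak limit point of the $L_2$-bounded sequence, verify the defining property of the conjugate variable by pairing the uniform bound $K$ against the strong (hence $L_2$) convergence of the test words and the weak convergence of $P\xi_k$, conclude by uniqueness of conjugate variables that the full sequence converges weakly, and obtain the norm statement from weak convergence together with convergence of norms. The only cosmetic difference is that you check the moment characterization of Remark \ref{rem:bi-conjugate-variable-via-moment-formula} directly, whereas the paper checks the cumulant conditions $\kappa_\chi(Z_1,\ldots,Z_q,P(\zeta))$; since bi-free cumulants are finite sums of such moments, the two verifications amount to the same computation.
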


\begin{proof}
	First, as $(\xi_k)_{k\geq 1}$ is bounded in the $L_2$-norm, $(\xi_k)_{k\geq 1}$ has a subnet that converges in the weak topology.  If $\zeta$ is the limit of this net, we will show that $P(\zeta) = \xi$.  From this it follows that $(P(\xi_k))_{k\geq 1}$ converges in the weak topology to $\xi$ due to the uniqueness of the bi-free conjugate variables.
	Thus, for the purposes of that which follows, we will assume that $(\xi_k)_{k\geq 1}$ converges to $\zeta$ in the weak topology.

	For $q \geq 0$ fix a $\chi : \{1,\ldots, q+1\} \to \{\ell,r\}$ such that $\chi(q+1) = \ell$ and choose $Z_1, \ldots, Z_q \in \A$ such that $Z_p \in B_\ell \cup \{\vX\}$ if $\chi(p) = \ell$ and $Z_p \in B_r \cup \{\vY\}$ if $\chi(p) = r$.  For each $k \in \bN$, let
	\[
		Z^{(k)}_p = \begin{cases}
			Z_p & \text{if } Z_k \in B_\ell \cup B_r \\
			X^{(k)}_i & \text{if }Z_k = X_i \\
			Y^{(k)}_j & \text{if }Z_k = Y_j
		\end{cases}.
	\]
	Hence
	\begin{align}
		\label{eqn:fisherconvergelemmalineone}\kappa_\chi(Z_1, \ldots, Z_q, P(\zeta))
		& = \kappa_\chi(Z_1, \ldots, Z_q, \zeta) \\
		\label{eqn:fisherconvergelemmalinetwo}&= \lim_{k \to \infty} \kappa_\chi\left(Z^{(k)}_1, \ldots, Z^{(k)}_q, \xi_k\right)
	\end{align}
	where (\ref{eqn:fisherconvergelemmalineone}) follows from the fact that $Z_1, \ldots, Z_q \in P(L_2(\fA, \varphi))$ and (\ref{eqn:fisherconvergelemmalinetwo}) follows from the fact that the cumulants are sums of moments, we have weak convergence of $\xi_k$ to $\eta$, the $\xi_k$ are bounded in $L_2(\fA, \varphi)$, and strong convergence of non-commutative polynomials in $\vX^{(k)}, \vY^{(k)}, B_\ell, B_r$ to the corresponding polynomials in $\vX, \vY, B_\ell, B_r$ by the assumptions of Proposition \ref{prop:fisher-strong-convergence-bounds}.  Therefore, as 
	\[
		\kappa_\chi\left(Z^{(k)}_1, \ldots, Z^{(k)}_q, \xi_k\right)
	\]
	is either $0$ or $1$, we see that $\kappa_\chi(Z_1, \ldots, Z_q, P(\eta))$ obtains the appropriate values to be $\xi$.  Thus the first claim is proved.

	By the first claim we obtain that
	\[
		\liminf_{k \to \infty} \left\|\xi_k\right\|_2 \geq \left\|\xi\right\|_2.
	\]
	Thus the additional assumption
	\[
		\limsup_{k \to \infty} \left\|\xi_k\right\|_2 \leq \left\| \xi \right\|_2
	\]
	implies that
	\[
		\lim_{k \to \infty} \left\|\xi_k\right\|_2 = \left\|\xi\right\|_2.
	\]
	This together with the fact that $\xi$ is the weak limit of $(\xi_k)_{k\geq 1}$ implies that
	\[
		\lim_{k \to \infty}  \left\|\xi_k- \xi\right\|_2 =0
	\]
	as desired.
\end{proof}

\begin{proof}[Proof of Proposition \ref{prop:fisher-strong-convergence-bounds}]
	If 
	\[
		\liminf_{k \to \infty}  \Phi^*\left(\vX^{(k)} \sqcup \vY^{(k)} : (B_\ell, B_r)\right)   = \infty
	\]
	there is nothing to prove.  Otherwise, we may pass to subsequences to assume that
	\[
		\limsup_{k \to \infty}  \Phi^*\left(\vX^{(k)} \sqcup \vY^{(k)} : (B_\ell, B_r)\right)  < \infty.
	\]
	Combining part (\ref{part:only-one-variable}) of Remark \ref{rem:remarks-about-fisher-info} with Proposition \ref{prop:fisher-strong-convergence-bounds} then implies the result.
\end{proof}

The convergence properties obtained in Proposition \ref{prop:fisher-strong-convergence-bounds} allows for many analytical results pertaining to the bi-free Fisher information.

\begin{cor}
	\label{cor:fisher-limits-sum-tending-to-zero}
	Let $\vX, \vY$ be tuples of self-adjoint operators of length $n$ and $m$ respectively, and let $B_\ell$, $B_r$ be unital, self-adjoint subalgebras of a C$^*$-non-commutative probability space $(\fA, \varphi)$.
	Suppose further that for each $k \in \bN$ that $\vX^{(k)}, \vY^{(k)}$ are tuples of self-adjoint elements of length $n$ and $m$ respectively, and $C_\ell, C_r$ are unital, self-adjoint subalgebras of $\fA$ such that
	\[
		(B_\ell \langle \vX\rangle, B_r \langle \vY\rangle) \qqand \left(C_\ell \left\langle \vX^{(k)}\right\rangle, C_r \left\langle \vY^{(k)}\right\rangle\right)
	\]
	are bi-free, there are no algebraic relations other than possibly left operators commuting with right operators, and
	\[
		\lim_{k \to \infty} \left\|X^{(k)}_i\right\| = \lim_{k \to \infty} \left\|Y_j^{(k)}\right\| = 0
	\]
	for all $1 \leq i \leq n$ and $1 \leq j \leq m$.  Then
	\begin{align*}
		\lim_{k \to \infty} \Phi^*\left( \vX + \vX^{(k)}\sqcup \vY + \vY^{(k)} : (B_\ell \vee C_\ell, B_r \vee C_r)\right)=  \Phi^*(\vX \sqcup \vY : (B_\ell, B_r)).
	\end{align*}
	Furthermore, if $C_\ell =  C_r = \bC$, and
	\[
		\Phi^*(\vX \sqcup \vY : (B_\ell, B_r)) < \infty,
	\]
	then 
	\[
		\J_\ell\left(  X^{(k)}_i  :   \left( B_\ell \left\langle\widehat{(\vX + \vX^{(k)})}_i \right\rangle, B_r\left\langle \vY + \vY^{(k)} \right\rangle  \right)\right)
	\] 
	tends to 
	\[
		\J_\ell(X_i : (B_\ell\langle \hat{\vX}_i \rangle, B_r \langle \vY\rangle))
	\]
	in $L_2$-norm.  A similar result holds for right bi-free conjugate variables.
\end{cor}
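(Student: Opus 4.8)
The statement has two parts, and I expect the first (convergence of Fisher informations) to follow quite directly from the machinery already in place, while the second (convergence of the conjugate variables themselves) will require the sharpened convergence result from Lemma~\ref{lem:fisher-strong-convergence-bounds}. For the first part, I plan to set $\vX^{(k)}_{\mathrm{tot}} = \vX + \vX^{(k)}$ and $\vY^{(k)}_{\mathrm{tot}} = \vY + \vY^{(k)}$ and observe that, under the stated hypotheses, these tuples satisfy the hypotheses of Proposition~\ref{prop:fisher-strong-convergence-bounds}: the operator norms are uniformly bounded (since $\|X^{(k)}_i\|\to 0$ and $\vX$ is fixed), and the strong limits are $X_i$ and $Y_j$ respectively because $X^{(k)}_i \to 0$ strongly. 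Proposition~\ref{prop:fisher-strong-convergence-bounds} then yields the lower bound
\[
	\liminf_{k\to\infty} \Phi^*\!\left(\vX + \vX^{(k)} \sqcup \vY + \vY^{(k)} : (B_\ell \vee C_\ell, B_r \vee C_r)\right) \geq \Phi^*(\vX \sqcup \vY : (B_\ell, B_r)).
\]
For the matching upper bound (the $\limsup$), I plan to invoke the Bi-Free Stam Inequality (Proposition~\ref{prop:stam-inequality}) with the bi-free pair $(\vX^{(k)} \sqcup \vY^{(k)})$, which gives
\[
	\left(\Phi^*(\vX + \vX^{(k)} \sqcup \vY + \vY^{(k)} : (B_\ell \vee C_\ell, B_r \vee C_r))\right)^{-1} \geq \left(\Phi^*(\vX \sqcup \vY : (B_\ell, B_r))\right)^{-1} + \left(\Phi^*(\vX^{(k)} \sqcup \vY^{(k)} : (C_\ell, C_r))\right)^{-1}.
\]
Since the $\vX^{(k)}, \vY^{(k)}$ shrink to $0$ in norm, the Cramer-Rao inequality (Proposition~\ref{prop:cramer-rao}) forces $\Phi^*(\vX^{(k)} \sqcup \vY^{(k)} : (C_\ell, C_r)) \to \infty$, so its reciprocal vanishes and the upper bound collapses to $\Phi^*(\vX \sqcup \vY : (B_\ell, B_r))$. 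Combining the two bounds gives the claimed limit.

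\textbf{The second part.}
Assuming now $C_\ell = C_r = \bC$ and $\Phi^*(\vX \sqcup \vY : (B_\ell, B_r)) < \infty$, I want to upgrade weak/Fisher convergence to $L_2$-norm convergence of the conjugate variables. The strategy is to apply Lemma~\ref{lem:fisher-strong-convergence-bounds} to the sequence
\[
	\xi_k = \J_\ell\!\left(X^{(k)}_i : \left(B_\ell\langle \widehat{(\vX + \vX^{(k)})}_i\rangle, B_r\langle \vY + \vY^{(k)}\rangle\right)\right).
\]
The first hypothesis of that lemma needs a uniform $L_2$-bound on $\xi_k$; this is exactly what the already-established convergence of Fisher informations from the first part provides, since $\|\xi_k\|_2^2$ is one summand of a convergent (hence bounded) sequence. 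The lemma then gives existence of the limit conjugate variable $\xi$ together with weak convergence of $P(\xi_k)$ to $\xi$.

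\textbf{Where the work concentrates.}
The decisive step is verifying the hypothesis $\limsup_k \|\xi_k\|_2 \leq \|\xi\|_2$ needed to promote weak convergence to norm convergence in Lemma~\ref{lem:fisher-strong-convergence-bounds}. I expect this to be the main obstacle, and the plan is to extract it from the exact Fisher-information limit established in the first part: since the total Fisher information converges to $\Phi^*(\vX \sqcup \vY : (B_\ell, B_r)) = \sum_i \|\J_\ell(X_i:\cdots)\|_2^2 + \sum_j \|\J_r(Y_j:\cdots)\|_2^2$, and since each individual summand $\|\xi_k\|_2^2$ obeys $\liminf_k \|\xi_k\|_2 \geq \|\xi\|_2$ by the first conclusion of the lemma, a summand-by-summand argument forces each $\liminf$ to be an equality; otherwise the sum of the limit norms would strictly exceed the limiting Fisher information, a contradiction. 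This pins down $\lim_k \|\xi_k\|_2 = \|\xi\|_2$, which combined with weak convergence yields $\|\xi_k - \xi\|_2 \to 0$. The analogous argument on the right, replacing left conjugate variables with right ones and $\hat\partial_{\ell,X}$ considerations with their right counterparts, completes the proof.
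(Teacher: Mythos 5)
Your proposal is correct and takes essentially the same route as the paper's proof: the lower bound via Proposition~\ref{prop:fisher-strong-convergence-bounds} together with part~(\ref{part:fisher-info-for-bi-free-things}) of Remark~\ref{rem:remarks-about-fisher-info}, the upper bound via the bi-free Stam inequality (Proposition~\ref{prop:stam-inequality}), where your Cramer-Rao detour is harmless but unnecessary since discarding the nonnegative term $\left(\Phi^*\left(\vX^{(k)} \sqcup \vY^{(k)} : (C_\ell, C_r)\right)\right)^{-1}$ already gives $\Phi^*\left(\vX + \vX^{(k)} \sqcup \vY + \vY^{(k)} : (B_\ell \vee C_\ell, B_r \vee C_r)\right) \leq \Phi^*(\vX \sqcup \vY : (B_\ell, B_r))$ for every $k$, and the second claim via Lemma~\ref{lem:fisher-strong-convergence-bounds}. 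Your summand-by-summand argument extracting $\limsup_{k \to \infty} \|\xi_k\|_2 \leq \|\xi\|_2$ from convergence of the total Fisher information is exactly the detail the paper suppresses when it states that the second claim ``trivially follows'' from that lemma.
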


\begin{proof}
	Proposition \ref{prop:fisher-strong-convergence-bounds} and part (\ref{part:fisher-info-for-bi-free-things}) of Remark \ref{rem:remarks-about-fisher-info} implies that
	\begin{align*}
		 \liminf_{k \to \infty} \Phi^*\left( \vX + \vX^{(k)} \sqcup  \vY + \vY^{(k)} : (B_\ell \vee C_\ell, B_r \vee C_r)\right)
		& \geq  \Phi^*(\vX \sqcup \vY : (B_\ell \vee C_\ell, B_r \vee C_r)) \\
		&=  \Phi^*(\vX \sqcup \vY : (B_\ell, B_r)).
	\end{align*}
	However, the bi-free Stam inequality (Proposition \ref{prop:stam-inequality}) implies that
	\begin{align*}
		 \Phi^*\left( \vX + \vX^{(k)} \sqcup  \vY + \vY^{(k)} : (B_\ell \vee C_\ell, B_r \vee C_r)\right)  \leq  \Phi^*(\vX \sqcup \vY : (B_\ell, B_r))
	\end{align*}
	for all $k$.  Hence the first claim follows.  The second claim now trivially follows from Lemma~\ref{lem:fisher-strong-convergence-bounds}.
\end{proof}

\begin{thm}
	\label{thm:fisher-info-after-perturbing-by-semis}
	Let $\vX, \vY$ be tuples of self-adjoint operators of lengths $n$ and $m$ respectively, and let $B_\ell$, $B_r$ be unital, self-adjoint subalgebras of a C$^*$-non-commutative probability space $(\fA, \varphi)$.  Suppose further that $S_1, \ldots, S_n, T_1, \ldots, T_m$ are $(0, 1)$ semicircular variables in $\fA$ such that 
	\[
		(B_\ell \langle \vX \rangle, B_r\langle \vY\rangle) \cup \{(S_i, 1)\}^n_{i=1}\cup \{(1, T_j)\}^m_{j=1}
	\]
	are bi-free and there are no algebraic relations other than possibly left operators commuting with right operators.  Then the map
	\[
		h : [0, \infty) \ni t \mapsto  \Phi^*(\vX + \sqrt{t} \vS \sqcup \vY + \sqrt{t} \vT : (B_\ell, B_r))
	\]
	is decreasing, right continuous, and
	\[
		\frac{(n+m)^2}{C^2 + (n+m)t} \leq  h(t) \leq \frac{n+m}{t}
	\]
	where
	\[
		C^2 = \varphi \left(\sum^n_{i=1} X_i^2 + \sum^m_{j=1} Y_j^2 \right).
	\]
	Moreover $h(t) = \frac{(n+m)^2}{C^2 + (n+m)t}$ for all $t$ if $\vX, \vY$ are centred semicircular distributions of the same variance and $\{(B_\ell, B_r)\} \cup \{(X_i, 1)\}^n_{i=1}\cup \{(1,Y_j)\}^m_{j=1}$ are bi-free.
	Finally, if $B_\ell = B_r = \bC$ and $h(t) = \frac{(n+m)^2}{C^2 + (n+m)t}$ for all $t$, then $\vX, \vY$ are centred semicircular distributions of the same variance such that $\{(X_i, 1)\}^n_{i=1}\cup \{(1,Y_j)\}^m_{j=1}$ is bi-free.
\end{thm}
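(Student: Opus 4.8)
The plan is to dispatch the five assertions (decreasing, right continuous, the two bounds, and the two equality statements) using the Stam and Cramér--Rao inequalities together with the perturbation and continuity results already in hand. Throughout I will pass to an enlarged C$^*$-non-commutative probability space carrying two variance-one bi-free semicircular families $\vS^{(1)}, \vT^{(1)}$ and $\vS^{(2)}, \vT^{(2)}$, mutually bi-free and bi-free from $(B_\ell\langle\vX\rangle, B_r\langle\vY\rangle)$. Since $\Phi^*$ depends only on the joint distribution, and since $\sqrt{s}\,\vS^{(1)}+\sqrt{t-s}\,\vS^{(2)}$ is a variance-$t$ semicircular family carrying the same bi-freeness relations as $\sqrt{t}\,\vS$, I may freely replace $\sqrt{t}\,\vS$ by such a sum without altering any value of $h$.

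For the bounds, the upper estimate is immediate from Theorem~\ref{thm:conj-perturb-by-semis} with $\epsilon = t$: each conjugate variable satisfies $\|\xi_i\|_2, \|\eta_j\|_2 \le t^{-1/2}$, so $h(t) = \sum_i\|\xi_i\|_2^2 + \sum_j\|\eta_j\|_2^2 \le (n+m)/t$. For the lower estimate I will first record that $\varphi\bigl(\sum_i (X_i+\sqrt{t}S_i)^2 + \sum_j (Y_j+\sqrt{t}T_j)^2\bigr) = C^2 + (n+m)t$, the cross terms $\varphi(X_iS_i)$ and $\varphi(S_iX_i)$ vanishing because each $S_i$ is centred and bi-free from $X_i$; feeding this into the Cramér--Rao inequality (Proposition~\ref{prop:cramer-rao}) gives $h(t)\,(C^2+(n+m)t) \ge (n+m)^2$. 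Monotonicity comes from the Stam inequality (Proposition~\ref{prop:stam-inequality}): writing $\vX+\sqrt{t}\vS = (\vX+\sqrt{s}\vS^{(1)}) + \sqrt{t-s}\,\vS^{(2)}$ for $s<t$ and applying Stam to these two bi-free systems — the increment carrying $C_\ell = C_r = \bC$, so the ambient pair remains $(B_\ell,B_r)$ — yields $h(t)^{-1} \ge h(s)^{-1} + \Phi^*(\sqrt{t-s}\vS^{(2)}\sqcup\sqrt{t-s}\vT^{(2)})^{-1} \ge h(s)^{-1}$, whence $h(t)\le h(s)$. Right continuity then follows from Corollary~\ref{cor:fisher-limits-sum-tending-to-zero}: for $t_k\downarrow t_0$ the increment $\sqrt{t_k-t_0}\,\vS^{(2)}$ has operator norm $2\sqrt{t_k-t_0}\to 0$, so $h(t_k)\to h(t_0)$, and monotonicity upgrades sequential convergence to genuine right continuity.

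For the sufficiency of equality, suppose $\vX,\vY$ are centred semicircular of common variance $v$ with $\{(B_\ell,B_r)\}\cup\{(X_i,1)\}^n_{i=1}\cup\{(1,Y_j)\}^m_{j=1}$ bi-free. Then $\{(X_i+\sqrt{t}S_i,1)\}\cup\{(1,Y_j+\sqrt{t}T_j)\}$ is a bi-free semicircular family of covariance $(v+t)I_{n+m}$, so Example~\ref{exam:Fisher-bi-free-central} (after discarding $B_\ell,B_r$ by the bi-freeness invariance of Remark~\ref{rem:remarks-about-fisher-info}\,(\ref{part:fisher-info-for-bi-free-things})) gives $h(t) = \Tr\bigl(((v+t)I)^{-1}\bigr) = (n+m)/(v+t)$, which is exactly $\tfrac{(n+m)^2}{C^2+(n+m)t}$ since $C^2 = (n+m)v$. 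For the necessity, take $B_\ell=B_r=\bC$ and assume $h(t)=\tfrac{(n+m)^2}{C^2+(n+m)t}$ for all $t$. Equality in the lower bound is precisely equality in the Cramér--Rao inequality for the perturbed system at each $t$, so the converse half of Proposition~\ref{prop:cramer-rao} shows $\{(X_i+\sqrt{t}S_i,1)\}\cup\{(1,Y_j+\sqrt{t}T_j)\}$ is a bi-free centred semicircular family of common variance $v+t$; matching second moments forces all $\varphi(X_i^2)$ and $\varphi(Y_j^2)$ to equal a common $v$. The last step is a deconvolution: since mixed bi-free cumulants between $(\vX,\vY)$ and $(\vS,\vT)$ vanish (\cite{CNS2015-2}*{Theorem 4.3.1}), bi-free cumulants are additive, so subtracting the purely second-order value-$t$ cumulants of $\sqrt{t}\vS,\sqrt{t}\vT$ from those of the perturbed family annihilates every cumulant of $(\vX,\vY)$ of order $\ge 3$ and pins the second-order ones to $vI$, identifying $(\vX,\vY)$ as a bi-free central limit distribution of variance $v$.

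The step I expect to be the main obstacle is this final deconvolution in the necessity direction: carefully extracting from the converse of Cramér--Rao that the perturbed family is genuinely bi-free and semicircular, and then using cumulant additivity to strip off the semicircular noise while verifying that the common-variance constraint propagates back to $t=0$. By contrast, the monotonicity, continuity, and the two bounds should reduce cleanly to the cited inequalities once the bi-free semicircular increments are arranged in a common space.
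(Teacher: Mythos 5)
Your proposal is correct and follows essentially the same route as the paper's proof: realizing $\sqrt{t}\,\vS$ as $\sqrt{s}\,\vS^{(1)}+\sqrt{t-s}\,\vS^{(2)}$ in an enlarged space, the Stam inequality (Proposition \ref{prop:stam-inequality}) for monotonicity, Corollary \ref{cor:fisher-limits-sum-tending-to-zero} for right continuity, the Cram\'{e}r--Rao inequality (Proposition \ref{prop:cramer-rao}) for the lower bound, and its equality case plus additivity of bi-free cumulants (to deconvolve the semicircular perturbation) for the two equality statements. The only cosmetic deviations are that you derive the upper bound from Theorem \ref{thm:conj-perturb-by-semis} (the explicit conjugate variable $\tfrac{1}{\sqrt{t}}P(S_i)$, whose $L_2$-norm is at most $t^{-1/2}$) rather than from Stam applied to $\sqrt{t}\,\vS \sqcup \sqrt{t}\,\vT$, and the sufficiency of equality from Example \ref{exam:Fisher-bi-free-central} rather than directly from the equality clause of Cram\'{e}r--Rao; both are equally valid.
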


\begin{proof}
	Let $S'_1, \ldots, S'_n, T'_1, \ldots, T'_m$ be $(0, 1)$ semicircular variables in $\fA$ (or a larger C$^*$-non-commutative probability space) such that 
	\[
		(B_\ell \langle \vX \rangle, B_r\langle \vY\rangle) \cup \{(S_i, 1)\}^n_{i=1}\cup \{(1, T_j)\}^m_{j=1}\cup \{(S'_i, 1)\}^n_{i=1}\cup \{(1, T'_j)\}^m_{j=1}
	\]
	are bi-free.  Then for all $\epsilon > 0$ we have that
	\begin{align*}
	 \Phi^*( \vX + \sqrt{t+\epsilon} \vS \sqcup \vY + \sqrt{t+\epsilon} \vY  : (B_\ell, B_r)) =\Phi^*( \vX + \sqrt{t} \vS + \sqrt{\epsilon} \vS'  \sqcup \vY + \sqrt{t} \vT + \sqrt{\epsilon} \vT' : (B_\ell, B_r)).
	\end{align*}
	It follows that the desired map is right continuous by Corollary \ref{cor:fisher-limits-sum-tending-to-zero} and decreasing from the bi-free Stam inequality (Proposition \ref{prop:stam-inequality}).  The lower bound follows from the bi-free Cramer-Rao inequality (Proposition \ref{prop:cramer-rao}) as
	\[
		\varphi\left(\left(X_i + \sqrt{t} S_i\right)^2\right) = \varphi(X_i^2) + t \qqand \varphi\left(\left(Y_j + \sqrt{t} T_j\right)^2\right) = \varphi(Y_i^2) + t
	\]
	whereas the upper bound follows from the bi-free Stam inequality (Proposition \ref{prop:stam-inequality}), which implies
	\begin{align*}
		  \Phi^*(\vX + \sqrt{t} \vS \sqcup \vY + \sqrt{t} \vT : (B_\ell, B_r))\leq  \Phi^*( \sqrt{t}\vS \sqcup  \sqrt{t} \vT : (B_\ell, B_r)) = \frac{n+m}{t}.
	\end{align*}
	The final claims follow from the equality portion of the bi-free Cramer-Rao inequality (Proposition \ref{prop:cramer-rao}) together with the fact that $\{(X_i + \sqrt{t} S_i, 1)\}^n_{i=1} \cup \{(1, Y_j + \sqrt{t} T_j)\}^m_{j=1}$ are bi-free centred semicircular distributions of the same variance if and only if $\{(X_i, 1)\}^n_{i=1} \cup \{(1, Y_j)\}^m_{j=1}$ are bi-free centred semicircular distributions of the same variance. This may be seen through examination of bi-free cumulants using the fact that 
	\[
		(B_\ell \langle \vX \rangle, B_r\langle \vY\rangle) \cup \{(S_i, 1)\}^n_{i=1}\cup \{(1, T_j)\}^m_{j=1}\cup \{(S'_i, 1)\}^n_{i=1}\cup \{(1, T'_j)\}^m_{j=1}
	\]
	are bi-free.
\end{proof}

\section{Non-Microstate Bi-Free Entropy}
\label{sec:Entropy}

In this section, we introduce the non-microstate bi-free entropy as follows.

\begin{defn}
	\label{defn:entropy}
	Let $\vX, \vY$ be tuples of self-adjoint operators of length $n$ and $m$ respectively, and let $B_\ell$, $B_r$ be unital, self-adjoint subalgebras of a C$^*$-non-commutative probability space $(\fA, \varphi)$.
	The \emph{relative bi-free entropy of $(\vX, \vY)$ with respect to $(B_\ell, B_r)$} is defined to be
	\begin{align*}
		\chi^* & (\vX \sqcup \vY : (B_\ell, B_r)) = \frac{n+m}{2} \log(2\pi e) + \frac{1}{2} \int^\infty_0 \left(\frac{n+m}{1+t} - \Phi^*( \vX + \sqrt{t} \vS \sqcup \vY + \sqrt{t} \vT : (B_\ell, B_r) ) \right) \, dt
	\end{align*}
	where $S_1, \ldots, S_n, T_1, \ldots, T_m$ are self-adjoint operators in (a larger) $\fA$ that have centred semicircular distributions with variance 1 such that 
	\[
		(B_\ell \langle \vX \rangle, B_r\langle \vY\rangle) \cup \{(S_i, 1)\}^n_{i=1}\cup \{(1, T_j)\}^m_{j=1}
	\]
	are bi-free.  

	In the case that $B_\ell = B_r = \bC$, the relative bi-free entropy of $\vX, \vY$ with respect to $(B_\ell, B_r)$ is called the \emph{non-microstate bi-free entropy of $(\vX, \vY)$} and is denoted $\chi^* (\vX \sqcup \vY)$.
\end{defn}

\begin{rem}
	We note that we have used a specific bi-free Brownian motion in Definition \ref{defn:entropy}, namely the one defined by completely independent bi-free central limit distributions.
	This appears to be the optimal choice as this choice of bi-free central limit distribution has the maximal microstate bi-free entropy among all bi-free central limit distributions (see \cite{CS2017}) and minimizes the inequality in the bi-free Cramer-Rao inequality (Proposition \ref{prop:cramer-rao}).
	We note other non-microstate bi-free entropies are possible by selecting different bi-free Brownian motions. 
\end{rem}

\begin{rem}
	By part (\ref{part:bi-free-fisher-is-free-fisher-if-one-side-absent}) of Remark \ref{rem:remarks-about-fisher-info}, it is easy to see that if $m = 0$ and $B_r = \bC$ then $\chi^* (\vX \sqcup \vY : (B_\ell, B_r))$ is the non-microstate free entropy of $\vX$ with respect to $B_\ell$, while if $n =0 $ and $B_ \ell = \bC$ then $\chi^* (\vX \sqcup \vY : (B_\ell, B_r))$ is the non-microstate free entropy of $\vY$ with respect to $B_\ell$.

	In addition, by Remark \ref{rem:conjugate-variables-to-free-conjugate}, it is elementary to see that
	\begin{align*}
		\Phi^*(\vX \sqcup \vY : (B_\ell, B_r))  \geq \Phi^*(\vX  : B_\ell) + \Phi^*(  \vY : B_r)
	\end{align*}
	for any $\vX, \vY$ and thus
	\[
		\chi^* (\vX \sqcup \vY : (B_\ell, B_r)) \leq \chi^* (\vX  : B_\ell) + \chi^* ( \vY : B_r) < \infty.
	\]
\end{rem}

\begin{exam}
	\label{exam:entropy-bi-free-central}
	Let $(\{S_k\}^{n}_{k=1}, \{S_k\}^{n+m}_{k=n+1})$ be a centred, self-adjoint bi-free central limit distribution with respect to a state $\varphi$. Recall the joint distribution of these operators is completely determined by the matrix
	\[
		A = [a_{i,j}] = [\varphi(S_iS_j)] \in \M_{n+m}(\bR)_{\sa}
	\]
	which is positive.

	Let $(\{T_k\}^{n}_{k=1}, \{T_k\}^{n+m}_{k=n+1})$ be a centred, bi-free central limit distribution with variance one and covariance zero that are bi-free from $(\{S_k\}^{n}_{k=1}, \{S_k\}^{n+m}_{k=n+1})$.  For each $t \in (0, \infty)$ let
	\[
		S_k(t) = S_k + \sqrt{t} T_k.
	\]
	Hence $(\{S_k(t)\}^{n}_{k=1}, \{S_k(t)\}^{n+m}_{k=n+1})$ is a centred, self-adjoint bi-free central limit distribution with covariance matrix
	\[
		A_t = [\varphi(S_i(t)S_j(t))] = t I_{n+m} + A.
	\]
	Therefore, since $A_t$ is invertible for all $t \in (0, \infty)$ as $A \geq 0$, we obtain from Example \ref{exam:Fisher-bi-free-central} that
	\[
		\Phi^*(S_1(t), \ldots, S_n(t) \sqcup S_{n+1}(t), \ldots, S_{n+m}(t)) = \Tr((t I_{n+m} + A)^{-1}).
	\]

	As $A$ is a self-adjoint matrix, there exists a unitary matrix $U$ and a diagonal matrix $D = \diag(\lambda_1, \ldots, \lambda_{n+m})$ such that $A = U^*DU$.  Hence it is easy to see that
	\[
		\Phi^*(S_1(t), \ldots, S_n(t) \sqcup S_{n+1}(t), \ldots, S_{n+m}(t)) = \Tr((t I_{n+m} + D)^{-1}) = \sum^{n+m}_{k=1} \frac{1}{t + \lambda_k}.
	\]
	Therefore, as $\prod^{n+m}_{k=1} \lambda_k = \det(D) = \det(A)$, we see that
	\begin{align*}
		\chi^* (S_1, \ldots, S_n \sqcup S_{n+1}, \ldots, S_{n+m}) & = \frac{n+m}{2}\log(2\pi e) + \frac{1}{2} \int^\infty_0 \frac{n+m}{1+t} - \sum^{n+m}_{k=1} \frac{1}{t + \lambda_k} \, dt \\
		&= \frac{n+m}{2} \log(2 \pi e) + \frac{1}{2} \left.\left(\log\left( \frac{(1+t)^{n+m}}{\prod^{n+m}_{k=1} (t + \lambda_k)}\right) \right)\right|^\infty_{t=0}\\
		&= \frac{n+m}{2} \log(2 \pi e) + \frac{1}{2}\log\left(\prod^{n+m}_{k=1} \lambda_k\right) \\
		&= \frac{n+m}{2} \log(2 \pi e) + \frac{1}{2}\log\left(\det(A)\right).
	\end{align*}
	Note this agrees with the microstate bi-free entropy of $(\{S_k\}^{n}_{k=1}, \{S_k\}^{n+m}_{k=n+1})$ obtained in \cite{CS2017} and that $\frac{n+m}{2} \log(2 \pi e)$ is $n+m$ times the free entropy of a single semicircular operator with variance one.
\end{exam}

To understand the non-microstate bi-free entropy, we first demonstrate an upper bound.

\begin{prop}
\label{prop:upper-bound-non-microstate-entropy-based-on-L2-norm}
	Let $\vX, \vY$ be tuples of self-adjoint operators of length $n$ and $m$ respectively, and let $B_\ell$, $B_r$ be unital, self-adjoint subalgebras of a C$^*$-non-commutative probability space $(\A, \varphi)$ with no algebraic relations other than possibly the commutation of left and right operators.  If 
	\[
		C^2 = \varphi\left(\sum^n_{i=1} X_i^2 + \sum^m_{j=1} Y_j^2 \right)
	\]
	then
	\[
		\chi^*(\vX \sqcup \vY : (B_\ell, B_r)) \leq \frac{n+m}{2} \log\left( \frac{2 \pi e }{n+m} C^2\right).
	\]
	Furthermore equality holds if $\vX, \vY$ are semicircular operators of the same variance such that $\{(X_i, 1)\}^n_{i=1} \cup \{(1, Y_j)\}^m_{j=1}$ are bi-free and, if $B_\ell = B_r = \bC$, the converse holds.
\end{prop}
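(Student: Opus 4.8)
The plan is to bound the Fisher information appearing in the integrand of Definition~\ref{defn:entropy} from below and then to integrate the resulting elementary function explicitly. Write $N = n+m$ and
\[
	h(t) = \Phi^*\paren{\vX + \sqrt{t}\,\vS \sqcup \vY + \sqrt{t}\,\vT : (B_\ell, B_r)}.
\]
Theorem~\ref{thm:fisher-info-after-perturbing-by-semis} (whose lower bound is a restatement of the bi-free Cramer--Rao inequality, Proposition~\ref{prop:cramer-rao}) gives the pointwise estimate $h(t) \geq \frac{N^2}{C^2 + Nt}$ for all $t \geq 0$. Since $\chi^*$ is built from $-h(t)$, this lower bound turns into an upper bound on the entropy:
\[
	\chi^*(\vX \sqcup \vY : (B_\ell, B_r)) \leq \frac{N}{2}\log(2\pi e) + \frac{1}{2}\int_0^\infty \paren{\frac{N}{1+t} - \frac{N^2}{C^2 + Nt}}\,dt.
\]

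First I would evaluate the integral. An antiderivative of the integrand is $N\log\!\paren{\frac{1+t}{C^2+Nt}}$; since both $\frac{N}{1+t}$ and $\frac{N^2}{C^2+Nt}$ are asymptotic to $N/t$, the difference is integrable at infinity, and evaluating the antiderivative at the endpoints (the limit at $\infty$ is $N\log(1/N)$ and the value at $0$ is $-N\log(C^2)$) gives $N\log\!\paren{\frac{C^2}{N}}$. Substituting this back and combining logarithms collapses the right-hand side to $\frac{N}{2}\log\!\paren{\frac{2\pi e}{N}C^2}$, which is exactly the asserted bound.

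For the sufficiency of the equality condition, if $\vX,\vY$ are centred semicircular operators of a common variance with the family $\set{(X_i,1)}_{i=1}^n \cup \set{(1,Y_j)}_{j=1}^m$ bi-free, then the equality clause of Theorem~\ref{thm:fisher-info-after-perturbing-by-semis} gives $h(t) = \frac{N^2}{C^2+Nt}$ for every $t$ (its hypothesis that $(B_\ell,B_r)$ be bi-free from the semicircular family is automatic in the relevant setting, being trivial when $B_\ell = B_r = \bC$). Hence the displayed inequality is saturated and equality holds in the entropy bound.

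For the converse, suppose $B_\ell = B_r = \bC$ and equality holds. Then $\int_0^\infty g(t)\,dt = 0$ where $g(t) = h(t) - \frac{N^2}{C^2+Nt} \geq 0$. Since $h$ is right continuous by Theorem~\ref{thm:fisher-info-after-perturbing-by-semis} and $\frac{N^2}{C^2+Nt}$ is continuous, $g$ is a nonnegative right-continuous function with vanishing integral, hence identically zero. Evaluating at $t = 0$ gives $\Phi^*(\vX \sqcup \vY) = N^2/C^2$, which is precisely the extremal case of the bi-free Cramer--Rao inequality; its converse (Proposition~\ref{prop:cramer-rao}) then forces $\vX,\vY$ to be centred semicircular of a common variance with $\set{(X_i,1)}_{i=1}^n \cup \set{(1,Y_j)}_{j=1}^m$ bi-free. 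The step I expect to be most delicate is this converse: one must promote the vanishing of the integral to pointwise equality \emph{everywhere} (not merely almost everywhere), which is exactly where the right continuity of $h$ is needed, and then recognise the $t=0$ value as the Cramer--Rao extremal value so that its equality characterisation applies verbatim.
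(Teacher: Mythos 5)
Your proof is correct and takes essentially the same approach as the paper: the lower bound on the perturbed Fisher information from Theorem~\ref{thm:fisher-info-after-perturbing-by-semis}, explicit evaluation of the resulting integral, and that theorem's equality characterization. The only minor difference is in the converse, where you pass from the vanishing integral to pointwise vanishing via right continuity and then apply the Cramer--Rao converse (Proposition~\ref{prop:cramer-rao}) directly at $t=0$, whereas the paper invokes the ``for all $t$'' converse of Theorem~\ref{thm:fisher-info-after-perturbing-by-semis}; both reduce to the same equality case.
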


\begin{proof}
	By Theorem \ref{thm:fisher-info-after-perturbing-by-semis}
	\[
		\Phi^*(\vX + \sqrt{t}\vS \sqcup \vY + \sqrt{t}\vT : (B_\ell, B_r)) \geq \frac{(n+m)^2}{C^2 + (n+m)t}.
	\]
	Hence
	\begin{align*}
		\chi^*(\vX \sqcup \vY : (B_\ell, B_r))& \leq \frac{n+m}{2} \left(\log(2\pi e) + \int^\infty_0 \frac{1}{1+t} - \frac{1}{t + (n+m)^{-1} C^2} \, dt \right) \\
		&=  \frac{n+m}{2} \left(\log(2\pi e) + \left.\left(\log\left(\frac{1+t}{t + (n+m)^{-1} C^2}\right) \right)\right|^\infty_{t=0} \right)\\
		&=  \frac{n+m}{2} \left(\log(2\pi e) - \log\left(\frac{1}{(n+m)^{-1} C^2}\right) \right) \\
		&= \frac{n+m}{2} \log\left( \frac{2 \pi e }{n+m} C^2\right).  
	\end{align*} 
	As equality holds if and only if the equality from Theorem \ref{thm:fisher-info-after-perturbing-by-semis} holds for almost every $t > 0$, the final claims follow as Theorem \ref{thm:fisher-info-after-perturbing-by-semis} specifies when the equality holds.
\end{proof}

Several other properties of the non-microstate bi-free entropy easily follow from our knowledge of bi-free Fisher information.

\begin{prop}
\label{prop:properties-of-bi-free-entropy}
	Let $\vX, \vY, \vX', \vY'$ be tuples of self-adjoint operators of lengths $n$, $m$, $n'$, and $m'$ respectively, and let $B_\ell$, $B_r$, $C_\ell$, $C_r$ be unital, self-adjoint subalgebras of a C$^*$-non-commutative probability space $(\fA, \varphi)$ with no algebraic relations other than possibly left and right operators commuting.  
	\begin{enumerate}
		\item We have
			\begin{align*}
				 \chi^*(\vX, \vX' \sqcup \vY, \vY' : (B_\ell\vee C_\ell, B_r \vee C_r)) \leq\chi^*(\vX \sqcup \vY : (B_\ell, B_r)) + \chi^*(\vX' \sqcup \vY' : (C_\ell, C_r)).
			\end{align*}
		\item If
			\[
				(B_\ell\langle \vX \rangle, B_r\langle \vY \rangle ) \qqand (C_\ell\langle \vX' \rangle, C_r\langle \vY' \rangle )
			\]
			are bi-free, then the inequality in part (1) is an equality.
		\item If $C_\ell \subseteq B_\ell$ and $C_r \subseteq B_r$, then 
			\[
				\chi^*(\vX \sqcup \vY : (B_\ell, B_r))  \leq \chi^*(\vX \sqcup \vY : (C_\ell, C_r)) .
			\]
		\item If
			\[
				(B_\ell\langle \vX \rangle, B_r\langle \vY \rangle ) \qqand (C_\ell , C_r  )
			\]
			are bi-free,  then
			\[
				\chi^*(\vX \sqcup \vY : (B_\ell, B_r)) = \chi^*(\vX \sqcup \vY : (B_\ell  \vee C_\ell, B_r \vee C_r)).
			\]
	\end{enumerate}
\end{prop}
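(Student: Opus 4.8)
The plan is to deduce all four assertions from the corresponding statements about the relative bi-free Fisher information, using the fact that, up to the additive constant $\frac{n+m}{2}\log(2\pi e)$, the entropy $\chi^*$ is the integral over $t \in (0,\infty)$ of $\frac{n+m}{1+t} - \Phi^*(\vX + \sqrt{t}\vS \sqcup \vY + \sqrt{t}\vT : (B_\ell, B_r))$. Both the constant and the term $\frac{n+m}{1+t}$ are additive in the number of variables, so any inequality (resp.\ equality) that holds pointwise in $t$ at the level of $\Phi^*$ integrates to the analogous relation for $\chi^*$, with the sense of the inequality reversed because $\Phi^*$ enters with a minus sign. Thus each part becomes a pointwise-in-$t$ claim about the perturbed tuples, and the passage to entropy is a routine integration, valid in the extended reals so that the statements persist even when some entropy equals $-\infty$.

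To make these pointwise comparisons legitimate I would first fix one sufficiently large C$^*$-non-commutative probability space containing, besides the given variables and algebras, a bi-free Brownian motion: centred variance-one semicircular operators $\{(S_i,1)\}_{i=1}^n \cup \{(S'_i,1)\}_{i=1}^{n'} \cup \{(1,T_j)\}_{j=1}^m \cup \{(1,T'_j)\}_{j=1}^{m'}$, mutually bi-free and bi-free from $((B_\ell \vee C_\ell)\langle \vX, \vX'\rangle, (B_r \vee C_r)\langle \vY, \vY'\rangle)$, with additional bi-freeness among the blocks imposed as each part requires. Writing $\vS, \vT$ for the unprimed blocks and $\vS', \vT'$ for the primed ones, the perturbing family of the combined system restricts to the correct perturbing family of each subsystem, so the entropies of the subsystems are computed with the same operators that appear after restricting the combined perturbation.

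For part (1), I would apply Proposition \ref{prop:Fisher-supadditive} (super-additivity of Fisher information) to the perturbed tuples at each $t$; the resulting pointwise inequality $\Phi^*(\text{combined}) \geq \Phi^*(\vX + \sqrt{t}\vS \sqcup \vY + \sqrt{t}\vT : (B_\ell,B_r)) + \Phi^*(\vX' + \sqrt{t}\vS' \sqcup \vY' + \sqrt{t}\vT' : (C_\ell,C_r))$ integrates, after the additive splitting of the constant and of $\frac{1}{1+t}$, to the claimed sub-additivity of $\chi^*$. For part (2), when $(B_\ell\langle\vX\rangle, B_r\langle\vY\rangle)$ and $(C_\ell\langle\vX'\rangle, C_r\langle\vY'\rangle)$ are bi-free, I would arrange the Brownian motion so that $(B_\ell\langle\vX\rangle, B_r\langle\vY\rangle, \vS, \vT)$ and $(C_\ell\langle\vX'\rangle, C_r\langle\vY'\rangle, \vS', \vT')$ are bi-free as two super-pairs; then Proposition \ref{prop:fisher-info-with-bifree-things} upgrades the pointwise inequality of part (1) to a pointwise equality, giving equality of the entropies. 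Part (3) follows by applying part (\ref{part:increasing-algebra-in-fisher-info}) of Remark \ref{rem:remarks-about-fisher-info} to the perturbed variables at each $t$, the single semicircular family being bi-free from the larger pair $(B_\ell\langle\vX\rangle, B_r\langle\vY\rangle)$ and hence usable for both $(B_\ell,B_r)$ and $(C_\ell,C_r)$. Part (4) follows identically from part (\ref{part:fisher-info-for-bi-free-things}) of Remark \ref{rem:remarks-about-fisher-info}, once the semicirculars are chosen so that $(C_\ell, C_r)$ stays bi-free from the augmented pair $(B_\ell\langle\vX\rangle, B_r\langle\vY\rangle, \vS, \vT)$.

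The one place requiring care is the bookkeeping of bi-freeness under perturbation: I must check that augmenting each bi-free pair by an independent semicircular block preserves the bi-freeness hypotheses needed in parts (2) and (4), and that the combined perturbing family genuinely restricts to the subsystem families. This is the standard stability of bi-free independence under adjoining a completely bi-free Brownian motion, but it is the crux of the argument; everything else is the additive splitting described above followed by integration.
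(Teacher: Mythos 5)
Your proposal is correct and follows essentially the same route as the paper: each part is deduced from the corresponding Fisher information statement (Proposition~\ref{prop:Fisher-supadditive}, Proposition~\ref{prop:fisher-info-with-bifree-things}, and parts~(\ref{part:increasing-algebra-in-fisher-info}) and~(\ref{part:fisher-info-for-bi-free-things}) of Remark~\ref{rem:remarks-about-fisher-info}) applied to the perturbed tuples at each time $t$ and then integrated against the definition of $\chi^*$. Your additional bookkeeping about adjoining the bi-free Brownian motion while preserving the bi-freeness hypotheses is exactly the detail the paper leaves implicit, so nothing is missing.
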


\begin{proof}
	Part (1) follows from Proposition \ref{prop:Fisher-supadditive}, part (2) follows from Proposition \ref{prop:fisher-info-with-bifree-things}, part (3) follows from part (\ref{part:increasing-algebra-in-fisher-info}) of Remark \ref{rem:remarks-about-fisher-info}, and part (4) follows from part (\ref{part:fisher-info-for-bi-free-things}) of Remark \ref{rem:remarks-about-fisher-info}.
\end{proof}

Furthermore, the non-microstate bi-free entropy behaves well with respect to limits.

\begin{prop}
	Let $\vX, \vY$ be tuples of self-adjoint operators of lengths $n$ and $m$ respectively, and let $B_\ell$, $B_r$ be unital, self-adjoint subalgebras of a C$^*$-non-commutative probability space $(\fA, \varphi)$ with no algebraic relations other than possibly the commutation of left and right operators.
	Suppose further that for each $k \in \bN$ that $\vX^{(k)}, \vY^{(k)}$ are tuples of self-adjoint elements in $\fA$ of lengths $n$ and $m$ respectively such that
	\begin{align*}
		& \limsup_{k \to \infty} \left\|X^{(k)}_i\right\| < \infty, \\
		& \limsup_{k \to \infty} \left\|Y^{(k)}_j\right\| < \infty, \\
		& s\text{-}\lim_{k \to \infty} X^{(k)}_i = X_i, \text{ and} \\
		& s\text{-}\lim_{k \to \infty} Y^{(k)}_j = Y_j
	\end{align*}
	for all $1 \leq i \leq n$ and $1 \leq j \leq m$ (with the strong limit computed as bounded linear maps acting on $L_2(\fA, \varphi)$).  Then
	\begin{align*}
		\limsup_{k \to \infty} \chi^*\left(\vX^{(k)} \sqcup \vY^{(k)} : (B_\ell, B_r)\right) \leq \chi^*(\vX \sqcup \vY : (B_\ell, B_r)).
	\end{align*}
\end{prop}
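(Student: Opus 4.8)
The plan is to prove this upper semicontinuity statement by combining the integral representation of the entropy from Definition \ref{defn:entropy} with the lower semicontinuity of the Fisher information (Proposition \ref{prop:fisher-strong-convergence-bounds}) applied pointwise in the time parameter $t$, and then passing the limit superior through the integral via a reverse Fatou argument. First I would fix a single family of variance-one semicircular operators $S_1, \ldots, S_n, T_1, \ldots, T_m$ realized in a larger C$^*$-non-commutative probability space so that they are bi-free from the pair generated by $\{X_i^{(k)}, Y_j^{(k)}\}_{k,i,j}$ together with $\vX, \vY$; this single family then serves as the bi-free Brownian perturbation simultaneously for every approximant and for the limit. Writing
\[
	f_k(t) = \frac{n+m}{1+t} - \Phi^*\left(\vX^{(k)} + \sqrt{t}\,\vS \sqcup \vY^{(k)} + \sqrt{t}\,\vT : (B_\ell, B_r)\right)
\]
and letting $f(t)$ denote the analogous expression with $\vX, \vY$ in place of $\vX^{(k)}, \vY^{(k)}$, Definition \ref{defn:entropy} gives
\[
	\chi^*\left(\vX^{(k)} \sqcup \vY^{(k)} : (B_\ell, B_r)\right) = \frac{n+m}{2}\log(2\pi e) + \frac{1}{2}\int_0^\infty f_k(t)\, dt,
\]
and the same formula holds for the limit with $f$ in place of $f_k$.

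For each fixed $t > 0$, note that $\vX^{(k)} + \sqrt{t}\,\vS$ converges strongly to $\vX + \sqrt{t}\,\vS$ with uniformly bounded operator norms, since the shifts $\sqrt{t}\,\vS, \sqrt{t}\,\vT$ are fixed and the norm hypotheses pass to the sums; likewise on the right. Thus Proposition \ref{prop:fisher-strong-convergence-bounds} applies to the perturbed systems and yields
\[
	\liminf_{k\to\infty} \Phi^*\left(\vX^{(k)} + \sqrt{t}\,\vS \sqcup \vY^{(k)} + \sqrt{t}\,\vT : (B_\ell, B_r)\right) \geq \Phi^*\left(\vX + \sqrt{t}\,\vS \sqcup \vY + \sqrt{t}\,\vT : (B_\ell, B_r)\right),
\]
so that $\limsup_{k\to\infty} f_k(t) \leq f(t)$ for every $t > 0$.

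To dominate the family from above, I would use the norm hypotheses to fix $M < \infty$ with $C_k^2 := \varphi\left(\sum_{i=1}^n (X_i^{(k)})^2 + \sum_{j=1}^m (Y_j^{(k)})^2\right) \leq M^2$ for all large $k$; after discarding finitely many terms we may assume this holds for all $k$. The lower bound of Theorem \ref{thm:fisher-info-after-perturbing-by-semis} then gives
\[
	\Phi^*\left(\vX^{(k)} + \sqrt{t}\,\vS \sqcup \vY^{(k)} + \sqrt{t}\,\vT : (B_\ell, B_r)\right) \geq \frac{(n+m)^2}{C_k^2 + (n+m)t} \geq \frac{(n+m)^2}{M^2 + (n+m)t},
\]
whence $f_k(t) \leq g(t) := \frac{n+m}{1+t} - \frac{(n+m)^2}{M^2 + (n+m)t}$ for all $k$ and $t$. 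An elementary expansion shows $g(t) = O(t^{-2})$ as $t \to \infty$ and $g$ is bounded near $0$, so $g \in L^1([0,\infty))$. The reverse Fatou lemma (valid since $f_k \leq g$ with $g$ integrable) combined with the pointwise bound $\limsup_k f_k \leq f$ then gives $\limsup_{k\to\infty} \int_0^\infty f_k(t)\, dt \leq \int_0^\infty \limsup_{k\to\infty} f_k(t)\, dt \leq \int_0^\infty f(t)\, dt$, and adding back the constant and the factor $\tfrac12$ yields the claimed inequality.

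I expect the main obstacle to be the pointwise semicontinuity step rather than the Fatou estimate: one must arrange a single bi-free semicircular family that perturbs every approximant and the limit at once while genuinely preserving both the strong convergence and the uniform norm bounds in the enlarged space, so that Proposition \ref{prop:fisher-strong-convergence-bounds} can be invoked legitimately for each $t$. Once this is set up, the integrability of the dominating function $g$ follows immediately from the explicit Cramer--Rao lower bound recorded in Theorem \ref{thm:fisher-info-after-perturbing-by-semis}, and the remainder is routine.
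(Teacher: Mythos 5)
Your proof is correct and follows essentially the same route as the paper's: the uniform second-moment bound extracted from the operator-norm hypotheses together with the Cramer--Rao-type lower bound of Theorem \ref{thm:fisher-info-after-perturbing-by-semis} produce an integrable dominating function for the integrands, pointwise (in $t$) upper semicontinuity comes from Proposition \ref{prop:fisher-strong-convergence-bounds} applied to the perturbed tuples, and the Dominated Convergence Theorem (reverse Fatou) finishes the argument. The only difference is cosmetic but in your favour: you fix a single semicircular family bi-free from all the approximants and the limit at once, whereas the paper introduces a separate family $\vS^{(k)}, \vT^{(k)}$ for each $k$; your set-up is the cleaner way to legitimately invoke Proposition \ref{prop:fisher-strong-convergence-bounds}, which needs the perturbed operators $\vX^{(k)} + \sqrt{t}\,\vS$ to converge strongly to $\vX + \sqrt{t}\,\vS$ in a common space.
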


\begin{proof}
	By assumption there exists a constant $C > 0$ such that
	\[
		C^2 \geq \varphi \left( \sum^n_{i=1}\left( X_i^{(k)}\right)^2 + \sum^m_{j=1} \left(Y_j^{(k)}\right)^2  \right)
	\]
	for all $k$ and 
	\[
		C^2 \geq \varphi \left( \sum^n_{i=1}  X_i^2  + \sum^m_{j=1}  Y_j^2 \right).
	\]
	By Theorem \ref{thm:fisher-info-after-perturbing-by-semis}, if $S^{(k)}_1, \ldots, S^{(k)}_n, T^{(k)}_1, \ldots, T^{(k)}_m$ are $(0, 1)$ semicircular variables such that 
	\[
		\left(B_\ell \left\langle \vX^{(k)} \right\rangle, B_r\left\langle \vY^{(k)}\right\rangle\right) \cup \left\{\left(S^{(k)}_i, 1\right)\right\}^n_{i=1}\cup \left\{\left(1, T^{(k)}_j\right)\right\}^m_{j=1}
	\]
	are bi-free, then
	\begin{align*}
		 \frac{n+m}{1+t} - \Phi^*\left( \vX^{(k)}  + \sqrt{t} \vS^{(k)}  \sqcup \vY^{(k)} + \sqrt{t} \vT^{(k)}  : (B_\ell, B_r)\right)  \leq \frac{n+m}{1+t} - \frac{n+m}{t + (n+m)^{-1} C^2}.
	\end{align*}
	Since $\frac{n+m}{1+t} - \frac{n+m}{t + (n+m)^{-1} C^2}$ is integrable and since
	\begin{align*}
		&\limsup_{k \to \infty} \frac{n+m}{1+t} - \Phi^*\left(\vX^{(k)}  + \sqrt{t} \vS^{(k)}  \sqcup \vY^{(k)} + \sqrt{t} \vT^{(k)} : (B_\ell, B_r)\right) \\
		& \leq \frac{n+m}{1+t} - \Phi^*\left(\vX  + \sqrt{t} \vS  \sqcup \vY + \sqrt{t} \vT : (B_\ell, B_r)\right)
	\end{align*}
	by Proposition \ref{prop:fisher-strong-convergence-bounds}, the result follows by the Dominated Convergence Theorem.
\end{proof}

\begin{prop}
	\label{prop:Fisher-is-the-derivative-of-entropy}
	Let $\vX, \vY$ be tuples of self-adjoint operators of lengths $n$ and $m$ respectively, and let $B_\ell$, $B_r$ be unital, self-adjoint subalgebras of a C$^*$-non-commutative probability space $(\fA, \varphi)$.  Suppose further that $S_1, \ldots, S_n, T_1, \ldots, T_m$ are $(0, 1)$ semicircular variables in $\fA$ such that 
	\[
		(B_\ell \langle \vX \rangle, B_r\langle \vY\rangle) \cup \{(S_i, 1)\}^n_{i=1}\cup \{(1, T_j)\}^m_{j=1}
	\]
	are bi-free and there are no algebraic relations other than possibly the commutation of left and right operators. 

	For $t \in [0, \infty)$, let
	\[
		g(t) = \chi^*(\vX + \sqrt{t} \vS \sqcup \vY + \sqrt{t} \vT : (B_\ell, B_r)).
	\]
	Then $g : [0, \infty) \to \bR \cup \{-\infty\}$ is a concave, continuous, increasing function such that $g(t) \geq \frac{n+m}{2} \log(2 \pi e t)$ and, when $g(t) \neq -\infty$, 
	\[
		\lim_{\epsilon \to 0+} \frac{1}{\epsilon} (g(t+\epsilon) - g(t)) = \frac{1}{2} \Phi^*(  \vX + \sqrt{t} \vS \sqcup \vY + \sqrt{t} \vT : (B_\ell, B_r) ).
	\]
\end{prop}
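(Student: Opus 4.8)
The plan is to reduce all four assertions to properties of the relative Fisher information along the flow, which were already established in Theorem~\ref{thm:fisher-info-after-perturbing-by-semis}. Write $h(s) = \Phi^*(\vX + \sqrt{s}\vS \sqcup \vY + \sqrt{s}\vT : (B_\ell, B_r))$ for the function appearing in that theorem, so that $h$ is non-negative, decreasing, right continuous, tends to $0$ at infinity, and satisfies $\frac{(n+m)^2}{C^2 + (n+m)s} \leq h(s) \leq \frac{n+m}{s}$. The first step is the \emph{semigroup property} of the perturbation. If $\vS', \vT'$ are fresh variance-one semicircular families chosen bi-free from $(B_\ell\langle\vX\rangle, B_r\langle\vY\rangle) \cup \{(S_i,1)\}_{i=1}^n \cup \{(1,T_j)\}_{j=1}^m$, then $(\sqrt{t}\vS + \sqrt{u}\vS', \sqrt{t}\vT + \sqrt{u}\vT')$ is again a centred bi-free central limit family of variance $t+u$ and covariance zero, still bi-free from $(B_\ell\langle\vX\rangle, B_r\langle\vY\rangle)$. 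Because the relative bi-free Fisher information is a function of the joint distribution alone (it is the sum of squared $L_2$-norms of conjugate variables, which are fixed by the moment/cumulant conditions of Definition~\ref{defn:bi-free-conjugate variables}), the Fisher information of $\vX + \sqrt{t}\vS$ perturbed by $\sqrt{u}\vS'$ equals $h(t+u)$. Feeding this into Definition~\ref{defn:entropy} yields
\[
	g(t) = \frac{n+m}{2}\log(2\pi e) + \frac{1}{2}\int_0^\infty \left( \frac{n+m}{1+u} - h(t+u) \right)du.
\]

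Next I would prove the master identity: for $0 \leq t_1 < t_2$,
\[
	g(t_2) - g(t_1) = \frac{1}{2}\int_{t_1}^{t_2} h(s)\,ds.
\]
Subtracting the two integral representations cancels the $\frac{n+m}{1+u}$ terms and leaves $\frac{1}{2}\int_0^\infty (h(t_1+u) - h(t_2+u))\,du$. Although each tail $\int_{t_i}^\infty h$ diverges (since $h(s) \geq \frac{(n+m)^2}{C^2+(n+m)s}$), the difference is controlled: truncating at $u = N$, substituting $s = t_i + u$, and recombining gives $\int_{t_1}^{t_2} h(s)\,ds - \int_{t_1+N}^{t_2+N} h(s)\,ds$, and the second piece is at most $(t_2 - t_1)\frac{n+m}{t_1+N} \to 0$ by $h(s) \leq (n+m)/s$. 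This proves the identity.

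All four conclusions then drop out. Since $h \geq 0$ the identity shows $g$ is increasing, and since $h$ is decreasing $g$ is the integral of a decreasing function and so concave; continuity on $(0,\infty)$ is immediate from the local boundedness $h(s) \leq (n+m)/s$, while at $t=0$ (with the value $-\infty$ allowed) continuity follows by monotone convergence, as $h(t+u)\uparrow h(u)$ when $t \downarrow 0$ by right continuity. The derivative formula is the identity divided by $\epsilon$:
\[
	\frac{1}{\epsilon}\bigl(g(t+\epsilon) - g(t)\bigr) = \frac{1}{2\epsilon}\int_t^{t+\epsilon} h(s)\,ds \longrightarrow \frac{1}{2}h(t)
\]
as $\epsilon \to 0+$, again using right continuity of $h$. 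Finally, substituting $h(s) \leq (n+m)/s$ into the representation gives
\[
	g(t) \geq \frac{n+m}{2}\log(2\pi e) + \frac{n+m}{2}\int_0^\infty \left(\frac{1}{1+u} - \frac{1}{t+u}\right)du = \frac{n+m}{2}\log(2\pi e\,t).
\]

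The main obstacle is the semigroup step: one must check not merely that bi-free semicircular families add their variances, but that adding $\sqrt{u}\vS'$ preserves the \emph{entire} bi-free joint distribution with the fixed data $(B_\ell, B_r, \vX, \vY)$, so that the two Fisher informations genuinely coincide. Once this is in place, the divergent-integral bookkeeping in the master identity is delicate but routine, and everything else is a formal consequence of the recorded properties of $h$.
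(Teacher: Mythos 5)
Your proposal is correct and follows essentially the same route as the paper's proof: the semigroup property via fresh bi-free semicircular families, the resulting identity $g(t_0+\epsilon)-g(t_0)=\frac{1}{2}\int_{t_0}^{t_0+\epsilon}h(s)\,ds$, and then all four conclusions read off from the monotonicity, right continuity, and bounds on $h$ established in Theorem \ref{thm:fisher-info-after-perturbing-by-semis}. The only difference is cosmetic: you spell out the truncation argument justifying the cancellation of the divergent tails and the monotone-convergence argument for continuity at $t=0$, which the paper leaves implicit.
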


\begin{proof}
	Let $S'_1, \ldots, S'_n, T'_1, \ldots, T'_m$ be $(0, 1)$ semicircular variables in $\fA$ (or a larger C$^*$-non-commutative probability space) such that 
	\[
		(B_\ell \langle \vX \rangle, B_r\langle \vY\rangle) \cup \{(S_i, 1)\}^n_{i=1}\cup \{(1, T_j)\}^m_{j=1}\cup \{(S'_i, 1)\}^n_{i=1}\cup \{(1, T'_j)\}^m_{j=1}
	\]
	are bi-free.  Then for all $\epsilon > 0$ we have that
	\begin{align*}
		 \Phi^*(\vX  + \sqrt{t+ \epsilon} \vS \sqcup \vY  + \sqrt{t+ \epsilon} \vT : (B_\ell, B_r))
		&=\Phi^*(\vX + \sqrt{t} \vS  + \sqrt{\epsilon}\vS' \sqcup \vY  + \sqrt{t} \vT+ \sqrt{\epsilon}\vT' : (B_\ell, B_r)) \\
		& \geq \Phi^*(\vX + \sqrt{t} \vS \sqcup  \vY + \sqrt{t} \vT : (B_\ell, B_r))
	\end{align*}
	by Proposition \ref{prop:stam-inequality}.   Hence $g$ is increasing.

	If $t_0 \geq 0$, $\epsilon > 0$,  $g(t_0) \neq -\infty$, and 
	\[
		h(t) = \Phi^*(\vX + \sqrt{t} \vS \sqcup  \vY + \sqrt{t} \vT : (B_\ell, B_r))
	\]
	is as in Theorem \ref{thm:fisher-info-after-perturbing-by-semis}, the above computations show
	\begin{align*}
		g(t_0 + \epsilon) - g(t_0) 
		&=\frac{1}{2} \int^\infty_0  \frac{n+m}{1+t} - h(t + t_0 + \epsilon) \, dt - \frac{1}{2} \int^\infty_0  \frac{n+m}{1+t} - h(t + t_0 ) \, dt \\
		&=\frac{1}{2}\int^{t_0 + \epsilon}_{t_0} h(t) \, dt.
	\end{align*}
	Since $h(t)$ is right continuous and decreasing by Theorem \ref{thm:fisher-info-after-perturbing-by-semis}, we see that $g$ is concave, continuous, and
	\[
		\lim_{\epsilon \to 0+} \frac{1}{\epsilon} (g(t+\epsilon) - g(t)) =\frac{1}{2}  h(t) = \frac{1}{2} \Phi^*( \vX + \sqrt{t} \vS \sqcup  \vY + \sqrt{t} \vT : (B_\ell, B_r) ).
	\]
	Furthermore, by Theorem \ref{thm:fisher-info-after-perturbing-by-semis}
	\begin{align*}
		g(t_0) &\geq \frac{n+m}{2} \log(2 \pi e) + \frac{1}{2} \int^\infty_0 \frac{n+m}{1+t} - \frac{n+m}{t+t_0} \, dt  =\frac{n+m}{2}\log(2 \pi e t_0). \qedhere
	\end{align*}
\end{proof}

As it is unknown whether non-microstate free entropy behaves well with respect to all transformations performed on the variables, we prove only the following in the bi-free setting.
Again we are limited to transformations on only the left or only the right variables as per the comments after Proposition \ref{prop:fisher-information-unaffected-by-orthogonal-transform}.

\begin{prop}
\label{prop:unitary-conjugates-and-bi-free-entropy}
	Let $\vX, \vY$ be tuples of self-adjoint operators of lengths $n$ and $m$ respectively and let $B_\ell$, $B_r$ be unital, self-adjoint subalgebras of a C$^*$-non-commutative probability space $(\fA, \varphi)$ with no algebraic relations other than the possibility of left and right operators commuting.
	Let $U = [u_{i,j}]$ be an $n \times n$ unitary matrix with real entries.
	If for each $1 \leq i \leq n$ we define
	\[
	X'_i =  \sum^n_{k=1} u_{i,k} X_k,	
	\]
	then
	\[
		\chi^*\left(\vX' \sqcup \vY : (B_\ell, B_r)   \right) = \chi^*(\vX \sqcup \vY : (B_\ell, B_r))
	\]
	A similar holds for the right variables.
\end{prop}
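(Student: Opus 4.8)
The plan is to reduce the statement to the orthogonal invariance of the relative bi-free Fisher information already recorded in Proposition \ref{prop:fisher-information-unaffected-by-orthogonal-transform}, by arranging that the semicircular perturbation used to compute $\chi^*(\vX' \sqcup \vY : (B_\ell, B_r))$ be the $U$-transform of the one used for $\chi^*(\vX \sqcup \vY : (B_\ell, B_r))$. First I would fix the standard bi-free semicircular family $\vS, \vT$ (variance one, covariance zero) with $(B_\ell\langle\vX\rangle, B_r\langle\vY\rangle) \cup \{(S_i,1)\}^n_{i=1} \cup \{(1,T_j)\}^m_{j=1}$ bi-free, as in Definition \ref{defn:entropy}, and set $S'_i = \sum_k u_{i,k}S_k$, so that $U\vS = (S'_1,\ldots,S'_n)$.

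The key claim is that $\{(S'_i,1)\}^n_{i=1} \cup \{(1,T_j)\}^m_{j=1}$ is again a standard bi-free semicircular family that is bi-free from $(B_\ell\langle\vX'\rangle, B_r\langle\vY\rangle) = (B_\ell\langle\vX\rangle, B_r\langle\vY\rangle)$. This should follow from the behaviour of bi-free central limit distributions under linear transformations: the bi-free cumulants of order $\geq 3$ among the $S'_i$ and $T_j$ are multilinear in those of the $S_i, T_j$ and hence vanish, while the covariances are preserved since $\varphi(S'_iS'_{i'}) = \sum_k u_{i,k}u_{i',k} = \delta_{i,i'}$ (using $UU^{T} = I$), $\varphi(S'_iT_j) = 0$, and $\varphi(T_jT_{j'}) = \delta_{j,j'}$. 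Thus the transformed family is a bi-free central limit distribution with identity covariance matrix, i.e. a standard bi-free semicircular family whose pairs are mutually bi-free. Bi-freeness from the $\vX$-system is then immediate because $\bC\langle U\vS\rangle = \bC\langle\vS\rangle$, so the relevant generated algebras, and hence all mixed cumulants with the $\vX$-system, are unchanged.

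With this in hand, $\chi^*(\vX'\sqcup\vY : (B_\ell,B_r))$ may be computed using the admissible semicircular family $U\vS, \vT$ (the value is independent of the admissible choice, since any two give the same joint $*$-distribution for $(B_\ell, B_r, \vX' + \sqrt{t}\,\cdot\,, \vY + \sqrt{t}\,\vT)$ and hence the same Fisher information). Then for each $t \geq 0$ I would use the identity $\vX' + \sqrt{t}\,U\vS = U(\vX + \sqrt{t}\,\vS)$, exhibiting $\vX' + \sqrt{t}\,U\vS$ as the orthogonal transform by $U$ of the left tuple $\vX + \sqrt{t}\,\vS$, with the right tuple $\vY + \sqrt{t}\,\vT$ left fixed. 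Applying Proposition \ref{prop:fisher-information-unaffected-by-orthogonal-transform} with the orthogonal matrix $U$ gives
\[
	\Phi^*(\vX' + \sqrt{t}\,U\vS \sqcup \vY + \sqrt{t}\,\vT : (B_\ell, B_r)) = \Phi^*(\vX + \sqrt{t}\,\vS \sqcup \vY + \sqrt{t}\,\vT : (B_\ell, B_r))
\]
for every $t$. Hence the two integrands in Definition \ref{defn:entropy} agree pointwise in $t$, the $\frac{n+m}{2}\log(2\pi e)$ terms match, and integrating yields $\chi^*(\vX'\sqcup\vY : (B_\ell,B_r)) = \chi^*(\vX\sqcup\vY : (B_\ell,B_r))$. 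The claim for the right variables is identical after exchanging the roles of left and right.

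The step I expect to be the main obstacle is the key claim that $U\vS$ remains a standard bi-free semicircular family bi-free from the rest of the system: one must check that an orthogonal mixing of the left semicircular operators spoils neither the mutual bi-freeness of the individual pairs nor the bi-freeness from $(B_\ell\langle\vX\rangle, B_r\langle\vY\rangle)$. This is where the structure theory of bi-free central limit distributions—determined by the covariance matrix with all higher cumulants vanishing—does the real work, and where I would also want to confirm that the perturbed system $(B_\ell\langle\vX + \sqrt{t}\,\vS\rangle, B_r\langle\vY + \sqrt{t}\,\vT\rangle)$ still meets the standing no-relations hypothesis required by Proposition \ref{prop:fisher-information-unaffected-by-orthogonal-transform}.
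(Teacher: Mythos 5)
Your proposal is correct and follows essentially the same route as the paper's proof: both transform the left semicircular perturbation by $U$ (setting $S'_i = \sum_k u_{i,k}S_k$), observe that $\vX' + \sqrt{t}\,\vS' = U(\vX + \sqrt{t}\,\vS)$ so that Proposition \ref{prop:fisher-information-unaffected-by-orthogonal-transform} gives equality of the Fisher informations for every $t$, and then integrate via Definition \ref{defn:entropy}. The only difference is that you spell out the verification (covariances preserved by orthogonality, higher cumulants vanishing, bi-freeness from the $\vX$-system) of the key claim that the transformed family remains an admissible standard semicircular family, which the paper simply asserts.
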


\begin{proof}
	Let $S_1, \ldots, S_n, T_1, \ldots, T_m$ be $(0, 1)$ semicircular variables such that 
	\[
		(B_\ell \langle \vX \rangle, B_r\langle \vY\rangle) \cup \{(S_i, 1)\}^n_{i=1}\cup \{(1, T_j)\}^m_{j=1}
	\]
	are bi-free with no algebraic relations other than the possibility of left and right operators commuting.  If for each $1 \leq i \leq n$ we define
	\[
		S'_i =  \sum^n_{k=1} u_{i,k} S_k,
	\]
	then $S'_1, \ldots, S'_n, T_1, \ldots, T_m$ are $(0, 1)$ semicircular variables such that 
	\[
		\left(B_\ell \left\langle   \vX' \right\rangle, B_r\langle \vY\rangle \right) \cup \{(S'_i, 1)\}^n_{i=1}\cup \{(1, T_j)\}^m_{j=1}
	\]
	are bi-free.  By Proposition \ref{prop:fisher-information-unaffected-by-orthogonal-transform}, 
	\begin{align*}
		 \Phi^*(\vX + \sqrt{t} \vS \sqcup \vY + \sqrt{t} \vT : (B_\ell, B_r))= \Phi^*\left( \vX' + \sqrt{t} \vS' \sqcup \vY + \sqrt{t} \vT : (B_\ell, B_r)   \right)
	\end{align*}
	and hence the result follows.
\end{proof}

In the case of scaling transformations, we have the following.

\begin{prop}
\label{prop:non-microstate-entropy-and-scaling}
	Let $\vX, \vY$ be tuples of self-adjoint operators of lengths $n$ and $m$ respectively, and let $B_\ell$, $B_r$ be unital, self-adjoint subalgebras of a C$^*$-non-commutative probability space $(\fA, \varphi)$.  Let $\lambda \in \bR \setminus \{0\}$.  Then
	\[
		\chi^*(\lambda \vX \sqcup \lambda \vY : (B_\ell, B_r)) = (n+m)\log|\lambda| + \chi^*(\vX \sqcup \vY : (B_\ell, B_r)).
	\]
\end{prop}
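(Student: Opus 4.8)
The plan is to reduce everything to the scaling behaviour of the bi-free Fisher information recorded in part~(\ref{part:scaling-fisher-info}) of Remark~\ref{rem:remarks-about-fisher-info}, namely $\Phi^*(\lambda \vX \sqcup \lambda \vY : (B_\ell, B_r)) = \lambda^{-2}\Phi^*(\vX \sqcup \vY : (B_\ell, B_r))$, and then to carry out the corresponding change of variables inside the integral defining $\chi^*$ in Definition~\ref{defn:entropy}. Note first that since $\lambda \neq 0$ we have $B_\ell\ang{\lambda\vX} = B_\ell\ang{\vX}$ and $B_r\ang{\lambda\vY}=B_r\ang{\vY}$, so the same semicircular family $\vS,\vT$ may be used in the definition of both entropies.

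First I would write out $\chi^*(\lambda\vX \sqcup \lambda\vY : (B_\ell,B_r))$ and perform the substitution $t = \lambda^2 s$ in the Fisher information term. Factoring out $\lambda$ gives $\lambda\vX + \sqrt{t}\,\vS = \lambda\paren{\vX + \mathrm{sgn}(\lambda)\sqrt{s}\,\vS}$, and similarly on the right. When $\lambda > 0$ this is $\lambda(\vX + \sqrt{s}\,\vS)$ and the scaling property applies directly. When $\lambda < 0$ one obtains $\lambda(\vX - \sqrt{s}\,\vS)$; here I would observe that $-\vS, -\vT$ are again $(0,1)$ semicircular families generating the \emph{same} algebras as $\vS,\vT$, so that $(B_\ell\ang{\vX},B_r\ang{\vY}) \cup \set{(-S_i,1)}^n_{i=1} \cup \set{(1,-T_j)}^m_{j=1}$ is bi-free, and hence $\vX - \sqrt{s}\,\vS = \vX + \sqrt{s}(-\vS)$ is a legitimate perturbation by a bi-free semicircular system with the same joint distribution relative to $(B_\ell,B_r)$ as $\vX + \sqrt{s}\,\vS$. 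Consequently $\Phi^*(\vX - \sqrt{s}\,\vS \sqcup \vY - \sqrt{s}\,\vT : (B_\ell,B_r)) = \Phi^*(\vX + \sqrt{s}\,\vS \sqcup \vY + \sqrt{s}\,\vT : (B_\ell,B_r))$, and in either case the scaling property yields
\[
	\Phi^*(\lambda\vX + \sqrt{t}\,\vS \sqcup \lambda\vY + \sqrt{t}\,\vT : (B_\ell,B_r)) = \frac{1}{\lambda^2}\Phi^*(\vX + \sqrt{s}\,\vS \sqcup \vY + \sqrt{s}\,\vT : (B_\ell,B_r)), \qquad t = \lambda^2 s.
\]

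Substituting this, using $dt = \lambda^2\,ds$, and then subtracting the integral defining $\chi^*(\vX \sqcup \vY : (B_\ell,B_r))$, the $\Phi^*$-contributions cancel and one is left with
\[
	\chi^*(\lambda\vX \sqcup \lambda\vY : (B_\ell,B_r)) - \chi^*(\vX \sqcup \vY : (B_\ell,B_r)) = \frac{n+m}{2}\int_0^\infty \paren{\frac{\lambda^2}{1+\lambda^2 s} - \frac{1}{1+s}}\,ds.
\]
Then I would evaluate this integral by combining the two summands before taking the limit, since each diverges separately: for finite $R$ the integral equals $\log(1+\lambda^2 R) - \log(1+R) = \log\frac{1+\lambda^2 R}{1+R}$, which tends to $\log(\lambda^2) = 2\log|\lambda|$ as $R \to \infty$. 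This gives the claimed constant $(n+m)\log|\lambda|$.

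The point requiring the most care is the treatment of negative $\lambda$ together with the justification that replacing $\vS,\vT$ by $-\vS,-\vT$ does not change the perturbed Fisher information; this rests on the fact that the value of $\Phi^*$ depends only on the joint distribution of the perturbed tuple relative to $(B_\ell,B_r)$ and on the bi-freeness of the added semicircular system, both of which are manifestly invariant under the sign flip. A secondary technical point is that the two elementary integrals of $\frac{\lambda^2}{1+\lambda^2 s}$ and $\frac{1}{1+s}$ individually diverge, so the cancellation must be carried out under a single integral sign (equivalently, with a common cutoff $R \to \infty$) rather than term by term. Finally, the degenerate case where $\chi^*(\vX \sqcup \vY : (B_\ell,B_r)) = -\infty$ is handled automatically, as the identity above simply adds the finite correction $(n+m)\log|\lambda|$ to both sides.
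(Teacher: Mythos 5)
Your proposal is correct and follows essentially the same route as the paper's proof: the scaling property of bi-free Fisher information from part (\ref{part:scaling-fisher-info}) of Remark \ref{rem:remarks-about-fisher-info} combined with the substitution $t = \lambda^2 s$ in the integral from Definition \ref{defn:entropy}. The only differences are cosmetic: the paper handles $\lambda < 0$ by applying Proposition \ref{prop:unitary-conjugates-and-bi-free-entropy} with the unitaries $-I_n$ and $-I_m$ (after reducing to $\lambda > 1$), whereas you flip the sign of the semicircular system and use invariance of $\Phi^*$ under joint distribution, and the paper evaluates the resulting elementary integral by a shift of variable rather than a common cutoff.
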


\begin{proof}
	It suffices to prove the result for $\lambda > 1$ as this also implies the result for $\lambda < -1$ since $-I_n$ and $-I_m$ are unitary matrices so we can apply Proposition \ref{prop:unitary-conjugates-and-bi-free-entropy}.  Notice this then implies the result for $0 < |\lambda| \leq 1$ via using $\lambda^{-1}$.  For $\lambda > 1$, we see that
	\begin{align*}
		& \chi^*(\lambda \vX\sqcup \lambda \vY : (B_\ell, B_r)) \\
		& = \frac{1}{2} \int^\infty_0 \left(\frac{(n+m)\lambda^{-2}}{\lambda^{-2} + t\lambda^{-2}} - \lambda^{-2} \Phi^*(\vX + \sqrt{t\lambda^{-2}} \vS  \sqcup \vY  + \sqrt{t\lambda^{-2}} \vT  : (B_\ell, B_r) ) \right) \, dt + \frac{n+m}{2} \log (2\pi e) \\
		&= \frac{1}{2} \int^\infty_0 \left(\frac{n+m}{\lambda^{-2} + s} - \Phi^*(\vX  + \sqrt{s} \vS \sqcup \vY + \sqrt{s} \vT : (B_\ell, B_r) ) \right) \, ds + \frac{n+m}{2} \log (2\pi e) \\
		&=\chi^*(\vX \sqcup \vY : (B_\ell, B_r)) -\frac{1}{2} \int^{\lambda^{-2} - 1}_0 \frac{n+m}{1+s} \, ds \\
		&=  \chi^*(\vX \sqcup \vY : (B_\ell, B_r)) + (n+m) \log|\lambda|. \qedhere
	\end{align*}
\end{proof}

In the case of finite bi-free Fisher information, we have a lower bound on the non-microstate bi-free entropy.

\begin{prop}
	\label{prop:finite-fisher-implies-finite-entropy}
	Let $\vX, \vY$ be tuples of self-adjoint operators of lengths $n$ and $m$ respectively and let $B_\ell$, $B_r$ be unital, self-adjoint subalgebras of a C$^*$-non-commutative probability space $(\fA, \varphi)$ with no algebraic relations other than possibly left and right operators commuting.   If 
	\[
		\Phi^*(\vX \sqcup \vY : (B_\ell, B_r)) < \infty,
	\]
	then
	\begin{align*}
		\chi^*(\vX \sqcup \vY : (B_\ell, B_r))  \geq \frac{n+m}{2} \log\left(\frac{2\pi (n+m) e}{\Phi^*(\vX \sqcup \vY : (B_\ell, B_r))}\right)> - \infty.
	\end{align*}
\end{prop}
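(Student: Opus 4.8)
The plan is to produce a pointwise upper bound on the perturbed Fisher information
\[
	h(t) = \Phi^*(\vX + \sqrt{t}\vS \sqcup \vY + \sqrt{t}\vT : (B_\ell, B_r))
\]
that starts at the finite value $\Phi := \Phi^*(\vX \sqcup \vY : (B_\ell, B_r))$ and decays like $\frac{n+m}{t}$, and then to substitute this bound into the integral defining $\chi^*$. Writing $N = n+m$, the definition of $\chi^*$ gives
\[
	\chi^*(\vX \sqcup \vY : (B_\ell, B_r)) = \frac{N}{2}\log(2\pi e) + \frac{1}{2}\int_0^\infty \left(\frac{N}{1+t} - h(t)\right) dt,
\]
so any pointwise upper bound on $h$ immediately yields a lower bound on $\chi^*$.

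The key step is the bi-free Stam inequality (Proposition \ref{prop:stam-inequality}). I would view $\vX + \sqrt{t}\vS$ and $\vY + \sqrt{t}\vT$ as the sums coming from the two bi-free pairs $(B_\ell\langle\vX\rangle, B_r\langle\vY\rangle)$ and $(\bC\langle\sqrt{t}\vS\rangle, \bC\langle\sqrt{t}\vT\rangle)$, which are bi-free by the hypotheses built into the definition of $\chi^*$. Since $\sqrt{t}\vS, \sqrt{t}\vT$ forms a completely independent bi-free central limit distribution with covariance matrix $tI_N$, Example \ref{exam:Fisher-bi-free-central} (or the scaling relation of Remark \ref{rem:remarks-about-fisher-info}(\ref{part:scaling-fisher-info})) gives $\Phi^*(\sqrt{t}\vS \sqcup \sqrt{t}\vT) = \frac{N}{t}$. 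The Stam inequality then reads
\[
	h(t)^{-1} \geq \Phi^{-1} + \frac{t}{N}, \qquad\text{equivalently}\qquad h(t) \leq \frac{N\Phi}{N + t\Phi}.
\]

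Substituting this into the entropy formula and evaluating the resulting elementary integral finishes the argument. Using the antiderivative $N\log\!\left(\frac{1+t}{N + t\Phi}\right)$ and the limits $\frac{1+t}{N+t\Phi} \to \Phi^{-1}$ as $t \to \infty$ and $= N^{-1}$ at $t=0$, one computes
\[
	\int_0^\infty \left(\frac{N}{1+t} - \frac{N\Phi}{N + t\Phi}\right) dt = N\log\left(\frac{N}{\Phi}\right),
\]
whence
\[
	\chi^*(\vX \sqcup \vY : (B_\ell, B_r)) \geq \frac{N}{2}\log(2\pi e) + \frac{N}{2}\log\left(\frac{N}{\Phi}\right) = \frac{n+m}{2}\log\left(\frac{2\pi(n+m)e}{\Phi^*(\vX \sqcup \vY : (B_\ell, B_r))}\right),
\]
which is finite precisely because $\Phi < \infty$. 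The computation is almost entirely routine; the one point requiring care is the correct identification of the perturbation's Fisher information so that Stam delivers the \emph{sharp} constant. In particular, the cruder bound $h(t) \leq \min\{\Phi, N/t\}$ (available directly from monotonicity and Theorem \ref{thm:fisher-info-after-perturbing-by-semis}) loses an additive $\frac{N}{2}$ and therefore fails to reproduce the stated constant, so the interpolating Stam bound is genuinely needed. One should also confirm integrability of the integrand so that the improper integral and the bounding integral both converge, which follows from the two-sided estimates of Theorem \ref{thm:fisher-info-after-perturbing-by-semis}.
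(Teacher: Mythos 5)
Your proof is correct and takes essentially the same approach as the paper: both apply the bi-free Stam inequality (Proposition \ref{prop:stam-inequality}) with the second pair taken to be the scaled semicircular system, whose Fisher information $\Phi^*(\sqrt{t}\vS \sqcup \sqrt{t}\vT) = \frac{n+m}{t}$ yields the pointwise bound $h(t) \leq \frac{n+m}{\frac{n+m}{\Phi} + t}$, and then integrate this bound in the definition of $\chi^*$. The paper's argument is identical in substance (writing $\lambda$ for your $\Phi$), including the same elementary evaluation of the resulting integral.
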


\begin{proof}
	Let $\lambda = \Phi^*(\vX \sqcup \vY : (B_\ell, B_r))$.

	Let $S_1, \ldots, S_n, T_1, \ldots, T_m$ be $(0, 1)$ semicircular variables such that 
	\[
		(B_\ell \langle \vX \rangle, B_r\langle \vY\rangle) \cup \{(S_i, 1)\}^n_{i=1}\cup \{(1, T_j)\}^m_{j=1}
	\]
	are bi-free.  By the bi-free Stam inequality (Proposition \ref{prop:stam-inequality}), we see for all $t \in (0, \infty)$ that
	\begin{align*}
		\Phi^*(\vX + \sqrt{t}\vS  \sqcup \vY + \sqrt{t}\vT : (B_\ell, B_r)) \leq \frac{1}{\frac{1}{\lambda} + \frac{t}{n+m}} = \frac{n+m}{\frac{n+m}{\lambda} + t}.
	\end{align*}
	Hence
	\begin{align*}
		\chi^*(\vX \sqcup \vY : (B_\ell, B_r))   & \geq \frac{n+m}{2} \log(2 \pi e) + \frac{1}{2} \int^\infty_0 \frac{n+m}{1+t} - \frac{n+m}{\frac{n+m}{\lambda} + t} \, dt \\
		&= \frac{n+m}{2} \log(2\pi e) + \frac{n+m}{2} \log\left( \frac{n+m}{\lambda}\right). \qedhere
	\end{align*}
\end{proof}

Additional lower bounds can be obtained in the tracially bi-partite setting using the non-microstate free entropy.

\begin{thm}
	\label{thm:non-micro-converting-rights-to-lefts}
	Let $\vX, \vY$ be tracially bi-partite tuples of self-adjoint operators of lengths $n$ and $m$ respectively in a C$^*$-non-commutative probability space $(\fA, \varphi)$.  Suppose there exists another C$^*$-non-commutative probability space $(\A_0, \tau_0)$  and tuples of self-adjoint operators $\vX', \vY' \in \A_0$ of lengths $n$ and $m$ respectively such that $\tau_0$ is tracial on $\A_0$ and
	\[
		\varphi(X_{i_1} \cdots X_{i_p} Y_{j_1} \cdots Y_{j_q}) = \tau_0(X'_{i_1} \cdots X'_{i_p} Y'_{j_q} \cdots Y'_{j_1})
	\]
	for all $p,q \in \bN \cup \{0\}$, $i_1, \ldots, i_p \in \{1,\ldots, n\}$, and $j_1, \ldots, j_q \in \{1, \ldots, m\}$. 
	Then
	\[
		\chi^*(\vX', \vY') \leq \chi^*(\vX \sqcup \vY).
	\]
\end{thm}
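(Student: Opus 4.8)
The plan is to deduce the theorem from a pointwise comparison of Fisher informations of perturbed systems. Concretely, I will show that for every $t > 0$
\[
	\Phi^*\paren{\vX + \sqrt{t}\,\vS \sqcup \vY + \sqrt{t}\,\vT} \leq \Phi^*\paren{\vX' + \sqrt{t}\,\vS',\, \vY' + \sqrt{t}\,\vT'},
\]
where $(\vS,\vT)$ is the completely bi-free semicircular family prescribed in Definition~\ref{defn:entropy} and $\{S_i'\}_{i=1}^n \cup \{T_j'\}_{j=1}^m$ is a free semicircular family free from $(\vX', \vY')$. Both entropies carry the same constant $\frac{n+m}{2}\log(2\pi e)$ and the same weight $\frac{n+m}{1+t}$ in their integrands, so this inequality integrates termwise to $\chi^*(\vX', \vY') \leq \chi^*(\vX \sqcup \vY)$. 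Thus the whole problem reduces to the Fisher-information comparison, whose engine is Lemma~\ref{lem:converting-rights-to-lefts}.

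Since $\Phi^*$ depends only on the joint distributions of the operators (the conjugate variables being determined by moments), I am free to replace the abstract spaces by any realization with the same moments. I would assume $(\A_0, \tau_0)$ sits inside a tracial von Neumann algebra $(\fM, \tau)$, adjoin a free semicircular family $\{S_i', T_j'\}$ free from $W^*(\vX', \vY')$, and pass to $\B(L_2(\fM, \tau))$ with $\varphi(Z) = \tau(Z(1))$ as in the discussion following Lemma~\ref{lem:converting-rights-to-lefts}. Taking $X_i, S_i$ to be left multiplication by $X_i', S_i'$ and $Y_j, T_j$ to be right multiplication by $Y_j', T_j'$, one verifies that: (i) all left operators commute with all right operators and $\varphi$ restricts to a tracial state on each side, so the perturbed system $(\vX + \sqrt{t}\,\vS, \vY + \sqrt{t}\,\vT)$ is tracially bi-partite; (ii) the moment relation of Lemma~\ref{lem:converting-rights-to-lefts} holds with free conversion exactly $(\vX' + \sqrt{t}\,\vS', \vY' + \sqrt{t}\,\vT')$, because right multiplications compose in reverse, giving the required order reversal on the $Y'$'s; and (iii) the multiplication realization of $\{S_i', T_j'\}$ furnishes a completely bi-free semicircular family bi-free from $(\vX, \vY)$, so this is a legitimate perturbation for Definition~\ref{defn:entropy}. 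By realization-independence, the value of $\Phi^*(\vX + \sqrt{t}\,\vS \sqcup \vY + \sqrt{t}\,\vT)$ is unchanged.

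With the perturbed systems so arranged, Lemma~\ref{lem:converting-rights-to-lefts} applies at each fixed $t$. Writing $\xi_i(t), \eta_j(t)$ for the bi-free conjugate variables and $\xi_i'(t), \eta_j'(t)$ for the free conjugate variables, it yields $\xi_i(t) = \Psi^{-1}\paren{P(\xi_i'(t))}$ whenever $\xi_i'(t)$ exists, and similarly on the right. Since $\Psi$ is isometric and $P$ is an orthogonal projection, $\left\|\xi_i(t)\right\|_2 \leq \left\|\xi_i'(t)\right\|_2$ and $\left\|\eta_j(t)\right\|_2 \leq \left\|\eta_j'(t)\right\|_2$. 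If the free Fisher information is infinite the inequality is trivial; otherwise every free conjugate variable exists, hence so does every bi-free one, and summing squares as in Definition~\ref{defn:bi-fisher} gives the pointwise bound above. Integrating over $t$ finishes the proof.

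The routine items (commutation, traciality, and the reversal in the moment formula) are immediate, so the \emph{main obstacle} is part (iii): confirming that the left/right multiplication realization of the free semicircular perturbation genuinely reproduces the completely bi-free semicircular family of Definition~\ref{defn:entropy} — that each $(S_i, 1)$ and $(1, T_j)$ is a variance-one semicircular element and that the whole collection, together with $(\vX, \vY)$, is bi-free with vanishing cross-covariances. This relies on the standard correspondence sending freely independent subalgebras of $(\fM, \tau)$ to bi-free pairs of left/right multiplication algebras in $\B(L_2(\fM, \tau))$; once this is invoked, the remainder is bookkeeping with the definitions.
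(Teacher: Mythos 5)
Your proposal is correct and follows essentially the same route as the paper: both reduce the theorem to a pointwise-in-$t$ comparison of Fisher informations of the semicircular-perturbed systems via Lemma \ref{lem:converting-rights-to-lefts} (isometry $\Psi$ composed with an orthogonal projection $P$ can only decrease $L_2$-norms of conjugate variables), invoke \cite{V1998-2}*{Corollary 3.9} and Theorem \ref{thm:conj-perturb-by-semis} (or the trivial infinite case) for existence, and then integrate. The only difference is in bookkeeping: where the paper verifies abstractly that the moment relation persists under perturbation, you realize the perturbed bi-free system concretely as left/right multiplication operators on $L_2(\fM,\tau)$ — precisely the realization the paper itself suggests after Lemma \ref{lem:converting-rights-to-lefts} — which makes the moment identity and order reversal automatic at the cost of invoking the standard correspondence sending free subalgebras of a tracial von Neumann algebra to bi-free pairs of multiplication algebras.
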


\begin{proof}
	First suppose that $\{(S_i, 1)\}^n_{i=1} \cup \{(1, T_j)\}^m_{j=1}$ have a bi-free central limit distribution that is bi-free from $(B_\ell \langle \vX \rangle, B_r\langle \vY\rangle)$ and that $\{S'_1, \ldots, S'_n, T'_1, \ldots, T'_m\}$ are free semicircular operators that are free from $\{\vX', \vY'\}$.
	It can be verified that for all $t \in (0, \infty)$, for all $p,q \in \bN \cup \{0\}$, and for all $i_1, \ldots, i_p \in \{1,\ldots, n\}$ and $j_1, \ldots, j_q \in \{1, \ldots, m\}$,
	\begin{align*}
		& \varphi((X_{i_1} + \sqrt{t} S_{i_1}) \cdots (X_{i_p} + \sqrt{t} S_{i_p}) (Y_{j_1} + \sqrt{t} T_{j_1}) \cdots (Y_{j_q} + \sqrt{t} T_{j_q})) \\
		&  = \tau_0((X'_{i_1} + \sqrt{t} S'_{i_1}) \cdots (X'_{i_p} + \sqrt{t} S'_{i_p}) (Y'_{j_q} + \sqrt{t} T'_{j_q}) \cdots (Y'_{j_1} + \sqrt{t} T'_{j_1})).
	\end{align*}
	Therefore, due to the definition of the free and bi-free entropies under consideration, it suffices to show that if 
	\begin{align*}
		\xi_i &= \J_\ell(X_i : (\bC \langle \hat{\vX}_i\rangle, \bC\langle \vY\rangle)), \\
	\xi'_i &= \J_\ell(X'_i : \bC \langle \hat{\vX}'_i, \vY'\rangle)), \\
	\eta_j &= \J_r(Y_j : (\bC \langle\vX\rangle, \bC\langle \hat{\vY}_j\rangle)), \text{ and} \\
\eta'_j &= \J_r(Y'_j : \bC \langle \hat{\vX}'_i,  \hat{\vY}'_j\rangle)) 
\end{align*}
all exist, then 
\[
	\sum^n_{i=1} \left\|\xi'_i\right\|^2_2 + \sum^m_{j=1} \left\|\eta'_j\right\|^2_2  \geq \sum^n_{i=1} \left\|\xi_i\right\|^2_2 + \sum^m_{j=1} \left\|\eta_j\right\|^2_2;
\]
this then passes to all times $t$ by applying the same but replacing $X_i$ by $X_i + \sqrt{t}S_i$, et cetera.
The existence follows from \cite{V1998-2}*{Corollary 3.9} and Theorem \ref{thm:conj-perturb-by-semis}.  The inequality then follows from Lemma \ref{lem:converting-rights-to-lefts}.
\end{proof}

\section{Non-Microstate Bi-Free Entropy Dimension}
\label{sec:Entropy-Dimension}

In this section, we extend the notion of non-microstate free entropy dimension to the bi-free setting and generalize the basic properties.

\begin{defn}
\label{defn:entropy-dimension}
Let $\vX, \vY$ be tuples of self-adjoint operators of length $n$ and $m$ respectively, and let $B_\ell$, $B_r$ be unital, self-adjoint subalgebras of a C$^*$-non-commutative probability space $(\fA, \varphi)$.  The \emph{$n$-left, $m$-right, non-microstate bi-free entropy dimension  of $(\vX, \vY)$ relative to $(B_\ell, B_r)$} is defined by
\[
	\delta^*(\vX \sqcup  \vY : (B_\ell, B_r)) = (n+m) + \limsup_{\epsilon \to 0^+} \frac{\chi^*(\vX + \sqrt{\epsilon} \vS \sqcup \vY + \sqrt{\epsilon} \vT : (B_\ell, B_r))}{|\log(\sqrt\epsilon)|}
\]
where $S_1, \ldots, S_n, T_1, \ldots, T_m$ are self-adjoint operators in (a larger) $\fA$ that have centred semicircular distributions with variance 1 such that 
	\[
		(B_\ell \langle \vX \rangle, B_r\langle \vY\rangle) \cup \{(S_i, 1)\}^n_{i=1}\cup \{(1, T_j)\}^m_{j=1}
	\]
are bi-free.

In the case that $B_\ell = B_r = \bC$, the non-microstate bi-free entropy dimension of $(\vX, \vY)$ relative to $(B_\ell, B_r)$ is called the \emph{non-microstate bi-free entropy of $(\vX, \vY)$} and is denoted $\delta^* (\vX \sqcup \vY)$.
\end{defn}

Clearly if $m = 0$ then $\delta^*(\vX \sqcup \vY)$ is the non-microstate free entropy dimension of $\vX$ and if $n = 0$ then $\delta^*(\vX \sqcup \vY)$ is the non-microstate free entropy dimension of $\vY$.  Consequently, the non-microstate bi-free entropy dimension is an extension of the non-microstate free entropy dimension.

To justify the terminology that non-microstate bi-free entropy dimension is a dimension, we note its value of bi-free central limit distributions.

\begin{thm}
Let $(\{S_k\}^{n}_{k=1}, \{S_k\}^{n+m}_{k=n+1})$ be a centred self-adjoint bi-free central limit distribution with respect to $\varphi$ with $\varphi(S^2_k) = 1$ for all $k$.  Recall that the joint distribution is completely determined by the positive matrix
\[
A = [a_{i,j}] = [\varphi(S_iS_j)] \in \M_n(\bR).
\]
Then
\[
\delta^*( S_1, \ldots, S_n \sqcup S_{n+1}, \ldots, S_{n+m} ) = \mathrm{rank}(A).
\]
\end{thm}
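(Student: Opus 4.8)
The plan is to reduce everything to the explicit entropy computation of Example~\ref{exam:entropy-bi-free-central} and then perform an elementary asymptotic analysis of a determinant. First I would observe that the perturbed family appearing in Definition~\ref{defn:entropy-dimension} is again a bi-free central limit distribution. Let $\{S'_k\}^{n+m}_{k=1}$ denote the completely independent (variance one, covariance zero) bi-free central limit distribution that is bi-free from $\{S_k\}^{n+m}_{k=1}$; this is the perturbing system $\vS \sqcup \vT$. Since mixed bi-free cumulants vanish and both families are centred, the family $\{S_k + \sqrt{\epsilon}\,S'_k\}^{n+m}_{k=1}$ is again a centred self-adjoint bi-free central limit distribution, and its covariance matrix is
\[
	\left[\varphi\left((S_i + \sqrt{\epsilon}S'_i)(S_j + \sqrt{\epsilon}S'_j)\right)\right] = A + \epsilon I_{n+m}.
\]
Applying Example~\ref{exam:entropy-bi-free-central} to this perturbed system, whose covariance is invertible for every $\epsilon > 0$ because $A \geq 0$, gives
\[
	\chi^*\left(S_1 + \sqrt{\epsilon}S'_1, \ldots \sqcup \ldots, S_{n+m} + \sqrt{\epsilon}S'_{n+m}\right) = \frac{n+m}{2}\log(2\pi e) + \frac{1}{2}\log\det(A + \epsilon I_{n+m}).
\]

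Next I would diagonalize. As $A$ is positive semidefinite, write its eigenvalues as $\lambda_1, \ldots, \lambda_{n+m} \geq 0$ and set $r = \mathrm{rank}(A)$, so that exactly $r$ eigenvalues are strictly positive and the remaining $n+m-r$ vanish. Then
\[
	\log\det(A + \epsilon I_{n+m}) = \sum^{n+m}_{k=1}\log(\lambda_k + \epsilon) = \sum_{\lambda_k > 0}\log(\lambda_k + \epsilon) + (n+m-r)\log\epsilon.
\]
Substituting into Definition~\ref{defn:entropy-dimension} and recalling that $|\log\sqrt{\epsilon}| = -\tfrac{1}{2}\log\epsilon \to +\infty$ as $\epsilon \to 0^+$, both the constant term $\frac{n+m}{2}\log(2\pi e)$ and the sum $\frac{1}{2}\sum_{\lambda_k > 0}\log(\lambda_k + \epsilon)$, which converges to the finite quantity $\frac{1}{2}\sum_{\lambda_k > 0}\log\lambda_k$, contribute $0$ after division by $|\log\sqrt{\epsilon}|$. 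The only surviving contribution is
\[
	\lim_{\epsilon \to 0^+} \frac{\frac{1}{2}(n+m-r)\log\epsilon}{-\frac{1}{2}\log\epsilon} = -(n+m-r),
\]
so that the $\limsup$ is in fact a limit equal to $-(n+m-r)$. Hence
\[
	\delta^*(S_1, \ldots, S_n \sqcup S_{n+1}, \ldots, S_{n+m}) = (n+m) - (n+m-r) = r = \mathrm{rank}(A),
\]
as claimed.

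The computation is essentially routine once Example~\ref{exam:entropy-bi-free-central} is invoked, so I do not expect a serious obstacle. The one point genuinely requiring care is the very first step: verifying that the perturbed family is a bona fide bi-free central limit distribution with covariance $A + \epsilon I_{n+m}$, so that the closed-form entropy formula applies verbatim. This rests on the additivity of covariance matrices under the bi-free convolution of semicircular families together with the vanishing of mixed bi-free cumulants and centredness (which force the cross terms $\varphi(S_i S'_j)$ to be $\varphi(S_i)\varphi(S'_j) = 0$), and I would confirm it by a short cumulant argument before citing the example.
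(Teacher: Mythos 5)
Your proposal is correct and follows essentially the same route as the paper: perturb by an independent variance-one bi-free semicircular family, check that the result is again a centred bi-free central limit distribution with covariance matrix $A + \epsilon I_{n+m}$, invoke Example \ref{exam:entropy-bi-free-central}, and extract the asymptotics of $\log\det(A+\epsilon I_{n+m})$ as $\epsilon \to 0^+$. The only cosmetic differences are that the paper first rescales by $\frac{1}{\sqrt{1+\epsilon}}$ via Proposition \ref{prop:non-microstate-entropy-and-scaling} before citing the example (a step your direct application shows is unnecessary, since the example imposes no variance normalization) and writes $\det(\epsilon I_{n+m}+A) = \epsilon^{\mathrm{nullity}(A)}p(\epsilon)$ instead of summing $\log(\lambda_k+\epsilon)$ over eigenvalues, neither of which changes the substance.
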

\begin{proof}
	Let $(\{T_k\}^{n}_{k=1}, \{T_k\}^{n+m}_{k=n+1})$ be a centred self-adjoint bi-free central limit distribution with respect to $\varphi$, bi-free from $(\set{S_k}_{k=1}^n, \set{S_k}_{k=n+1}^{n+m})$, with 
\[
\varphi(T_iT_j) = \begin{cases}
1 & \text{if }i = j \\
0 & \text{if }i \neq j
\end{cases}.
\]
If we define $Z_{k,\epsilon} = S_k + \sqrt\epsilon T_k$ for all $1 \leq k \leq n+m$, then $(\{Z_k\}^{n}_{k=1}, \{Z_k\}^{n+m}_{k=n+1})$ is a centred self-adjoint bi-free central limit distribution with respect to $\varphi$ with 
\[
\varphi(Z_{i,\epsilon}Z_{j,\epsilon}) = \begin{cases}
1 + \epsilon & \text{if }i = j \\
\varphi(S_iS_j) & \text{if }i \neq j
\end{cases}
\]
and
\[
\delta^*( S_1, \ldots, S_n \sqcup S_{n+1}, \ldots, S_{n+m} ) = (n+m) + \limsup_{\epsilon \to 0^+} \frac{\chi^*(Z_{1,\epsilon}, \ldots, Z_{n,\epsilon} \sqcup Z_{n+1,\epsilon}, \ldots, Z_{n+m,\epsilon})}{|\log(\sqrt\epsilon)|}.
\]
By applying Proposition \ref{prop:non-microstate-entropy-and-scaling} and Example \ref{exam:entropy-bi-free-central}, we see that
\begin{align*}
&\chi^*(Z_{1,\epsilon}, \ldots, Z_{n,\epsilon} \sqcup Z_{n+1,\epsilon}, \ldots, Z_{n+m,\epsilon})\\
&= (n+m) \log(\sqrt{1 + \epsilon}) + \chi^*\left(\frac{1}{\sqrt{1+\epsilon}}Z_{1,\epsilon}, \ldots, \frac{1}{\sqrt{1+\epsilon}}Z_{n,\epsilon} \sqcup \frac{1}{\sqrt{1+\epsilon}}Z_{n+1,\epsilon}, \ldots, \frac{1}{\sqrt{1+\epsilon}}Z_{n+m,\epsilon}\right) \\
&=  \frac{n+m}{2} \log(1 + \epsilon) + \frac{n+m}{2} \log(2\pi e)
	+ \frac{1}{2} \log\left(   \det\left( \left(1 - \frac{1}{1+\epsilon}\right) I_{n+m} +    \frac{1}{1 + \epsilon}A  \right)\right) \\
&=  \frac{n+m}{2} \log(2\pi e) + \frac{1}{2} \log\left(   \det\left( \epsilon I_{n+m} +     A  \right)\right).
\end{align*}
As $A$ is a positive matrix and thus diagonalizable, we know that
\[
\det\left( \epsilon I_{n+m} +     A  \right) = \epsilon^{\mathrm{nullity}(A)} p(\epsilon)
\]
where $p$ is a polynomial of degree $\mathrm{rank}(A)$ with real coefficients that does not vanish at 0.  Consequently, we obtain that
\begin{align*}
&\delta^*( S_1, \ldots, S_n \sqcup S_{n+1}, \ldots, S_{n+m} ) \\
&= (n+m) + \limsup_{\epsilon \to 0^+} \frac{\frac{n+m}{2} \log(2\pi e) + \frac{1}{2} \log(\epsilon^{\mathrm{nullity}(A)}p(\epsilon))}{|\log(\sqrt\epsilon)|} \\
&= (n+m) + \limsup_{\epsilon \to 0^+} \frac{\frac{n+m}{2} \log(2\pi e) + \frac12\mathrm{nullity}(A) \log(\epsilon) + \frac{1}{2}  \log(p(\epsilon))}{|\log(\sqrt\epsilon)|} \\
&= n+m - \mathrm{nullity}(A) = \mathrm{rank}(A) \qedhere
\end{align*}
\end{proof}

\begin{exam}
Let $(S, T)$ be a bi-free central limit distribution with variances 1 and covariance $c \in [-1,1]$.  Then
\[
\delta^*(S \sqcup T) = \begin{cases}
2 & \text{if } c \neq\pm 1 \\
1 & \text{if } c =\pm 1 \\
\end{cases}.
\]
In particular, the support of the joint distribution of $(S, T)$ has dimension $\delta^*(S\sqcup T)$: indeed, if $c \neq \pm 1$ then $(S, T)$ has joint distribution with support $[-2, 2]^2 \subset \bR^2$ by \cite{HW2016}, while otherwise it is supported on the line $y = cx$.
\end{exam}

Due to the previous results in this paper, the basic properties of non-microstate free entropy dimension carry-forward to the bi-free setting.

\begin{prop}
Let $\vX, \vY, \vX', \vY'$ be tuples of self-adjoint operators of lengths $n$, $m$, $n'$, and $m'$ respectively.
Let $B_\ell$, $B_r$, $C_\ell$, $C_r$ be unital, self-adjoint subalgebras of a C$^*$-non-commutative probability space $(\fA, \varphi)$ with no algebraic relations other than possibly left and right operators commuting.  
	\begin{enumerate}
		\item We have
			\begin{align*}
				 \delta^*(\vX, \vX' \sqcup \vY, \vY' : (B_\ell\vee C_\ell, B_r \vee C_r)) \leq\delta^*(\vX \sqcup \vY : (B_\ell, B_r)) + \delta^*(\vX' \sqcup \vY' : (C_\ell, C_r)).
			\end{align*}
		\item If
			\[
				(B_\ell\langle \vX \rangle, B_r\langle \vY \rangle ) \qqand (C_\ell\langle \vX' \rangle, C_r\langle \vY' \rangle )
			\]
			are bi-free, then the inequality in part (1) is an equality.
		\item If $C_\ell \subseteq B_\ell$ and $C_r \subseteq B_r$, then 
			\[
				\delta^*(\vX \sqcup \vY : (B_\ell, B_r))  \leq \delta^*(\vX \sqcup \vY : (C_\ell, C_r)) .
			\]
		\item If
			\[
				(B_\ell\langle \vX \rangle, B_r\langle \vY \rangle ) \qqand (C_\ell , C_r  )
			\]
			are bi-free,  then
			\[
				\delta^*(\vX \sqcup \vY : (B_\ell, B_r)) = \delta^*(\vX \sqcup \vY : (B_\ell  \vee C_\ell, B_r \vee C_r)).
			\]
	\end{enumerate}
\end{prop}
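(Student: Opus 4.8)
The plan is to reduce every assertion to the corresponding statement for the non-microstate bi-free entropy in Proposition \ref{prop:properties-of-bi-free-entropy}, applied not to the variables themselves but to their perturbations $\vX+\sqrt\epsilon\vS,\ \vY+\sqrt\epsilon\vT$ by the bi-free Brownian motion, and then to divide by the positive quantity $\abs{\log(\sqrt\epsilon)}$ and pass to the $\limsup$ as $\epsilon\to0^+$. Throughout I would fix once and for all a single family of $(0,1)$-semicircular operators $\{(S_i,1)\}\cup\{(1,T_j)\}\cup\{(S'_i,1)\}\cup\{(1,T'_j)\}$ that is bi-free from the whole system $(\,(B_\ell\vee C_\ell)\langle\vX,\vX'\rangle,(B_r\vee C_r)\langle\vY,\vY'\rangle)$; any sub-collection then serves as a valid perturbing family for each smaller entropy-dimension appearing, so that all three entropy dimensions in a given part are computed with consistent Brownian motions.

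Parts (1), (3) and (4) are then immediate. For (1), Proposition \ref{prop:properties-of-bi-free-entropy}(1) gives, for every $\epsilon>0$,
\begin{align*}
&\chi^*(\vX+\sqrt\epsilon\vS,\vX'+\sqrt\epsilon\vS'\sqcup\vY+\sqrt\epsilon\vT,\vY'+\sqrt\epsilon\vT':(B_\ell\vee C_\ell,B_r\vee C_r))\\
&\qquad\le\chi^*(\vX+\sqrt\epsilon\vS\sqcup\vY+\sqrt\epsilon\vT:(B_\ell,B_r))+\chi^*(\vX'+\sqrt\epsilon\vS'\sqcup\vY'+\sqrt\epsilon\vT':(C_\ell,C_r));
\end{align*}
dividing by $\abs{\log(\sqrt\epsilon)}>0$, using the subadditivity $\limsup(a+b)\le\limsup a+\limsup b$, and adding $(n+n'+m+m')=(n+m)+(n'+m')$ yields the claimed inequality. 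Part (3) uses Proposition \ref{prop:properties-of-bi-free-entropy}(3) at each $\epsilon$ together with the monotonicity of $\limsup$. For (4), Proposition \ref{prop:properties-of-bi-free-entropy}(4) shows the two entropy functions of $\epsilon$ agree for every $\epsilon$ (the perturbed pair $(B_\ell\langle\vX+\sqrt\epsilon\vS\rangle,B_r\langle\vY+\sqrt\epsilon\vT\rangle)$ is again bi-free from $(C_\ell,C_r)$, since $\vS,\vT$ are bi-free from everything), so the $\limsup$s coincide while the number of variables is unchanged.

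The substantive part is (2). The inequality $\le$ is exactly part (1). For $\ge$ one cannot simply invoke additivity of $\chi^*$: although bi-freeness and Proposition \ref{prop:properties-of-bi-free-entropy}(2) give the exact identity $\chi^*_{12}(\epsilon)=\chi^*_1(\epsilon)+\chi^*_2(\epsilon)$ of the perturbed entropies for every $\epsilon$, the $\limsup$ is only superadditive, $\limsup(a+b)\ge\limsup a+\liminf b$. The plan is therefore to upgrade one of the two subsystem quotients to a genuine limit. To this end I would pass to the Fisher-information picture: writing $h_k(t)$ for the $\Phi^*$ of the $k$-th perturbed system, bi-free additivity of the Fisher information (Proposition \ref{prop:fisher-info-with-bifree-things}) gives $h_{12}(t)=h_1(t)+h_2(t)$ for all $t$, and via $\tfrac{d}{dt}\chi^*=\tfrac12\Phi^*$ (Proposition \ref{prop:Fisher-is-the-derivative-of-entropy}) together with a standard Stolz-type argument the quotient $\chi^*_k(\epsilon)/\abs{\log(\sqrt\epsilon)}$ has the same asymptotics as $-\epsilon\,h_k(\epsilon)$. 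Thus it suffices to prove that $\lim_{\epsilon\to0^+}\epsilon\,h_k(\epsilon)$ exists for at least one of $k=1,2$; granting this, superadditivity of $\limsup$ (applied with the convergent index as the $\liminf$ term) closes the gap and, with part (1), gives equality.

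Establishing that limit will be the main obstacle. The bi-free Stam inequality (Proposition \ref{prop:stam-inequality}), applied with a fresh semicircular perturbation, shows that $R_k(t)=1/h_k(t)$ satisfies $R_k(t+s)\ge R_k(t)+s/(n_k+m_k)$, so $b_k(t)=R_k(t)-t/(n_k+m_k)$ is nondecreasing, while the bi-free Cramer--Rao inequality (Proposition \ref{prop:cramer-rao}) bounds it above; hence $b_k(0^+)$ exists. When $b_k(0^+)>0$ (the nondegenerate case, e.g. finite bi-free Fisher information or an invertible covariance as in Example \ref{exam:Fisher-bi-free-central}) one gets $\epsilon\,h_k(\epsilon)\to0$ and the limit exists trivially. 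The genuinely delicate case is $b_k(0^+)=0$, where convergence of $\epsilon\,h_k(\epsilon)=\bigl(\tfrac1{n_k+m_k}+b_k(\epsilon)/\epsilon\bigr)^{-1}$ requires convergence of $b_k(\epsilon)/\epsilon$; this would follow from concavity of the reciprocal Fisher information $R_k$ along the bi-free Brownian flow, which is not delivered by the Stam inequality alone. I expect proving this regularity of $R_k$ to be the crux of part (2) and the step demanding the most care.
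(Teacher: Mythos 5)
Your handling of parts (1), (3) and (4) is correct and coincides with the paper's own (one-sentence) proof: apply Proposition \ref{prop:properties-of-bi-free-entropy} to the perturbed tuples, divide by the positive quantity $|\log(\sqrt\epsilon)|$, and take $\limsup$; the zero covariance of the perturbing semicircular family is exactly what guarantees that the family used for the joint system restricts to legitimate perturbing families for each subsystem (and, in (4), that the perturbed pair stays bi-free from $(C_\ell,C_r)$), which your device of fixing one globally bi-free semicircular family accomplishes.

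For part (2), however, your proposal is incomplete -- by your own admission -- and the obstacle you identify is genuine. Writing $\chi^*_1(\epsilon)$, $\chi^*_2(\epsilon)$, $\chi^*_{12}(\epsilon)$ for the entropies of the two perturbed subsystems and of the perturbed joint system, Proposition \ref{prop:properties-of-bi-free-entropy}(2) gives $\chi^*_{12}(\epsilon)=\chi^*_1(\epsilon)+\chi^*_2(\epsilon)$ for every $\epsilon$, but passing to $\limsup$ only yields subadditivity, i.e.\ part (1) again; the reverse inequality needs $\limsup(a_\epsilon+b_\epsilon)\geq\limsup a_\epsilon+\limsup b_\epsilon$, which holds when one of the two quotients actually converges but is false for general bounded families. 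Your Stolz--Ces\`{a}ro reduction to the existence of $\lim_{\epsilon\to0^+}\epsilon\,\Phi^*$ for at least one subsystem is correct, as is your Stam/Cramer--Rao analysis showing that $b_k(t)=1/h_k(t)-t/(n_k+m_k)$ is nondecreasing and bounded; but the convergence of $b_k(\epsilon)/\epsilon$ in the degenerate case $b_k(0^+)=0$ is precisely what you never establish, and it cannot follow formally from the properties invoked: decreasing functions of the form $h_k(t)=a_k(t)/t$ with $a_1+a_2$ constant and $a_1,a_2$ oscillating on a $\log\log$ scale satisfy the monotonicity, Stam and Cramer--Rao constraints used here, yet violate $\limsup$-additivity. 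So, as written, your argument proves (2) only under the additional hypothesis that $\lim_{\epsilon\to0^+}\epsilon\,\Phi^*$ exists for one of the subsystems -- which does cover, for instance, a subsystem with finite bi-free Fisher information, or bi-free central limit distributions as in Example \ref{exam:entropy-bi-free-central} -- but not in the stated generality. You should be aware that the paper's own proof does not address this point at all: it declares the proposition immediate from Proposition \ref{prop:properties-of-bi-free-entropy}, implicitly commuting $\limsup$ with sums, so the step you balked at is glossed over in the source (the same subtlety is already present for the free non-microstate entropy dimension). Your more cautious treatment is therefore closer to a complete argument than the paper's, but the crux you isolate remains open in your write-up.
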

\begin{proof}
This result immediately follows from Definition \ref{defn:entropy-dimension}, Proposition \ref{prop:properties-of-bi-free-entropy}, and the fact that the semicircular perturbations have zero covariance.
\end{proof}

Moreover, we have an unsurprising upper bound for the non-microstate bi-free entropy dimension.

\begin{prop}
Let $\vX, \vY$ be tuples of self-adjoint operators of length $n$ and $m$ respectively, and let $B_\ell$, $B_r$ be unital, self-adjoint subalgebras of a C$^*$-non-commutative probability space $(\fA, \varphi)$.
Then
\[
\delta^*(\vX \sqcup \vY) \leq n+m.
\]
\end{prop}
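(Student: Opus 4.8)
The plan is to work directly from Definition \ref{defn:entropy-dimension} in the case $B_\ell = B_r = \bC$, where
\[
	\delta^*(\vX \sqcup \vY) = (n+m) + \limsup_{\epsilon \to 0^+} \frac{\chi^*(\vX + \sqrt{\epsilon}\,\vS \sqcup \vY + \sqrt{\epsilon}\,\vT)}{\abs{\log(\sqrt{\epsilon})}}.
\]
Since $\abs{\log(\sqrt{\epsilon})} = -\tfrac{1}{2}\log\epsilon \to +\infty$ as $\epsilon \to 0^+$ and is strictly positive once $\epsilon < 1$, it suffices to show that the numerator $\chi^*(\vX + \sqrt{\epsilon}\,\vS \sqcup \vY + \sqrt{\epsilon}\,\vT)$ stays bounded above by a constant independent of $\epsilon$ on the interval $(0,1]$; the quotient will then have $\limsup$ at most $0$, giving the claimed bound $\delta^*(\vX \sqcup \vY) \le n+m$.

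To produce the uniform upper bound on the numerator, I would apply Proposition \ref{prop:upper-bound-non-microstate-entropy-based-on-L2-norm} to the perturbed tuples $\vX + \sqrt{\epsilon}\,\vS$ and $\vY + \sqrt{\epsilon}\,\vT$, which yields
\[
	\chi^*(\vX + \sqrt{\epsilon}\,\vS \sqcup \vY + \sqrt{\epsilon}\,\vT) \le \frac{n+m}{2}\log\!\left(\frac{2\pi e}{n+m}\,C_\epsilon^2\right),
\]
where $C_\epsilon^2 = \varphi\!\left(\sum_{i=1}^n (X_i + \sqrt{\epsilon}S_i)^2 + \sum_{j=1}^m (Y_j + \sqrt{\epsilon}T_j)^2\right)$. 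Using that the $S_i,T_j$ are centred with $\varphi(S_i^2) = \varphi(T_j^2) = 1$ and bi-free from $(\bC\langle \vX\rangle, \bC\langle \vY\rangle)$, the cross terms $\varphi(X_iS_i)$, $\varphi(S_iX_i)$, $\varphi(Y_jT_j)$, $\varphi(T_jY_j)$ all vanish, so $C_\epsilon^2 = C^2 + (n+m)\epsilon$ with $C^2 = \varphi\!\left(\sum X_i^2 + \sum Y_j^2\right)$. In particular $C_\epsilon^2 \le C^2 + (n+m)$ for all $\epsilon \in (0,1]$, and since $\log$ is increasing the right-hand side above is bounded by a finite constant $M$ for all such $\epsilon$.

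Combining the two steps, for $\epsilon \in (0,1)$ I have
\[
	\frac{\chi^*(\vX + \sqrt{\epsilon}\,\vS \sqcup \vY + \sqrt{\epsilon}\,\vT)}{\abs{\log(\sqrt{\epsilon})}} \le \frac{M}{\abs{\log(\sqrt{\epsilon})}},
\]
and letting $\epsilon \to 0^+$ the right-hand side tends to $0$, so the $\limsup$ of the left side is at most $0$. There is no serious obstacle here; the only points requiring care are that $\chi^*$ is finite, so that the quotient is well-defined (which follows from the upper bound itself, or from the remark bounding $\chi^*$ by a sum of free entropies), and that dividing by the positive quantity $\abs{\log(\sqrt{\epsilon})}$ preserves the inequality, so the uniform bound on the numerator transfers correctly to the quotient.
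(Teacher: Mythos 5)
Your proposal is correct and is essentially identical to the paper's proof: both apply Proposition \ref{prop:upper-bound-non-microstate-entropy-based-on-L2-norm} to the perturbed tuples, use bi-freeness to compute $C_\epsilon^2 = C^2 + (n+m)\epsilon$, bound this uniformly for $\epsilon \in (0,1]$, and conclude that the $\limsup$ of the quotient is at most $0$. (One small remark: you do not actually need $\chi^*$ of the perturbed tuples to be finite, only bounded above; if it were $-\infty$ the quotient would only decrease, so the argument goes through regardless.)
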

\begin{proof}
If $S_1, \ldots, S_n, T_1, \ldots, T_m$ are self-adjoint operators in (a larger) $\fA$ that have centred semicircular distributions with variance 1 such that 
	\[
		(B_\ell \langle \vX \rangle, B_r\langle \vY\rangle) \cup \{(S_i, 1)\}^n_{i=1}\cup \{(1, T_j)\}^m_{j=1}
	\]
are bi-free, then using bi-freeness, we see that
\[
\varphi\left( \sum^n_{i=1} (X_i + \sqrt{\epsilon} S_i)^2 + \sum^m_{j=1} (Y_j + \sqrt{\epsilon} T_j)^2\right) = C^2 + (n+m)\epsilon.
\]
Therefore Proposition \ref{prop:upper-bound-non-microstate-entropy-based-on-L2-norm} implies that 
\begin{align*}
\delta^*(\vX \sqcup \vY) &= (n+m) + \limsup_{\epsilon \to 0^+} \frac{\chi^*(\vX + \sqrt{t} \vS \sqcup \vY + \sqrt{t} \vT : (B_\ell, B_r))}{|\log(\sqrt\epsilon)|} \\
& \leq (n+m) + \limsup_{\epsilon \to 0^+} \frac{\frac{n+m}{2} \log\left( \frac{2 \pi e }{n+m} (C^2 + (n+m)\epsilon)\right)}{|\log(\sqrt\epsilon)|} \\
& \leq (n+m) + \limsup_{\epsilon \to 0^+} \frac{\frac{n+m}{2} \log\left( \frac{2 \pi e }{n+m} (C^2 + (n+m))\right)}{|\log(\sqrt\epsilon)|} \\
&= n+m.\qedhere
\end{align*}
\end{proof}

Furthermore, a similar known lower bound for the non-microstate free entropy dimension extends to the bi-free setting.

\begin{prop}
\label{prop:lower-bound-for-entropy-dimension}
Let $\vX, \vY$ be tuples of self-adjoint operators of length $n$ and $m$ respectively, and let $B_\ell$, $B_r$ be unital, self-adjoint subalgebras of a C$^*$-non-commutative probability space $(\fA, \varphi)$.  Then
\[
\delta^*(\vX \sqcup \vY) \geq (n+m) - \limsup_{\epsilon \to 0^+} \epsilon \Phi^*(\vX + \sqrt{\epsilon} \vS \sqcup \vY + \sqrt{\epsilon} \vT : (B_\ell, B_r))
\]
where $S_1, \ldots, S_n, T_1, \ldots, T_m$ are self-adjoint operators in (a larger) $\fA$ that have centred semicircular distributions with variance 1 such that 
	\[
		(B_\ell \langle \vX \rangle, B_r\langle \vY\rangle) \cup \{(S_i, 1)\}^n_{i=1}\cup \{(1, T_j)\}^m_{j=1}
	\]
are bi-free.  Furthermore, if 
\[
\lim_{\epsilon \to 0^+} \epsilon \Phi^*(\vX + \sqrt{\epsilon} \vS \sqcup \vY + \sqrt{\epsilon} \vT : (B_\ell, B_r))
\]
exists, then the inequality becomes an equality.
\end{prop}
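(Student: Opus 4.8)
The plan is to reduce the whole statement to an Abelian (Tauberian-type) estimate for a single real variable, exactly as in the free case. Fix the semicircular perturbations as in the statement and set
\[
	g(t) = \chi^*(\vX + \sqrt{t}\,\vS \sqcup \vY + \sqrt{t}\,\vT : (B_\ell, B_r)) \qqand h(t) = \Phi^*(\vX + \sqrt{t}\,\vS \sqcup \vY + \sqrt{t}\,\vT : (B_\ell, B_r)).
\]
By Theorem \ref{thm:fisher-info-after-perturbing-by-semis} we have $h(t) \le (n+m)/t < \infty$ for every $t>0$, so Proposition \ref{prop:finite-fisher-implies-finite-entropy} (together with the upper bound of Proposition \ref{prop:upper-bound-non-microstate-entropy-based-on-L2-norm}) shows $g(t)$ is finite for each $t>0$. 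Proposition \ref{prop:Fisher-is-the-derivative-of-entropy} gives that $g$ is concave, continuous and increasing with right derivative $\tfrac12 h$. Since a finite concave function is locally absolutely continuous on the open half-line and $h$, being monotone, is continuous off a countable set, the fundamental theorem of calculus applies and I would record the integral representation
\[
	g(\epsilon) = g(1) - \tfrac12 \int_\epsilon^1 h(t)\,dt \qfor 0 < \epsilon < 1.
\]
Writing $\lambda := \limsup_{\epsilon \to 0^+} \epsilon\, h(\epsilon)$ (the quantity on the right of the statement), the bound $t\,h(t) \le n+m$ forces $\lambda \le n+m$, so the claimed right-hand side is nonnegative.

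For the main inequality I would argue as follows. Given any $\lambda' > \lambda$, there is a $\delta \in (0,1)$ with $h(t) \le \lambda'/t$ for all $t < \delta$. Splitting the integral at $\delta$ and using that $h$ is bounded (hence integrable) on $[\delta,1]$ yields a constant $M$ (depending on $\lambda',\delta,g(1)$ but not on $\epsilon$) with
\[
	\int_\epsilon^1 h(t)\,dt \le \lambda'(\log\delta - \log\epsilon) + \int_\delta^1 h(t)\,dt, \qqand\text{hence}\quad g(\epsilon) \ge M + \tfrac12 \lambda' \log\epsilon.
\]
Since $|\log(\sqrt\epsilon)| = -\tfrac12\log\epsilon > 0$ for $\epsilon \in (0,1)$, dividing preserves the inequality and gives
\[
	\frac{g(\epsilon)}{|\log(\sqrt\epsilon)|} \ge \frac{M}{-\tfrac12\log\epsilon} - \lambda' \xrightarrow[\epsilon\to 0^+]{} -\lambda'.
\]
Thus $\limsup_{\epsilon\to0^+} g(\epsilon)/|\log(\sqrt\epsilon)| \ge -\lambda'$, and letting $\lambda' \downarrow \lambda$ yields $\limsup_{\epsilon\to0^+} g(\epsilon)/|\log(\sqrt\epsilon)| \ge -\lambda$. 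Adding $n+m$ and recalling the definition of $\delta^*$ produces the stated lower bound.

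For the equality assertion, assume $\lambda = \lim_{\epsilon\to0^+}\epsilon\, h(\epsilon)$ exists. Then for any $\lambda'' < \lambda$ there is $\delta$ with $h(t) \ge \lambda''/t$ for $t < \delta$, and discarding the nonnegative tail $\int_\delta^1 h \ge 0$ gives $g(\epsilon) \le M' + \tfrac12\lambda''\log\epsilon$ for a constant $M'$; the same division shows $\limsup_{\epsilon\to0^+} g(\epsilon)/|\log(\sqrt\epsilon)| \le -\lambda''$, and letting $\lambda'' \uparrow \lambda$ gives the reverse inequality, so equality holds. I expect the only genuinely delicate point to be the passage from the one-sided derivative statement of Proposition \ref{prop:Fisher-is-the-derivative-of-entropy} to the integral representation of $g$; once concavity and finiteness of $g$ on $(0,\infty)$ are in hand this is routine, and the remainder consists solely of the elementary $\limsup$/$\liminf$ manipulations above.
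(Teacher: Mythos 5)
Your proof is correct and takes essentially the same approach as the paper: both arguments rest on the identity $g(\epsilon_0) - g(\epsilon) = \frac{1}{2}\int_{\epsilon}^{\epsilon_0} h(t)\,dt$, the bounds $h(t) \leq \lambda'/t$ (resp.\ $h(t) \geq \lambda''/t$) near $0$ supplied by the $\limsup$ (resp.\ limit) hypothesis, and division by $|\log\sqrt{\epsilon}|$. The only cosmetic difference is that you reconstruct the integral identity from concavity and the one-sided derivative via absolute continuity, whereas the paper simply reuses the computation inside the proof of Proposition \ref{prop:Fisher-is-the-derivative-of-entropy}, where that identity falls out exactly from the definition of $\chi^*$ as an integral of Fisher informations, so the real-analysis step you flag as delicate can be bypassed entirely.
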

\begin{proof}
Let
\[
L = \limsup_{\epsilon \to 0^+} \epsilon \Phi^*(\vX + \sqrt{\epsilon} \vS \sqcup \vY + \sqrt{\epsilon} \vT : (B_\ell, B_r)).
\]
Given $\delta > 0$ there exists an $\epsilon_0 > 0$ such that 
\[
\Phi^*(\vX + \sqrt{\epsilon} \vS \sqcup \vY + \sqrt{\epsilon} \vT : (B_\ell, B_r)) \leq \frac{L + \delta}{\epsilon}
\]
for all $0 < \epsilon < \epsilon_0$.  Therefore, the same computation as used in Proposition \ref{prop:Fisher-is-the-derivative-of-entropy} implies for all $0 < \epsilon < \epsilon_0$ that
\begin{align*}
&\chi^*(\vX + \sqrt{\epsilon_0} \vS \sqcup \vY + \sqrt{\epsilon_0} \vT : (B_\ell, B_r)) - \chi^*(\vX + \sqrt{\epsilon} \vS \sqcup \vY + \sqrt{\epsilon} \vT : (B_\ell, B_r)) \\
&= \frac{1}{2} \int^{\epsilon_0}_{\epsilon} \Phi^*(\vX + \sqrt{t} \vS \sqcup \vY + \sqrt{t} \vT : (B_\ell, B_r)) \, dt \\
& \leq \frac{1}{2} \int^{\epsilon_0}_{\epsilon}  \frac{L + \delta}{t} \, dt \\
&= \frac{L+\delta}{2} \ln\left(\frac{\epsilon_0}{\epsilon}\right).
\end{align*}
Hence
\[
\chi^*(\vX + \sqrt{\epsilon} \vS \sqcup \vY + \sqrt{\epsilon} \vT : (B_\ell, B_r) \geq \chi^*(\vX + \sqrt{\epsilon_0} \vS \sqcup \vY + \sqrt{\epsilon_0} \vT : (B_\ell, B_r)) - \frac{L+\delta}{2} \ln(\epsilon_0) + \frac{L+\delta}{2} \ln(\epsilon)
\]
for all $0 < \epsilon < \epsilon_0$.  Therefore, since 
\[
\chi^*(\vX + \sqrt{\epsilon_0} \vS \sqcup \vY + \sqrt{\epsilon_0} \vT : (B_\ell, B_r))
\]
is finite (Proposition \ref{prop:upper-bound-non-microstate-entropy-based-on-L2-norm} gives an upper bound, while Theorem \ref{thm:fisher-info-after-perturbing-by-semis} and Proposition \ref{prop:finite-fisher-implies-finite-entropy} give a the lower bound), we obtain that
\[
\liminf_{\epsilon \to 0^+} \frac{\chi^*(\vX + \sqrt{\epsilon} \vS \sqcup \vY + \sqrt{\epsilon} \vT : (B_\ell, B_r))}{|\log(\sqrt\epsilon)|} \geq -\paren{L + \delta}
\]
for all $\delta > 0$.  
Hence
\begin{align*}
\delta^*(\vX \sqcup \vY : (B_\ell, B_r)) &= (n+m) + \limsup_{\epsilon \to 0^+} \frac{\chi^*(\vX + \sqrt{\epsilon} \vS \sqcup \vY + \sqrt{\epsilon} \vT : (B_\ell, B_r))}{|\log(\sqrt\epsilon)|} \\
& \geq (n+m) + \liminf_{\epsilon \to 0^+} \frac{\chi^*(\vX + \sqrt{\epsilon} \vS \sqcup \vY + \sqrt{\epsilon} \vT : (B_\ell, B_r))}{|\log(\sqrt\epsilon)|} \\
&\geq (n+m) - L
\end{align*}
as desired.

If 
\[
\lim_{\epsilon \to 0^+} \epsilon \Phi^*(\vX + \sqrt{\epsilon} \vS \sqcup \vY + \sqrt{\epsilon} \vT : (B_\ell, B_r))
\]
exists, given $\delta > 0$ there exists an $\epsilon_0 > 0$ such that 
\[
\Phi^*(\vX + \sqrt{\epsilon} \vS \sqcup \vY + \sqrt{\epsilon} \vT : (B_\ell, B_r)) \geq \frac{L - \delta}{\epsilon}
\]
for all $0 < \epsilon < \epsilon_0$.  By performing similar computations to those above with reversed inequalities, we obtain
\[
\delta^*(\vX \sqcup \vY) = (n+m) - \lim_{\epsilon \to 0^+} \epsilon \Phi^*(\vX + \sqrt{\epsilon} \vS \sqcup \vY + \sqrt{\epsilon} \vT : (B_\ell, B_r))
\]
as desired.
\end{proof}

The above lower bound in conjunction with previous results in this paper immediately give us the following.

\begin{cor}
Let $\vX, \vY$ be tuples of self-adjoint operators of length $n$ and $m$ respectively, and let $B_\ell$, $B_r$ be unital, self-adjoint subalgebras of a C$^*$-non-commutative probability space $(\fA, \varphi)$.  Then
\begin{enumerate}
\item $\delta^*(\vX \sqcup \vY) \geq 0$, and
\item if $\Phi^*(\vX \sqcup \vY) < \infty$, then $\delta^*(\vX \sqcup \vY) = n+m$.
\end{enumerate}
\end{cor}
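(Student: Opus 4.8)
The plan is to read both parts off Proposition \ref{prop:lower-bound-for-entropy-dimension} together with the upper bound on the perturbed Fisher information supplied by Theorem \ref{thm:fisher-info-after-perturbing-by-semis}. Throughout I would write
\[
	h(t) = \Phi^*(\vX + \sqrt{t}\vS \sqcup \vY + \sqrt{t}\vT : (B_\ell, B_r)),
\]
which by Theorem \ref{thm:fisher-info-after-perturbing-by-semis} is a nonnegative, decreasing function of $t$ satisfying $h(t) \leq (n+m)/t$, and whose value at $t = 0$ is $h(0) = \Phi^*(\vX \sqcup \vY)$.

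For part (1), I would first observe that $\epsilon\, h(\epsilon) \leq n+m$ for every $\epsilon > 0$ directly from the bound $h(\epsilon) \leq (n+m)/\epsilon$. Hence
\[
	\limsup_{\epsilon \to 0^+} \epsilon\, \Phi^*(\vX + \sqrt{\epsilon}\vS \sqcup \vY + \sqrt{\epsilon}\vT : (B_\ell, B_r)) \leq n+m,
\]
and feeding this into the inequality of Proposition \ref{prop:lower-bound-for-entropy-dimension} yields $\delta^*(\vX \sqcup \vY) \geq (n+m) - (n+m) = 0$.

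For part (2), I would use that $h$ is decreasing with $h(0) = \Phi^*(\vX \sqcup \vY) < \infty$. Then for all $\epsilon > 0$,
\[
	0 \leq \epsilon\, h(\epsilon) \leq \epsilon\, h(0) = \epsilon\, \Phi^*(\vX \sqcup \vY),
\]
and letting $\epsilon \to 0^+$ forces, by squeezing, the limit $\lim_{\epsilon \to 0^+} \epsilon\, h(\epsilon)$ to exist and equal $0$. Since the limit exists, the equality clause of Proposition \ref{prop:lower-bound-for-entropy-dimension} applies and gives $\delta^*(\vX \sqcup \vY) = (n+m) - 0 = n+m$.

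The argument is entirely bookkeeping layered on top of estimates already in hand, so I do not anticipate a genuine obstacle; the only point that demands care is confirming in part (2) that the relevant quantity has an honest limit (not merely a $\limsup$), since the equality half of Proposition \ref{prop:lower-bound-for-entropy-dimension} is what upgrades the inequality $\delta^* \geq n+m$ to equality against the already-established upper bound $\delta^*(\vX\sqcup\vY) \leq n+m$. This is precisely what the squeeze $0 \leq \epsilon\, h(\epsilon) \leq \epsilon\, h(0)$ guarantees.
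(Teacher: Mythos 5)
Your proposal is correct and follows essentially the same route as the paper: both parts rest on Proposition \ref{prop:lower-bound-for-entropy-dimension}, with part (1) obtained from the bound $h(\epsilon) \leq (n+m)/\epsilon$ of Theorem \ref{thm:fisher-info-after-perturbing-by-semis} and part (2) from showing $\lim_{\epsilon\to 0^+}\epsilon\, h(\epsilon) = 0$. The only cosmetic difference is in part (2): the paper bounds $h(\epsilon) \leq \bigl(\tfrac{1}{\lambda} + \tfrac{\epsilon}{n+m}\bigr)^{-1}$ by a direct application of the bi-free Stam inequality, whereas you invoke the monotonicity of $h$ (i.e.\ $h(\epsilon) \leq h(0) = \lambda$) recorded in Theorem \ref{thm:fisher-info-after-perturbing-by-semis} --- which is itself proved via Stam, so the two arguments coincide in substance.
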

\begin{proof}
Let $S_1, \ldots, S_n, T_1, \ldots, T_m$ be self-adjoint operators in (a larger) $\fA$ that have centred semicircular distributions with variance 1 such that 
	\[
		(B_\ell \langle \vX \rangle, B_r\langle \vY\rangle) \cup \{(S_i, 1)\}^n_{i=1}\cup \{(1, T_j)\}^m_{j=1}
	\]
are bi-free.  Since Theorem \ref{thm:fisher-info-after-perturbing-by-semis} implies that 
\[
0 \leq \limsup_{\epsilon \to 0^+} \epsilon \Phi^*(\vX + \sqrt{\epsilon} \vS \sqcup \vY + \sqrt{\epsilon} \vT : (B_\ell, B_r)) \leq \limsup_{\epsilon \to 0^+} \epsilon  \frac{n+m}{\epsilon} = n+m,
\]
we easily obtain that $\delta^*(\vX \sqcup \vY) \geq 0$ by Proposition \ref{prop:lower-bound-for-entropy-dimension}.
Furthermore, if $\lambda := \Phi^*(\vX \sqcup \vY) < \infty$, then by applying the bi-free Stam inequality (Proposition~\ref{prop:stam-inequality}) in the same manner as in Proposition \ref{prop:finite-fisher-implies-finite-entropy} we see
\[
0 \leq \limsup_{\epsilon \to 0^+} \epsilon \Phi^*(\vX + \sqrt{\epsilon} \vS \sqcup \vY + \sqrt{\epsilon} \vT : (B_\ell, B_r)) \leq \limsup_{\epsilon \to 0^+} \epsilon \frac{1}{\frac{1}{\lambda} + \frac{\epsilon}{n+m}} = 0.
\]
Hence Proposition \ref{prop:lower-bound-for-entropy-dimension} implies that $\delta^*(\vX \sqcup \vY) = n+m$, as desired.
\end{proof}

\section{Additivity of Bi-free Fisher Information}
\label{sec:Additive-Bi-Free-Fisher-Info}

By \cite{V1999} it is known that if $X_1, \ldots, X_n$ are self-adjoint operators such that
\[
\Phi^*(X_1, \ldots, X_n) = \Phi^*(X_1, \ldots, X_k) + \Phi^*(X_{k+1}, \ldots, X_n) < \infty,
\]
then $\{X_1, \ldots, X_k\}$ and $\{X_{k+1}, \ldots, X_n\}$ are freely independent. Thus it is natural to ask:

\begin{ques}
	\label{ques:Fisher}
	Is the converse to Proposition \ref{prop:fisher-info-with-bifree-things} true?  That is, if 
	\begin{align*}
		\Phi^*&(\vX, \vX' \sqcup \vY, \vY' : (B_\ell \vee C_\ell, B_r \vee C_r))= \Phi^*(\vX   \sqcup \vY   : (B_\ell, B_r)) + \Phi^*( \vX' \sqcup  \vY' : (C_\ell, C_r))
	\end{align*}
	and all terms are finite, is it the case that 
	\[
		(B_\ell  \langle \vX \rangle, B_r \langle \vY\rangle) \qqand (C_\ell \langle \vX' \rangle, C_r\langle \vY'\rangle)
	\]
	are bi-free?
\end{ques}
Question \ref{ques:Fisher} is of interest as verifying collections are bi-freely independent has been difficult so any equivalent characterizations would be exceptional.  In this section we illustrate some partial results towards such a characterization in the case that $B_\ell = B_r = C_\ell = C_r = \bC$ and $n=m=1$.  In this case, we are trying to demonstrate that if
\[
\Phi^*(X \sqcup Y) = \Phi^*(X) + \Phi^*(Y) < \infty,
\]
then $X$ and $Y$ are classically independent with respect to $\varphi$.  In particular, this would imply $X$ and $Y$ commute in distribution.

We begin with the following where we do not assume $X$ and $Y$ commute in distribution.

\begin{lem}
\label{lem:additive-bi-free-fisher-gives-info-about-conjugate-variables}
Let $(X, Y)$ be a pair of self-adjoint operators in a C$^*$-non-commutative probability space $(\fA, \varphi)$.  If $\Phi^*(X \sqcup Y) < \infty$ (so $\Phi^*(X), \Phi^*(Y) < \infty$ by Proposition \ref{prop:Fisher-supadditive}), then
\[
\Phi^*(X \sqcup Y) = \Phi^*(X) + \Phi^*(Y)
\]
if and only if
\[
\J_\ell(X : (\bC, \bC\langle Y \rangle)) = \J(X : \bC) \qqand \J_r(Y : (\bC\langle X \rangle, \bC)) = \J(Y : \bC).
\]
\end{lem}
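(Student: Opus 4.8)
The plan is to reduce the statement to the orthogonal projection identity of Remark \ref{rem:conjugate-variables-to-free-conjugate} combined with the Pythagorean theorem in the relevant $L_2$-spaces. Since $\Phi^*(X \sqcup Y) < \infty$, the bi-free conjugate variables $\xi := \J_\ell(X : (\bC, \bC\ang{Y}))$ and $\eta := \J_r(Y : (\bC\ang{X}, \bC))$ both exist, and by Definition \ref{defn:bi-fisher} we have
\[
	\Phi^*(X \sqcup Y) = \left\|\xi\right\|_2^2 + \left\|\eta\right\|_2^2.
\]

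First I would record the two projection identities. Let $P$ be the orthogonal projection of $L_2((\bC \vee \bC\ang{Y})\ang{X}, \varphi)$ onto $L_2(\bC\ang{X}, \varphi)$, and let $Q$ be the orthogonal projection of $L_2((\bC\ang{X} \vee \bC)\ang{Y}, \varphi)$ onto $L_2(\bC\ang{Y}, \varphi)$. By Remark \ref{rem:conjugate-variables-to-free-conjugate} (and its right-handed analogue), $P(\xi) = \J(X : \bC)$ and $Q(\eta) = \J(Y : \bC)$; in particular these free conjugate variables exist, consistent with Proposition \ref{prop:Fisher-supadditive}, which guarantees $\Phi^*(X), \Phi^*(Y) < \infty$.

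Next I would apply Pythagoras. Since $\J(X : \bC)$ is the $L_2(\bC\ang{X}, \varphi)$-component of $\xi$, and $\J(Y : \bC)$ the $L_2(\bC\ang{Y}, \varphi)$-component of $\eta$, we obtain
\[
	\left\|\xi\right\|_2^2 = \Phi^*(X) + \left\|\xi - P\xi\right\|_2^2 \qqand \left\|\eta\right\|_2^2 = \Phi^*(Y) + \left\|\eta - Q\eta\right\|_2^2.
\]
Summing these gives
\[
	\Phi^*(X \sqcup Y) = \Phi^*(X) + \Phi^*(Y) + \left\|\xi - P\xi\right\|_2^2 + \left\|\eta - Q\eta\right\|_2^2,
\]
which in passing re-proves the super-additivity inequality $\Phi^*(X \sqcup Y) \geq \Phi^*(X) + \Phi^*(Y)$.

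Finally I would read off the equality case. The two residual terms are non-negative, so $\Phi^*(X \sqcup Y) = \Phi^*(X) + \Phi^*(Y)$ holds if and only if $\left\|\xi - P\xi\right\|_2 = 0 = \left\|\eta - Q\eta\right\|_2$, that is, $\xi = P\xi = \J(X : \bC)$ and $\eta = Q\eta = \J(Y : \bC)$ as elements of the ambient $L_2$-spaces. This is precisely the asserted condition, and both directions of the equivalence fall out at once. There is essentially no deep obstacle here; the only point needing care is the bookkeeping of which $L_2$-spaces the various vectors inhabit and the identification of $\J(X : \bC)$ with its isometric image inside $L_2((\bC \vee \bC\ang{Y})\ang{X}, \varphi)$, both of which are supplied by Remark \ref{rem:conjugate-variables-to-free-conjugate}.
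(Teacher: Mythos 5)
Your proposal is correct and follows essentially the same route as the paper: both rest on the projection identities $P(\xi) = \J(X : \bC)$, $Q(\eta) = \J(Y : \bC)$ from Remark \ref{rem:conjugate-variables-to-free-conjugate} together with the Pythagorean comparison of norms, the only difference being that you write the orthogonal decomposition explicitly to get both directions at once, while the paper treats the ``if'' direction as immediate and deduces the ``only if'' from the equality $\left\|P(\xi)\right\|_2^2 + \left\|Q(\eta)\right\|_2^2 = \left\|\xi\right\|_2^2 + \left\|\eta\right\|_2^2$.
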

\begin{proof}
Clearly if
\[
\J_\ell(X : (\bC, \bC\langle Y \rangle)) = \J(X : \bC) \qqand \J_r(Y : (\bC\langle X \rangle, \bC)) = \J(Y : \bC).
\]
then $\Phi^*(X \sqcup Y) = \Phi^*(X) + \Phi^*(Y)$.  

Conversely, let $\A = \bC \langle X, Y\rangle$, $\X = \bC\langle X \rangle$, $\Y = \bC\langle Y \rangle$, and let $P : L_2(\A, \varphi) \to L_2(\X, \varphi)$ and $Q : L_2(\A, \varphi) \to L_2(\Y, \varphi)$ be the orthogonal projections  onto their codomains.  Since $\Phi^*(X \sqcup Y) < \infty$, we know that
\[
\xi = \J_\ell(X : (\bC, \bC\langle Y \rangle)) \qqand \eta = \J_r(Y : (\bC \langle X \rangle, \bC))
\]
exist, and
\[
P(\xi) = \J(X : \bC) \qqand Q(\eta) = \J(Y : \bC)
\]
by Remark \ref{rem:conjugate-variables-to-free-conjugate}.  Therefore, as
\begin{align*}
\left\|P(\xi)\right\|^2 + \left\|Q(\eta)\right\|^2 &= \left\|\J(X : \bC)\right\|^2 + \left\|\J(Y : \bC)\right\|^2 \\
&= \Phi^*(X) + \Phi^*(Y)\\
&= \Phi^*(X \sqcup Y) \\
&= \left\|\xi\right\|^2 + \left\|\eta\right\|^2,
\end{align*}
it must be the case that $\xi = P(\xi)$ and $\eta = Q(\eta)$.
\end{proof}

To proceed, we recall the following result of Dabrowski \cite{D2010}*{Lemma 12}.
Suppose $\vX$ is an $n$-tuple of algebraically free self-adjoint operators that generate a tracial von Neumann algebra $(\fM, \tau)$.
If $\J(X_1 : \bC \langle\hat{\vX}_1 \rangle)$ exists, then the operator $(\tau \otimes 1) \circ \partial_{X_1} : \bC\ang{\vX} \to \bC\ang{\vX} $ extends to a bounded linear operator, which will also be denoted $(\tau \otimes 1) \circ \partial_{X_1}
$, from $\fM$ to $L_2(\fM, \tau)$.
Note that although the result is stated only for tuples with $n \geq 2$, it extends to the $n = 1$ case as well (by, for example, formally including a semi-circular variable free from $X_1$ and then restricting the resulting $(\tau\otimes1)\circ\partial_{X_1}$ to the $W^*$-algebra generated by $X_1$.
In fact, we will only use this result in the bi-free setting applied to a single left or a single right operator in which case traciality is trivial.

Using Dabrowski's result, we can state the following continuing on what was learned in Lemma \ref{lem:additive-bi-free-fisher-gives-info-about-conjugate-variables}.

\begin{lem}
\label{lem:bi-free-conjugate-equal-free-conjugate-implies-zero-expectations}
Let $(X, Y)$ be a pair of self-adjoint operators in a C$^*$-non-commutative probability space $(\fA, \varphi)$, let $\A = \bC\langle X, Y\rangle$, let $\X = \bC\langle X\rangle$, and let $P : L_2(\A, \varphi) \to L_2(\X, \varphi)$ be the orthogonal projection onto the codomain. 
Suppose the distribution of $X$ is absolutely continuous with respect to the Lebesgue measure with density $f_X$.
Suppose further that for each $m \in \bN$ there exists an element $E(Y^m) \in C^*(X)$ such that
\[
\langle E(Y^m), \zeta\rangle_{L_2(\X, \varphi)} = \langle Y^m, \zeta\rangle_{L_2(\A, \varphi)}
\]
for all $\zeta \in L_2(\X, \varphi) \subseteq L_2(\A, \varphi)$ (i.e. $E(Y^m) = P Y^m P \in C^*(X)$).  

If $\Phi^*(X \sqcup Y) < \infty$ and $\J_\ell(X : (\bC, \bC\langle Y \rangle)) = \J(X : \bC)$, then
\[
\left[(\varphi \otimes 1) \circ \partial_X\right](E(Y^m)) = 0
\]
for all $m \in \bN$.
\end{lem}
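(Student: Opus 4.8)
The plan is to prove that the vector $(\varphi\otimes 1)\partial_X(E(Y^m))\in L_2(\X,\varphi)$ is orthogonal to $X^n$ for every $n\geq 0$; since the absolute continuity of $f_X$ identifies $L_2(\X,\varphi)$ with $L_2(\bR,f_X\,dx)$, polynomials in $X$ are dense and this orthogonality forces the vector to vanish. Note first that $\Phi^*(X)\leq\Phi^*(X\sqcup Y)<\infty$ by Proposition \ref{prop:Fisher-supadditive}, so $\xi:=\J(X:\bC)$ exists, and Dabrowski's lemma \cite{D2010} makes $(\varphi\otimes 1)\partial_X$ a bounded operator from $C^*(X)$ to $L_2(\X,\varphi)$; in particular $(\varphi\otimes 1)\partial_X(E(Y^m))$ is a well-defined $L_2$-vector, the relation $(\varphi\otimes 1)\partial_X=(1\otimes\varphi)\partial_X$ holds on the commutative algebra $C^*(X)$, and the free conjugate relation $\varphi(g\,\xi)=(\varphi\otimes\varphi)\partial_X(g)$ extends from polynomials to all $g\in C^*(X)$.

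The first step is a reduction. Using $\langle(\varphi\otimes 1)\partial_X(E(Y^m)),X^n\rangle=\langle\partial_X(E(Y^m)),1\otimes X^n\rangle=\langle E(Y^m),\partial_X^*(1\otimes X^n)\rangle$, the standard adjoint formula for the free difference quotient $\partial_X^*(1\otimes X^n)=\xi X^n-(\varphi\otimes 1)\partial_X(X^n)$ (legitimate because $\varphi$ restricts to a trace on the commutative algebra $C^*(X)$ and $\xi$ exists), and the defining identity $\varphi(X^\ell E(Y^m))=\varphi(X^\ell Y^m)$ of $E(Y^m)=P(Y^m)$, I obtain
\[ \langle(\varphi\otimes 1)\partial_X(E(Y^m)),X^n\rangle=\varphi(X^n E(Y^m)\xi)-\sum_{k=0}^{n-1}\varphi(X^k)\varphi(X^{n-1-k}Y^m). \]
On the other hand, the hypothesis $\J_\ell(X:(\bC,\bC\langle Y\rangle))=\xi$ fed into the bi-free moment formula of Remark \ref{rem:bi-conjugate-variable-via-moment-formula}, together with $\partial_{\ell,X}(X^n Y^m)=\sum_{k=0}^{n-1}X^k Y^m\otimes X^{n-1-k}$, gives
\[ \varphi(X^n Y^m\xi)=\sum_{k=0}^{n-1}\varphi(X^k Y^m)\varphi(X^{n-1-k})=\sum_{k=0}^{n-1}\varphi(X^k)\varphi(X^{n-1-k}Y^m), \]
the last equality by reindexing. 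Subtracting, the whole statement reduces to the single identity $\varphi(X^n E(Y^m)\xi)=\varphi(X^n Y^m\xi)$ for all $n$.

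This last identity, equivalently $\varphi\big(X^n(Y^m-E(Y^m))\xi\big)=0$, is the heart of the matter and the step I expect to be the main obstacle. Writing $W=Y^m-E(Y^m)$, the projection property of $E(Y^m)$ yields $\varphi(AW)=\varphi(WA)=0$ for every $A\in C^*(X)$, but the non-traciality of $\varphi$ (and the resulting absence of a bounded right action on $L_2(\A,\varphi)$) prevents one from simply moving $\xi$ past $W$ to invoke this; indeed, computing $\varphi(X^n Y^m\xi)$ through the flipped difference quotient $\hat\partial_{\ell,X}$ via Remark \ref{rem:left-connection-between-two-diff-quot} and the one-sided identity $\varphi(Y^m A)=\varphi(E(Y^m)A)$ only reproduces the reduction circularly. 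The plan to close the gap is to write $\varphi(X^n Y^m\xi)$ honestly as $\langle\xi,P(Y^m X^n)\rangle$ and to pin down the genuinely noncommutative moments $\varphi(X^a Y^m X^b)$ occurring in $P(Y^m X^n)$ by applying the full bi-free moment formula $\varphi(Z\xi)=(\varphi\otimes\varphi)\partial_{\ell,X}(Z)$ not only to $Z=X^n Y^m$ but to the words $Z=X^n Y^m X^j$ (and their flips), whose difference quotients produce exactly those moments; these relations, together with Dabrowski's boundedness, should collapse $\langle\xi,P(WX^n)\rangle$ to zero. I expect the delicate part to be the analytic bookkeeping: justifying the passage to $L_2$-limits when approximating $\xi$ by polynomials in $X$ (staying inside the closure of the domain of $\partial_X$ via Dabrowski) and verifying that the enlarged family of moment relations is sufficient, rather than any single algebraic manipulation.
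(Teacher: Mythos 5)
Your reduction is carried out correctly as far as it goes: the adjoint identity $\partial_X^*(1\otimes X^n)=\J(X:\bC)\,X^n-(\varphi\otimes 1)\partial_X(X^n)$ is legitimate on the commutative algebra $C^*(X)$ (after approximating $E(Y^m)$ in operator norm by self-adjoint polynomials so that Dabrowski's boundedness justifies the limits), and the bi-free moment formula applied to $Z=X^nY^m$ does give $\varphi(X^nY^m\xi)=\sum_{k=0}^{n-1}\varphi(X^k)\varphi(X^{n-1-k}Y^m)$. The genuine gap is that your remaining identity $\varphi(X^nE(Y^m)\xi)=\varphi(X^nY^m\xi)$ is not a smaller problem: it is exactly equivalent to the conclusion of the lemma. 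Indeed, approximating $E(Y^m)$ by self-adjoint polynomials $q_k(X)$ and applying the free conjugate relation and the Leibniz rule to $X^nq_k$ yields
\begin{equation*}
\varphi\bigl(X^nE(Y^m)\xi\bigr)=\sum_{k=0}^{n-1}\varphi(X^k)\varphi\bigl(X^{n-1-k}Y^m\bigr)+\bigl\langle (\varphi\otimes 1)\partial_X(E(Y^m)),\, X^n\bigr\rangle_{L_2(\X,\varphi)},
\end{equation*}
so the identity you still need literally says $\langle Z_m,X^n\rangle=0$, which is what you set out to prove; the reduction is circular, as you yourself suspected. The escape route you sketch (testing the bi-free relation on the words $X^aY^mX^b$) does not repair this: each such relation introduces the new unknowns $\varphi(X^aY^mX^b)$ and $\varphi(X^aY^mX^b\xi)$ on an equal footing, and no combination of them isolates $\langle Z_m,X^n\rangle$.

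What closes the argument is a different choice of test word: apply the bi-free moment formula to $Z=Y^mX^n$, with the right variable written \emph{first}. On one hand $\varphi(Y^mX^n\xi)=(\varphi\otimes\varphi)(\partial_{\ell,X}(Y^mX^n))=\sum_{i=1}^{n}\varphi(Y^mX^{i-1})\varphi(X^{n-i})$; on the other hand, because $Y^m$ now sits at the outermost position of the word, $\varphi(Y^mX^n\xi)=\langle X^n\xi, Y^m\rangle_{L_2(\A,\varphi)}$, and since $X^n\xi\in L_2(\X,\varphi)$ the vector $Y^m$ may be replaced by its projection $E(Y^m)$. This one-sided use of the projection is available without any traciality, whereas the substitution your word $X^nY^m$ would require—replacing a $Y^m$ sandwiched between $X^n$ and $\xi$, i.e.\ asserting $P(Y^mX^n)=E(Y^m)X^n$—is exactly what non-traciality forbids. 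From there the entire computation lives inside $\bC\langle X\rangle$, where $\varphi$ is a trace: writing $\langle X^n\xi,E(Y^m)\rangle=\lim_k\varphi(q_kX^n\xi)$, applying the free relation to $q_kX^n$, and cancelling the sums $\sum_i\varphi(q_kX^{i-1})\varphi(X^{n-i})\to\sum_i\varphi(Y^mX^{i-1})\varphi(X^{n-i})$ against the bi-free side leaves $\lim_k(\varphi\otimes\varphi)(\partial_X(q_k)(1\otimes X^n))=0$, which by the norm-continuity of $(\varphi\otimes 1)\circ\partial_X$ is precisely $\langle Z_m,X^n\rangle=0$; density of polynomials in $L_2(\bR,f_X\,dx)$ then gives $Z_m=0$.
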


\begin{proof}
Since  $\J_\ell(X : (\bC, \bC\langle Y \rangle)) = \J(X : \bC)$ exists, we see for all $n \in \bN$ that
\begin{align*}
	\sum^{n}_{i=1} \varphi(Y^m X^{n-i}) \varphi(X^{i-1}) &= (\varphi \otimes \varphi)(\partial_{\ell, X}(Y^m X^n)) \\
	&= \langle Y^m X^n \J_\ell(X : (\bC, \bC\langle Y \rangle)) , 1\rangle_{L_2(\A, \varphi)} \\
	&= \langle  X^n \J(X : \bC) , Y^m\rangle_{L_2(\A, \varphi)} \\
	&= \langle  X^n \J(X : \bC) , E(Y^m)\rangle_{L_2(\X, \varphi)}.
\end{align*}
Since $E(Y^m) \in C^*(X)$, there exists a sequence of self-adjoint polynomials $(q_k(X))_{k \geq 1}$ from $\X$ such that $\lim_{k \to \infty} \left\|q_k(X) - E(Y^m)\right\| = 0$.  Hence, as this implies $\lim_{k \to \infty} \left\|q_k(X) - E(Y^m)\right\|_2 = 0$, we obtain that
\begin{align*}
	\sum^{n}_{i=1} \varphi(Y^m X^{n-i}) \varphi(X^{i-1})
	&= \lim_{k \to \infty}  \langle  X^n \J(X : \bC) , q_k(X)\rangle_{L_2(\X, \varphi)} \\
	&= \lim_{k \to \infty}  \langle  q_k(X) X^n \J(X : \bC) , 1\rangle_{L_2(\X, \varphi)}\\
	&= \lim_{k \to \infty} (\varphi \otimes \varphi)(\partial_X(q_k(X) X^n))\\
	&= \lim_{k \to \infty} (\varphi \otimes \varphi)\paren{\partial_X(q_k(X))(1 \otimes X^n) + (q_k(X) \otimes 1) \partial_X(X^n) } \\
	&= \lim_{k \to \infty} (\varphi \otimes \varphi)\paren{\partial_X(q_k(X))(1 \otimes X^n)}  + \sum^{n}_{i=1} \varphi(q_k(X)X^{n-i}) \varphi(X^{i-1}).
\end{align*}
Therefore, as
\[
	\lim_{k \to \infty} \sum^{n}_{i=1} \varphi(q_k(X)X^{n-i}) \varphi(X^{i-1}) = \sum^{n}_{i=1} \varphi(Y^m X^{n-i}) \varphi(X^{i-1})
\]
via inner product computations, we obtain that
\[
\lim_{k \to \infty} (\varphi \otimes \varphi)(\partial_X(q_k(X))(1 \otimes X^n)) = 0
\]
for all $n \in \bN$.  Hence
\[
\lim_{k \to \infty} (\varphi \otimes \varphi)(\partial_X(q_k(X))(1 \otimes r(X)))  =0
\]
for all $r(X) \in \bC \langle X \rangle$.

Fix $m \in \bN$, and let $Z_m := \sq{(\varphi\otimes1)\circ\partial_X}(E(Y^m))$.
Choose any polynomial $r(X) \in \X$.
Then, as $L_2(\X, \varphi)$ can be expressed as $L_2(\bR, f_X(x) \, dx)$, as $\lim_{k \to \infty} \left\|q_k(X) - E(Y^m)\right\| = 0$, and as $(\varphi \otimes 1) \circ \partial_X$ is norm continuous, we obtain that
\begin{align*}
\langle Z_m, r(X) \rangle_{L_2(\X, \varphi)} &= \int_{\bR} Z_m(x) \overline{r(x)} f_X(x) \, dx \\
&= \lim_{k \to \infty}  \int_{\bR} \left( [(\varphi \otimes 1) \circ \partial_X](q_k(X)) \right) (x) \overline{r(x)} f_X(x) \, dx \\
 &= \lim_{k \to \infty}  \int_{\bR} \int_\bR  \left(\partial_X(q_k(X)) \right) (y, x) \overline{r(x)} f_X(x)f_X(y) \, dy  \, dx \\
  &= \lim_{k \to \infty} (\varphi \otimes \varphi)(\partial_X(q_k(X))(1 \otimes r(X))) \\
  &= 0.
\end{align*}
It follows that $Z_m = 0$ since $\X$ is dense in $L_2(\X, \varphi)$.
\end{proof}

\begin{rem}
Unfortunately we cannot easily see how to replace the condition $E(Y^m) \in C^*(X)$ with $E(Y^m) \in W^*(X)$ as we only know operator norm continuity of $(\varphi \otimes 1) \circ \partial_X$.
\end{rem}

\begin{rem}
In the case $(X, Y)$ is bi-partite with joint distribution $f(x, y) \, d\lambda_2$, it is easy to compute $E(Y^m)$.  Indeed
\[
	E(Y^m)(x) = \int_\bR y^m \frac{f(x,y)}{f_X(x)} \, dy.
\]
Therefore, provided $f(x,y)$ is sufficiently nice, it is not too much to assume that $E(Y^m) \in C^*(X)$.  
\end{rem}

 In fact, in the bi-partite case, the converse of Lemma \ref{lem:bi-free-conjugate-equal-free-conjugate-implies-zero-expectations} holds.

\begin{lem}
\label{lem:zero-expectations-implies-bi-free-conjugate-variable-equals-free-conjugate-variable}
Under the assumptions of Lemma \ref{lem:bi-free-conjugate-equal-free-conjugate-implies-zero-expectations} together with the assumption that $(X, Y)$ is bi-partite, if $\Phi^*(X \sqcup Y) < \infty$ and 
\[
\left[(\varphi \otimes 1) \circ \partial_X\right](E(Y^m)) = 0
\]
for all $m \in \bN$, then $\J_\ell(X : (\bC, \bC\langle Y \rangle)) = \J(X : \bC)$.
\end{lem}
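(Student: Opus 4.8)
The plan is to show that $\J := \J(X : \bC)$ itself satisfies the moment characterization of the left bi-free conjugate variable from Remark~\ref{rem:bi-conjugate-variable-via-moment-formula}; by the uniqueness asserted there this forces $\J_\ell(X : (\bC, \bC\langle Y\rangle)) = \J$, which is the conclusion (the existence side is already given by $\Phi^*(X \sqcup Y) < \infty$, and $\J(X:\bC)$ exists since $\Phi^*(X) < \infty$). Since $(X,Y)$ is bi-partite, $\A$ is spanned by the monomials $X^nY^m$, so by bilinearity it suffices to verify
\[
	\varphi(X^n Y^m \J) = (\varphi\otimes\varphi)(\partial_{\ell,X}(X^n Y^m)) = \sum_{k=0}^{n-1} \varphi(X^k Y^m)\varphi(X^{n-1-k})
\]
for all $n,m \geq 0$. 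This is precisely the reverse of the chain of equalities in Lemma~\ref{lem:bi-free-conjugate-equal-free-conjugate-implies-zero-expectations}, now read with the hypothesis $[(\varphi\otimes1)\circ\partial_X](E(Y^m)) = 0$ serving as an input rather than as the conclusion.

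First I would rewrite the left-hand side entirely in terms of $X$. Using that $X$ and $Y$ commute and are self-adjoint, and that left multiplication by the bounded operator $X^nY^m$ is bounded on $L_2(\A,\varphi)$, one gets $\varphi(X^nY^m\J) = \ang{X^n\J, Y^m}_{L_2(\A,\varphi)}$. Because $\J$ is the self-adjoint free conjugate variable it lies in $L_2(\X,\varphi)$, hence so does $X^n\J$, and the defining property of $E(Y^m)$ applied to $\zeta = X^n\J$ yields $\ang{X^n\J, Y^m}_{L_2(\A,\varphi)} = \ang{X^n\J, E(Y^m)}_{L_2(\X,\varphi)} = \varphi(E(Y^m)X^n\J)$, the last step using that $E(Y^m)$ is self-adjoint.

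Next I would approximate. Choose self-adjoint polynomials $q_k(X)$ with $\|q_k(X) - E(Y^m)\| \to 0$, possible since $E(Y^m)\in C^*(X)$. Operator-norm convergence gives $\varphi(E(Y^m)X^n\J) = \lim_k \varphi(q_k(X)X^n\J)$, and since $q_k(X)X^n \in \X$ the defining relation of $\J = \J(X:\bC)$ gives $\varphi(q_k(X)X^n\J) = (\varphi\otimes\varphi)(\partial_X(q_k(X)X^n))$. Expanding by the Leibniz rule and applying $\varphi\otimes\varphi$ splits this into $\ang{[(\varphi\otimes1)\circ\partial_X](q_k(X)),\, X^n}_{L_2(\X,\varphi)}$ plus $\sum_{k'=0}^{n-1}\varphi(q_k(X)X^{k'})\varphi(X^{n-1-k'})$. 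Letting $k\to\infty$: the first summand tends to $\ang{[(\varphi\otimes1)\circ\partial_X](E(Y^m)),\, X^n} = 0$ by the hypothesis together with the norm-continuity of $(\varphi\otimes1)\circ\partial_X$ from Dabrowski's result \cite{D2010} (valid here for a single variable), while the second tends to $\sum_{k'=0}^{n-1}\varphi(X^{k'}Y^m)\varphi(X^{n-1-k'})$ upon replacing $\varphi(E(Y^m)X^{k'})$ by $\varphi(X^{k'}Y^m)$ via the defining property of $E(Y^m)$. This establishes the required identity, including the base case $n=0$, where both sides reduce to $\varphi(Y^m\J) = \lim_k(\varphi\otimes\varphi)(\partial_X(q_k(X))) = 0$.

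The main obstacle is analytic bookkeeping rather than anything conceptual: one must ensure each limit interchange is legitimate, and the crucial ingredient is exactly that $(\varphi\otimes1)\circ\partial_X$ is continuous for the operator norm, which is what allows the approximation $q_k(X) \to E(Y^m)$ to pass through the difference quotient and collapse the first summand to zero; without this Dabrowski-type continuity the argument stalls (cf.\ the remark following Lemma~\ref{lem:bi-free-conjugate-equal-free-conjugate-implies-zero-expectations} about $C^*(X)$ versus $W^*(X)$). The bi-partite hypothesis is used twice—to reduce to the monomials $X^nY^m$ and to commute $X$ past $Y$—and the absolute continuity of the distribution of $X$ identifies $L_2(\X,\varphi)$ with $L_2(\bR, f_X\,dx)$, so that all the pairings above are honest integrals. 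Modulo these justifications the proof is the verbatim reversal of Lemma~\ref{lem:bi-free-conjugate-equal-free-conjugate-implies-zero-expectations}.
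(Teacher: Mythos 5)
Your proposal is correct and follows essentially the same route as the paper's proof: approximate $E(Y^m)$ in operator norm by self-adjoint polynomials $q_k(X)$, use the hypothesis together with the norm-continuity of $(\varphi\otimes1)\circ\partial_X$ to kill the term $(\varphi\otimes\varphi)(\partial_X(q_k(X))(1\otimes X^n))$, reassemble via the Leibniz rule and the defining property of $\J(X:\bC)$, and invoke bi-partiteness to reduce to monomials. The only difference is cosmetic: you read the chain of equalities starting from $\varphi(X^nY^m\J)$ whereas the paper starts from $(\varphi\otimes\varphi)(\partial_{\ell,X}(Y^mX^n))$, and you make the uniqueness step from Remark \ref{rem:bi-conjugate-variable-via-moment-formula} explicit up front.
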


\begin{proof}
Since $E(Y^m) \in C^*(X)$, there exists a sequence of self-adjoint polynomials $(q_k(X))_{k \geq 1} \subseteq \X$ such that $\lim_{k \to \infty} \left\|q_k(X) - E(Y^m)\right\| = 0$.   Hence for all $r(x) \in \bC\langle X \rangle$ we have as in the proof of Lemma \ref{lem:bi-free-conjugate-equal-free-conjugate-implies-zero-expectations} that
\begin{align*}
0 = \langle Z_m, r(X) \rangle_{L_2(\X, \varphi)}  = \lim_{k \to \infty} (\varphi \otimes \varphi)(\partial_X(q_k(X))(1 \otimes r(X))).
\end{align*}  
Hence   
\[
\lim_{k \to \infty} (\varphi \otimes \varphi)(\partial_X(q_k(X))(1 \otimes X^n)) = 0
\]
for all $n \in \bN$.  Therefore
\begin{align*}
	(\varphi \otimes \varphi)(\partial_{\ell, X}(Y^m X^n))
	&= \sum^{n}_{i=1} \varphi(Y^m X^{n-i}) \varphi(X^{i-1}) \\
	&=\lim_{k \to \infty} (\varphi \otimes \varphi)(\partial_X(q_k(X))(1 \otimes X^n))  + \sum^{n}_{i=1} \varphi(q_k(X)X^{n-i}) \varphi(X^{i-1}) \\
	&= \lim_{k \to \infty} (\varphi \otimes \varphi)(\partial_X(q_k(X))(1 \otimes X^n) + (q_k(X) \otimes 1) \partial_X(X^n) )\\
	&= \lim_{k \to \infty} (\varphi \otimes \varphi)(\partial_X(q_k(X) X^n))\\
	&= \lim_{k \to \infty} \varphi(q_k(X) X^n \J(X : \bC))\\
	&= \varphi( Y^m X^n \J(X : \bC)).
\end{align*}
Therefore, as the above holds for all $m,n \in \bN$ and as $(X,Y)$ is bi-partite, we obtain that $\J_\ell(X : (\bC, \bC\langle Y \rangle)) = \J(X : \bC)$ as desired.
\end{proof}

\begin{rem}
Lemma \ref{lem:bi-free-conjugate-equal-free-conjugate-implies-zero-expectations} is useful in the context of Question \ref{ques:Fisher} as, by Lemma \ref{lem:additive-bi-free-fisher-gives-info-about-conjugate-variables}, $\Phi^*(X \sqcup Y) = \Phi^*(X) + \Phi^*(Y) < \infty$ implies $\J_\ell(X : (\bC, \bC\langle Y \rangle)) = \J(X : \bC)$ and thus Lemma \ref{lem:bi-free-conjugate-equal-free-conjugate-implies-zero-expectations} implies  $\left[(\varphi \otimes 1) \circ \partial_X\right](E(Y^m)) = 0$ for all $m \in \bN$.  This later condition often implies  $E(Y^m)$ is a scalar.  In this case, we must have $E(Y^m) = \varphi(Y^m)$ and that $X$ and $Y$ are independent.  

For an example where $\left[(\varphi \otimes 1) \circ \partial_X\right](E(Y^m)) = 0$ implies $E(Y^m)$ is scalar, consider the case that $X$ is a semicircular variable with variance 1.  Recall that if $U_0(X) = 1$, $U_1(X) = X$, and $U_n(X) = U_{N-1}(X)X - U_{n-2}(X)$, then $\{U_n(X)\}_{n \geq 0}$ form an orthonormal basis for $L_2(\X, \varphi)$.  If $T = (\varphi \otimes 1) \circ \partial_X$, then clearly $T(U_0(X)) = 0$, $T(U_1(X)) = U_0(X)$, and, by induction,
\begin{align*}
T(U_n(X)) &= T(U_{n-1}(X)X) - T(U_{n-2}(X)) \\
&= (\varphi \otimes 1)(\partial_X(U_{n-1}(X))(1 \otimes X) + (U_n(X) \otimes 1)) - U_{n-3}(X) \\
&= T(U_{n-1}(X)) X + \varphi(U_n(X)) - U_{n-3}(X) \\
&= U_{n-2}(X) X + 0 - U_{n-3}(X) \\
&= U_{n-1}(X).
\end{align*}
Hence $T$ is the annihilation operator on the Chebyshev polynomials so we easily see that if $\zeta \in W^*(X)$ has the property that $T(\zeta) = 0$ if and only if $\zeta = \lambda U_0(X) = \lambda$ for some $\lambda \in \bC$.
\end{rem}
Consequently, we have the following.

\begin{cor}
Under the assumptions of Lemma \ref{lem:bi-free-conjugate-equal-free-conjugate-implies-zero-expectations}, if $X$ is a semicircular operator and
\[
\Phi^*(X \sqcup Y) = \Phi^*(X) + \Phi^*(Y) < \infty,
\]
then $X$ and $Y$ are independent.
\end{cor}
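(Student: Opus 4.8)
The plan is to assemble the three preceding results into a single chain, since all of the analytic content has already been isolated. First I would invoke Lemma \ref{lem:additive-bi-free-fisher-gives-info-about-conjugate-variables}: because $\Phi^*(X \sqcup Y) = \Phi^*(X) + \Phi^*(Y) < \infty$, it gives $\J_\ell(X : (\bC, \bC\langle Y\rangle)) = \J(X : \bC)$ (and, symmetrically, $\J_r(Y : (\bC\langle X\rangle, \bC)) = \J(Y : \bC)$). Before applying the next lemma I would check its standing hypotheses are met: since $X$ is a variance-one semicircular operator its law is absolutely continuous with density $\frac{1}{2\pi}\sqrt{4 - x^2}\, 1_{[-2,2]}$, and the existence of $E(Y^m) = P Y^m P \in C^*(X)$ is part of the assumptions of Lemma \ref{lem:bi-free-conjugate-equal-free-conjugate-implies-zero-expectations} under which we are working.

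With these in hand, Lemma \ref{lem:bi-free-conjugate-equal-free-conjugate-implies-zero-expectations} immediately yields $\sq{(\varphi \otimes 1)\circ \partial_X}(E(Y^m)) = 0$ for every $m \in \bN$. Writing $T = (\varphi \otimes 1)\circ \partial_X$, the computation in the Remark preceding the corollary shows that, because $X$ is semicircular of variance one, $T$ acts on the Chebyshev basis $\set{U_n(X)}_{n \geq 0}$ of $L_2(\X, \varphi)$ as the annihilation operator $T(U_n(X)) = U_{n-1}(X)$ with $T(U_0(X)) = 0$, so that $\ker T \cap W^*(X) = \bC$. Expanding $E(Y^m) \in C^*(X) \subseteq W^*(X)$ in this basis and imposing $T(E(Y^m)) = 0$ forces every coefficient other than the constant one to vanish, so $E(Y^m)$ is a scalar. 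Evaluating $\varphi$ and using $\varphi(E(Y^m)) = \langle E(Y^m), 1\rangle_{L_2(\X,\varphi)} = \langle Y^m, 1\rangle_{L_2(\A,\varphi)} = \varphi(Y^m)$, I would identify this scalar as $E(Y^m) = \varphi(Y^m)$ for all $m$.

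The remaining step is to read off independence from $E(Y^m) = \varphi(Y^m)$. Since $E(Y^m) = P(Y^m)$, this says $Y^m - \varphi(Y^m)1 \perp L_2(\bC\langle X\rangle)$, i.e.\ $\varphi(X^n Y^m) = \varphi(X^n)\varphi(Y^m)$ for all $n, m \geq 0$; equivalently, all conditional moments of $Y$ against $C^*(X)$ are the constants $\varphi(Y^m)$, which is the statement that $Y$ is independent of $X$. I expect this final translation to be the delicate point: phrased through the conditional-expectation interpretation it is cleanest in the (bi-partite) commuting case, where constancy of every conditional moment $E(Y^m) = \varphi(Y^m)$ pins down the conditional law of $Y$ given $X$ and hence classical independence, and one would bring in the symmetric identities coming from $\J_r(Y : (\bC\langle X\rangle, \bC)) = \J(Y : \bC)$ to control moments in the opposite order and thereby the full joint distribution. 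The genuinely nontrivial mechanism is the Chebyshev annihilation-operator argument of the preceding Remark, which converts the vanishing derivative $T(E(Y^m)) = 0$ into the rigidity $E(Y^m) \in \bC$; everything else is bookkeeping with inner products.
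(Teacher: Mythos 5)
Your proof follows exactly the paper's intended argument: the corollary appears there as an immediate consequence (``Consequently, we have the following'') of chaining Lemma \ref{lem:additive-bi-free-fisher-gives-info-about-conjugate-variables}, Lemma \ref{lem:bi-free-conjugate-equal-free-conjugate-implies-zero-expectations}, and the Chebyshev annihilation-operator remark, which is precisely your assembly, including the identification $E(Y^m) = \varphi(Y^m)$ from the scalarity of $E(Y^m)$. The ``delicate point'' you flag at the end --- passing from $\varphi(X^nY^m) = \varphi(X^n)\varphi(Y^m)$ to independence --- is treated with the same leap (silently) in the paper's remark, so your hedging, if anything, is more careful than the source rather than a deviation from it.
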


Unfortunately, it is possible that the kernel of $(\varphi \otimes 1) \circ \partial_X$ contains more than just scalar operators.  For example, if we take
\[
f_X(x) =c \left( \sqrt{4-(x-4)^2} \chi_{[2, 6]} + \sqrt{4-(x+4)^2} \chi_{[-6,-2]}\right)
\]
where $c$ is a normalization constant to make $f$ a probability distribution, it is not too hard to see the free conjugate variable exists.  Moreover $X^{-1} \in C^*(X)$ and
\[
\left[(\varphi \otimes 1) \circ \partial_X\right](X^{-1}) = - \varphi(X^{-1}) X^{-1} = 0
\]
as $\varphi(X^{-1}) = 0$.  However, this does not immediately provide a counter example to Question \ref{ques:Fisher} as we do not know whether $E(Y^m) = X^{-1}$ is possible for some selection of $Y$ such that the joint density $f(x,y)$ satisfies all of the necessary properties.

\section{Open Questions}
\label{sec:Ques}

We conclude this paper with several important and interesting questions raised in this paper in addition to the question of whether results in bi-free probability may be applied to obtain results pertaining to von Neumann algebras.

To begin, recall the previous questions: Question \ref{ques:Fisher} and Question \ref{ques:domains}.
The interest in Question \ref{ques:Fisher} was discussed in Section \ref{sec:Additive-Bi-Free-Fisher-Info} and the importance of Question \ref{ques:domains} is that the free analogue is an essential fact in many works (e.g. \cites{CS2014, D2010, D2016, GS2014, MSW2017}).

One interest in regards to bi-freeness is the following.
\begin{ques}
	\label{ques:left-to-right-always-works}
	In the context of Theorem~\ref{thm:non-micro-converting-rights-to-lefts}, is the supremum of $\chi^*(\vX', \vY')$ over acceptable tuples $\vX', \vY'$ always equal to $\chi^*(\vX\sqcup\vY)$?
\end{ques}
It is worth pointing out that there are often choices of $\vX'$ and $\vY'$ for which equality is not attained; for example, if $\vX$ contains at least one variable and $\vY$ consists of a single variable, the tuples $\vX$ and $\vY$ themselves satisfy the conditions of $\vX'$ and $\vY'$, but $\chi^*(\vX, \vY) = -\infty$ regardless of $\chi^*(\vX \sqcup \vY)$ (since the algebraic relation $X_1Y = YX_1$ is satisfied).
The answer to Question \ref{ques:left-to-right-always-works} is affirmative for the bi-free central limit distributions and for independent distributions.
A general answer to Question \ref{ques:left-to-right-always-works} would be of interest as it directly relates the free and bi-free non-microstate entropies in the case that the bi-free entropy is tracially bi-partite.

One question related to Question \ref{ques:left-to-right-always-works} is the following.
\begin{ques}
	\label{ques:integration}
	Let $(X,Y)$ be a bi-partite pair with joint distribution $\mu$.  Is there an integration formula involving just $\mu$ to compute $\chi^*(X \sqcup Y)$?
\end{ques}
Question \ref{ques:integration} arises from the fact that \cite{V1998-2} demonstrated that if $X$ is a self-adjoint operator with distribution $\mu$, then the non-microstate free entropy of $X$ is
\[
	\chi^*(X) = \frac{1}{2} \log(2\pi) + \frac{3}{4} + \int_\bR \int_\bR \log|s-t| \, d\mu(s) \, d\mu(t).
\]
Of course, an affirmative answer to both Questions \ref{ques:left-to-right-always-works} and \ref{ques:integration} would enable the computation of the non-microstate free entropy of two self-adjoint operators via an integration formula.
Thus it is unlikely that both Question \ref{ques:left-to-right-always-works} and Question \ref{ques:integration} can be answered in the affirmative.
In addition, a negative answer to Question \ref{ques:integration} would give merit to the statement that bi-free probability is not a probability theory for measures on $\bR^2$ but completely a non-commutative probability theory.

In terms of the proof of this formula from \cite{V1998-2}, we appear to have all the necessary tools to prove a formula (if a formula exists at all).  Given a pair $(X,Y)$ of commuting self-adjoint operators and self adjoint operators $S, T$ with centred semicircular distribution with variance 1 such that $\{(X,Y), (S, 1), (1, T)\}$ are bi-free, for all $t \in \bR$ let $(X_t, Y_t) = (X + \sqrt{t} S, Y + \sqrt{t} T)$.  If $(X_t, Y_t)$ have distributions $f_{X_t}$ and $f_{Y_t}$ respectively and joint distribution $f_t(x,y)$, let
\begin{align*}
	h_{X,t}(x) &= \int_\bR \frac{f_{X_t}(s)}{x-s} \, ds, \\
	h_{Y,t}(y) &= \int_\bR \frac{f_{Y_t}(r)}{y-r} \, dr, \\
	H_{X,t}(x,y) &= \int_\bR \frac{f_t(s,y)}{x-s} \, ds, \text{ and} \\
	H_{Y,t}(x,y) &= \int_\bR \frac{f_t(x,r)}{y-r} \, dr.
\end{align*}
It is possible to show that
\[
	\frac{1}{\pi} \frac{d f_{X_t}}{dt}(x) = - h_{x,t}(x) \frac{df_{X_t}}{dx}(x) - f_{X_t}(x) \frac{d h_{x,t}}{dx}(x)
\]
and
\[
	\frac{1}{\pi} \frac{df_t}{dt}(x,y) = - h_{X,t}(x) \frac{df_t}{dx}(x,y) - f_{X_t}(x) \frac{d H_{X,t}}{dx}(x,y) - h_{Y,t}(y) \frac{df_t}{dy}(x,y)  - f_{Y_t}(y) \frac{dH_{Y,t}}{dy}(x,y).
\]
Using the integral formula from \cite{V1998-2} as the definition for $\chi^*(X_t)$ and the first differential equation, one shows that
\[
	\frac{d(\chi^*(X_t))}{dt} = \frac{1}{2}\Phi^*(X_t)
\]
from which the equivalence of definitions then follows.   For the bi-free side, we know from Proposition \ref{prop:Fisher-is-the-derivative-of-entropy} that
\[
	\frac{d(\chi^*(X_t \sqcup Y_t))}{dt} = \frac{1}{2}\Phi^*(X_t \sqcup Y_t).
\]
As Proposition \ref{prop:conjugate-variable-integral-description-bi-partite} gives a formula for $\Phi^*(X_t \sqcup Y_t)$ in terms of $f_t, f_{X_t}, f_{Y_t}, h_{X,t}, h_{Y,t}, H_{X,t}$, and $H_{Y,t}$, one needs `simply' modify the integral expression for $\Phi^*(X_t \sqcup Y_t)$ to invoke the above differential equations to obtain a $\frac{d}{dt}$ of a new expression which will be the formula for $\chi^*(X_t \sqcup Y_t)$.  Such a formula has remained elusive to us.

Of course, the most natural question is
\begin{ques}
	Does the microstate bi-free entropy from \cite{CS2017} agree with the above non-microstate bi-free entropy for  tracially bi-partite collections? 
\end{ques}

In the free setting, \cite{BCG2003} first showed that the microstate free entropy is always less than the non-microstate free entropy.  Thus perhaps a good starting point would be a bi-free version of \cite{BCG2003}.  Of course much progress was made towards the converse in \cite{D2016}.

\section*{Acknowledgements}
 The authors would like to thank Yoann Dabrowski for discussions related to $(\varphi \otimes 1) \circ \partial_X$.

\end{document}